\newtheorem{theorem}{Theorem}[section]
\newtheorem{proposition}[theorem]{Proposition}
\newtheorem{lemma}[theorem]{Lemma}
\newtheorem{corollary}[theorem]{Corollary}
\newtheorem{conjecture}[theorem]{Conjecture}
\theoremstyle{definition}
\newtheorem{definition}[theorem]{Definition}
\newtheorem{example}[theorem]{Example}
\newtheorem{remark}[theorem]{Remark}
\newtheorem{notation}[theorem]{Notation}
\def\Id{\operatorname{Id}}
\def\Ker{\operatorname{Ker}}
\def\Coker{\operatorname{Coker}}
\def\Hom{\operatorname{Hom}}
\def\Ext{\operatorname{Ext}}
\def\Aut{\operatorname{Aut}}
\def\Pic{\operatorname{Pic}}
\def\im{\operatorname{Im}}
\def\Spec{\operatorname{Spec}}
\def\scrO{\mathcal{O}}
\def\spcheck{^{\vee}}
\newcommand{\R}{\mathbb{R}}
\newcommand{\C}{\mathbb{C}}
\newcommand{\Z}{\mathbb{Z}}
\newcommand{\Q}{\mathbb{Q}}
\newcommand{\F}{\mathbb{F} }
\newcommand{\Pee}{\mathbb{P}}
\newcommand{\twopartdef}[4]
{
	\left\{
		\begin{array}{ll}
			#1 & \mbox{if } #2 \\
			#3 & \mbox{if } #4
		\end{array}
	\right.
}
\newcommand{\twobytwo}[4]
{
	\begin{pmatrix}
		#1 & #2 \\
		#3 & #4 \\
	\end{pmatrix}
}
\title[Smoothings and RDP Adjacencies for Cusp Singularities]{Smoothings and Rational Double Point Adjacencies for Cusp Singularities}
\author{Philip Engel}
\thanks{Research partially
    supported by NSF grant DMS-1502585.} 
\author{Robert Friedman}
\begin{document}

\maketitle

\begin{abstract}A cusp singularity is a surface singularity whose minimal resolution is a cycle of smooth rational curves meeting transversely. Cusp singularities come in dual pairs. Looijenga proved in 1981 that if a cusp singularity is smoothable, the minimal resolution of the dual cusp is the anticanonical divisor of some smooth rational surface. In 1983, the second author and Miranda gave a criterion for smoothability of a cusp singularity, in terms of the existence of a K-trivial semistable model for the central fiber of such a smoothing. We study these ``Type III degenerations" of rational surfaces with an anticanonical divisor---their deformations, birational geometry, and monodromy. Looijenga's original paper also gave a description of the rational double point configurations to which a cusp singularity deforms, but only in the case where the resolution of the dual cusp has cycle length 5 or less. We generalize this classification to an arbitrary cusp singularity, giving an explicit construction of a semistable simultaneous resolution of such an adjacency. The main tools of the proof are (1) formulas for the monodromy of a Type III degeneration, (2) a construction via surgeries on integral-affine surfaces of a degeneration with prescribed monodromy, (3) surjectivity of the period map for Type III central fibers, and (4) a theorem of Shepherd-Barron producing a simultaneous contraction to the adjacency.\end{abstract}

\section*{Introduction}

Let $(X,x)$ be the germ of an isolated singularity. Some basic questions that one can ask about the deformation theory of $(X,x)$ are: Is $(X,x)$ smoothable? If so, how can we describe the smoothing components of $(X,x)$? What are the singularities adjacent to $(X,x)$? In particular, for a surface singularity, what are the rational double points adjacent to $(X,x)$? In case $(X,x)$ is itself a rational double point, the answers to these questions are well-known. In this case, $(X,x)$ is smoothable and there is a unique smoothing component, as it is a hypersurface singularity, and the adjacent rational double points correspond to subgraphs of the Dynkin diagram corresponding to $(X,x)$.

The next level of complexity for a normal surface singularity is the case where $(X,x)$ is a minimally elliptic singularity in Laufer's terminology, i.e.\  a Gorenstein elliptic singularity, and here the simplest case is when the fundamental cycle
of $(X,x)$ is reduced. In this case, $(X,x)$ is either a simple elliptic, cusp, or triangle singularity.
These three types admit a resolution whose exceptional fiber is a smooth elliptic curve, a cycle of smooth rational curves, or either cuspidal, two smooth rational curves meeting at a tacnode, or three smooth rational curves meeting at a point, respectively.

The deformation theory of simple elliptic and triangle singularities is by now well understood, by work of Pinkham and Looijenga \cite{Pink1}, \cite{Pink2}, \cite{Looij1},   \cite{Looij2}. 
The crucial point here is that the singularities in question have a $\C^*$-action. Thus questions about smoothings and adjacent singularities can be related to global questions about the existence of certain compact algebraic surfaces with configurations of curves on them: (generalized) del Pezzo surfaces with a smooth  section of the anticanonical divisor in the case of simple elliptic singularities, $K3$ surfaces with so-called $T_{p,q,r}$ configurations in the case of triangle singularities. For example, in case $(X,x)$ is simple elliptic, let $(\widetilde{X}, E)$ be the minimal resolution of $X$ and set $E^2 = -d$. Then we may realize the germ $(\widetilde{X}, E)$ on an elliptically ruled surface with zero section $E$ and infinity section $E'$ satisfying $(E')^2=d$. We preserve the divisor $E'$ on the nearby fibers of a negative weight deformation of $(X,x)$ to get pairs $(X_t, E')$, where $X_t$ is a normal projective surface such that $E'\in |-K_{X_t}|$.

It follows that nontrivial negative weight deformations of $(X,x)$ correspond to generalized del Pezzo surfaces $X_t$, hence $d\leq 9$ whenever $(X,x)$ is smoothable (the converse also holds). Furthermore, the adjacent configurations of rational double points to $(X,x)$ are those that appear on such an $X_t$. These in turn can be described by the configurations of curves, all of whose components are isomorphic to $\Pee^1$ and of self-intersection $-2$, which appear as the exceptional curves on a minimal resolution $Y_t \rightarrow X_t$. For $d$ small, $(X,x)$ is a  complete intersection singularity ($d\leq 4$) or a Pfaffian singularity ($d=5$). In this case, there is an associated root system $R$ given by taking the vectors of square $-2$ in $(K_{Y_t})^\perp$. (By the usual algebraic geometry conventions, all root systems in this paper will be \textbf{negative definite}.) Then the possible Dynkin diagrams of configurations of rational double points on some $X_t$ are given by root systems contained in $R$ satisfying a mild extra condition coming from the period map, which can in turn be described via embeddings into the extended Dynkin diagram of $R$.
For $d$ large, however, there can be several smoothing components of $(X,x)$ and $(K_{Y_t})^\perp$ need not be a root lattice.
Nonetheless, the possible adjacent rational double point configurations can still be described lattice-theoretically.

For the case of cusp singularities, Looijenga gave a beautiful description of the possible adjacencies in his breakthrough paper \cite{Looij}, but only in the case of multiplicity at most $5$. In this case, the cusp singularity is either a complete intersection or Pfaffian, hence automatically smoothable and   there is a unique smoothing component. As in the case of simple elliptic singularities of multiplicity at most $5$, there is again a diagram which determines the possible adjacent singularities, and it is the intersection graph for the root basis of a generalized root system of hyperbolic type.

As above, a key ingredient in Looijenga's analysis is the existence of a complete surface containing the cusp $(X,x)$ on which one globalizes deformations. But one must leave the world of algebraic geometry to produce such a surface. The Inoue surface $V_0$ associated to a cusp $(X,x)$ is a compact complex surface of type VII${}_0$ such that $D+D'\in |-K_{V_0}|$, where an analytic neighborhood of $D'=\sum_{i=1}^{r'}D_i'$ is a minimal resolution of the cusp $(X,x)$, and by definition an analytic neighborhood of $D=\sum_{j=1}^rD_j$ is a minimal resolution of the \textsl{dual cusp} $(\widehat{X}, \hat{x})$ to $(X,x)$. (For notational reasons, it is more convenient to denote by $D'$ the minimal resolution of the cusp we are interested in deforming and by $D$ the minimal resolution of its dual.) Here $D$ is a cycle of smooth rational curves, or an irreducible nodal curve if $r=1$, and similarly for $D'$. By convention, $D_i\cdot D_{i\pm 1} = 1$, where the integer $i$ is taken mod $r$, and $D_i\cdot D_j = 0$ otherwise. The \textsl{type} or \textsl{self-intersection sequence} of $D$ is the sequence $(D_1^2, \dots, D_r^2)$ mod cyclic permutations and order-reversing permutations. There is a somewhat complicated  recipe for obtaining $D$, together with its self-intersection sequence, from $D'$.

Looijenga showed that, if $\overline{\overline{V}}_0$ is the compact singular analytic surface obtained by contracting the curves $D'$ and $D$, then the deformation functor $\mathbf{Def}_{\overline{\overline{V}}_0}$ of $\overline{\overline{V}}_0$ is naturally isomorphic to  $\mathbf{Def}_{(\widehat{X}, \hat{x})} \times \mathbf{Def}_{(X,x)}$. Thus,  the deformations of $(X,x)$, viewed as deformations of $(X,x) \amalg  (\widehat{X}, \hat{x})$ for which the deformation of $(\widehat{X}, \hat{x})$ is trivial, can be identified with deformations of the pair $(\overline{V}_0, D)$ keeping $D$ constant, where $\overline{V}_0$ is the compact singular analytic surface obtained by contracting $D'$. A smoothing of $(X,x)$ then yields a pair $(Y_t,D_t)$, where $Y_t$ is a smooth rational surface and  $D_t\in |-K_{Y_t}|$ is a cycle of rational curves of the same type as $D\subseteq V_0$. Briefly, we say that $D$ \textsl{sits on a rational surface} $Y$ (we identify $D_t$ with $D$). Hence a smoothing component of $(X,x)$ determines a deformation type $[(Y,D)]$ of \textsl{anticanonical pairs} $(Y,D)$,  i.e.\ pairs consisting of a smooth rational surface $Y$ and a section $D\in |-K_Y|$ which is a cycle of rational curves of the same type as the minimal resolution of the dual cusp to $(X,x)$. We define a smoothing component of $(X,x)$ to be \textsl{of type $[(Y,D)]$} in this case. In particular, the existence  of such a pair $(Y,D)$ is a \textit{necessary} condition for the cusp $(X,x)$ to be smoothable. In \cite{Looij}, Looijenga conjectured that it is also a \textit{sufficient} condition:

\begin{conjecture}[Looijenga's conjecture] The cusp $(X,x)$ is smoothable if and only if the minimal resolution of the dual cusp sits on a rational surface.
\end{conjecture}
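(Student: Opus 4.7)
The forward direction is already essentially built into the setup: a smoothing component of $(X,x)$ produces a deformation of the pair $(\overline{V}_0, D)$ with $D$ held fixed, and blowing down $D'$ on each fiber gives anticanonical pairs $(Y_t, D)$ on nearby fibers. The content of the conjecture is the converse: given an anticanonical pair $(Y, D)$ whose boundary $D$ is a cycle of rational curves of the same type as the minimal resolution of the dual cusp $(\widehat{X}, \hat{x})$, produce an actual smoothing of $(X, x)$.

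My plan is to run Looijenga's correspondence in reverse. Under the isomorphism $\mathbf{Def}_{\overline{\overline{V}}_0} \cong \mathbf{Def}_{(\widehat{X}, \hat{x})} \times \mathbf{Def}_{(X,x)}$ recalled above, deformations of $(X,x)$ correspond to deformations of $(\overline{V}_0, D)$ with $D$ held fixed, so it suffices to produce a flat one-parameter deformation $\pi\colon \mathcal{V} \to \Delta$ of $\overline{V}_0$ in which $D$ deforms trivially and whose generic fiber, after contracting $D'$, becomes smooth. The natural way to achieve this is to realize both the Inoue surface $V_0$ and the rational surface $Y$ as pieces of a single semistable degeneration of compact complex surfaces whose total space is smoothable, and then extract the desired deformation from that global object.

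Concretely, the plan is to thicken $(Y, D)$ to the central fiber of a Type III Kulikov degeneration $\mathcal{W}\to\Delta$ of $K3$ surfaces whose dual intersection complex is $S^2$: one hemisphere assembles, via chains of anticanonical pairs glued along boundary cycles, into a configuration containing $(Y, D)$, while the opposite hemisphere assembles into a matching anticanonical pair $(Y', D')$ with $D'$ precisely the minimal resolution of $(X, x)$. Contracting $D' \subset \mathcal{W}_0$ produces a degenerate surface on which $(X,x)$ appears as an isolated singularity, and the standard smoothing theory for Kulikov models, combined with Looijenga's correspondence between deformations of $\overline{V}_0$ and of $(X,x)\amalg(\widehat{X},\hat{x})$, extracts a one-parameter smoothing of the cusp from $\mathcal{W}$.

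The principal obstacle is the construction of the global Type III surface, or equivalently the construction of the matching pair $(Y', D')$ on the opposite hemisphere. Under the tropical dictionary for Kulikov models, this amounts to the combinatorial-arithmetic problem of completing the integral-affine structure on a disk determined by $(Y,D)$ to a global integral-affine structure on $S^2$ carrying exactly $24$ focus-focus singularities and matching the prescribed boundary data along the equator determined by $D$. Producing such a completion in full generality is the heart of the proof; Looijenga's treatment of multiplicity at most $5$ corresponds to explicit completions available in low rank. Once the global $S^2$, and hence the pair $(Y',D')$, is in hand, Kulikov smoothing plus the Looijenga correspondence deliver the smoothing of $(X,x)$ with essentially formal further work.
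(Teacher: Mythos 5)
Your plan to reverse Looijenga's correspondence is the right starting point, and the observation that the content is to produce a suitable global semistable model is correct. But the geometric model you propose is wrong. The degeneration used in the paper (following Friedman--Miranda and Engel) is \emph{not} a Type III Kulikov degeneration of $K3$ surfaces; it is a Type III degeneration of anticanonical pairs $(\mathcal{Y},\mathcal{D})\to\Delta$ in which $\omega_{Y_0}\cong\mathcal{O}_{Y_0}(-D)$ rather than $\mathcal{O}_{Y_0}$, one component $V_0$ of the central fiber is the Hirzebruch--Inoue surface (class VII${}_0$, non-K\"ahler) carrying \emph{both} cycles $D$ and $D'$, and the general fiber is a rational anticanonical pair, not a $K3$. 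The Inoue surface cannot occur as a component of a $K3$ Kulikov model: triviality of $\omega_{W_0}$ would force the double curve on $V_0$ to lie in $|-K_{V_0}|$, which is empty for the Inoue surface (where $D+D'\in|K_{V_0}|$). Consequently the closing appeal to Looijenga's correspondence between $\mathbf{Def}_{\overline{\overline{V}}_0}$ and $\mathbf{Def}_{(X,x)}\times\mathbf{Def}_{(\widehat{X},\hat{x})}$ does not apply to the $\mathcal{W}$ you have built, since $\overline{\overline{V}}_0$ is a contracted Inoue surface and the Inoue surface is nowhere in your $K3$ central fiber. In the actual proof, it is the presence of the Inoue component that allows one to invoke Shepherd--Barron to contract $\bigcup_{i>0}V_i$ in a Type III degeneration of anticanonical pairs, leaving a one-parameter smoothing of the cusp sitting on $\overline{V}_0$; no Inoue surface, no Looijenga factorization.

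Beyond the modeling issue, the step you correctly flag as ``the heart'' is not actually carried out, and even its statement is not quite right for this problem. In the paper the completion of the integral-affine disk is to $S^2$ with $Q(D)+1$ singular points in the generic case: one of them is the pseudo-fan singularity at $v_0$ coming from the Inoue surface (of charge $Q(D')=24-Q(D)$, typically far from an $A_1$), and the remaining $Q(D)$ are $A_1$'s --- not $24$ focus-focus singularities split across two rational hemispheres. The real content is then (i) the existence of an almost-toric fibration $\mu\colon(Y,D,\omega)\to B$ over a disk with prescribed $[\omega]$ and with $D$ collapsing to $v_0$ (Theorem~\ref{anylambda} and Remark~\ref{degenerate}); (ii) a nodal-slide argument placing the $A_1$ singularities at lattice points so the base can actually be triangulated as the dual complex of a Type III anticanonical pair (Proposition~\ref{makeint}); and (iii) the deformation-theoretic smoothing of the resulting $d$-semistable $Y_0$. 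Your proposal also quietly assumes the existence of a rational anticanonical pair $(Y',D')$ with $D'$ the resolution of the cusp itself; that is not given and is not needed in the Inoue-surface approach, which uses only $(Y,D)$ and the canonically associated $V_0$.
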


Motivated by the corresponding picture for degenerations of $K3$ surfaces, the second author and R.\ Miranda showed in \cite{FriedmanMiranda} that Looijenga's conjecture was equivalent to the existence of a certain semistable model $Y_0$ for the smoothing, whose description we recall in Section 2. The main idea is to show that  the semistable fiber $Y_0 = \bigcup_{i=0}^nV_i$ deforms in a smooth one parameter family $\mathcal{Y} \to \Delta$. Here $V_0$ is the Inoue surface with the cycles $D'$ and $D$, and the $V_i$, $i\geq 1$, are rational surfaces meeting $V_0$ along the cycle $D'$. A theorem of Shepherd-Barron \cite{Shepherd-Barron} shows that the divisor $\bigcup_{i= 1}^nV_i$ is exceptional in $\mathcal{Y}$, although we might also have to contract some rational curves in the general fibers. (The theorem of Shepherd-Barron is stated in the context of degenerations of $K3$ surfaces, but the proof applies in this setting as well.) The result is a singular complex threefold $\overline{\mathcal{Y}}$ and a flat morphism $\bar{\pi}\colon \overline{\mathcal{Y}}\to \Delta$ such that the fiber over $0$ is the singular Inoue surface $\overline{V}_0$ and the general fiber is a rational surface, possibly with rational double points. In particular, $(X,x)$ is smoothable. However, the complexity of constructing the possible semistable singular fibers seemed, in the words of \cite{FriedmanMiranda},  ``rather daunting."

Looijenga's conjecture went unproven until quite recently. The first proof, due to Gross-Hacking-Keel \cite{GHK2}, is based on ideas from the Gross-Siebert program \cite{gs} for proving mirror symmetry. The second proof, due to the first author \cite{Engel}, uses ideas from symplectic geometry to construct the semistable models that are required in the approach of \cite{FriedmanMiranda}. The two proofs can be viewed, respectively, as part of the ``algebraic" and the ``symplectic" aspects of mirror symmetry for the anticanonical pair $(Y,D)$. While we have no proven claims regarding mirror symmetry, it serves as an underlying motivation for some of the constructions in this paper. 

Mirror symmetry is, very roughly, a prediction originating in physics that Calabi-Yau varieties come in mirror pairs $X$ and $X^{mir}$ on which algebraic and symplectic data are interchanged. Kontsevich \cite{hms} formalized these physical ideas in homological mirror symmetry---that the derived category of coherent sheaves on $X$ is equivalent to the Fukaya category of $X^{mir}$. The Strominger-Yau-Zaslow program \cite{syz} sought to derive a more geometric origin of mirror symmetry---that $X$ and $X^{mir}$ have dual special Lagrangian torus fibrations over a common base $B$.

Hitchin \cite{hitchin} observed that the base $B$ carries two natural integral-affine structures. These structures are interchanged by mirror symmetry. The Gross-Siebert program built on this observation, providing a new interpretation of these integral-affine structures on $B$ in terms of tropical geometry. Instead of just a Calabi-Yau variety $X$, one should consider its ``large complex structure limit"---a maximally unipotent degeneration $\mathcal{X}\rightarrow \Delta$ of $X$ over a disc. Then one of the affine structures on $B$ is identified via the tropical geometry with the dual complex $\Gamma(X_0)$ of the central fiber. This affine structure ought to agree with the affine structure on $B$ coming from the symplectic geometry of the Lagrangian torus fibration $X^{mir}\rightarrow B$.

It is possible to understand the results of this paper within such a framework. Observe that a smoothing of $\overline{V}_0-D$ is a degeneration of the open Calabi-Yau variety $Y-D$. This degeneration is the large complex structure limit. One expects then that the dual complex $B=\Gamma(Y_0)$ of the semistable resolution of the central fiber has two natural integral-affine structures. In Section 2.2 we describe one of these structures,  which is the other structure than the one considered by Gross-Hacking-Keel. Our affine structure is identified with the base of a Lagrangian torus fibration of the mirror of $Y-D$ over $B$.

While \cite{GHK2} builds a smoothing of the cusp by a ``polytope" construction from their integral-affine structure on $B$, we follow \cite{Engel} to build a semistable resolution of a smoothing by a ``fan" construction from the other structure. One advantage of the approach of \cite{GHK2} is the construction of canonical {\it theta functions} on the deformation, analogous to theta functions on degenerations of abelian varieties. Furthermore, our approach only produces a one-parameter smoothing as opposed to a versal deformation of the cusp.
However, the construction here and in \cite{Engel} has many other advantages: the geometry of the degeneration, central fiber, and significantly, the semistable resolution, are much more explicit. One can algorithmically build the central fiber of the resolution of a smoothing of the cusp. Our methods yield a strengthening of Looijenga's conjecture:

\begin{theorem}\label{0.2} Let $(X,x)$ be a cusp singularity with dual cusp of type $D$. For each deformation type $[(Y,D)]$ of an anticanonical pair, there exists a smoothing component of $(X,x)$ of type $[(Y,D)]$. Hence the number of smoothing components of $(X,x)$ is at least as large as the number of deformation types $[(Y,D)]$.
\end{theorem}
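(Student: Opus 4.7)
The plan is to combine the Friedman--Miranda approach of \cite{FriedmanMiranda} with the semistable-model construction of \cite{Engel}, taking care to match the prescribed deformation type $[(Y,D)]$. By \cite{FriedmanMiranda}, it suffices to construct a $d$-semistable degeneration $Y_0 = V_0 \cup V_1 \cup \cdots \cup V_n$, where $V_0$ is the Inoue surface carrying $D$ and $D'$ and $V_1, \ldots, V_n$ is a chain of smooth rational surfaces glued to $V_0$ along $D'$, and then to verify that the general fiber of the resulting smoothing, after contracting $\bigcup_{i\geq 1} V_i$ via Shepherd-Barron, realizes $(Y,D)$ up to deformation equivalence.

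The crucial step is to initialize Engel's construction from $(Y,D)$ itself rather than from an abstract combinatorial datum. The input required in \cite{Engel} is an almost-toric (``pseudo-fan'') decomposition of a disk whose boundary monodromy matches the self-intersection sequence of the cycle $D'$; Engel's method then builds the chain $V_1,\ldots,V_n$ together with its gluing to $V_0$ along $D'$. Starting from $(Y,D)$, one would produce such a pseudo-fan by exhibiting a toric model of $(Y,D)$ via a sequence of corner and internal blow-downs, then reading off the triangulation from the resulting toric fan. Once the $d$-semistable model is in hand, Friedman's smoothing theorem for $d$-semistable surfaces produces a one-parameter family $\mathcal{Y} \to \Delta$, and Shepherd-Barron's contraction yields the smoothing $\overline{\mathcal{Y}}\to\Delta$ of $(\overline{V}_0, D)$, equivalently of $(X,x)$.

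It remains to identify the resulting smoothing component as being of type $[(Y,D)]$. The general fiber of $\mathcal{Y}\to\Delta$ is obtained by gluing $V_0$ to the chain along $D'$, and by construction this pair is deformation-equivalent to $(Y,D)$; moreover, contracting the $(-1)$- and $(-2)$-curves disjoint from $D$ in the Shepherd-Barron step does not alter the deformation type of the minimal resolution of the general fiber of $\overline{\mathcal{Y}}$. Since the deformation type of $(Y_t, D)$ is locally constant on each smoothing component of $(X,x)$, distinct types $[(Y,D)]$ then yield distinct components, giving the stated lower bound.

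The main obstacle is the initialization step: one must show that every deformation type of anticanonical pair $(Y,D)$ admits an almost-toric presentation whose boundary data exactly matches the prescribed cycle $D'$. This is essentially a combinatorial/toric existence statement---a converse to Engel's forward construction---and verifying it, together with the claim that the resulting smoothing faithfully recovers the chosen deformation type, is where the bulk of the technical work of Section~3 is concentrated.
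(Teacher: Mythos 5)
Your overall route — run Engel's construction to get a $d$-semistable Type III central fiber, smooth it, contract $\bigcup_{i\geq 1}V_i$ via Shepherd-Barron, and then argue the general fiber realizes the prescribed deformation type — is the same strategy the paper takes. However, there are two issues, one a misreading and one a genuine gap.

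First, you frame the ``initialization step'' (feeding $(Y,D)$ into Engel's construction) as the main technical obstacle requiring a converse to Engel's forward construction. This is a misreading: Engel's construction \emph{already} takes an anticanonical pair $(X,D)$ as its input — that is the entire point of his proof of Looijenga's conjecture. The paper simply quotes this fact: ``The construction of \cite{Engel} inputs an anticanonical pair $(X,D)$ and outputs some Type III anticanonical pair $Y_0$ \ldots'' There is no additional combinatorial existence statement to prove here.

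Second, and more importantly, your claim that ``by construction this pair is deformation-equivalent to $(Y,D)$'' is exactly where the real work lives, and you have left it unjustified. The general fiber $(Y_t, D_t)$ of the degeneration $\mathcal{Y}\to\Delta$ is not literally ``obtained by gluing $V_0$ to the chain along $D'$'' in any way that manifestly returns $(Y,D)$; it is a priori a different surface. The paper closes this gap via Proposition~\ref{symp}: by identifying $(Y_t,D_t)$ as a fiber connect sum of almost-toric fibrations over the intersection complex $\Gamma(Y_0)^\vee$, one builds a diffeomorphism $\phi\colon (X,D)\to (Y_t,D_t)$ carrying $D_i$ to $D_i$. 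One then invokes Theorem~5.14 of \cite{Friedman2} (a diffeomorphism of anticanonical pairs preserving the classes $[D_i]$ implies deformation equivalence) to conclude. Without this argument — or an equivalent one — the deformation type of $(Y_t, D_t)$ is not controlled, and the proof is incomplete. Your final paragraph (deformation type is locally constant on a smoothing component, so distinct types yield distinct components) is correct and matches the paper.
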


There are examples where the number of smoothing components of $(X,x)$ is in fact larger than the number of deformation types $[(Y,D)]$. For smoothable simple elliptic singularities of high multiplicity, this phenomenon was already observed by Looijenga-Wahl \cite{LooijengaWahl}. However, it seems likely that, as in the simple elliptic case, all smoothing components of a cusp singularity of type $[(Y,D)]$ are essentially isomorphic.

This paper has two goals: to analyze in detail the geometry of one parameter smoothings of cusp singularities via their semistable models, and then to use this study to describe the possible adjacencies of a cusp singularity to a union of rational double points (briefly, a \textsl{rational double point configuration}). To describe the first goal, we begin by recalling some facts and terminology about anticanonical pairs using \cite{Friedman2}  as a reference (but these many of these ideas also appear in \cite{GHK} and, in essence, date back to \cite{Looij}).   

\begin{definition} Given a deformation type $[(Y,D)]$ of anticanonical pairs, we have the positive cone
$$\mathcal{C} =\{x\in H^2(Y; \R): x^2 >0\},$$
which has two components. Exactly one component $\mathcal{C}^+$ contains the classes of ample divisors. Define $\mathcal{A}_{\text{\rm{gen}}}\subseteq \mathcal{C}^+$ to be the ``generic ample cone,"   i.e.\ the ample cone of a very general deformation of $(Y,D)$.  It is invariant under the  monodromy group of the deformation. Define $\Lambda = \Lambda(Y,D)$ be the orthogonal complement in $H^2(Y;\Z)$ of the classes $[D_i]$ of the components of $D$ and set $\Lambda_\R = \Lambda \otimes \R$. Let $\overline{\mathcal{A}}_{\text{\rm{gen}}}$ be the closure of $\mathcal{A}_{\text{\rm{gen}}}$ in the positive cone $\mathcal{C}$ and finally set $\mathcal{B}_{\text{\rm{gen}}}$ to be the interior in $\Lambda_\R$ of the intersection $\overline{\mathcal{A}}_{\text{\rm{gen}}} \cap \Lambda_\R$. 
\end{definition}

One can then describe the monodromy group of the pair $(Y,D)$ as follows:

\begin{definition} Let $\Gamma= \Gamma(Y,D)$, the group of \textsl{admissible isometries} of $(Y,D)$, be the  the group of integral isometries $\gamma$ of $H^2(Y;\Z)$ such that  $\gamma([D_i]) = [D_i]$ for all $i$, $\gamma(\mathcal{C}^+) =\mathcal{C}^+$, and 
$$\gamma (\overline{\mathcal{A}}_{\text{\rm{gen}}}(Y)) = \overline{\mathcal{A}}_{\text{\rm{gen}}}(Y).$$
In particular, $\Gamma$ acts on the sets $\mathcal{B}_{\text{\rm{gen}}}$ and $\mathcal{B}_{\text{\rm{gen}}}\cap \Lambda$.
\end{definition}

Given a one parameter smoothing $\overline{\pi}\colon (\overline{\mathcal{Y}}, \mathcal{D}) \to \Delta$ of the pair $(\overline{V}_0, D)$ over the disk $\Delta$ such that $\mathcal{D}\cong D\times \Delta$, we can find a ``good" semistable model $\pi \colon (\mathcal{Y}, \mathcal{D}) \to \Delta$ (possibly after a base change). Here good roughly means that $\mathcal{Y}$ is smooth and that the scheme-theoretic fiber $Y_0$ over $0\in \Delta$ has reduced normal crossings and satisfies $\omega_{Y_0} \cong \scrO_{Y_0}(-D)$, where $\omega_{Y_0}$ is the dualizing sheaf. We shall refer to the family $\pi \colon (\mathcal{Y}, \mathcal{D}) \to \Delta$ as a \textsl{Type III degeneration of rational surfaces}. The detailed properties of $Y_0$  are described in Definition~\ref{defTypeIII}. 

We turn next to the description of the monodromy of a Type III degeneration of rational surfaces. It is easy to show that, if $(Y,D)$ is a negative-definite anticanonical pair, then there is an exact sequence
$$0 \to \Z  \to H_2(Y-D;\Z) \to \Lambda \to 0,$$
where $\Z = \Z\cdot \gamma$ is the radical of intersection pairing $\bullet$ on $H_2(Y-D;\Z)$. In particular, we can define the pairing between an element of $\Lambda$ and an element of $H_2(Y-D;\Z)$. Our first main result is then roughly as follows:

\begin{theorem}\label{0.5} Let $\pi \colon (\mathcal{Y}, \mathcal{D}) \to \Delta$ be a Type III degeneration of rational surfaces as above. Let $T$ be the monodromy of the fiber, which acts on $\Lambda$ and on $H_2(Y-D;\Z)$. Then: \begin{enumerate}
\item[\rm(i)] For the action of $T$ on $H_2(Y; \Z)$, $T=\Id$
\item[\rm(ii)] There is a unique class $\lambda \in \Lambda$ such that, for all $x\in H_2(Y-D;\Z)$, 
$$T(x) = x-\langle \lambda , x\rangle \gamma.$$
\item[\rm(iii)] With $\lambda$ as above,  $\lambda^2 = v$, where $v$ is the number of triple points of the singular fiber $Y_0$.
\item[\rm(iv)] For a unique choice of sign, $\pm \lambda\in \mathcal{B}_{\text{\rm{gen}}}\cap \Lambda$.
\item[\rm(v)] The class $\lambda$ generates over $\Q$ the cokernel of the specialization map $\operatorname{sp}\colon H^2(Y_0; \Q) \to H^2(Y_t; \Q)$. \end{enumerate}
\end{theorem}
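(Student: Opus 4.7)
My strategy combines the Clemens--Schmid exact sequence for $\pi$, a local Picard--Lefschetz-type analysis at each triple point of $Y_0$, and the rationality of the general fiber. I would establish (i) first using the algebraic nature of the cohomology of a rational surface, then (ii) by a structural argument on the short exact sequence, with (iii), (iv), and (v) following by a combination of the same local computation and Clemens--Schmid.

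For (i): since $Y_t$ is a smooth rational surface, $H^2(Y_t;\Q)$ is pure of Hodge type $(1,1)$ and is generated over $\Q$ by classes of algebraic curves. Each such curve extends to a divisor on the smooth total space $\mathcal{Y}$ (equivalently, every line bundle on $Y_t$ lifts to a line bundle on $\mathcal{Y}$, since the relative Picard scheme is discrete for a family of rational surfaces), producing a monodromy-invariant lift; hence $T=\Id$ on $H^2(Y_t;\Q)$. Since $H^2(Y_t;\Z)$ is torsion free for a rational surface, this upgrades to $T=\Id$ integrally, and Poincar\'e duality transfers the result to $H_2(Y;\Z)$.

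For (ii): the monodromy preserves the exact sequence $0\to\Z\gamma\to H_2(Y-D;\Z)\to\Lambda\to 0$. It acts trivially on $\Z\gamma$, since $\gamma$ is represented by a torus in a tubular neighborhood of $D$ fixed by $\pi$, and trivially on $\Lambda\subset H^2(Y;\Z)$ by (i); hence $T-\Id$ descends to a homomorphism $\Lambda\to\Z\gamma$. To promote this to pairing with an integral class $\lambda\in\Lambda$ (rather than $\Lambda^{\spcheck}$), I would compute $T$ as a product of local monodromies around the triple points of $Y_0$, where each factor is a Picard--Lefschetz-type transvection by an explicit integral class $\lambda_p\in\Lambda$; summing yields $\lambda=\sum_p\lambda_p\in\Lambda$ with the formula $T(x)=x-\langle\lambda,x\rangle\gamma$. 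For (iii), a direct computation in the standard local model of a triple point gives $\lambda_p^2=1$ and $\lambda_p\cdot\lambda_q=0$ for distinct $p\neq q$ (disjoint supports in the double locus), yielding $\lambda^2=v$. For (iv), I would interpret $\pm\lambda$ as a limit K\"ahler class of the degeneration: a relative K\"ahler class on $\mathcal{Y}$ restricts to K\"ahler classes on each nearby fiber and on each component $V_i\subset Y_0$, and compatibility with (ii) forces $\pm\lambda$ to pair nonnegatively with every class effective on a very general $(Y,D)$, placing it in $\mathcal{B}_{\text{gen}}$. For (v), I would apply the Clemens--Schmid exact sequence to $\pi$: combined with (i) and (ii), this identifies the cokernel of $\operatorname{sp}$ as a rank-one space, with $\lambda$ as a $\Q$-generator under the natural inclusion $\Lambda\subset H^2(Y_t;\Q)$.

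The main obstacle I anticipate is the local analysis at the triple points needed for (ii) and (iii). Classical Picard--Lefschetz treats isolated nodes with sphere vanishing cycles, but a semistable triple point where three surface components meet is more complicated, and one must work out the local model explicitly to identify the correct integral vanishing class, verify that it lies in $\Lambda$ (not merely in $\Lambda^{\spcheck}$), and confirm that its self-intersection is exactly $+1$ with no cross-terms between distinct triple points; these are the delicate integrality and support assertions upon which both (ii) and the count $\lambda^2=v$ ultimately rest.
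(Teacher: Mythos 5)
Your argument for (i) cannot work as stated, and in fact contradicts part (v) of the very theorem you are trying to prove. You claim that every line bundle on $Y_t$ lifts to a line bundle on $\mathcal{Y}$. But $\Pic\mathcal{Y}\cong H^2(\mathcal{Y};\Z)\cong H^2(Y_0;\Z)\cong\Pic Y_0$, so the restriction $\Pic\mathcal{Y}\to\Pic Y_t$ is the specialization map $\operatorname{sp}$, which by (v) has $\Q$-cokernel of rank one generated by $\lambda$. So not every class lifts, and your argument only produces invariance on a corank-one sublattice. The underlying subtlety is that this is not a K\"ahler degeneration: $V_0$ is a non-K\"ahler Inoue surface, so the local invariant cycle theorem fails for $H^2(Y_t)$ (this is noted explicitly in the paper). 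The paper instead proves (i) by applying Clemens's formula for $N$ directly: after identifying the Clemens collapse $c\colon Y_t\to Y_0$, one checks that the $2$-tori over triple points and the tubes $c^{-1}(\alpha)$ over small loops around triple points in double curves are all homologous to zero in $H_2(Y_t;\Z)$, which forces $N=0$ there by the explicit formula in Clemens's Theorem 5.6. This is a topological, not a Hodge-theoretic, argument.

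Your argument for (iv) also fails for a structural reason: $\lambda\in\Lambda$ is by definition orthogonal to every $[D_i]$, so it cannot be a (limit of a) K\"ahler class, which would pair positively with the effective $[D_i]$; and in any case there is no relative K\"ahler class on $\mathcal{Y}$ because the central fiber contains the non-K\"ahler surface $V_0$. The paper's proof of (iv) is of a completely different flavor: if $\lambda$ were orthogonal to an effective numerical exceptional class $\alpha$, then (using (v)) the corresponding line bundle would extend to $\mathcal{Y}$, and a semicontinuity argument would produce an effective Cartier divisor $C$ on $Y_0$ with $C\cdot D=1$; but $D$ lies in the smooth locus of $Y_0$ and the components of $D$ span a negative semi-definite lattice, so no such divisor exists. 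For the same reason, the Clemens--Schmid machinery you invoke for (v) is not available off the shelf; the paper proves (v) via the Gysin/Leray spectral sequence for $\mathcal{Y}^*\hookrightarrow\mathcal{Y}$ and the equivalences in Corollary~\ref{Cokerd}, establishing the relevant degeneration statements by hand.

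For (ii) and (iii), you correctly anticipate that the hard part is establishing integrality of $\lambda$ and the formula $\lambda^2=v$, but the local-Picard-Lefschetz strategy you propose is not the one that succeeds, and the specific claim $\lambda_p\cdot\lambda_q=0$ for distinct triple points $p\neq q$ is not justified (adjacent triple points share a double curve, so their local models are not disjointly supported). What the paper does instead is first obtain $\lambda$ abstractly as an element of $\Lambda^\vee$ from the unipotence of $T$ on the short exact sequence, then construct an explicit oriented closed surface $\Sigma\subset Y_t-D_t$ by gluing cylinders (fibering over paths in the double curves) with pairs of pants near each triple point; Lemma~\ref{intersection} computes $[\Sigma]^2=v$ by carefully tracking contributions (each cylinder contributes $0$, each pair of pants contributes $1$), and Proposition~\ref{geomrep} shows $\lambda=\pm[\Sigma]\bmod\Z\gamma$ by comparison with the specialization map, giving integrality since $[\Sigma]\in H_2(Y_t-D_t;\Z)$. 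The formula $\lambda^2=v$ is also verified independently via the dual action $\hat T$ on $\overline{H}_2(Y-D,\partial)$ and the local formula $\overline{N}\circ\hat N(y)=-v(\gamma\bullet y)\gamma$ from Clemens. This global geometric representative is the key idea your proposal is missing.
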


It is natural to always choose the sign so that $\lambda\in \mathcal{B}_{\text{\rm{gen}}}\cap \Lambda$. The class $\lambda$ is only well-defined modulo the action of $\Gamma(Y,D)$. We define the \textsl{monodromy invariant} to be the image of $\lambda$ in $\Gamma\backslash(\mathcal{B}_{\text{\rm{gen}}}\cap \Lambda)$, but continue by abuse of notation to denote it by $\lambda$. Our next result, which is a significant generalization of Theorem~\ref{0.2},  states that all possible $\lambda$ arise.  

\begin{theorem}\label{0.6} Let $D'$ be the minimal resolution of a cusp singularity for which the dual $D$ sits on a rational surface. Then for every deformation type of anticanonical pairs $(Y,D)$ and for every  class $\lambda\in \mathcal{B}_{\text{\rm{gen}}}\cap \Lambda$, there exists a Type III degeneration of rational surfaces whose general fiber is deformation equivalent to $(Y,D)$ and whose monodromy invariant is $\lambda$ mod $\Gamma(Y,D)$.
\end{theorem}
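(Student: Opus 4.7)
The plan is to combine the semistable construction of \cite{Engel} (which underlies Theorem~\ref{0.2}) with the characterization of the monodromy invariant in Theorem~\ref{0.5}, and to show that the combinatorial freedom available in Engel's construction is sufficient to realize any prescribed $\lambda\in \mathcal{B}_{\text{\rm{gen}}}\cap \Lambda$. The crucial input from Theorem~\ref{0.5} is that the monodromy is the ``symplectic transvection" $T(x) = x-\langle\lambda,x\rangle\gamma$ with $\lambda^2=v$, so any Type III degeneration with central fiber $Y_0$ having exactly $\lambda^2$ triple points and whose component-wise polarization data assemble to the class $\lambda$ is automatically forced to have monodromy invariant $\lambda$ (up to the $\Gamma(Y,D)$-action).

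Given $(Y,D)$ and $\lambda$, I would first invoke the description of Type III degenerations via integral-affine triangulations of $S^2$ used in \cite{Engel}: components $V_i$ of $Y_0$ correspond to vertices of a triangulation $\mathcal{T}$, double curves to edges, and triple points to $2$-simplices, with the boundary data near the ``cusp vertex'' $V_0$ matching the self-intersection sequence of $D$. A piecewise-linear strictly convex height function on $\mathcal{T}$ induces a polarization of the general fiber. I would choose $\mathcal{T}$ to have exactly $\lambda^2$ triangles and a height function whose induced class in $\Lambda$ equals $\lambda$. Once such $(\mathcal{T},h)$ is in hand, the semistable smoothing theorem of \cite{FriedmanMiranda} (in the form used in Section~3) produces a one-parameter Type III degeneration $\pi\colon (\mathcal{Y},\mathcal{D})\to \Delta$ whose central fiber realizes $\mathcal{T}$ and whose general fiber lies in the deformation class $[(Y,D)]$. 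A standard Picard--Lefschetz computation, or direct application of Theorem~\ref{0.5}, then identifies the monodromy invariant of $\pi$ with $\lambda$ modulo $\Gamma(Y,D)$.

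The main obstacle is the realizability step: proving that every class in $\mathcal{B}_{\text{\rm{gen}}}\cap \Lambda$ comes from an admissible pair $(\mathcal{T},h)$ with the prescribed boundary data. The cone $\overline{\mathcal{A}}_{\text{\rm{gen}}}$ is cut out by infinitely many inequalities coming from $(-2)$-curve classes in the very general deformation of $(Y,D)$, and a class in the interior $\mathcal{B}_{\text{\rm{gen}}}$ avoids all of these walls; the claim is essentially that wall-avoidance is exactly the positivity condition needed for the existence of a strictly convex height function on some allowable triangulation. I expect the argument to go by reducing modulo the $\Gamma(Y,D)$-action to a fundamental domain in $\mathcal{B}_{\text{\rm{gen}}}\cap \Lambda$, constructing a base triangulation via the existence theorem of \cite{Engel}, and then modifying it by local combinatorial moves (triangle refinements, bistellar flips, and internal blow-ups/downs of the components $V_i$ with $i\geq 1$) to adjust $\lambda^2$ and the induced class in $\Lambda$ to the prescribed values. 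The most delicate point is controlling refinements that increase $\lambda^2$ without altering the deformation type of the general fiber, which requires showing that the necessary modifications of the interior components can always be absorbed by blow-ups and blow-downs that preserve the anticanonical pair $(Y,D)$ on the nearby fiber.
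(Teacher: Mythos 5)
Your high-level picture is right: the argument does go through integral-affine triangulations of $S^2$ together with the identification of the monodromy invariant with the symplectic class of an almost toric fibration (Proposition~\ref{symp}), and it does end with the semistable smoothing/Shepherd-Barron machinery and the identification of self-diffeomorphism classes with $\Gamma(Y,D)$-orbits via Theorem~5.14 of \cite{Friedman2}. But there is a genuine gap in the central realizability step, and it is precisely the step you defer with ``I would choose $\mathcal{T}$\dots and a height function whose induced class equals $\lambda$.'' The paper does \emph{not} start from an arbitrary triangulation and perturb it by refinements, bistellar flips, and internal blow-ups/downs to adjust the class. That approach has to explain why the moves can reach every $\lambda$ in $\mathcal{B}_{\text{\rm{gen}}}\cap\Lambda$ starting from Engel's output, and as you note yourself, the most delicate point (changing $\lambda^2$ without altering the deformation type) is not resolved. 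The paper instead constructs the almost toric fibration with prescribed $[\omega]=\lambda$ \emph{directly} and non-inductively: it uses Proposition~\ref{effdecomp} to write $\lambda = \sum a_j[D_j] + \sum b_{ij}[E_{ij}]$ with $E_{ij}$ disjoint internal exceptional curves, passes to the moment polytope of a toric model $(\overline{X},\overline{D})$, and cuts surgery triangles of size $m_{ij}=c_j-b_{ij}$ resting on the edges $\overline{P}_j$. Facts~1 and~2 in the proof of Lemma~\ref{nooverlap} (lattice distance of the invariant point from $\overline{P}_j$ equals $c_j$; lattice length of $\overline{P}_j$ bounds the sum of base lengths) are exactly what guarantees that these surgeries fit, and this is where the positivity conditions defining $\mathcal{B}_{\text{\rm{gen}}}$ are actually consumed. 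Nothing in your proposal supplies a mechanism for this; ``wall-avoidance is exactly the positivity condition needed'' is the assertion to be proved, not an argument.

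Two smaller but real omissions. First, the reduction ``modulo the $\Gamma(Y,D)$-action to a fundamental domain'' is neither used in nor needed for the construction; the paper constructs a degeneration for the given $\lambda$ outright, and $\Gamma$ only enters at the very end to upgrade the statement from ``up to self-diffeomorphism preserving the $[D_i]$'' to ``mod $\Gamma$.'' Second, the almost toric surface $B$ produced by the construction typically has singularities at non-integral points and/or is not the dual complex of a \emph{generic} $Y_0$, so Proposition~\ref{symp} cannot be applied directly. The paper handles this by passing to the order-$n$ refinement $B[n]$ via base change (Proposition~\ref{base}), using nodal slides of integer length and Type~I/II birational modifications (Propositions~\ref{one} and~\ref{two}) to reconcile $B[n]$ with the refined generic base $B'[n]$, and then dividing the monodromy invariant by $n$ (Remark~\ref{birationallambda}). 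Your ``triangle refinements'' gestures at the base-change step and your ``internal blow-ups/downs'' at Type~I modifications, but the key point — that base change scales the monodromy invariant linearly while the birational modifications leave it fixed, so that the generic-fiber computation of Proposition~\ref{symp} can be transferred back to the original degeneration — is the crux of the bookkeeping and is missing from your sketch.
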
 

Though the statements of Theorem \ref{0.2} and Theorem \ref{0.6} are purely algebraic, the method of proof relies on symplectic geometry, and in particular the geometry of Lagrangian torus fibrations. There is a naturally defined singular integral-affine structure on the dual complex $\Gamma(Y_0)$ of the central fiber of a Type III degeneration of rational surfaces, defined in Section 2.2. When $Y_0$ is \textsl{generic}, see Definition \ref{generic}, work of Symington \cite{Symington} allows us to construct a Lagrangian \textsl{almost toric fibration} of a symplectic $4$-manifold with a degenerate symplectic anticanonical divisor $D$: $$\mu\colon (X,D,\omega)\rightarrow \Gamma(Y_0).$$

In Section 3.3, we prove that $(X,D)$ and $(Y_t,D_t)$ are naturally diffeomorphic, and that the fiber class of $\mu$ is identified under this diffeomorphism with $\gamma$ while the class $[\omega]$ is identified with $\lambda$. The validity of this result is essentially a fluke of two dimensions---the $4$-manifold $(X,D,\omega)$ is the symplectic mirror of the algebraic degeneration $\mathcal{Y}\rightarrow \Delta$ and thus ought to carry a dual Lagrangian torus fibration, but dual fibrations of $2$-tori are diffeomorphic because the inverse transpose of a $2\times 2$ matrix happens to be conjugate to the original matrix. Regardless, it is this result which allows us to bound below the number of smoothing components, by bounding below the diffeomorphism types of general fibers $(Y_t,D_t)$.

The identification of the dual complex $\Gamma(Y_0)$ with the base of a Lagrangian torus fibration also provides a method for constructing smoothings. If one can construct a Lagrangian fibration $$\mu\colon (X,D,\omega)\rightarrow S^2$$ such that the natural integral-affine structure on the base of $\mu$ admits an appropriate triangulation, then the base is, as an integral-affine manifold, the dual complex of the central fiber of a Type III degeneration. Thus, producing a degeneration with prescribed monodromy invariant $\lambda$ reduces to the construction of a Lagrangian torus fibration $(X,D,\omega)\rightarrow S^2$ such that $[\omega]=\lambda$. This is the goal of Sections 4 and 5.

By analogy with the case of $K3$ surfaces, it is natural to conjecture that the monodromy invariant $\lambda$ mod $\Gamma$ is a complete invariant of the singular fiber of a Type III degeneration of rational surfaces up to locally trivial deformation and standard birational modifications. We discuss this conjecture in more detail in Section 5.

Finally, we apply the above results to the study of rational double point adjacencies of cusps. First, we recall Looijenga's definition of certain distinguished elements of square $-2$ in $\Lambda(Y,D)$.

\begin{definition} Let $(Y,D)$ be an anticanonical pair, and let $\beta \in \Lambda(Y,D)$ with $\beta^2=-2$. Then $\beta$ is a \textsl{Looijenga root} or briefly a \textsl{root} if there exists a deformation of $(Y,D)$ over a connected base $S$ and a fiber $(Y_s, D_s)$ over $s\in S$ such that, in $\Lambda(Y_s, D_s)$, $\beta$ is the class of a smooth curve $C\cong \Pee^1$ disjoint from $D_s$. We let $R\subseteq \Lambda(Y,D)$ be the set of roots. If $D$ is negative-definite and has at most $5$ components (equivalently, the dual cusp has multiplicity at most $5$), then $R$ is a generalized root system of hyperbolic type, and in particular it spans $\Lambda$. However, when the number of components of $D$ is larger than $5$, there are many possibilities for the behavior of $R$.
\end{definition}

We can then describe the possible rational double point adjacencies  of a cusp $(X,x)$ as follows. Let $D$ be the minimal resolution of the dual cusp and let $(Y,D)$ be an anticanonical pair of type $D$. 

\begin{definition} Let $\Upsilon$ be a negative definite sublattice of $\Lambda(Y,D)$. Then $\Upsilon$ is \textsl{good} if 
\begin{enumerate}
\item[\rm(i)] $\Upsilon$ is spanned by elements of $R$. 
\item[\rm(ii)] There exists a homomorphism $\varphi\colon \Lambda(Y,D) \to \C^*$ such that $\Ker \varphi\cap R = \Upsilon\cap R$.
\end{enumerate}
The lattice $\Upsilon$  determines an RDP configuration (possibly consisting of more than one singular point) by taking the appropriate type (i.e.\ Dynkin diagram) of a set of simple roots for $\Upsilon \cap R$. We say that the corresponding rational double point configuration  is \textsl{of type $\Upsilon$}. 
 \end{definition}
 
 We can then state our result about adjacencies of cusps to rational double points as follows:
 
 \begin{theorem}\label{0.9} Suppose that $(X,x)$ is adjacent to a rational double point configuration on a smoothing component of type $[(Y,D)]$. Then the components of the exceptional fibers of a minimal resolution span a good negative definite sublattice $\Upsilon$ of $\Lambda(Y,D)$, and the rational double point configuration is of type $\Upsilon$.
 
 Conversely, if $\Upsilon$ is a good negative definite sublattice of $\Lambda(Y,D)$, then there exists an adjacency of $(X,x)$ to a rational double point configuration of type $\Upsilon$ on some  smoothing component of type $[(Y,D)]$
\end{theorem}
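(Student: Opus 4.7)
The plan is to reduce both directions of the theorem to a period-theoretic analysis of anticanonical pairs, combining the Type III degenerations produced by Theorem~\ref{0.6} with a simultaneous resolution of the rational double points in the total space of the smoothing.

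For the forward direction, start with a one-parameter smoothing $\mathcal{X}\to \Delta$ of $(X,x)$ whose nearby fiber $X_t$ carries the prescribed rational double point configuration. After a finite base change, simultaneously resolve the rational double points to produce a Type III degeneration of rational surfaces with smooth general fiber $(Y_t,D_t)$ of type $[(Y,D)]$, to which the framework of Theorem~\ref{0.5} applies. The components of the exceptional divisor of the simultaneous resolution are smooth $\Pee^1$'s of self-intersection $-2$, disjoint from $D_t$; their classes lie in $\Lambda(Y_t,D_t)$ and span a negative definite sublattice $\Upsilon$. Each such class is a root by the very definition of $R$, verifying condition (i). For condition (ii), the homomorphism $\varphi\colon \Lambda \to \C^*$ is furnished by the standard period of an anticanonical pair, characterized by the property that a root $\beta\in R$ is represented by an effective $(-2)$-curve on $(Y_t,D_t)$ if and only if $\varphi(\beta)=1$. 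Since the effective root classes on $(Y_t,D_t)$ are exactly the roots in the lattice generated by the exceptional curves of the resolution, i.e.\ $\Upsilon\cap R$, the period satisfies $\Ker\varphi\cap R = \Upsilon\cap R$, and by construction the RDP configuration is of type $\Upsilon$.

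For the converse, given a good sublattice $\Upsilon\subseteq \Lambda(Y,D)$, choose a homomorphism $\varphi$ as in condition (ii). Using a Torelli-type surjectivity of the period map on the moduli of anticanonical pairs of type $[(Y,D)]$, realize $\varphi$ as the period of a pair $(Y^\varphi,D)$ deformation equivalent to $(Y,D)$; by the same period-theoretic characterization as above, $(Y^\varphi,D)$ contains disjoint smooth $\Pee^1$'s of self-intersection $-2$ whose classes span $\Upsilon\cap R$, and these contract to yield an anticanonical surface $(Y^\varphi_c,D)$ with rational double points of type $\Upsilon$. Now pick any $\lambda\in\mathcal{B}_{\text{\rm{gen}}}\cap \Lambda$ and invoke Theorem~\ref{0.6} to construct a Type III degeneration of rational surfaces of type $[(Y,D)]$ with monodromy invariant $\lambda$. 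A deformation-theoretic argument within this smoothing component lets one specialize the general fiber to $(Y^\varphi,D)$ and then simultaneously contract the $(-2)$-curves in the total space; after Looijenga's contraction of the dual cycle, this yields a smoothing of $(X,x)$ adjacent to a rational double point configuration of type $\Upsilon$ on a smoothing component of type $[(Y,D)]$.

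The main obstacle in this plan is the period-theoretic step. In the forward direction, one must check that the effective $(-2)$-classes on $(Y_t,D_t)$ are \emph{exactly} the roots in $\Upsilon$, ruling out stray roots outside $\Upsilon$ that might become effective; this requires the full strength of the effective-cone description via the period map. In the converse, one must ensure that the surjectivity of the period map is compatible with the semistable models produced in Section~3, so that the specialization of the general fiber to $(Y^\varphi,D)$ and the subsequent simultaneous contraction of the $(-2)$-curves can be performed flatly over $\Delta$ while preserving the smoothing of the cusp at $t=0$. Both points rest on a careful interaction between the Torelli theorem for anticanonical pairs and the symplectic construction of Type III degenerations used throughout the paper.
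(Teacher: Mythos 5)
Your overall blueprint (pass through a simultaneous resolution, control the rational double points via the period homomorphism, and build the converse with a Type III degeneration plus Shepherd--Barron) matches the paper's strategy, but there is a concrete error and the most technical steps are asserted rather than proved.

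The error is in the converse: you write ``pick any $\lambda\in\mathcal{B}_{\text{\rm{gen}}}\cap\Lambda$ and invoke Theorem~\ref{0.6}.'' The monodromy invariant cannot be arbitrary. The asymptotic formula for the period map (Proposition~\ref{asympbehav} in the paper, and the resulting Theorem~\ref{charrat}) shows that for $|t|\ll 1$ and $\beta\in R$, $\varphi_{Y_t}(\beta)=1$ forces $\beta\cdot\lambda=0$. Hence the rational double point configurations that can persist as $t\to 0$ are exactly those supported in $(\lambda)^\perp$. If $\lambda$ is not chosen orthogonal to $\Upsilon$, no one-parameter family of the type you constructed can exhibit an adjacency of type $\Upsilon$, no matter how you specialize the generic fiber. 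The paper therefore first proves (Lemma~\ref{existslambda}) that any good $\Upsilon$ is $\lambda$-good for some $\lambda\in\mathcal{B}_{\text{\rm{gen}}}$, and feeds \emph{that} $\lambda$ into Theorem~\ref{0.6}. Your proposal omits this step and uses an arbitrary $\lambda$, which breaks the argument.

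The two remaining issues are gaps rather than outright errors, and you flagged both as ``main obstacles,'' but each is actually the technical heart of the proof and cannot be waved through. In the converse, ``a deformation-theoretic argument lets one specialize the general fiber to $(Y^\varphi,D)$'' is not how the paper proceeds, and it is unclear how to realize it: the period $\varphi_{Y_t}$ of the general fiber is constrained by $\lambda$, so you cannot independently prescribe it. The paper instead prescribes the period of the \emph{special} fiber $(Y_0,D)$ via the surjectivity of the period map for $d$-semistable Type III anticanonical pairs (Theorem~\ref{surj1}), and then lifts this to a one-parameter family using holomorphic extension of the period map over the discriminant (Proposition~\ref{extpermapholom}) and transversality of the period fiber to the discriminant (Proposition~\ref{surj2}). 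In the forward direction, the claim that the effective $(-2)$-curves on $Y_t$ are \emph{exactly} the exceptional fibers of $Y_t\to\overline{Y}_t$ is Proposition~\ref{charexcepfibers}, whose proof requires the asymptotic period analysis (Theorem~\ref{charrat}) and a flatness argument about extending line bundles to $\mathcal{Y}$ and then contracting via Shepherd--Barron; citing ``the effective-cone description via the period map'' alone does not suffice because one must also rule out roots in $(\lambda)^\perp$ that are not in $\Upsilon$ becoming effective after a deformation.
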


An equivalent and more geometric form of the above characterization is given in Proposition~\ref{meaningofgood}.

\begin{remark} It is natural to conjecture that, given the good sublattice $\Upsilon$ of $\Lambda(Y,D)$, then there exists an adjacency of $(X,x)$ to a rational double point configuration of type $\Upsilon$ on \textit{every} smoothing component of type $[(Y,D)]$.
\end{remark}

The methods of this paper say nothing  about adjacencies of the cusp $(X,x)$ to non-rational singularities. If $(X,x)$ is adjacent to a union of singularities, then at most one can be non-rational and all others are rational double points. Moreover, the non-rational singularity, if it exists, is either a cusp singularity or a simple elliptic singularity. Wahl described various adjacencies between elliptic singularities in \cite{Wahl}, and P.\ Hacking (unpublished) has shown that these are the only possible adjacencies for cusp singularities. More generally, one can make fairly precise conjectures about the possible adjacencies from a cusp to a union of an elliptic singularity and a number of rational double points (conjecturally a union of $A_k$ singularities), which generalize Looijenga's results for multiplicity at most $5$. However, we shall not do so here.

A description of the contents of this paper is as follows: Section 1 deals with certain preliminary results on the geometry and topology of anticanonical pairs. In Section 2, we study Type III degenerations of rational surfaces and in particular the possible singular fibers $Y_0$. Such a fiber has reduced normal crossings, and there is a unique component isomorphic to an Inoue surface $V_0$, with the dual cycle $D$ contained in the smooth locus of $Y_0$. We call the pair $(Y_0, D)$ a Type III anticanonical pair. Although the Inoue surface $V_0$ is not K\"ahler, there are analogues of a mixed Hodge structure and a limiting mixed Hodge structure for $Y_0$. We study the relevant spectral sequences and determine when they degenerate. A general theme is that it is more useful to consider the topology of the pair $(Y_0, D)$ as opposed to that of $Y_0$. We also introduce a singular integral-affine structure on the dual complex $\Gamma(Y_0)$, and describe the almost toric fibration $(X,D,\omega)\rightarrow \Gamma(Y_0)$. We also describe the deformation theory of the pair $(Y_0, D)$ in an appropriate sense. Section 3 deals with the monodromy of a smoothing of the pair $(Y_0, D)$. In particular, we prove Theorem~\ref{0.5}. We also prove that $(X,D,\omega)$ is naturally diffeomorphic to the general fiber $Y_t$ of the Type III degeneration of rational surfaces with central fiber $Y_0$. Theorem \ref{0.2} follows. Furthermore, we show that the monodromy invariant $\lambda$ from Theorem \ref{0.5} is identified under this diffeomorphism with the class of the symplectic form $[\omega]$.  In Sections 4 and 5 we prove Theorem~\ref{0.6}. Section 4 collects results concerning birational modification and base change of Type III degenerations, and records their effect on the integral-affine structure defined on $\Gamma(Y_0)$. In Section 5, we construct a Type III degeneration with arbitrary monodromy invariant $\lambda\in \mathcal{B}_{\text{\rm{gen}}}\cap\Lambda$, the method being to first construct the dual complex of the central fiber via surgeries on integral-affine surfaces. Section 6 uses the description of the monodromy and the limiting cohomology to give an asymptotic formula for the period map of a general fiber in a Type III degeneration of rational surfaces. Sections 7 and 8 are devoted to a study of an analogue of the period map for a Type III anticanonical pair. We calculate the differential of the period map in Section 7 and, using this calculation, show in Section 8 that the period map is surjective for
topologically trivial deformations of the pair $(Y_0, D)$. Moreover, for suitable lattices $\Upsilon$ inside an appropriate quotient of $\Pic Y_0$, we show that we can arrange a deformation of the pair $(Y_0, D)$ so that the image of $\Upsilon$ is in the kernel of the period map for all smooth fibers $(Y_t, D)$. Finally, in Section 9, we prove Theorem~\ref{0.9}. The strategy of the proof is as follows. Given a good lattice $\Upsilon$ in $\Lambda(Y, D)$, we show that there exists a $\lambda \in \mathcal{B}_{\text{\rm{gen}}}\cap \Lambda$  such that $\Upsilon$ is orthogonal to $\lambda$. By Theorem~\ref{0.6}, there exists a Type III degeneration of rational surfaces $(\mathcal{Y}, \mathcal{D})$ with invariant $\lambda$.  Using (v) of Theorem~\ref{0.5}, we can identify $\Upsilon$ with a quotient of $\Pic Y_0$. By the surjectivity of the period map for $(Y_0,D)$, we can assume after deforming $(Y_0, D)$ that $\Upsilon$  is exactly the kernel of the period map and that the same is true for all nearby smooth fibers $(Y_t, D)$. It then follows by invoking the full strength of Shepherd-Barron's contraction result that, after blowing down all components of the central fiber not equal to $V_0$ as well as some curves in the general fiber, we exactly contract the curves in a general fiber $Y_t$ whose classes lie in $\Upsilon$. Thus we obtain an adjacency of the cusp to a rational double point of type  $\Upsilon$.

\medskip
\noindent\textbf{Acknowledgements.} It is a pleasure to thank Eduard Looijenga and Radu Laza for helpful discussions and correspondence. In addition, we thank for the referee for their careful reading and comments.

\section{Preliminaries}

\subsection{Topology of anticanonical pairs}

\begin{definition} Let $D=D_1+\dots+D_r$ be a reduced cycle of smooth rational curves for $r\geq 2$, or an irreducible nodal curve whose normalization is smooth rational for $r=1$. The integer $r=r(D)$ is the \textsl{length} of $D$. By convention, we take the indices $i$ mod $r$ and  $D_i\cap D_j =\emptyset$ for $j\neq i\pm 1$. An \textsl{orientation} for $D$ is a choice of generator for $H_1(D;\Z) \cong \Z$. For $i\geq 3$, an orientation determines and is determined by an indexing of the components of $D$ as above up to cyclic permutation, and we always assume that the indexing and the orientation are compatible in this sense. Given an indexing of the components, we always assume that there is an associated \textsl{type} of $D$, which by definition is a sequence $(d_1, \dots, d_r)$ of integers $d_i$, such that $d_i \leq -2$ for all $i$ and $d_i \leq -3$ for at least one $i$. If $D$ is given as a Cartier divisor in a surface, we always assume that $d_i = D_i^2$. 

An \textsl{anticanonical pair} or simply \textsl{pair} $(Y,D)$ is a rational surface $Y$ with an anticanonical divisor $D\in |-K_Y|$ which forms a reduced cycle of smooth rational curves $D=D_1+\dots+D_r$ unless $r=1$ in which case $D$ is a nodal rational curve. The pair $(Y,D)$ is of \textsl{type} $D$ if the type of $D$ is $(D_1^2, \dots, D_r^2)$. We say $(Y,D)$ is \textsl{toric} if $V$ is a smooth toric surface and $D$ is the toric boundary.  \end{definition}

Let $(Y,D)$ be an anticanonical pair. Enumerate the components of $D$ as $D_1, \dots, D_r$ and the set of nodes $T$ as $t_1, \dots, t_r$, with the convention that $D_i \cap D_{i+1} = \{t_i\}$. If $[D_i]$ denotes the class of the component $D_i$ in $H^2(Y;\Z)$, let $\Lambda =\Lambda(Y,D) = \{[D_1], \dots, [D_r]\}^\perp \subseteq H^2(Y; \Z).$ By definition $\Lambda$ is a primitive sublattice of $H^2(Y; \Z)$. Its dual lattice $\Lambda\spcheck$ is described as follows: let $\widehat{\operatorname{span}}\{[D_1], \dots, [D_r]\}$ be the saturation of the subgroup of $H^2(Y; \Z)$ generated by $[D_1], \dots, [D_r]$. Then $$\Lambda\spcheck = H^2(Y; \Z)/\widehat{\operatorname{span}}\{[D_1], \dots, [D_r]\}.$$

Via the Gysin spectral sequence (the Leray spectral sequence applied to the inclusion $i\colon Y-D \hookrightarrow Y$) there is an  exact sequence
$$0 \to H^2(Y; \Z)/ \operatorname{span} \{[D_1], \dots, [D_r]\}\to H^2(Y-D; \Z) \to \Z \to 0,$$
where the right hand $\Z$ is the kernel of the Gysin map.
Thus, if $\overline{H}^2(Y-D; \Z)$ denotes $H^2(Y-D; \Z)$ mod torsion, there is an exact sequence
$$0 \to \Lambda\spcheck \to \overline{H}^2(Y-D; \Z) \to \Z \to 0.$$
Dually, there is an exact sequence
$$0 \to \Z \to H_2(Y-D; \Z) \to \Lambda \to 0,$$
 arising from the exact sequence of the pair $(Y, Y-D)$:
$$0=H_3(Y; \Z) \to H_3(Y, Y-D; \Z) \to H_2(Y-D; \Z) \to H_2(Y; \Z).$$
Poincar\'e-Alexander duality identifies $H_3(Y, Y-D; \Z)$ with $H^1(D;\Z) = \Z$, and  the image of $H_2(Y-D; \Z) \to H_2(Y; \Z)$ corresponds exactly to those $2$-cycles which have algebraic intersection $0$ with every component of $D$, and thus via Poincar\'e duality with $\Lambda$. In terms of compactly supported cohomology, we have a commutative diagram
$$\minCDarrowwidth20pt \begin{CD}
H_3(Y, Y-D; \Z) @>>> H_2(Y-D; \Z) @>>>  H_2(Y; \Z) @. {}
\\
 @V{\cong}VV  @V{\cong}VV @V{\cong}VV @.\\
 H^1(D;\Z) @>>> H^2_c(Y-D;\Z) @>>> H^2(Y;\Z) @>>> H^2(D;\Z).
\end{CD}$$

Let $V$ be a small tubular neighborhood of $D$ in $Y$, homotopy equivalent to $D$, so that $Y-V$ is homotopy equivalent to $Y-D$. Let $\partial =\partial V$. Using the Mayer-Vietoris sequence applied to the decomposition $Y=(Y-D)\cup V$,
one easily computes that $H^1(\partial;\Z)\cong H^1(D;\Z) \cong \Z$ and that $H^2(\partial; \Z)$ has rank one. The exact sequence for the pair $(Y-V, \partial)$, which we shall write as $(Y-D, \partial)$, is then (all groups with $\Z$-coefficients)
$$H^1(Y-D)=0 \to H^1(\partial) \to H^2(Y-D, \partial) \to H^2(Y-D) \to H^2(\partial).$$
Lefschetz duality identifies $H^2(Y-D, \partial)$ with $H_2(Y-D; \Z)$ and identifies the image of $H^1(\partial)$ with the subgroup $\Z$ above.
Moreover, Lefschetz duality is compatible with 
Poincar\'e duality on $\partial$, which identifies $H^1(\partial)$ with $H_2(\partial)$, and with the exact sequence of the pair $(Y-D, \partial)$, so that the above exact sequence is identified with
$$H_2(\partial) \to H_2(Y-D) \to H_2(Y-D, \partial) \to H_1(\partial) \to 0.$$  Poincar\'e duality identifies $\overline{H}_2(Y-D, \partial)$ with $\overline{H}^2(Y-D; \Z)$, and hence there is an inclusion of $\Lambda\spcheck$ in $\overline{H}_2(Y-D, \partial)$ for which the following diagram commutes:
$$\minCDarrowwidth20pt\begin{CD}
H_2(Y-D;\Z) @>>> \overline{H}_2(Y-D, \partial)\\
@VVV @AAA\\
\Lambda @>>> \Lambda\spcheck.
\end{CD}$$
 
 It is easy to check that a generator of $H_2(\partial)$ is the class $\gamma$ which is the tube (with respect to some Hermitian metric) over a simple closed curve in $D_i$ enclosing a double point: for example, there exists a curve $\gamma'$ on $\partial$ projecting onto the generator of $H_1(D;\Z)$, and then $\gamma \bullet \gamma' = \pm 1$. Identifying $\gamma$ with its image in $H_2(Y-D; \Z)$, we see that the kernel of $H_2(Y-D; \Z) \to \Lambda$ is $\Z\gamma$. Note that $\gamma$ is only well-defined up to sign, but is determined by an orientation of the cycle $D$.

\subsection{The generic ample cone and effective cone}

We recall the following terminology from \cite{Friedman2} (see also \cite{GHK}). Let $\mathcal{C}\subseteq H^2(Y;\R)$ be the positive cone and let $\mathcal{C}^+$ be the component of $\mathcal{C}$ containing the classes of ample divisors. Inside $\mathcal{C}^+$, we have the convex cone $\overline{\mathcal{A}}_{\text{\rm{gen}}}$, which is the closure in $\mathcal{C}^+$ of the ample cone for a generic small deformation of $Y$. We define a \textsl{$-2$-curve} on $Y$ to be a smooth rational curve  on $Y$ of self-intersection $-2$ which is not a component  of $D$, and define $Y$ to be \textsl{generic} if there are no $-2$-curves on $Y$. If $Y$ is  generic, then 
\begin{gather*}
\overline{\mathcal{A}}_{\text{\rm{gen}}} = \{x\in \mathcal{C}^+: x\cdot [E] \geq 0 \text{ for all exceptional curves $E$ }\\
 \text{ and $x\cdot [D_i] \geq 0$ for all $i$}\}.
\end{gather*}
For a generic $Y$, we may replace the condition $x\cdot [E] \geq 0$ for all exceptional curves $E$ by the condition $x\cdot \alpha \geq 0$ for all effective numerical exceptional curves $\alpha$, i.e.\ cohomology classes $\alpha$ such that $\alpha^2=\alpha\cdot [K_{Y}] = -1$ and $\alpha$ is the class of an effective divisor, or equivalently $\alpha \cdot [H] \geq 0$ for some ample divisor $H$ \cite[\S5]{Friedman2}. We set $\mathcal{A}_{\text{\rm{gen}}}$ equal to the interior of $\overline{\mathcal{A}}_{\text{\rm{gen}}}$.

We can omit the assumption that $x\in \mathcal{C}$ lies in the component $\mathcal{C}^+$:

\begin{lemma}\label{altcharA}
If $x\in \mathcal{C}$, and $x\cdot \alpha \geq 0$ for all effective numerical exceptional curves $\alpha$ and $x\cdot [D_i] \geq 0$ for all $i$, then $x\in \mathcal{C}^+$.
\end{lemma}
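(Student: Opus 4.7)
My plan is to reduce the lemma to the Hodge index theorem by exhibiting a class $N \in \mathcal{C}^+$ that is itself a nonnegative combination of the classes $[D_i]$ and of effective numerical exceptional classes. The argument will then run as follows. The hypotheses on $x$ directly yield $x \cdot N \geq 0$, and since the intersection form on $H^2(Y; \R)$ has signature $(1, \rho(Y)-1)$ and $N^2 > 0$, the orthogonal hyperplane $N^\perp$ is negative definite and cannot contain any class $y$ with $y^2 > 0$; in particular $x \cdot N \neq 0$, whence $x \cdot N > 0$. The usual characterization of the two components of the positive cone---two elements of $\mathcal{C}$ pair positively if and only if they lie in the same component---then places $x$ in the same component as $N$, which is $\mathcal{C}^+$ by construction.

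To build $N$, I would fix any birational morphism $\pi\colon Y \to Y_{\min}$ to a minimal model of $Y$ (either $\Pee^2$ or a Hirzebruch surface), realized as a sequence of blowups at (possibly infinitely near) smooth points. Let $\widetilde E_1, \ldots, \widetilde E_m \in H^2(Y; \Z)$ denote the classes of the total transforms in $Y$ of the successive exceptional divisors. One verifies from the blowup formula $K_Y = \pi^* K_{Y_{\min}} + \sum_j \widetilde E_j$ that each $\widetilde E_j$ has $\widetilde E_j^2 = -1$ and $\widetilde E_j \cdot K_Y = -1$, and is represented by an effective divisor, so each $\widetilde E_j$ is an effective numerical exceptional class on $Y$ in the sense of the definition. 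Setting $N := \pi^*(-K_{Y_{\min}})$ and substituting gives
\[
N \;=\; -K_Y + \sum_{j=1}^{m} \widetilde E_j \;=\; \sum_{i=1}^{r} [D_i] + \sum_{j=1}^{m} \widetilde E_j,
\]
the desired nonnegative combination. Since $K_{Y_{\min}}^2 \in \{8, 9\}$ and pullback preserves the intersection pairing, $N^2 = K_{Y_{\min}}^2 > 0$; and since $-K_{Y_{\min}}$ is effective on $Y_{\min}$, its pullback $N$ is effective on $Y$ and hence pairs strictly positively with every ample class, placing $N$ in $\mathcal{C}^+$.

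I do not anticipate any step posing a real difficulty. The only minor point requiring care is to verify that the canonical bundle formula and the effectivity of the total transforms $\widetilde E_j$ carry over unchanged when some of the blowup centers are infinitely near, but this is a standard computation; having done so, the three ingredients $N \in \mathcal{C}^+$, $N^2 > 0$, and the explicit decomposition above assemble immediately into the Hodge-index argument of the first paragraph.
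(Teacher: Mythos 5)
Your proof is correct and takes essentially the same route as the paper, though with a more explicit construction. The paper asserts (via an unelaborated ``easy induction on the number of blowups'') that the $[D_i]$ together with the effective numerical exceptional classes support a nef and big divisor $H$, then concludes by $x\cdot H\ge 0$ and the Hodge index theorem. You instead write down the explicit candidate $N=\pi^*(-K_{Y_{\min}})=\sum_i[D_i]+\sum_j\widetilde E_j$, check $N^2=K_{Y_{\min}}^2\in\{8,9\}$, verify each $\widetilde E_j$ is an effective numerical exceptional class via the orthogonality of total transforms, and close with the same Hodge-index argument. One minor divergence worth noting: your $N$ need not be nef --- for $Y_{\min}=\F_n$ with $n\ge 3$, $-K_{\F_n}=2\sigma+(n+2)f$ fails nefness against the negative section $\sigma$, and nefness is not restored by pullback --- but your argument never uses nefness, only $N^2>0$, $N$ effective (hence $N\in\mathcal{C}^+$), and the nonnegative decomposition, so this is not a defect. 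The payoff of your version is that it replaces the paper's appeal to an unspecified induction by a one-line formula; the payoff of the paper's version (if one actually wants a nef $H$) is a statement that is reusable elsewhere, but for the lemma at hand the two are interchangeable.
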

\begin{proof} An easy induction on the number of blowups required to obtain $Y$ from a minimal surface shows that the union of the $D_i$ and the effective numerical exceptional curves supports a nef and big divisor $H$. Thus, for $x$ as in the hypothesis of the lemma, $x\cdot H \geq 0$ and so $x\in \mathcal{C}^+$.
\end{proof}

\begin{definition}\label{defB} Let $\mathcal{B}_{\text{\rm{gen}}}$ be the interior of $\overline{\mathcal{A}}_{\text{\rm{gen}}}\cap \Lambda_\R$ in $\Lambda_\R =\Lambda \otimes \R$. Thus by definition, for $x\in\Lambda_\R$,  $x\in \mathcal{B}_{\text{\rm{gen}}}$ $\iff$ $x\in \Lambda_\R\cap \mathcal{C}^+$ and $x\cdot \alpha > 0$ for all effective numerical exceptional curves $\alpha$. Note that, since the set of walls defined by the numerical exceptional classes and the $[D_i]$ is locally finite in $\mathcal{C}^+$, $\mathcal{B}_{\text{\rm{gen}}}$ is a nonempty open convex subset of $\Lambda_\R$.
\end{definition}

\begin{definition}\label{internal} A \textsl{corner blow-up} of $(Y,D)$ is a blow-up at a node of $D$ and an \textsl{internal blow-up} of $(Y,D)$ is a blow-up on the smooth locus of $D$. Both the corner and internal blow-up have an anticanonical divisor $\pi^*D-E$ where $\pi$ is the blow-up and $E$ is the exceptional divisor. An \textsl{internal exceptional curve} is an exceptional curve which is not a component of $D$. \end{definition} 

The dual of the ample cone is the effective cone. The extremal rays of the effective cone are the classes of curves of negative self-intersection---generically, the internal exceptional curves and components of $D$. With a mild assumption, every effective divisor on $Y$ is linearly equivalent to a union of components $D$ and disjoint internal exceptional curves:

\begin{proposition}\label{effdecomp} Let $(Y,D)$ be an anticanonical pair with $r(D)\geq 3$ and no $-2$-curves, besides components of $D$. If $A$ is an effective curve on $Y$, then $A$ is linearly equivalent to a curve of the form 
$$\sum a_jD_j+\sum b_iE_i,$$ 
where  the $E_i$ are disjoint internal exceptional curves and $a_j, b_j\geq 0$.
 \end{proposition}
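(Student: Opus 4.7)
My plan is to reduce the claim to verifying that each irreducible component of $A$ is linearly equivalent to a non-negative combination of the $[D_j]$ and classes of internal exceptional curves, and then to arrange the disjointness of the exceptional curves at the end. Writing $A = \sum_C n_C C$ as a sum of distinct irreducible components with multiplicities $n_C > 0$, it suffices to treat each irreducible component separately.

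For an irreducible $C$ with $C \neq D_j$ for all $j$, one has $C \cdot D \geq 0$, hence $C \cdot K_Y \leq 0$, and adjunction gives $C^2 = 2p_a(C) - 2 - C \cdot K_Y \geq -2$. The hypothesis of no $(-2)$-curves then forces either $C$ to be an internal exceptional curve (the case $C^2 = -1$, $p_a(C) = 0$, $C \cdot D = 1$), or $C^2 \geq 0$. In the latter case, $C$ is nef: $C \cdot C' \geq 0$ for any other irreducible curve $C'$, and $C \cdot C = C^2 \geq 0$.

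For an irreducible nef $C$ that is not a component of $D$, I would apply Riemann-Roch on the rational surface $Y$. Since $K_Y - C \sim -D - C$ has negative intersection with any ample class, $h^0(K_Y - C) = 0$, giving
$$h^0(C) \,\geq\, 1 + \tfrac{1}{2}(C^2 + C \cdot D).$$
When $C^2 > 0$ or $C \cdot D > 0$, the linear system $|C|$ has positive dimension, and by degenerating a general member I would exhibit an effective representative of $[C]$ supported on $D$ together with internal exceptional curves. This step relies on the existence of a \emph{toric model} for $(Y,D)$ -- a sequence of internal blow-downs terminating in a smooth toric pair -- a structural result about anticanonical pairs that is available under the no-$(-2)$-curves assumption and that reduces the degeneration problem to the classical fact that every effective divisor on a smooth toric surface is linearly equivalent to a non-negative combination of torus-invariant divisors. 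The residual case $C^2 = C \cdot D = 0$ similarly reduces, on the toric model, to a non-negative combination of the boundary classes alone.

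Finally, to arrange disjointness of the chosen $E_i$'s: whenever two intersecting internal exceptional curves $E, E'$ appear in the intermediate decomposition, the class $[E] + [E']$ has self-intersection $2(E \cdot E') - 2 \geq 0$, its linear system has positive dimension, and (via the toric model once more) a generic member can be replaced by an effective divisor supported on $D$ together with a collection of pairwise disjoint internal exceptional curves. Iterating this substitution on the finitely many intersecting pairs produced in the initial step yields the desired representation, and the passage from numerical to linear equivalence is automatic for the rational surface $Y$. The principal difficulty is invoking the toric model to produce the moving linear systems needed in the last two steps: this structural input is the nontrivial ingredient, and the entire argument hinges on the no-$(-2)$-curves hypothesis that makes such a model available.
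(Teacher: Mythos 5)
Your proposal diverges from the paper's proof at the crucial step, and the divergence is where the gap lies. The paper does \emph{not} try to directly exhibit the decomposition of each irreducible component of $A$; instead, it proves a single key lemma (Lemma~\ref{effdecomplemma}): every nonzero effective nef divisor $G$ has a member of $|G|$ of the form $B + D_j$ for some component $D_j$ of $D$. The proof of that lemma uses Riemann--Roch together with Ramanujam's vanishing lemma and the cycle structure of $D$, and the conclusion is deliberately modest --- you get just one boundary component into the support. The paper then writes $A \sim \sum a_j D_j + \sum b_i C_i$ with $\sum a_j$ maximal, observes that absence of $-2$-curves forces each $C_i$ to be either nef or exceptional, and shows that if the $C_i$ are not already disjoint exceptional curves, applying the lemma to a suitable nef piece (a non-exceptional $C_i$, or the nef sum of two intersecting exceptionals) would increase $\sum a_j$, contradicting maximality. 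The maximality of $\sum a_j$ simultaneously drives the iteration and guarantees termination, which neatly sidesteps both issues in your plan.

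Your approach instead wants a direct decomposition for each nef irreducible $C$ via a toric model $\pi\colon Y \to \overline{Y}$, but this is where the argument has a genuine gap, for two reasons. First, the claimed reduction is not spelled out and is subtler than it appears: if $\overline{\alpha} = \pi_*[C] = \sum a_j [\overline{D}_j]$ with $a_j \geq 0$ (which is valid on a toric surface), pulling back yields $[C] = \sum a_j [D_j] + \sum_i (a_{j(i)} - m_i)[E_i]$, where $m_i$ is a multiplicity of the pushforward at the center of $E_i$. The sign of $a_{j(i)} - m_i$ is not automatically non-negative, and whatever choice of toric decomposition you make must be compatible with all the blown-up points simultaneously; you have not indicated why such a choice exists, and this is essentially as hard as what you are trying to prove. (You also claim the toric model requires the no-$-2$-curves hypothesis; it does not --- toric models exist for any anticanonical pair with $r(D)\geq 3$ after corner blow-ups, by \cite[Prop.~1.3]{GHK}. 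The no-$-2$-curves hypothesis is used in the paper only to classify the residual components $C_i$ into nef or exceptional.) Second, even granting the toric-model degeneration, your disjointness clean-up at the end lacks a termination argument: replacing a pair of intersecting exceptional curves by a new decomposition can itself produce new intersecting pairs, and without a monovariant like the paper's $\sum a_j$ you have no reason the process halts. So while the overall skeleton (Riemann--Roch to produce moving sections, reduce to components of $D$ plus exceptional curves) resonates with the paper, the actual mechanism you invoke is hand-waved at the point where the real work happens.
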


\begin{proof}

We begin with the following lemma:

\begin{lemma}\label{effdecomplemma} Let $G$ be an effective, nef, and nonzero  divisor on $Y$. Then there exists a component $D_j$ of $D$ and an element of 
$|G|$ of the form $B + D_j$, where $B$ is effective.
\end{lemma}
\begin{proof} We may clearly assume that no component of $D$ is a fixed component of $|G|$. Since $H^2(Y; \scrO_Y(G))=0$, Riemann-Roch implies that
$$h^0(Y; \scrO_Y(G)) = h^1(Y; \scrO_Y(G)) + \frac12(G^2 + G\cdot D) + 1.$$
Hence $h^0(Y; \scrO_Y(G)) \geq 2$ if $G\cdot D \neq 0$. If $G\cdot D = 0$, then $h^1(Y; \scrO_Y(G))\geq 1$ by \cite[Lemma 4.13]{Friedman2}, and so again $h^0(Y; \scrO_Y(G)) \geq 2$. It follows that $\dim |G| \geq 1$, so that, for every point $y$ of $Y$, there exists a curve in $|G|$ passing through  $y$. Hence, if there exists a component $D_j$ of $D$ such that $G\cdot D_j =0$, then there exists a curve in $|G|$ meeting $D_j$ and thus necessarily of the form $B+D_j$ for some effective $B$. In particular, if $G\cdot D \leq 2$, then the assumption $r(D)\geq 3$ implies that such a component $D_j$ must exist.

If $G^2 = 0$, then by \cite[Theorem 4.19]{Friedman2}, $G$ is either of the form $kC$, $k\geq 1$, where $C\cdot D = 2$, or of the form $kE$ where $E\cdot D =0$. In either case, there is a component $D_j$ of $D$ such that $G\cdot D_j =0$ and we are done by the previous paragraph. Thus, we may assume that $G^2> 0$, i.e.\ that $G$ is big, and that $G\cdot D_j > 0$ for every $j$. By Ramanujam's lemma (cf.\ \cite[Lemma 4.10]{Friedman2}), the restriction map $H^0(Y; \scrO_Y(G)) \to H^0(D; \scrO_Y(G)|D)$ is surjective. The line bundle $\scrO_Y(G)|D$ has positive degree on every component of $D$. Choosing a component $D_j$, it is then easy, using the assumption $r(D) \geq 3$, to construct a nonzero section of $\scrO_Y(G)|D$ which vanishes identically on $D_j$. Lifting this section to a section of $\scrO_Y(G)$ then gives a curve in $|G|$ which contains $D_j$ as desired.
\end{proof}

Returning to the proof of Proposition~\ref{effdecomp}, every curve in   $|A|$ is of the form $\sum a_jD_j+\sum b_iC_i$, where the $a_j, b_i\geq 0$ and the $C_i$ are distinct irreducible curves which are not components of $D$.  Clearly the sum $\sum a_j$ is bounded. We may thus choose such a curve for which  $\sum a_j$ is maximal,  possibly zero.  Since there are no $-2$-curves on $Y$, one of the following holds:
\begin{enumerate}
\item[(i)] Some $C_i$ is not exceptional.
\item[(ii)] Every $C_i$ is exceptional but two intersect.
\item[(iii)] The $C_i$ are disjoint exceptional curves. 
\end{enumerate}
If $C_i$ is not exceptional, then it is nef and we can apply Lemma~\ref{effdecomplemma} to find an effective curve $C_i'$ linearly equivalent to $C_i$ which contains $D_k$ for some $k$. But then  there is a curve in $|A|$ of the form $\sum a_j'D_j+\sum b_iC_i'$ as above with $\sum_ja_j ' > \sum a_j$, contradicting the hypothesis on $\sum a_j$ is maximal. Likewise,  if there exist $i_1, i_2$ such that $C_{i_1}\cdot C_{i_2} >0$, then $G= C_{i_1} + C_{i_2}$ is nef, and we again reach a contradiction by Lemma~\ref{effdecomplemma}. Thus we are in Case (iii), which is the statement of Proposition~\ref{effdecomp}.
\end{proof}

\begin{remark}
It is easy to see that the assumption $r(D)\geq 3$ is necessary. For example, the class of a fiber  on the a minimal anticanonical pair $(\F_N,D_1+D_2)$, where $D_1$ and $D_2$ are both irreducible sections, cannot be expressed as a linear combination of the components of $D$.
\end{remark}

\section{Type III degenerations}

\subsection{Preliminaries}

\begin{definition}\label{defTypeIII} A \textsl{Type III degeneration of anticanonical pairs} is a proper morphism $\pi\colon (\mathcal{Y}, \mathcal{D})\to \Delta$, where $\mathcal{Y}$ is a smooth complex $3$-fold, $\Delta$ is the unit disk, and $\mathcal{D}\rightarrow \Delta$ is a locally trivial relative normal crossings divisor in $\mathcal{Y}$ whose intersection with the fiber $Y_t$ is denoted by $D_t$ for all $t$, such that:

\begin{enumerate}
\item[(i)] The general fibers $\pi^{-1}(t) = (Y_t, D_t)$, $t\neq 0$ are anticanonical pairs and the central fiber $\pi^{-1}(0) = (Y_0, D_0)$ with $Y_0= \bigcup_{i=0}^{f-1}V_i$ is a reduced normal crossings divisor.
\item[(ii)] $V_0$ is the Hirzebruch-Inoue surface with dual cycles $D$ and $D'$. The normalization $\tilde{V}_i$ of $V_i$ is a smooth rational surface for $i\neq 0$. Furthermore $D_0=D$.
\item[(iii)] Let $C_i$ denote the union of the double curves $C_{ij}=V_i\cap V_j$ which lie on $V_i$. Then the inverse image of $C_i$ under the normalization map is an anticanonical divisor on $\tilde{V_i}$ for $i>0$ and $C_0=D'$.  Equivalently, $K_{\mathcal{Y}} = \scrO_{\mathcal{Y}}(-\mathcal{D})$.
\item[(iv)] The triple point formula holds: $$\left(C_{ij}\big{|}_{\tilde{V}_i}\right)^2+\left(C_{ij}\big{|}_{\tilde{V}_j}\right)^2=\twopartdef{-2}{C_{ij} \textrm{ is smooth}}{0}{C_{ij} \textrm{ is nodal.}}$$
\item[(v)] The dual complex $\Gamma(Y_0)$ is a triangulation of the sphere.
\end{enumerate}

A \textsl{Type III anticanonical pair} is a pair $(Y_0, D_0)$, where $Y_0$ is a reduced surface with simple normal crossings and $D_0$ is a Cartier divisor on $Y_0$, satisfying (ii)-(v) above. We say $(Y_0, D_0)$ is \textsl{$d$-semistable} if in addition 
$$T^1_{Y_0} =Ext^1(\Omega^1_{Y_0}, \scrO_{Y_0}) \cong \scrO_{(Y_0)_{\text{sing}}},$$
which is an analytic refinement of the triple point formula. By \cite[(2.6) and (2.9)]{FriedmanMiranda}, every Type III anticanonical pair can be deformed via a locally trivial deformation to one which is $d$-semistable, and (as we shall discuss later) every $d$-semistable Type III anticanonical pair can be smoothed in a one parameter family with smooth total space.\end{definition}

\begin{notation} To simplify the notation, we will henceforth suppress the tildes on $\tilde{V}_i$ so that $(V_i,C_i)$ denotes a smooth anticanonical pair. In addition, we introduce the convention $$C_{ij}=C_{ij}\big{|}_{V_i}\,\,\textrm{ and }\,\,C_{ji}=C_{ij}\big{|}_{V_j}$$ so that $C_{ij}$ always denotes a curve on the smooth surface $V_i$. Then $C_{ij}$ and $C_{ji}$ have the same image in $Y_0$ but may not be isomorphic. In fact, the image of $C_{ij}$ in $Y_0$ is nodal if and only if exactly one of $C_{ij}$ or $C_{ji}$ is nodal. We define \vspace{-3pt}$$c_{ij}:=\twopartdef{-c_{ij}^2}{r(C_i)\geq 2}{2-c_{ij}^2}{r(C_i)=1.}$$ Then, the triple point formula states that $c_{ij}+c_{ji}=2$ in all cases. \end{notation}

\begin{definition} The \textsl{charge} of a pair $(V,C)$ is the integer $$Q(V,C):=12+\sum (c_i-3)$$ where $$c_i= \twopartdef{-C_i^2}{r(C)>1}{2-C_i^2}{r(C)=1}.$$ Then $Q(V,C)=0$ if and only if $(V,C)$ is toric and otherwise $Q(V,C)>0$. A corner blow-up keeps the charge constant, while an  internal blow-up increases the charge by one. \end{definition}

\begin{remark} When $(V,C)$ is an anticanonical pair, 
$$Q(V,C)=\chi_{\text{top}}(V-C),$$ where $\chi_{\text{top}}$ is the (topological) Euler characteristic. When $V$ is the Hirzebruch-Inoue surface, and $C$ is one of the cycles on $V$, this formula does not hold. \end{remark}

\begin{proposition}[Conservation of Charge] \label{charge} Let $(Y_0,D_0)$ be a Type III anticanonical pair. Then  $\displaystyle\sum Q(V_i,C_i)=24.$ \end{proposition}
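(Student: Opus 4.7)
The plan is a direct combinatorial calculation combining three ingredients: the definition of the charge $Q(V,C)$, the condensed triple point formula $c_{ij}+c_{ji}=2$ recorded in the notation (which, by design of the $r=1$ shift, holds in all cases), and the fact that $\Gamma(Y_0)$ is a triangulation of $S^2$ from Definition~\ref{defTypeIII}(v).

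Write $N$, $E$, $T$ for the numbers of components, double curves, and triple points of $Y_0$, i.e.\ the numbers of vertices, edges, and $2$-faces of $\Gamma(Y_0)$. First I would expand
\[
\sum_i Q(V_i, C_i) \;=\; 12N \;-\; 3\sum_i r(C_i) \;+\; \sum_{i,j} c_{ij},
\]
where the inner sum runs over components $C_{ij}$ of the cycle $C_i$. The next step is to evaluate the two combinatorial sums by global double-counting. Each double curve of $Y_0$ is incident to exactly two components, so it contributes one component to each of two cycles $C_i$; hence $\sum_i r(C_i) = 2E$. The ordered sum $\sum_{i,j} c_{ij}$ regroups by unordered pairs indexing double curves as $\sum_{\text{double curves}}(c_{ij}+c_{ji})$, and the triple point formula turns this into $2E$ as well. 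Substituting yields $\sum_i Q(V_i, C_i) = 12N - 4E$.

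The final step invokes only the topology of the dual complex. Because $\Gamma(Y_0)$ triangulates $S^2$, Euler's formula gives $N - E + T = 2$, and since every $2$-face is a triangle with each edge shared by exactly two triangles, double-counting yields $2E = 3T$. Eliminating $T$ gives $E = 3(N-2)$, so $\sum_i Q(V_i,C_i) = 12N - 12(N-2) = 24$.

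The only non-routine point, which I would isolate as a preliminary bookkeeping remark before the main calculation, is the uniform validity of $c_{ij}+c_{ji}=2$ across the several special cases: components with $r(C_i)=1$ (so $C_i$ is an irreducible nodal curve), double curves whose image in $Y_0$ is nodal, and configurations where $V_i\cap V_j$ need not be a single smooth curve. These are precisely the cases the ``$+2$'' shift in the definition of $c_{ij}$ when $r(C_i)=1$ is engineered to absorb, together with the two variants of the triple point formula in Definition~\ref{defTypeIII}(iv). Once this identity is in hand, the remainder of the argument is purely formal counting and uses no geometry of the Inoue surface $V_0$ at all; in particular, the result parallels the analogous charge identity for Type III degenerations of K3 surfaces and depends only on the combinatorics of the dual complex.
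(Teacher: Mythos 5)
Your argument is correct and self-contained, which is worth noting since the paper itself simply cites Friedman--Miranda, Proposition~3.7, rather than reproducing a proof. The double-count $\sum_i r(C_i) = 2E$ is right (each edge of $\Gamma(Y_0)$ contributes one component to the double curve configuration on each of the two incident surfaces), the regrouping $\sum_{i,j} c_{ij} = \sum_{\text{edges}}(c_{ij}+c_{ji}) = 2E$ is exactly what the condensed triple point formula is designed to give, and the reduction $12N - 4E = 24$ via Euler's formula together with $2E = 3T$ is the standard triangulated-sphere count. Your expansion $\sum_i Q = 12N - 3\sum_i r(C_i) + \sum_{i,j} c_{ij}$ is therefore correctly evaluated.

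Your isolation of the bookkeeping point about $c_{ij}+c_{ji}=2$ is also the right instinct: the cases where one of $C_{ij}, C_{ji}$ is nodal are precisely the ones where the $+2$ shift in the $r=1$ branch of the definition cancels against the $0$ (rather than $-2$) on the right side of Definition~\ref{defTypeIII}(iv). One refinement you could add for completeness: the configuration where \emph{both} pullbacks $C_{ij}$ and $C_{ji}$ are nodal would, on its face, give $c_{ij}+c_{ji}=6$; but this would force both $v_i$ and $v_j$ to have valence $1$ in $\Gamma(Y_0)$, which is incompatible with condition~(v), so it is excluded. Pointing this out would tighten the "in all cases" claim you inherit from the paper's Notation block. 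This combinatorial route — expand the charge, pair $c_{ij}$ with $c_{ji}$, then invoke $\chi(S^2)=2$ — is to my knowledge essentially the same computation as in the cited Friedman--Miranda reference, so you have filled in a genuine but not independently novel gap.
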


\begin{proof} See \cite{FriedmanMiranda}, Proposition 3.7. \end{proof}

\subsection{The integral-affine structure on the dual complex}

\begin{definition} An \textsl{integral-affine manifold} of dimension $n$ is a manifold with charts to $\R^n$ such that the transition functions are valued in the integral-affine transformation group $SL_n(\Z)\ltimes \R^n$. A \textsl{lattice manifold} is an integral-affine manifold whose transition functions are in $SL_n(\Z)\ltimes \Z^n$. \end{definition}

Note that a lattice manifold contains a lattice of points with integer coordinates in some (any thus any) chart. We will endow $\Gamma(Y_0)$ with the structure of a triangulated lattice surface with singularities. Define the following notation for the dual complex:

\begin{enumerate}
\item[(i)] The vertices $v_i$ correspond to the components $V_i$,
\item[(ii)] the directed edges $e_{ij}=(v_i,v_j)$ correspond to double curves $C_{ij}$,
\item[(iii)] the triangular faces $f_{ijk}=(v_i,v_j,v_k)$ correspond to triple points $p_{ijk}$.
\end{enumerate} 

\noindent We denote the $i$-skeleton of $\Gamma(Y_0)$ by $\Gamma(Y_0)^{[i]}.$ There is a natural (non-singular) lattice manifold structure on $$\Gamma(Y_0)- \{v_i\colon Q(V_i,C_i)>0\textrm{ or }\,i=0\}$$ which we now define, see also Remark 1.11 of Version 1 of \cite{GHK2}. We declare each triangular face $f_{ijk}$ to be integral-affine equivalent to a \textsl{basis triangle}, that is, a lattice triangle of area $1/2$. Then, we define the integral-affine structure on the union of two triangular faces $f_{ijk}$ and $f_{ik\ell}$ that share an edge $e_{ik}$ by gluing two basis triangles in the plane along their shared edge. We glue in the unique way such that $$c_{ik}e_{ik}=e_{ij}+e_{i\ell},$$ where we view the directed edges $e_{ij}$, $e_{ik}$ and $e_{i\ell}$ as integral vectors. By Propositions 2.2 and 2.3 of \cite{Engel}, this defines a unique integral-affine structure on $\Gamma(Y_0) -  \Gamma(Y_0)^{[0]}$ which naturally extends to the vertices $v_i$ corresponding to toric pairs $(V_i,C_i)$. 

\begin{definition} Let $(Y,D)$ be a pair with cycle components $D=D_1+\dots+D_n.$ The \textsl{pseudo-fan} of $(Y,D)$ is an integral-affine surface $\mathfrak{F}(Y,D)$ PL-equivalent to the cone over the dual complex of $D$. For each intersection point $t_i=D_i\cap D_{i+1}$ there is an associated face $f_{i,i+1}$ of this cone integral-affine equivalent to a basis triangle. Let $e_i$ denote the primitive integral vector that originates at the cone point corresponding to some component $D_i$. We glue $f_{i-1,i}$ and $f_{i,i+1}$ together by an element of $SL_2(\Z)$ in the unique manner such that $e_{i-1}+e_{i+1}=d_ie_i$ where $$d_i =\twopartdef{-D_i^2}{n>1}{2-D_i^2}{n=1}.$$ The integral-affine structure has at most one singularity, at the cone point. Compare to Section 1.2 of \cite{GHK2}.  \end{definition}

Note that the union of the triangles containing a vertex $v_i\in \Gamma(Y_0)$ is the pseudo-fan $\mathfrak{F}(V_i,C_i)$. 

\begin{example} We can visualize the lattice structure on $\Gamma(Y_0)$ by cutting it open along a spanning tree of the singular vertices. The resulting disc has an integral-affine chart to $\R^2$ whose image is a lattice polygon. Figure \ref{example} below depicts such an open chart. The intersection complex of $Y_0$ is overlaid by the dual complex, and each component of the anticanonical cycle of $(V_i,C_i)$ is labelled by its self-intersection. The surface $Y_0$ visibly satisfies conditions (ii)-(v) of Definition \ref{defTypeIII}.

There are three anticanonical pairs $(V_i,C_i)$ with charge $Q(V_i,C_i)=1$, whose cycles of self-intersections are $(3,-1,-1,-2)$, $(6,-1,-1,-5)$, and $(2, -1, -1, -1)$. The Hirzebruch-Inoue surface $V_0$ meets the other components of $Y_0$ along the cycle $D'=C_0$ whose components have self-intersections $(-6,-9)$. Every other component of $Y_0$ is toric.

\begin{figure}
\begin{centering}
\includegraphics[width=4in]{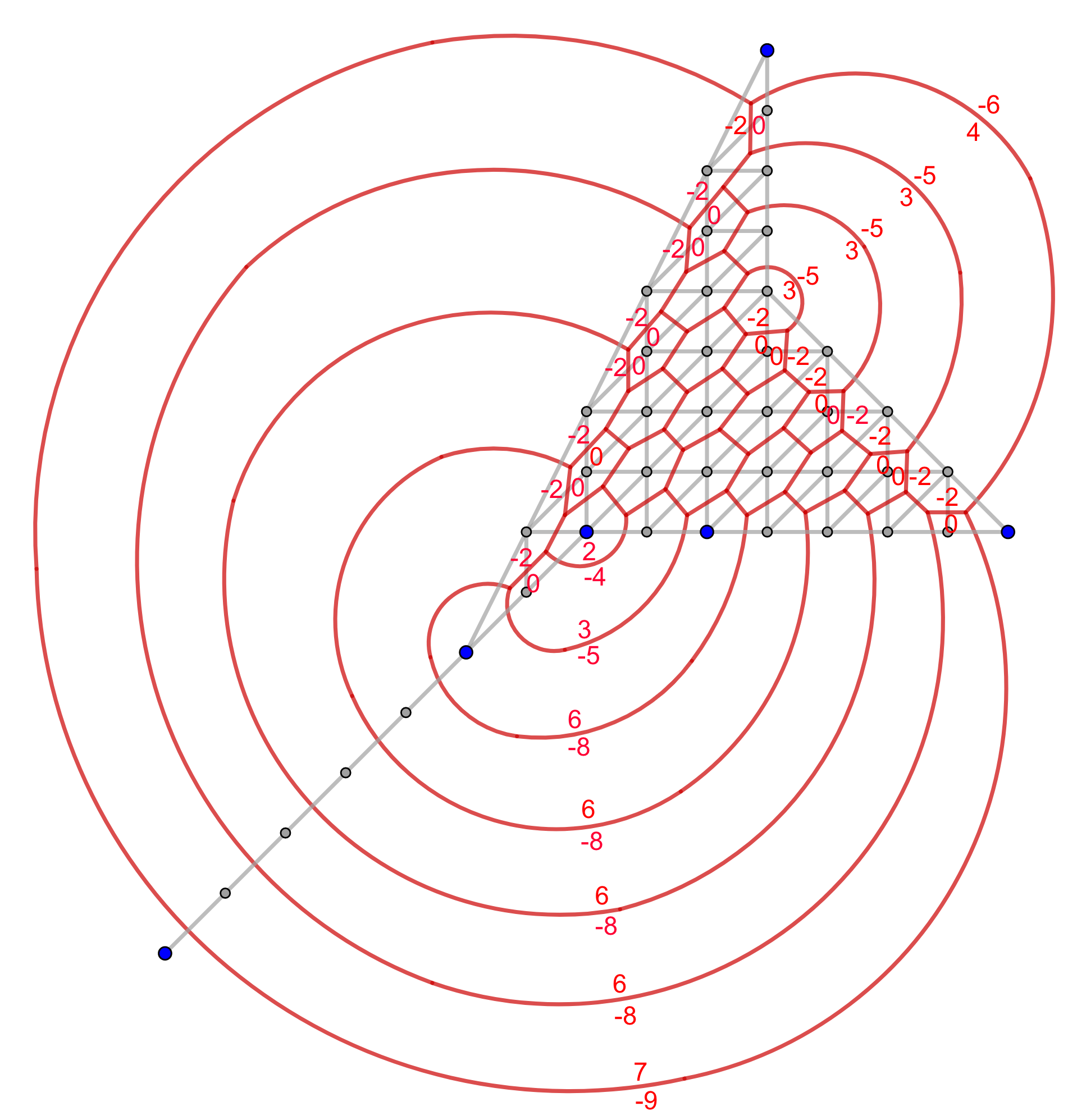}
\caption{A Type III anticanonical pair $Y_0$ and the lattice manifold structure on its dual complex. Double curves with self-intersection $-1$ on both components are unlabeled.}
\label{example}
\end{centering}
\end{figure}
\end{example}

\begin{definition}\label{generic} We say that $Y_0$ is \textsl{generic} if $Q(V_i,C_i)\leq 1$ for all $i\neq 0$. \end{definition} Because of the formula $Q(D)+Q(D')=24$, genericity is  equivalent to the condition that $\Gamma(Y_0)$ has $Q(D)+1$ singularities. 

\begin{remark}\label{fibration} Integral-affine manifolds appear naturally in symplectic geometry as bases of Lagrangian torus fibrations. Let $\mu\colon(X,\omega)\rightarrow B$ be a Lagrangian $2$-torus fibration. There are natural coordinates from $B$ to $\R^2$ well defined up to $GL_2(\Z)\ltimes \R^2$ defined as follows: Choose a base point $p_0\in B$ and let $U\ni p_0$ be a contractible open set. Let $\{\alpha,\beta\}$ be a basis of $H_1(\mu^{-1}(b_0);\Z)$. Let $p\in U$ be a point, and let $\gamma\colon [0,1]\to U$ satisfy $\gamma(0)=p_0$, $\gamma(1)=p$. Let $C_\alpha$ and $C_\beta$ be cylinders in $X$ which fiber over $\gamma$ and whose boundaries in $\mu^{-1}(b_0)$ are homologous to $\alpha$ and $\beta$, respectively. We may then define coordinates $U\to \R^2$ by $$(x(p),y(p))=\left(\int_{C_{\alpha}} \omega,\int_{C_{\beta}}\omega\right).$$ These integrals are well-defined because the fibers of $\mu$ are Lagrangian. The only ambiguities are the choice of base point, and basis of $H_1$ of a fiber; thus the coordinates are well-defined up to $GL_2(\Z)\ltimes \R^2$, and in fact lie in $SL_2(\Z)\ltimes \R^2$ if there is a consistent choice of orientation on the fibers of $\mu$. 

Conversely, given an integral-affine manifold $B$ (without singularities), there is a natural lattice $T_\Z B\subset TB$ in the tangent bundle consisting of tangent vectors which are integral in some, and thus any, chart. Dualizing gives a lattice $T_\Z^*B\subset T^*B$ in the cotangent bundle. The cotangent bundle $T^*B$ admits a natural symplectic structure, and addition of a section of $T_\Z^*B$ is a symplectomorphism. Thus, there is a Lagrangian torus fibration $$\mu\,:\,T_\Z^*B\backslash T^*B\rightarrow B$$ and this procedure locally describes an inverse to the above procedure. In the local coordinates $(x,y)$ of an integral-affine chart on $B$, the symplectic form is $\omega=dx\wedge dp + dy\wedge dq$ where $p,q\in \R/\Z$ are coordinates on the torus fibers which descend from the coordinates on fibers of the cotangent bundle.

Symington \cite{Symington} was able to extend this correspondence in dimension two to Lagrangian fibrations with certain singular fibers. Suppose there is a Lagrangian fibration of a symplectic $4$-manifold $\mu\colon (X,\omega)\rightarrow B$ whose general fiber is a $2$-torus, but degenerates over some fibers to a necklace of $k$ Lagrangian $2$-spheres. Then the base $B$ admits an integral-affine structure with an {\it $A_k$ singularity} at the image of a singular fiber, around which the $SL_2(\Z)$ component of monodromy of the integral-affine structure is conjugate to $$\twobytwo{1}{k}{0}{1}.$$ Conversely, given an integral affine surface $B$ with $A_k$ singularities, there is a Lagrangian torus fibration over $B$ which is unique up to (not necessarily fiber-preserving) symplectomorphism if the base either has a nonempty boundary or is noncompact. 

Returning to Type III degenerations, suppose that $Y_0$ is generic. Then every singularity of $\Gamma(Y_0)$ other than $v_0$ is of type $A_1$---this follows easily from Propositions 2.9 and 2.10 of \cite{Engel}. Thus, there is a Lagrangian torus fibration $$\mu\,:\,(X,\omega) \rightarrow \Gamma(Y_0) - \{v_0\}$$ with irreducible nodal fibers exactly over the vertices $v_i$ corresponding to pairs with $Q(V_i,C_i)=1$. The edges in $\Gamma(Y_0)^{[1]}$ also have an interpretation in the symplectic $4$-manifold $X$: For any straight line segment $y=mx$ of rational slope in the base, there is a Lagrangian cylinder which fibers over it, whose fiber is the circle $p=-mq$. Thus, there is a Lagrangian cylinder $A_{ij}$ which fibers over each edge $e_{ij}$.

Proposition \ref{symp} below proves that $X$ is in fact diffeomorphic to $Y_t-D_t$. Furthermore, under this diffeomorphism, the class of the symplectic form and the class of the fiber of $\mu$ are naturally identified with the two invariants describing the monodromy on $H_2(Y_t-D_t;\Z)$. We prove that every monodromy-invariant class in $H_2(Y_t-D_t;\Z)$ may be constructed by patching together the $A_{ij}$ by chains lying over $\Gamma(Y_0)^{[0]}$. Thus, the monodromy-invariant classes are represented by Lagrangians and perpendicular to $[\omega]$.

\end{remark}

\begin{proposition}\label{extend} Let $Y_0$ be a generic Type III anticanonical pair. The Lagrangian fibration $\mu_0\colon (X_0,\omega)\rightarrow \Gamma(Y_0)-\{v_0\}$ extends to a continuous map $\mu\,:\,(X,D,\omega)\rightarrow \Gamma(Y_0)$ where $X$ is a smooth, compact $4$-manifold, $D=\mu^{-1}(v_0)$ is a cycle of $2$-spheres with the appropriate self-intersections, and $\omega$ is the symplectic form on $X_0=X-D$. \end{proposition}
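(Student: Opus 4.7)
The plan is to extend $\mu_0$ over $v_0$ by identifying the open end of $X_0$ above a small punctured disc around $v_0$ with the complement of a symplectic cycle of $2$-spheres in its tubular neighborhood, and then gluing in such a neighborhood. Since $Y_0$ is generic, the only singularities of the integral-affine structure on $\Gamma(Y_0) - \{v_0\}$ are of type $A_1$ and occur at vertices $v_i$ with $Q(V_i, C_i) = 1$; in particular, a sufficiently small disc neighborhood $U \subset \Gamma(Y_0)$ of $v_0$ contains no $A_1$ singularities, so $\mu_0$ restricted to the circle $\partial U$ is a genuine torus bundle $B \to S^1$.

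The first step is to compute the monodromy of $B \to S^1$. Since the integral-affine structure on $U - \{v_0\}$ coincides with the pseudo-fan $\mathfrak{F}(V_0, C_0) - \{v_0\}$ for $C_0 = D'$, the $SL_2(\Z)$-monodromy of $B$ around $\partial U$ is the product of elementary transformations dictated by the pseudo-fan relations $e_{j-1} + e_{j+1} = -(D_j')^2\, e_j$ on the components $D_j'$ of $D' = C_0$. Next, invoking the dual cusp relationship between $D$ and $D'$ on the Inoue surface $V_0$ (a classical Hirzebruch--Zagier continued-fraction identity), this monodromy is conjugate, with the appropriate orientation convention, to the monodromy of the torus bundle that bounds a plumbing along a cycle of $2$-spheres with self-intersection sequence $(D_1^2, \dots, D_r^2)$.

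With that diffeomorphism of torus bundles in hand, I would glue a tubular neighborhood $N$ of a smoothly embedded cycle of $2$-spheres $D = D_1 + \dots + D_r$ with the prescribed self-intersections onto $X_0$ along $B$. The resulting space $X = X_0 \cup_B N$ is a smooth compact $4$-manifold containing the cycle $D$ with the required self-intersections, and $X - D = X_0$. Define $\mu$ to equal $\mu_0$ on $X_0$ and the constant $v_0$ on $D$; continuity along $D$ holds because points of $N - D$ approaching $D$ correspond, under the gluing, to points of $X_0$ whose $\mu_0$-images converge to $v_0$ in $\Gamma(Y_0)$. The symplectic form $\omega$ already lives on $X - D = X_0$, so there is nothing further to verify for it.

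The main obstacle is the identification of the two torus bundles, which rests on the continued-fraction duality between the self-intersection sequences of $D$ and $D'$ due to Hirzebruch. Once this duality is set up with the correct orientation conventions and basis choices, the gluing is standard, though some bookkeeping is needed to verify that the monodromy product of pseudo-fan elementary transformations indeed matches the plumbing monodromy. A construction of this extension, in fact with extra symplectic structure on the neighborhood of $D$, is one of the main technical contributions of \cite{Engel}.
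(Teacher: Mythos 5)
Your argument is essentially correct and follows the same overall strategy as the paper: pass to a small punctured neighborhood of $v_0$, identify the monodromy of the torus bundle over its boundary circle via the pseudo-fan $\mathfrak{F}(V_0,C_0)$, and glue in a cycle of $2$-spheres with the appropriate self-intersections. Where you diverge is at the key identification step. You compute the pseudo-fan monodromy abstractly and then invoke the Hirzebruch continued-fraction duality between the self-intersection sequences of $D$ and $D'$ to match it against the plumbing monodromy of $D$, finally gluing in an abstract plumbing neighborhood. The paper instead sidesteps the arithmetic: it observes that the Inoue surface $V_0$ is itself a $2$-torus fibration over an annulus undergoing symplectic boundary reduction at its two ends to produce $D$ and $D'$. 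Once the pseudo-fan monodromy around $v_0$ is recognized as the monodromy of that annular fibration, $\mu_0^{-1}(U-\{v_0\})$ is identified with $V_0 - D - D'$, and the gluing is simply attaching the (already present) neighborhood of $D$ inside $V_0$. This makes the continued-fraction duality geometrically automatic rather than something to be verified by bookkeeping, and it also gives a canonical gluing map rather than one depending on a chosen diffeomorphism of torus bundles. Both routes reach the same conclusion; yours requires care with orientations and bases in the duality identity, which is exactly the point where you (correctly) flag the remaining work.
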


\begin{proof} Let $U\ni v_0$ be a contractible open set in $\Gamma(Y_0)$ containing no singularities other than $v_0$. Then the Lagrangian torus fibration over $U-\{v_0\}$ is, as a smooth $2$-torus fibration, uniquely determined by the $SL_2(\Z)$ monodromy around $v_0$. We note that the Hirzebruch-Inoue surface is topologically quite simple: It is a two-torus fibration over an annulus that undergoes symplectic boundary reduction over its two circular boundaries to produce the cycles $D$ and $D'$. It then follows easily from computing the monodromy around the vertex of the pseudo-fan $\mathfrak{F}(V_0,C_0)$ that $\mu_0^{-1}(U-\{v_0\})$ is diffeomorphic to $V_0-D-D'$ and that the boundary component of the annulus over which $D$ fibers corresponds to $v_0$. Thus, we may extend $\mu_0$ in the desired manner by gluing $X_0$ to a neighborhood of $D$ in $V_0$ and declaring $\mu(D)=v_0$.  \end{proof}

\subsection{Topology and geometry of the central fiber}

We now collect various results on the cohomology of $Y_0$ (both topological and analytic) and the limiting cohomology. For notational simplicity, we shall just consider the case where $Y_0$  has global normal crossings; minor modifications handle the general case. Let $p_{ijk}=V_i\cap V_j\cap V_k$ denote a triple point, and let \begin{align*} a_0& \colon \coprod _iV_i \to Y_0 \\ a_1& \colon \coprod _{i< j}C_{ij} \to Y_0 \\ a_2& \colon\!\!\! \coprod_{i< j< k} \!\! p_{ijk}\to Y_0 \end{align*} be the natural morphisms.

\begin{lemma}\label{Lemma1} $H^0(Y_0; \scrO_{Y_0}) \cong \C$, $H^1(Y_0; \scrO_{Y_0}) = H^2(Y_0; \scrO_{Y_0}) = 0$.
\end{lemma}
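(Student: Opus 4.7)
The plan is to combine the Mayer--Vietoris (normalization) resolution of $\mathcal{O}_{Y_0}$ with Serre duality on the compact Cohen--Macaulay analytic surface $Y_0$, which together pin down all three cohomology groups. First, since $\Gamma(Y_0) \cong S^2$ is connected and $Y_0$ is reduced, $Y_0$ is connected; its global regular functions are then constant, so $H^0(Y_0, \mathcal{O}_{Y_0}) = \mathbb{C}$.

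To compute $\chi(\mathcal{O}_{Y_0})$ I would use the resolution
\[
0 \to \mathcal{O}_{Y_0} \to a_{0*}\mathcal{O}_{\coprod V_i} \to a_{1*}\mathcal{O}_{\coprod C_{ij}} \to a_{2*}\mathcal{O}_{\coprod p_{ijk}} \to 0,
\]
which reduces the calculation to the components, double curves and triple points. The crucial input is the Hodge numbers of the Hirzebruch--Inoue surface $V_0$: it is a class $\text{VII}_0$ surface with $b_1 = 1$ and $p_g = 0$, and Kodaira--Spencer's identities for a non-K\"ahler surface give $h^{0,1}(V_0) = 1$ and $h^{0,2}(V_0) = 0$, so $\chi(\mathcal{O}_{V_0}) = 0$. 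The other $V_i$ are smooth rational surfaces with $\chi(\mathcal{O}_{V_i}) = 1$, each $C_{ij} \cong \mathbb{P}^1$ contributes $\chi = 1$, and each triple point contributes $1$. With $V$, $E$, $F$ the numbers of components, double curves, and triple points,
\[
\chi(\mathcal{O}_{Y_0}) = (V-1) - E + F = \chi(\Gamma(Y_0)) - 1 = \chi(S^2) - 1 = 1.
\]

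For the vanishing of $H^2$, I would identify $\omega_{Y_0} \cong \mathcal{O}_{Y_0}(-D)$ via adjunction on each component: for $i \geq 1$, $C_i \in |-K_{V_i}|$ gives $\omega_{V_i}(C_i) = \mathcal{O}_{V_i}$, while on $V_0$ the relation $\omega_{V_0} = \mathcal{O}_{V_0}(-D-D')$ combined with $C_0 = D'$ yields $\omega_{V_0}(D') = \mathcal{O}_{V_0}(-D)$; these glue to the ideal sheaf $\mathcal{I}_D$. Serre duality then gives
\[
H^2(Y_0, \mathcal{O}_{Y_0}) \cong H^0(Y_0, \mathcal{I}_D)^* = 0,
\]
since any global section of $\mathcal{I}_D$ is a global section of $\mathcal{O}_{Y_0}$ (hence constant by the first step) vanishing on the nonempty cycle $D$, and so is zero. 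Combined with $\chi(\mathcal{O}_{Y_0}) = 1$ and $h^0 = 1$, this forces $h^1 = 0$.

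The main obstacle is justifying Serre duality, since $Y_0$ is not projective (its component $V_0$ is not even K\"ahler), so one must invoke the analytic form of Grothendieck--Serre duality for compact Cohen--Macaulay complex spaces, which produces a dualizing sheaf $\omega_{Y_0}$ with the standard pairings $H^i(Y_0, \mathcal{F}) \times \operatorname{Ext}^{2-i}(\mathcal{F}, \omega_{Y_0}) \to \mathbb{C}$. An alternative route is to analyze the normalization spectral sequence directly: its only nontrivial higher differential is $d_2 \colon H^1(V_0, \mathcal{O}_{V_0}) \to H^2(\Gamma(Y_0), \mathbb{C})$, and one would need to show it is an isomorphism between two copies of $\mathbb{C}$; this seems less transparent than the duality argument above.
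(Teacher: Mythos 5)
Your proof is correct and takes essentially the same approach as the paper: both use the normalization resolution of $\scrO_{Y_0}$ together with Serre duality (with $\omega_{Y_0}\cong \scrO_{Y_0}(-D)$, whose sections vanish since $D\neq\emptyset$) to kill $H^2$. The only cosmetic difference is that you deduce $h^1=0$ from the Euler characteristic $\chi(\scrO_{Y_0})=\chi(\Gamma(Y_0))-1=1$, whereas the paper tracks the single relevant spectral sequence differential $d_2^{0,1}\colon \C\to\C$ and uses the vanishing of $H^2$ to force it to be an isomorphism; these are the same argument with different bookkeeping, and your worry about justifying Serre duality is addressed exactly as you suggest, by Grothendieck--Serre duality for compact Cohen--Macaulay (here Gorenstein) analytic spaces.
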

\begin{proof} The sheaf $\scrO_{Y_0}$ is quasi-isomorphic to the complex
$$\textstyle (a_0)_*\left(\bigoplus_i\scrO_{V_i}\right) \to (a_1)_*\left(\bigoplus_{i< j}\scrO_{C_{ij}}\right) \to (a_2)_*\left(\bigoplus_{i< j< k}\scrO_{p_{ijk}}\right).$$
Thus there is a spectral sequence converging to $H^*(Y_0; \scrO_{Y_0})$ whose $E_1$ page is
\begin{center}
\begin{tabular}{|c|c|c}
$H^1(V_0; \scrO_{V_0})$ &{} &{} \\ \hline
$\bigoplus_i\C$ & $\bigoplus_{i< j}\C$ & $\bigoplus_{i< j< k}\C$ \\ \hline
\end{tabular}
\end{center}
where $H^1(V_0; \scrO_{V_0}) \cong \C$. 
The $E_2$ page is then 
\begin{center}
\begin{tabular}{|c|c|c}
$\C$ &{} &{} \\ \hline
$\C$ & {} & $\C$ \\ \hline
\end{tabular}
\end{center}
and the only possibly nonzero differential is $d_2^{0,1}\colon \C\to \C$. Then we have $H^1(Y_0; \scrO_{Y_0}) \cong \Ker d_2^{0,1}$ and $H^2(Y_0; \scrO_{Y_0})\cong \Coker d_2^{0,1}$. By Serre duality, $H^2(Y_0; \scrO_{Y_0})$ is dual to $H^0(Y_0; \omega_{Y_0}) = H^0(Y_0; \scrO_{Y_0}(-D))=0$. Thus $H^2(Y_0; \scrO_{Y_0})=0$, so that $d_2^{0,1}$ is an isomorphism and hence $H^1(Y_0; \scrO_{Y_0}) =0$ as well.
\end{proof}

\begin{corollary}\label{Corollary2} $\Pic Y_0 \cong H^2(Y_0; \Z) \cong H^2(\mathcal{Y}; \Z)\cong \Pic \mathcal{Y}$.
\end{corollary}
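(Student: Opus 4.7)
The plan is to obtain all three isomorphisms from the exponential sheaf sequence $0 \to \Z \to \scrO \to \scrO^* \to 0$, combined with Lemma~\ref{Lemma1} and the fact that $\mathcal{Y}$ deformation retracts onto its central fiber $Y_0$.

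First, applied to $Y_0$, the exponential sequence produces
$$H^1(Y_0;\scrO_{Y_0}) \to \Pic Y_0 \to H^2(Y_0;\Z) \to H^2(Y_0;\scrO_{Y_0}),$$
where both outer groups vanish by Lemma~\ref{Lemma1}, giving $\Pic Y_0 \cong H^2(Y_0;\Z)$. For the middle isomorphism $H^2(\mathcal{Y};\Z)\cong H^2(Y_0;\Z)$, I will invoke the standard fact that a smooth proper semistable family over the disk admits a deformation retraction onto its reduced normal-crossings central fiber (obtained for instance from a Clemens-style collapsing map, or equivalently from a cylindrical tubular neighborhood of the divisor $Y_0 \subset \mathcal{Y}$); this retraction induces isomorphisms on all singular cohomology.

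Finally, for $\Pic \mathcal{Y} \cong H^2(\mathcal{Y};\Z)$, I apply the exponential sequence to $\mathcal{Y}$; it suffices to check that $H^i(\mathcal{Y};\scrO_{\mathcal{Y}}) = 0$ for $i=1,2$. Because $\Delta$ is Stein, the Leray spectral sequence collapses to give $H^i(\mathcal{Y};\scrO_{\mathcal{Y}}) = H^0(\Delta; R^i\pi_*\scrO_{\mathcal{Y}})$, so I need only verify that $R^i\pi_*\scrO_{\mathcal{Y}}=0$ for $i\geq 1$. The vanishing at $t = 0$ is exactly Lemma~\ref{Lemma1}, while for $t\neq 0$ the fiber $Y_t$ is a smooth rational surface and so also satisfies $h^i(Y_t;\scrO_{Y_t})=0$ for $i\geq 1$. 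Since $t\mapsto h^i(Y_t;\scrO_{Y_t})$ is therefore identically zero on $\Delta$, Grauert's base change theorem forces $R^i\pi_*\scrO_{\mathcal{Y}}=0$, completing the proof.

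The entire argument is essentially formal once Lemma~\ref{Lemma1} is available; the only point relying on anything outside the exponential/GAGA formalism is the deformation retraction $\mathcal{Y}\simeq Y_0$, which I expect to be the main technical ingredient, though it is classical for semistable degenerations with smooth total space.
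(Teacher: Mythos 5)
Your proposal matches the paper's proof almost exactly: the paper likewise gets the first isomorphism from the exponential sequence plus Lemma~\ref{Lemma1}, the second from the deformation retraction $\mathcal{Y}\simeq Y_0$, and the third from the Leray spectral sequence for $\pi$. The paper simply asserts $H^1(\mathcal{Y};\scrO_{\mathcal{Y}})=H^2(\mathcal{Y};\scrO_{\mathcal{Y}})=0$ ``via the Leray spectral sequence'' without spelling out that one collapses the spectral sequence over the Stein base and then needs $R^i\pi_*\scrO_{\mathcal{Y}}=0$; your appeal to Grauert's base change (using that the fiberwise $h^i$ vanish identically, where vanishing at $t=0$ is Lemma~\ref{Lemma1} and vanishing at $t\neq 0$ is rationality of $Y_t$, and noting that $\pi$ is flat by miracle flatness) is the correct and standard way to fill that in.
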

\begin{proof} The first isomorphism is immediate from the exponential sheaf sequence and Lemma~\ref{Lemma1}, and the second is clear because $\mathcal{Y}$ deformation-retracts onto $Y_0$. To prove the final statement, it suffices to note that $H^1(\mathcal{Y}; \scrO_{\mathcal{Y}}) = H^2(\mathcal{Y}; \scrO_{\mathcal{Y}}) = 0$ via the Leray spectral sequence.
\end{proof}

\begin{lemma}\label{Lemma3} $H^0(Y_0; \C) \cong \C$,   $H^1(Y_0; \C) =  0$, and $H^4(Y_0; \C) \cong \C^f$. If $d \colon \bigoplus_iH^2(V_i;\C) \to \bigoplus_{i< j}H^2(C_{ij};\C)$ is the natural map, then 
\begin{align*} H^2(Y_0; \C) &\cong \Ker d\\
H^3(Y_0; \C) &\cong \Coker d \oplus \C . 
\end{align*}  
\end{lemma}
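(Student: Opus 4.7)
The plan is to adapt the spectral sequence argument of Lemma~\ref{Lemma1} to constant $\C$-coefficients. The constant sheaf $\underline{\C}_{Y_0}$ is quasi-isomorphic to the complex
\begin{equation*}
(a_0)_*\underline{\C} \to (a_1)_*\underline{\C} \to (a_2)_*\underline{\C},
\end{equation*}
producing a spectral sequence with $E_1^{p,q} = H^q(Y_0^{[p]}; \C)$ converging to $H^{p+q}(Y_0; \C)$, where $Y_0^{[0]} = \coprod_i V_i$, $Y_0^{[1]} = \coprod_{i<j} C_{ij}$, and $Y_0^{[2]} = \coprod_{i<j<k} p_{ijk}$.

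First I would identify the $E_1$ page. Each $V_i$ with $i\neq 0$ is a smooth rational surface, so $H^{\mathrm{odd}}(V_i)=0$ and $H^0(V_i)=H^4(V_i)=\C$; the Hirzebruch-Inoue surface $V_0$ has $b_1(V_0)=b_3(V_0)=1$; each double curve $C_{ij}\cong \Pee^1$ has $H^1=0$ and $H^0=H^2=\C$; and the triple points contribute only in degree zero. In particular, the only nonzero entries in the $q=1$ and $q=3$ rows are $E_1^{0,1}=E_1^{0,3}=\C$, contributed entirely by $V_0$. The $q=0$ row is precisely the simplicial cochain complex of $\Gamma(Y_0)$, whose cohomology is $H^*(S^2;\C)$. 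The $q=2$ row has $d_1$ equal to the map $d$ of the statement, with the next term vanishing.

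Passing to $E_2$ then gives $E_2^{0,0}=\C$, $E_2^{2,0}=\C$, $E_2^{1,0}=0$ from the $q=0$ row; $E_2^{0,1}=E_2^{0,3}=\C$; $E_2^{0,2}=\Ker d$ and $E_2^{1,2}=\Coker d$; and $E_2^{0,4}=\C^f$, with all other entries zero. The only higher differential whose source and target are both possibly nonzero is $d_2^{0,1}\colon E_2^{0,1}\to E_2^{2,0}$, a map $\C\to\C$, and the main obstacle is to show it is an isomorphism. The argument should invoke the topology of the Hirzebruch-Inoue surface: the generator of $H_1(V_0;\Z)\cong\Z$ is represented by a loop linking the anticanonical cycle $D'$, so the restriction map $H^1(V_0;\C)\to H^1(D';\C)$ is an isomorphism of one-dimensional spaces. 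Unwinding the Mayer-Vietoris definition of $d_2^{0,1}$ identifies it, up to a nonzero scalar, with this restriction followed by the simplicial coboundary that produces the fundamental class of $\Gamma(Y_0)\cong S^2$ in $H^2(\Gamma(Y_0);\C)$; hence $d_2^{0,1}\neq 0$ and therefore is an isomorphism.

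Granted this, $E_2^{0,1}$ and $E_2^{2,0}$ both die at $E_3$, and all remaining differentials vanish for degree reasons, so $E_\infty=E_3$. Reading off the diagonals yields $H^0(Y_0;\C)=\C$, $H^1(Y_0;\C)=0$, $H^2(Y_0;\C)=\Ker d$, $H^3(Y_0;\C)=E_\infty^{1,2}\oplus E_\infty^{0,3}=\Coker d\oplus \C$, and $H^4(Y_0;\C)=\C^f$, completing the proof.
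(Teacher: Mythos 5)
Your proposal sets up the same Mayer--Vietoris spectral sequence as the paper and arrives at identical $E_1$ and $E_2$ pages, but you handle the crucial step --- showing that $d_2^{0,1}\colon E_2^{0,1}\to E_2^{2,0}$ is nonzero --- differently. The paper observes that the canonical map of sheaves $\underline{\C}_{Y_0}\to\scrO_{Y_0}$ induces isomorphisms on precisely the $E_1$ terms feeding into $E_2^{0,1}$ and $E_2^{2,0}$ (namely $H^1(V_0;\C)\cong H^1(V_0;\scrO_{V_0})$ because $h^{1,0}(V_0)=0$, and $H^0(C_{ij};\C)\cong H^0(C_{ij};\scrO_{C_{ij}})$, $H^0(p_{ijk};\C)\cong H^0(p_{ijk};\scrO_{p_{ijk}})$), so the nonvanishing of $d_2^{0,1}$ for constant coefficients is inherited from the $\scrO_{Y_0}$-spectral sequence of Lemma~1.7, where it was established by Serre duality. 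You instead argue directly from the topology of the Inoue surface, identifying $d_2^{0,1}$ with the restriction $H^1(V_0;\C)\to H^1(D';\C)$ followed by the coboundary hitting the fundamental class of $\Gamma(Y_0)\cong S^2$. This is essentially the cohomological dual of the argument the paper \emph{does} make (with $\Z$-coefficients, in homology) in the proof of Lemma~1.7 for $\Pic Y_0$, where it computes $d^2_{2,0}\colon H_2(|\Gamma(Y_0)|;\Z)\to H_1(V_0;\Z)$ explicitly as the cycle traced out along $D'$. So your route is sound in spirit, but the phrase ``up to a nonzero scalar'' papers over the real work of identifying the $d_2$ differential of the hypercohomology spectral sequence with this geometric composite; the paper's comparison-of-spectral-sequences argument sidesteps that verification entirely by borrowing a clean Serre-duality computation already done. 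In exchange, your argument is more self-contained and does not rely on the complex-analytic $\scrO_{Y_0}$ computation.
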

\begin{proof} The Mayer-Vietoris spectral sequence for $H^*(Y_0; \C)$ has $E_1$ page
\begin{center}
\begin{tabular}{|c|c|c}
$\C^f$ & {} & {} \\ \hline
$\C$  & {} & {} \\ \hline
$\bigoplus_iH^2(V_i;\C)$ & $\bigoplus_{i< j}H^2(C_{ij};\C)$ & {} \\ \hline
$\C$  & {} & {} \\ \hline
$\C^f$ & $\C^e$ & $\C^v$\\ \hline
\end{tabular}
\end{center}
The $E_2$ page is 
\begin{center}
\begin{tabular}{|c|c|c}
$\C^f$ & {} & {} \\ \hline
$\C$  & {} & {} \\ \hline
$\Ker d$ & $\Coker d$ & {} \\ \hline
$\C$  & {} & {} \\ \hline
$\C$ & {} & $\C$\\ \hline
\end{tabular}
\end{center}
It suffices to prove that $d_2^{0,1}$ is an isomorphism. However, $H^1(V_0; \C) \cong H^1(V_0; \scrO_{V_0})$, $H^0(p_{ijk}; \C) \cong H^0(p_{ijk}; \scrO_{p_{ijk}})$, $H^0(C_{ij}; \C) \cong H^0(C_{ij}; \scrO_{C_{ij}})$, so the fact that $d_2^{0,1}$ is an isomorphism for this spectral sequence follows from the corresponding fact for the spectral sequence for $\scrO_{Y_0}$ appearing in Lemma~\ref{Lemma1}. 

\end{proof}

\begin{remark} For the Mayer-Vietoris spectral sequence, $E_3 = E_\infty$. We will see later that $\Coker d = 0$.
\end{remark}

\begin{lemma}\label{notorsinH1} $\Pic Y_0 = H^2(Y_0; \Z)$ is torsion free and 
$$\textstyle \Pic Y_0 \cong \Ker \left(\bigoplus_iH^2(V_i;\Z) \to \bigoplus_{i< j}H^2(C_{ij};\Z)\right).$$
\end{lemma}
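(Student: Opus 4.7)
The plan is to extend the Mayer-Vietoris spectral sequence argument of Lemma~\ref{Lemma3} from $\C$ to $\Z$ coefficients. Row $q=0$ of the $E_1$-page is the cellular cochain complex of the dual complex $\Gamma(Y_0) \cong S^2$, giving $E_2^{\bullet,0} = H^\bullet(S^2;\Z)$; in particular $E_2^{2,0} \cong \Z$. Row $q=1$ degenerates to the single entry $E_2^{0,1} = H^1(V_0;\Z) \cong \Z$, since $V_i$ is rational for $i\geq 1$ and each $C_{ij} \cong \Pee^1$. Row $q=2$ begins with $\bigoplus_i H^2(V_i;\Z) \xrightarrow{d} \bigoplus_{i<j} H^2(C_{ij};\Z)$, so $E_2^{0,2} = \Ker d$. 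The only higher differential relevant to $H^2$ is $d_2^{0,1}\colon H^1(V_0;\Z) \to E_2^{2,0}$; if this can be shown to be an isomorphism of copies of $\Z$, then both $E_\infty^{0,1}$ and $E_\infty^{2,0}$ vanish, giving $H^2(Y_0;\Z) \cong \Ker d$. Torsion-freeness is then automatic because $\Ker d$ sits inside the torsion-free group $\bigoplus_i H^2(V_i;\Z)$.

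Injectivity of $d_2^{0,1}$ is free: the exponential sheaf sequence yields $H^0(\scrO_{Y_0}) \to H^0(\scrO_{Y_0}^*) \to H^1(Y_0;\Z) \to H^1(\scrO_{Y_0})$, and by Lemma~\ref{Lemma1} the rightmost group vanishes while the first arrow is surjective, forcing $H^1(Y_0;\Z) = E_\infty^{0,1} = 0$, i.e.\ $\Ker d_2^{0,1}=0$. Hence $d_2^{0,1}$ is multiplication by some nonzero integer $n$, and the spectral sequence produces a short exact sequence
$$0 \to \Z/n\Z \to H^2(Y_0;\Z) \to \Ker d \to 0.$$

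The main obstacle is showing $n = \pm 1$. By the universal coefficient theorem the torsion subgroup of $H^2(Y_0;\Z)$ is canonically isomorphic to the torsion subgroup of $H_1(Y_0;\Z)$, so it suffices to prove $H_1(Y_0;\Z)=0$. I would establish this via van Kampen applied to the decomposition $Y_0 = V_0 \cup V'$ with $V' = \bigcup_{i\geq 1} V_i$ and $V_0 \cap V' = D'$. The subspace $V'$ is simply connected: its dual complex $\Gamma(Y_0) \setminus \operatorname{star}(v_0)$ is a disk, and the standard van Kampen argument for simple normal crossings varieties with simply connected strata and simply connected dual complex yields $\pi_1(V')=1$. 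The surface $V_0$ is a Hirzebruch--Inoue surface, for which $\pi_1(V_0) \cong \Z$ with the cycle $D'$ generating; the map $\pi_1(D') \to \pi_1(V_0)$ is therefore an isomorphism. The van Kampen pushout $\pi_1(V_0) *_{\pi_1(D')} \pi_1(V')$ collapses to the trivial group, giving $\pi_1(Y_0) = 1$ and hence $H_1(Y_0;\Z)=0$, which completes the proof.
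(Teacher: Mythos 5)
Your argument is correct and reaches the conclusion by a genuinely different route at the crucial step. Both proofs set up the short exact sequence
$$0 \to \Coker d_2^{0,1} \to H^2(Y_0;\Z) \to \Ker d \to 0$$
from the cohomology Mayer--Vietoris spectral sequence, reduce to showing $H_1(Y_0;\Z)=0$, and ultimately invoke the same fact from Inoue that $\pi_1(D')\to\pi_1(V_0)$ is an isomorphism. Where they diverge: the paper deduces that $\Coker d_2^{0,1}$ is torsion by citing Lemma~\ref{Lemma3}, and then proves $H_1(Y_0;\Z)=0$ by showing that the \emph{homology} Mayer--Vietoris differential $d^2_{2,0}\colon H_2(|\Gamma(Y_0)|;\Z)\cong\Z\to H_1(V_0;\Z)$ is an isomorphism---which requires unwinding the spectral sequence to identify $d^2_{2,0}(1)$ with the explicit $1$-cycle in $V_0$ tracing through the double points of $D'$. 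You instead kill $H^1(Y_0;\Z)$ directly with the exponential sequence (a clean observation the paper does not use), which gives injectivity of $d_2^{0,1}$, and then replace the homology spectral sequence with a van Kampen pushout for $Y_0=V_0\cup V'$, avoiding the differential bookkeeping entirely. The one thing you elide is the simply-connectedness of $V'$: the ``standard van Kampen argument'' you cite requires all strata of $V'$ (in particular the $C_{ij}$ with $i,j\ge 1$) to be simply connected, which depends on the global normal crossings hypothesis in force for this discussion --- the same hypothesis under which the paper's spectral-sequence tables are computed. With that caveat made explicit, your proof is complete, rests on the same essential input, and is if anything a little cleaner.
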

\begin{proof} By Corollary~\ref{Corollary2}, $\Pic Y_0 \cong H^2(Y_0; \Z)$. The arguments of Lemma~\ref{Lemma3}, but with $\Z$-coefficients, then give an exact sequence
\begin{align*} \textstyle 0 \to \Coker d_2^{0,1}  \to &H^2(Y_0; \Z) \to  \\ &\textstyle \Ker \left(\bigoplus_iH^2(V_i;\Z) \to \bigoplus_{i< j}H^2(C_{ij};\Z)\right) \to 0,\end{align*}
where now $d_2^{0,1}$ refers to the differential in the corresponding spectral sequence with $\Z$-coefficients. By Lemma~\ref{Lemma3}, $\Coker d_2^{0,1}$ is torsion. 
It suffices to prove that $H_1(Y_0; \Z)$ is torsion free, and in fact we will show directly that $H_1(Y_0;\Z)=0$. Examining the Mayer-Vietoris homology spectral sequence computing $H_n(Y_0; \Z)$, we must show that the differential $d^2_{2,0} \colon E_{2,0}^2 = H_2(|\Gamma(Y_0)|; \Z) \cong \Z\to E_{0,1}^2  = \bigoplus_iH_1(V_i; \Z) = H_1(V_0; \Z)$ is an isomorphism, where $|\Gamma(Y_0)|$ is the topological realization of the dual complex of $Y_0$. It is straightforward to check that, in this special case, if $1$ is a generator of $H_2(|\Gamma(Y_0)|; \Z)$, then $d^2_{2,0}(1)$ is represented by the $1$-cycle in $V_0$  formed by connecting the double points  in the cycle $\bigcup_{i=1}^{r'}C_{0i}=D'$ by (real) curves in the components.   By \cite[(4.10)]{Inoue},   inclusion induces isomorphisms $\pi_1(D', *)\cong \pi_1(V_0, *) \cong \pi_1(D, *)$ and thus $H_1(D', \Z)\cong H_1(V_0, \Z)$. Hence $d^2_{2,0}(1)$ is   a generator of $H_1(V_0;\Z)$, so that $d^2_{2,0}$ is surjective and $H_1(Y_0;\Z)=0$.
\end{proof}

Let $\Omega_{Y_0}^*/\tau_{Y_0}^*$ be the quotient of the complex $\Omega_{Y_0}^*$ by the subcomplex of torsion differentials (i.e.\ those sections supported on   $(Y_0)_{\text{sing}}$). Here $\Omega_{Y_0}^* = \bigwedge^*\Omega_{Y_0}^1$, where $\Omega_{Y_0}^1$ is the sheaf of K\"ahler differentials on $Y_0$. By \cite[(1.5)]{Friedman1}, we have the following:

\begin{lemma} {\rm(i)} There is an exact sequence 
$$\textstyle 0 \to \Omega_{Y_0}^1/\tau_{Y_0}^1 \to (a_0)_*\left(\bigoplus_i\Omega^1_{V_i}\right) \to (a_1)_*\left(\bigoplus_{i< j} \Omega^1_{C_{ij}}\right) \to 0.$$

\smallskip
\noindent {\rm(ii)} $\Omega_{Y_0}^2/\tau_{Y_0}^2 \cong (a_0)_*\left(\bigoplus_i\Omega^2_{V_i}\right)$. \qed
\end{lemma}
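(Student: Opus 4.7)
The statement is local on $Y_0$, so the plan is to work étale-locally and verify everything by direct computation at the three strata: smooth points of a single component, generic points of a double curve, and triple points. At a smooth point the claim is trivial (both sides equal $\Omega^1$ or $\Omega^2$ of the ambient smooth surface), so only the latter two strata require work. The local models are $V(xy)\subset \C^3$ at a generic double-curve point and $V(xy,yz,xz)\subset \C^3$ at a triple point, with $a_0$ corresponding to normalization of these reducible schemes.

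First I would characterize $\tau^1_{Y_0}\subset \Omega^1_{Y_0}$ as the subsheaf of local sections annihilated by a nonzero function; equivalently, by the square of the ideal sheaf of $(Y_0)_{\mathrm{sing}}$. The restriction of Kähler differentials under $a_0$ gives a canonical map
$$\rho\colon \Omega^1_{Y_0}\longrightarrow (a_0)_*\bigoplus_i\Omega^1_{V_i},$$
and the first step is to show $\Ker \rho=\tau^1_{Y_0}$. In the double-curve model $xy=0$, $\Omega^1_{Y_0}$ is generated by $dx,dy,dz$ modulo the single relation $x\,dy+y\,dx=0$; restricting to the two branches $\{x=0\}$ and $\{y=0\}$ kills $y\,dx$ on the first branch and $x\,dy$ on the second, and these two local sections generate the full torsion, since each is annihilated by a coordinate function. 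A parallel check at the triple point handles the remaining case.

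Next I would identify the image of $\rho$ as the kernel of the difference-of-restrictions map
$$\delta\colon (a_0)_*\bigoplus_i\Omega^1_{V_i}\longrightarrow (a_1)_*\bigoplus_{i<j}\Omega^1_{C_{ij}},$$
which proves (i) once surjectivity of $\delta$ is established. Injectivity of $\rho$ modulo torsion is immediate from the previous paragraph; exactness in the middle reduces, at a generic double-curve point, to the statement that a pair of $1$-forms on $\{x=0\}$ and $\{y=0\}$ whose restrictions to $C_{12}=\{x=y=0\}$ coincide lifts to a section of $\Omega^1_{Y_0}/\tau^1$ — which is transparent from the explicit presentation above. Surjectivity of $\delta$ follows by exhibiting, for any $1$-form $\alpha$ on $C_{ij}$, an extension $\tilde\alpha$ to a form on $V_i$ supported in a neighborhood of $C_{ij}$, paired with zero on $V_j$; this extension exists because $C_{ij}$ is a smooth divisor in the smooth surface $V_i$. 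The main bookkeeping obstacle will be the triple points, where three branches meet and one must verify that the cocycle condition on the three pairwise differences is automatic — this is where I expect the most care to be needed, although it ultimately reduces to the fact that the triple-point locus is zero-dimensional, so $(a_2)_*\bigoplus\Omega^1_{p_{ijk}}=0$ and no further correction terms appear.

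For (ii), the analogous restriction map $\Omega^2_{Y_0}\to (a_0)_*\bigoplus_i\Omega^2_{V_i}$ is easier: a $2$-form on each $C_{ij}$ vanishes for dimension reasons, so there is no $\delta$-term. At the double-curve point $xy=0$, the module $\Omega^2_{Y_0}$ is generated by $dx\wedge dy$, $dy\wedge dz$, $dx\wedge dz$ modulo the relations obtained by wedging $x\,dy+y\,dx=0$ with each of $dx,dy,dz$; after killing torsion, $dx\wedge dz$ survives only on $\{y=0\}$ and $dy\wedge dz$ only on $\{x=0\}$, while $dx\wedge dy$ becomes torsion, yielding exactly $\Omega^2_{V_1}\oplus \Omega^2_{V_2}$. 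The triple-point case is handled identically. This establishes (ii), and together with the argument above completes the proof — which is, as noted, essentially the local computation carried out in detail in \cite{Friedman1}.
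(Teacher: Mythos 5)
Your proposal is the standard local computation, and it is essentially the argument carried out in the cited reference \cite{Friedman1}; the paper itself does not reprove the lemma but simply quotes that source. However, your étale-local model at a triple point is wrong: $V(xy,yz,xz)\subset\C^3$ is the union of the three coordinate \emph{axes}, a one-dimensional scheme, not a surface. The correct model for a triple point of a simple normal crossings surface inside a smooth threefold is $V(xyz)\subset\C^3$, the union of the three coordinate planes, with the three branches $\{x=0\},\{y=0\},\{z=0\}$, the double curves being the coordinate axes, and the triple point the origin. The module of Kähler differentials, its torsion, and the restriction maps for $\C[x,y,z]/(xy,yz,xz)$ look nothing like those of $\C[x,y,z]/(xyz)$, so the computation you sketch would not transfer. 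Because you explicitly defer the triple-point verification rather than carrying it out, this is an error in the setup rather than a broken step in the argument as written, but the triple-point stratum is precisely the case you single out as delicate and the one a referee would want to see. With the correct ring $R=\C[x,y,z]/(xyz)$ everything works as at a double curve: the relation in $\Omega^1_{Y_0}$ is $yz\,dx+xz\,dy+xy\,dz=0$; the kernel of the restriction map to the three planes is the torsion; and given a triple $(\alpha_1,\alpha_2,\alpha_3)$ with $\alpha_1=b_1\,dy+c_1\,dz$, $\alpha_2=a_2\,dx+c_2\,dz$, $\alpha_3=a_3\,dx+b_3\,dy$ whose pairwise restrictions to the axes agree, one interpolates coefficients (e.g.\ $a(x,y,z)=a_2(x,z)+a_3(x,y)-a_2(x,0)$) to produce the required lift to $\Omega^1_{Y_0}/\tau^1_{Y_0}$, and surjectivity onto $\bigoplus\Omega^1_{C_{ij}}$ is checked by the same kind of direct construction. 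Part (ii) likewise goes through on $V(xyz)$. One further small inaccuracy: you say the torsion is ``equivalently'' the sections annihilated by the \emph{square} of the ideal of the singular locus; in the double-curve model the torsion is already killed by $(x,y)$ itself, and in any case the characterization you actually use — sections killed by a non-zerodivisor, equivalently sections restricting to zero on every component — is the correct and relevant one.
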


\begin{corollary}\label{Corollary7} {\rm(i)} With notation as in Lemma~\ref{Lemma3}, $H^0(Y_0; \Omega_{Y_0}^1/\tau_{Y_0}^1) = 0$, $H^1(Y_0; \Omega_{Y_0}^1/\tau_{Y_0}^1) \cong \Ker d$, and $H^2(Y_0; \Omega_{Y_0}^1/\tau_{Y_0}^1) \cong \Coker d$.

\smallskip
\noindent {\rm(ii)} $H^0(Y_0; \Omega_{Y_0}^2/\tau_{Y_0}^2) = 0$, $H^1(Y_0; \Omega_{Y_0}^2/\tau_{Y_0}^2)\cong H^1(V_0; \Omega^2_{V_0}) \cong \C$, and 
$$\textstyle H^2(Y_0; \Omega_{Y_0}^2/\tau_{Y_0}^2) \cong \bigoplus_i H^2(V_i; \Omega^2_{V_i}) \cong \bigoplus_iH^4(V_i; \C) = \C^f.$$
\end{corollary}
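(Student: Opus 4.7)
Part~(ii) follows directly from the second assertion of the preceding lemma. Since $a_0$ is a finite morphism, the isomorphism $\Omega_{Y_0}^2/\tau_{Y_0}^2 \cong (a_0)_*\bigoplus_i\Omega^2_{V_i}$ yields
$$H^k(Y_0;\Omega_{Y_0}^2/\tau_{Y_0}^2) \;=\; \bigoplus_i H^k(V_i;K_{V_i}) \;\cong\; \bigoplus_i H^{2-k}(V_i;\scrO_{V_i})^\vee$$
by Serre duality. For rational $V_i$ with $i\neq 0$ one has $H^0(V_i;\scrO_{V_i})\cong\C$ and the higher cohomology vanishes; for the Hirzebruch-Inoue surface $V_0$, Lemma~\ref{Lemma1} supplies $h^{0,0}(V_0)=h^{0,1}(V_0)=1$ and $h^{0,2}(V_0)=0$. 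Collecting contributions yields the dimensions claimed in~(ii).

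For Part~(i), the plan is to apply the long exact sequence in cohomology associated to the short exact sequence from part~(i) of the preceding lemma. The required inputs are the following. First, $H^0(V_i;\Omega^1_{V_i})=0$ for every $i$: for rational $V_i$ this is standard Hodge theory, while for $V_0$ it is the vanishing of holomorphic $1$-forms on the Hirzebruch-Inoue surface proved in~\cite{Inoue}. Second, $H^2(V_i;\Omega^1_{V_i})=0$ for every $i$, which follows from the previous item by Serre duality together with the canonical identification $T_{V_i}\otimes K_{V_i}\cong\Omega^1_{V_i}$. Third, $H^0(C_{ij};\Omega^1_{C_{ij}})=0$ and $H^1(C_{ij};\Omega^1_{C_{ij}})\cong\C$, which are immediate from $C_{ij}\cong\Pee^1$. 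With these in hand the long exact sequence collapses to
$$0 \to H^1(Y_0;\Omega_{Y_0}^1/\tau_{Y_0}^1) \to \bigoplus_i H^1(V_i;\Omega^1_{V_i}) \xrightarrow{d'} \bigoplus_{i<j} H^1(C_{ij};\Omega^1_{C_{ij}}) \to H^2(Y_0;\Omega_{Y_0}^1/\tau_{Y_0}^1) \to 0,$$
together with $H^0(Y_0;\Omega_{Y_0}^1/\tau_{Y_0}^1)=0$.

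The remaining step is to identify the connecting map $d'$ above with the map $d$ of Lemma~\ref{Lemma3}. Both arise by restriction along $C_{ij}\hookrightarrow V_i$, so it suffices to produce natural, restriction-compatible isomorphisms $H^1(V_i;\Omega^1_{V_i})\cong H^2(V_i;\C)$ and $H^1(C_{ij};\Omega^1_{C_{ij}})\cong H^2(C_{ij};\C)$. The latter is trivial for $\Pee^1$, and the former is Hodge decomposition for rational $V_i$ with $i\neq 0$ (where $h^{2,0}=h^{0,2}=0$ forces $H^2=H^{1,1}$). The main obstacle, and essentially the only substantive point in the proof, is the analogous identification for the non-K\"ahler surface $V_0$: one must show that the Fr\"olicher edge map $H^1(V_0;\Omega^1_{V_0})\to H^2(V_0;\C)$ is an isomorphism. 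The vanishings $H^0(V_0;\Omega^2_{V_0})=H^2(V_0;\scrO_{V_0})=0$ already established kill $E_1^{2,0}$ and $E_1^{0,2}$ of the Fr\"olicher spectral sequence in total degree $2$, so it suffices to check that the two $d_1$-differentials incident to $E_1^{1,1}$ vanish; this is verified from the explicit structure of Hirzebruch-Inoue surfaces in~\cite{Inoue}. Granted this point, $d'=d$, and the claimed descriptions of $H^1$ and $H^2$ in~(i) follow.
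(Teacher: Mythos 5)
Your proof is correct, and it fills in exactly the details that the paper leaves implicit (the paper marks the corollary with $\qed$ as following directly from the exact sequences of the preceding lemma). Both parts are handled as one would expect: part~(ii) by the identification $\Omega^2_{V_i}=K_{V_i}$ and Serre duality, part~(i) by taking the long exact cohomology sequence for $0\to\Omega^1_{Y_0}/\tau^1_{Y_0}\to(a_0)_*\bigoplus\Omega^1_{V_i}\to(a_1)_*\bigoplus\Omega^1_{C_{ij}}\to 0$ and matching the connecting map with the map $d$ of Lemma~\ref{Lemma3}.

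Two small remarks on presentation rather than substance. First, your appeal to the ``explicit structure of Hirzebruch--Inoue surfaces'' to kill the $d_1$-differentials incident to $E_1^{1,1}$ in the Fr\"olicher spectral sequence is an unnecessary detour: for \emph{any} compact complex surface the Fr\"olicher spectral sequence degenerates at $E_1$ (a theorem of Kodaira; see e.g.\ Barth--Hulek--Peters--Van~de~Ven, Ch.~IV). Combined with $h^{2,0}(V_0)=h^{0,2}(V_0)=0$, this immediately gives the required isomorphism $H^1(V_0;\Omega^1_{V_0})\cong H^2(V_0;\C)$, with no case analysis. Second, the citation of Lemma~\ref{Lemma1} for $h^{0,1}(V_0)=1$, $h^{0,2}(V_0)=0$ is slightly off target: Lemma~\ref{Lemma1} concerns $Y_0$, not $V_0$, and in fact its proof \emph{uses} those values for $V_0$. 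The correct source is Inoue's paper~\cite{Inoue} (or the general classification of class $\mathrm{VII}_0$ surfaces). These are attribution issues only; the mathematical content is right.
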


By \cite[(1.5)]{Friedman1}, there is a spectral sequence with $E_1$ term $H^q(Y_0;\Omega_{Y_0}^p/\tau_{Y_0}^p)$  converging to $H^{p+q}(Y_0; \C)$. 

\begin{lemma}  The above spectral sequence degenerates at $E_1$.
\end{lemma}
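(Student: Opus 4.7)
The plan is to observe that the $E_1$ page has already been computed (essentially in Corollary~\ref{Corollary7} and Lemma~\ref{Lemma1}) and that its total dimensions along each diagonal $p+q=n$ coincide with $\dim H^n(Y_0;\C)$ computed in Lemma~\ref{Lemma3}. Since in any convergent spectral sequence of finite-dimensional vector spaces one has $\sum_{p+q=n}\dim E_1^{p,q}\geq \dim H^n$, with equality for all $n$ forcing $E_1=E_\infty$, the degeneration will follow from this numerical comparison alone.

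Concretely, I would tabulate the $E_1$ page $E_1^{p,q}=H^q(Y_0;\Omega_{Y_0}^p/\tau_{Y_0}^p)$ for $0\leq p,q\leq 2$: the row $p=0$ is $(\C,0,0)$ by Lemma~\ref{Lemma1}; the row $p=1$ is $(0,\Ker d,\Coker d)$ by part (i) of Corollary~\ref{Corollary7}; and the row $p=2$ is $(0,\C,\C^f)$ by part (ii). Summing along the diagonal $p+q=n$ yields
$$
\C,\ 0,\ \Ker d,\ \Coker d \oplus \C,\ \C^f
$$
for $n=0,1,2,3,4$ respectively, which matches term-by-term the computation of $H^n(Y_0;\C)$ in Lemma~\ref{Lemma3}.

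The only mild obstacle is to confirm that no higher differential $d_r$ can be nonzero despite the bidegree constraints permitting some in principle (for example, a potential $d_2\colon H^1(\Omega^1/\tau^1)\to H^0(\Omega^2/\tau^2) \oplus \ldots$ is automatically zero because the target vanishes, but a potential $d_2\colon H^0(\scrO)\to H^{-1}(\Omega^1)\oplus\ldots$ raises no issue either). The dimension count renders all this moot: since every $E_\infty^{p,q}$ is a subquotient of $E_1^{p,q}$, the equality $\sum_{p+q=n}\dim E_1^{p,q}=\dim H^n(Y_0;\C)$ at each $n$ forces $E_\infty^{p,q}=E_1^{p,q}$ for every $p,q$, hence every differential on every page must vanish. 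This proves the spectral sequence degenerates at $E_1$, and as a byproduct also shows that $\Coker d=0$ (compare the remark following Lemma~\ref{Lemma3}), since $H^3(Y_0;\C)$ receives no additional contributions.
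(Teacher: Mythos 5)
Your argument that the spectral sequence degenerates at $E_1$ is correct and matches the paper's proof: tabulate the $E_1$ page from Lemma~\ref{Lemma1} and Corollary~\ref{Corollary7}, compare the diagonal totals with the dimensions of $H^n(Y_0;\C)$ computed in Lemma~\ref{Lemma3}, and conclude by the standard observation that equality of totals forces $E_1=E_\infty$.

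The final sentence, however, is an error. Degeneration at $E_1$ does \emph{not} give $\Coker d = 0$ ``as a byproduct.'' What degeneration asserts at $n=3$ is precisely $\dim E_1^{1,2}+\dim E_1^{2,1}=\dim H^3(Y_0;\C)$, which by Lemma~\ref{Lemma3} reads $\dim\Coker d + 1 = \dim\Coker d + 1$ — a tautology, since $\Coker d$ appears on both sides. No information about whether $\Coker d$ vanishes can be extracted from this lemma. The vanishing $\Coker d = 0$ is a real theorem of the paper (Proposition~\ref{lambdaprop}(ii), via Corollary~\ref{Cokerd}), and its proof genuinely needs the existence of a one-parameter smoothing and a monodromy computation, which is extra input not available here. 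You should delete the last clause. (A smaller point: the sample $d_2$'s you mention are written with bidegree $(+1,-1)$; $d_2$ actually has bidegree $(+2,-1)$, sending $E_2^{p,q}$ to $E_2^{p+2,q-1}$. This slip is harmless since your dimension count correctly disposes of all pages at once.)
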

\begin{proof} The $E_1$ page looks like
\begin{center}
\begin{tabular}{|c|c|c}
$\C^f$ & {} & {} \\ \hline
$\C$  & {} & {} \\ \hline
$\Ker d$ & $\Coker d$ & {} \\ \hline
{}  & {} & {} \\ \hline
$\C$ & {} & {}\\ \hline
\end{tabular}
\end{center}
Comparing this with Lemma~\ref{Lemma3}, we see that no differentials in the spectral sequence can ever be nonzero.
\end{proof} 

A similar discussion handles the case of the complex $\Omega_{Y_0}^*(\log D)/\tau_{Y_0}^*$. We record for future  reference the only cases we shall need:

\begin{lemma}\label{Lemma1.9} $H^0(Y_0; \Omega_{Y_0}^1(\log D)/\tau_{Y_0}^1) = 0$ and $H^1(Y_0; \Omega_{Y_0}^1(\log D)/\tau_{Y_0}^1)\cong H^2(Y_0;\C)/\bigoplus_i\C[D_i]$. Moreover $H^1(Y_0; \Omega_{Y_0}^1(\log D)/\tau_{Y_0}^1)\cong H^2(Y_0; \Omega_{Y_0}^1/\tau_{Y_0}^1)$.
\end{lemma}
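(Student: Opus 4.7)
The main strategy is to analyze the sheaf $\Omega^1_{Y_0}(\log D)/\tau^1$ via the residue exact sequence, exploiting the fact that $D\subset V_0$ is a normal crossings divisor contained in the smooth locus of $Y_0$ (since $D\cap D' = \emptyset$). This gives a short exact sequence of sheaves on $Y_0$:
\begin{equation*}
0 \to \Omega^1_{Y_0}/\tau_{Y_0}^1 \to \Omega^1_{Y_0}(\log D)/\tau_{Y_0}^1 \xrightarrow{\mathrm{Res}} \bigoplus_{j=1}^r \iota_{j*}\scrO_{D_j} \to 0,
\end{equation*}
with $\iota_j\colon D_j\hookrightarrow Y_0$ the inclusion. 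On the other components $V_i$, $i\geq 1$, both sheaves agree since $D\cap V_i=\emptyset$, so the sequence is trivially exact there; on $V_0$ it reduces to the standard residue sequence for the NC divisor $D$ in a smooth surface.

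Taking the associated long exact sequence in cohomology, and using that $H^0(\Omega^1_{Y_0}/\tau^1)=0$ by Corollary~\ref{Corollary7}(i), that $H^0(D_j;\scrO)=\C$, and that $H^p(D_j;\scrO)=0$ for $p\geq 1$ (as each $D_j\cong\Pee^1$), I extract
\begin{equation*}
0 \to H^0(\Omega^1(\log D)/\tau^1) \to \bigoplus_{j=1}^r\C \xrightarrow{\delta} H^1(\Omega^1/\tau^1) \to H^1(\Omega^1(\log D)/\tau^1) \to 0,
\end{equation*}
together with an isomorphism $H^2(\Omega^1/\tau^1)\cong H^2(\Omega^1(\log D)/\tau^1)$ from the tail.

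The connecting homomorphism $\delta$ sends $1\in H^0(\scrO_{D_j})$ to the Chern class $c_1(\scrO_{Y_0}(D_j))\in H^1(\Omega^1_{Y_0}/\tau^1)$, which under the identifications $H^1(\Omega^1_{Y_0}/\tau^1)\cong\Ker d\cong H^2(Y_0;\C)$ of Corollary~\ref{Corollary7}(i) and Lemma~\ref{Lemma3} corresponds to the class $[D_j]\in H^2(Y_0;\C)$, whose image in $\bigoplus_i H^2(V_i;\C)$ is $([D_j]|_{V_0},0,\dots,0)$. The essential input is that these $r$ classes are linearly independent in $H^2(V_0;\C)$: this holds because $D$ is the minimal resolution of a cusp singularity, so the Gram matrix $(D_i\cdot D_j)_{V_0}$ is negative definite and hence non-degenerate. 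Thus $\delta$ is injective, yielding $H^0(\Omega^1(\log D)/\tau^1)=0$ and $H^1(\Omega^1(\log D)/\tau^1)\cong H^2(Y_0;\C)/\bigoplus_j\C[D_j]$, proving the first two assertions.

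For the third assertion $H^1(\Omega^1(\log D)/\tau^1)\cong H^2(\Omega^1/\tau^1)$, I combine the tail isomorphism $H^2(\Omega^1/\tau^1)\cong H^2(\Omega^1(\log D)/\tau^1)$ just obtained with the vanishing $\Coker d=0$ promised by the remark following Lemma~\ref{Lemma3}. By Corollary~\ref{Corollary7}(i), $\Coker d=0$ forces $H^2(\Omega^1/\tau^1)=0$, and the second assertion then implies that $H^2(Y_0;\C)$ is spanned by the $[D_j]$, so $H^1(\Omega^1(\log D)/\tau^1)=0$ as well, and the asserted isomorphism holds (trivially, with both sides zero). The main obstacle is justifying the vanishing $\Coker d=0$, which reflects a global property of $Y_0$ (depending on $d$-semistability and the $S^2$ topology of the dual complex $\Gamma(Y_0)$) and is treated in subsequent parts of the paper.
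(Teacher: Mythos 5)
Your argument for the first two assertions is correct and, if anything, more explicit than the paper's: where the paper simply asserts that the connecting map $\bigoplus_i H^0(\scrO_{D_i})\to\Ker d$ is injective, you identify the image of $1\in H^0(\scrO_{D_j})$ as $[D_j]\in H^2(V_0;\C)$ and justify linear independence via the negative definiteness of the intersection matrix of the cycle $D$.

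The third assertion is where your proof breaks down. As a preliminary, the lemma as printed contains a typo: the intended claim is $H^2(Y_0;\Omega^1_{Y_0}(\log D)/\tau^1_{Y_0})\cong H^2(Y_0;\Omega^1_{Y_0}/\tau^1_{Y_0})$. This is exactly what the paper's final line of the proof establishes (``The last statement follows since $H^1(D_i;\scrO_{D_i})=H^2(D_i;\scrO_{D_i})=0$''), and it is exactly what Corollary~\ref{Cokerd} requires for the equivalence (i) $\iff$ (ii)$'$. You obtained this isomorphism already as the ``tail isomorphism,'' so the correct proof was in your hands. But taking the printed $H^1\cong H^2$ at face value and trying to show both sides vanish runs into two independent problems. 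First, the vanishing $\Coker d=0$ cannot be invoked here: the remark following Lemma~\ref{Lemma3} is only a forward announcement, and the actual proof (Proposition~\ref{lambdaprop}(ii)) routes through Corollary~\ref{Cokerd}, which in turn relies on the present lemma, so your argument would be circular. Second, the deduction ``$\Coker d=0$ forces $H^2(\Omega^1/\tau^1)=0$, and the second assertion then implies that $H^2(Y_0;\C)$ is spanned by the $[D_j]$'' is a non sequitur: $\Coker d=0$ is a statement about the surjectivity of $d$ and carries no information about $\Ker d\cong H^2(Y_0;\C)$, whose rank generally far exceeds $r=r(D)$. So $H^1(Y_0;\Omega^1_{Y_0}(\log D)/\tau^1_{Y_0})\cong H^2(Y_0;\C)/\operatorname{span}\{[D_i]\}$ is typically nonzero, and the literal $H^1\cong H^2$ is false as an unconditional statement.
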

\begin{proof} From the exact sequence 
$$\textstyle 0 \to \Omega_{Y_0}^1/\tau_{Y_0}^1 \to \Omega_{Y_0}^1(\log D)/\tau_{Y_0}^1 \to \bigoplus_i\scrO_{D_i} \to 0,$$
we get an exact sequence 
\begin{gather*}\textstyle 
H^1(Y_0; \Omega_{Y_0}^1/\tau_{Y_0}^1)=0 \to  H^1(Y_0;\Omega_{Y_0}^1(\log D)/\tau_{Y_0}^1 )\to \bigoplus_i H^0(\scrO_{D_i}) \\
\to H^1(Y_0; \Omega_{Y_0}^1/\tau_{Y_0}^1) =\Ker d\to H^1(Y_0; \Omega_{Y_0}^1(\log D)/\tau_{Y_0}^1) \to 0.
\end{gather*}
As the map $\bigoplus_i H^0(\scrO_{D_i}) \to  \Ker d\subseteq \bigoplus_jH^2(V_j;\C)$ is injective, 
$$H^0(Y_0; \Omega_{Y_0}^1(\log D)/\tau_{Y_0}^1) = 0$$
and $H^1(Y_0; \Omega_{Y_0}^1(\log D)/\tau_{Y_0}^1)\cong H^2(Y_0;\C)/\bigoplus_i\C[D_i]$. The last statement follows since $H^1(D_i; \scrO_{D_i}) = H^2(D_i; \scrO_{D_i}) = 0$. 
\end{proof}

We turn next to limiting cohomology. We have the relative log complex $\Lambda^*_{\mathcal{Y}/\Delta}= \Omega^*_{\mathcal{Y}/\Delta}(\log Y_0)$, and we denote the restriction of the relative log complex to $Y_0$ by $\Lambda^*_{Y_0}$. Note that, for $t\neq 0$,  the restriction to the  fiber $Y_t$ is just $\Omega^*_{Y_t}$. The complex $\Lambda^*_{\mathcal{Y}/\Delta}(\log \mathcal{D})= \Omega^*_{\mathcal{Y}/\Delta}(\log (Y_0+\mathcal{D}))$  is defined in the obvious way. Its restriction to $Y_t$ for $t\neq 0$ is  $\Omega^*_{Y_t}(\log D)$ and its restriction  to $Y_0$ will be denoted by $\Lambda^*_{Y_0}(\log D)$.
By \cite{Steenbrink} (see \cite[Theorem 11.16]{PetersSteenbrink}), the hypercohomology $\mathbb{H}^*(Y_0; \Lambda^*_{Y_0})$ may be identified with the cohomology of the canonical fiber  $Y_\infty = \mathcal{Y}\times _{\Delta^*}\widetilde{\Delta^*}$, where $\Delta^*$ is the punctured unit disk and $\widetilde{\Delta^*}$ is its universal cover. It follows that the sheaves $\mathbb{R}^q\pi_*\Lambda^*_{\mathcal{Y}/\Delta}$ are locally free. Minor modifications of the above arguments show that $\mathbb{H}^*(Y_0; \Lambda^*_{Y_0}(\log D))$ may be identified with the cohomology of    $Y_\infty -D_\infty= (\mathcal{Y}-\mathcal{D})\times _{\Delta^*}\widetilde{\Delta^*}$ and hence that the sheaves $\mathbb{R}^q\pi_*\Lambda^*_{\mathcal{Y}/\Delta}(\log \mathcal{D})$ are locally free.

We compute the cohomology of the sheaves $\Lambda^*_{Y_0}$ and $\Lambda^*_{Y_0}(\log D)$ as follows.

\begin{lemma}\label{somecalcs} $H^q(Y_0; \Lambda^p_{Y_0})$ and $H^q(Y_0; \Lambda^p_{Y_0}(\log D))$ are zero except in the following cases:
\begin{enumerate}
\item[\rm(i)] $p=0$, hence $\Lambda^0_{Y_0}= \Lambda^0_{Y_0}(\log D) = \scrO_{Y_0}$, and $q = 0$.
\item[\rm(ii)] $p=q =1$.
\item[\rm(iii)] $p=2$. In this case $\Lambda^2_{Y_0} =\omega_{Y_0} =\scrO_{Y_0}(-D)$ and $\Lambda^2_{Y_0}(\log D) = \scrO_{Y_0}$. Hence $H^2(Y_0; \Lambda^2_{Y_0}) \cong H^0(Y_0;\Lambda^2_{Y_0}(\log D))\cong \C$ and $H^q(Y_0; \Lambda^2_{Y_0})$ and $H^q(Y_0; \Lambda^2_{Y_0}(\log D))$ are zero in all other cases.
\end{enumerate}
\end{lemma}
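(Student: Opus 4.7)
The proof decomposes into three cases according to the value of $p$. For $p=0$ both $\Lambda^0_{Y_0}$ and $\Lambda^0_{Y_0}(\log D)$ are $\scrO_{Y_0}$ by definition, so Lemma~\ref{Lemma1} gives the claim. For $p=2$, the plan is first to identify the sheaves as line bundles on $Y_0$. Since $\pi$ is semistable, the restriction of $\Omega^n_{\mathcal{Y}/\Delta}(\log Y_0)$ to the reduced central fiber is the dualizing sheaf; combining this with condition (iii) of Definition~\ref{defTypeIII}, which gives $K_{\mathcal{Y}} = \scrO_{\mathcal{Y}}(-\mathcal{D})$, and the triviality of the normal bundle $\scrO_{\mathcal{Y}}(Y_0)|_{Y_0}$, adjunction will yield $\Lambda^2_{Y_0} = \omega_{Y_0} = \scrO_{Y_0}(-D)$ and $\Lambda^2_{Y_0}(\log D) = \omega_{Y_0}(D) = \scrO_{Y_0}$. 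The cohomology of $\scrO_{Y_0}$ is Lemma~\ref{Lemma1}, and $H^i(Y_0; \omega_{Y_0}) \cong H^{2-i}(Y_0; \scrO_{Y_0})^{\vee}$ will follow by Grothendieck--Serre duality on the Gorenstein variety $Y_0$.

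The main case is $p=1$, which I would handle by a dimension count using the Hodge-de Rham spectral sequence. By Steenbrink's theorem, the sequence $E_1^{p,q} = H^q(Y_0; \Lambda^p_{Y_0}) \Rightarrow H^{p+q}(Y_\infty; \C)$ degenerates at $E_1$, and by the local freeness of $\mathbb{R}^n\pi_*\Lambda^*_{\mathcal{Y}/\Delta}$ recalled just before the lemma, $\dim H^n(Y_\infty; \C) = \dim H^n(Y_t; \C)$ for $t \neq 0$ small. Consequently
$$\sum_{p+q=n} \dim H^q(Y_0; \Lambda^p_{Y_0}) = \dim H^n(Y_t; \C).$$
Taking $n=1$: $H^1(Y_t; \C) = 0$ (as $Y_t$ is a rational surface) and $H^1(\Lambda^0_{Y_0}) = 0$ by the first paragraph, so $H^0(\Lambda^1_{Y_0}) = 0$. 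Taking $n=3$: $H^3(Y_t; \C) = 0$ and $H^1(\Lambda^2_{Y_0}) = H^1(\omega_{Y_0}) = 0$, so $H^2(\Lambda^1_{Y_0}) = 0$.

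For the log version I would run the identical argument with $\Lambda^*_{Y_0}(\log D)$ in place of $\Lambda^*_{Y_0}$ and $Y_\infty - D_\infty$ in place of $Y_\infty$; this uses the log analog of Steenbrink's theorem (available because $\mathcal{D}$ is smooth and \'etale over $\Delta$) together with the local freeness of $\mathbb{R}^n\pi_*\Lambda^*_{\mathcal{Y}/\Delta}(\log\mathcal{D})$ noted before the lemma. It then remains only to verify that $H^1(Y_t - D_t; \C) = H^3(Y_t - D_t; \C) = 0$, which will follow from the long exact sequence of the pair $(Y_t, Y_t - D_t)$ combined with the Lefschetz duality isomorphisms $H^k(Y_t, Y_t - D_t; \Z) \cong H_{4-k}(D_t; \Z)$: the map $H_2(D_t) \to H_2(Y_t)$ is injective by the linear independence of the classes $[D_i]$ (a consequence of negative definiteness of the intersection matrix of the dual cusp cycle $D$), and $H_0(D_t) \to H_0(Y_t)$ is an isomorphism. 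The principal technical obstacle will be the careful verification of Steenbrink's $E_1$-degeneration and local freeness of the higher direct image sheaves, especially in the log-with-horizontal-boundary setting.
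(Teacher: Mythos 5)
Your handling of the cases $p=0$ and $p=2$ matches the paper's (both reduce to Lemma~\ref{Lemma1} via the identification of $\Lambda^2_{Y_0}$ with $\omega_{Y_0}$ and Serre duality), but your treatment of $p=1$ has a serious logical gap. You invoke ``Steenbrink's theorem'' for $E_1$-degeneration of the spectral sequence $E_1^{p,q}=H^q(Y_0;\Lambda^p_{Y_0})\Rightarrow \mathbb H^{p+q}(Y_0;\Lambda^*_{Y_0})$, and then derive the vanishings $H^0(\Lambda^1_{Y_0})=H^2(\Lambda^1_{Y_0})=0$ from the dimension count $\sum_{p+q=n}\dim H^q(\Lambda^p_{Y_0})=\dim H^n(Y_\infty;\C)$. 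But Steenbrink's degeneration theorem requires a K\"ahler (or at least weakly K\"ahler) hypothesis on the total space or on $Y_0$, and that hypothesis fails here: one component of $Y_0$ is the Inoue surface $V_0$, which is a non-K\"ahler class VII${}_0$ surface. What Steenbrink does give unconditionally is only the quasi-isomorphism identifying $\mathbb H^*(Y_0;\Lambda^*_{Y_0})$ with $H^*(Y_\infty;\C)$ (this is what the paper cites just before the lemma); the $E_1$-degeneration is a separate Hodge-theoretic statement. Without degeneration, the dimension count only gives the one-sided inequality $\sum_{p+q=n}\dim E_1^{p,q}\geq \dim \mathbb H^n$, which carries no information when the right side vanishes, so your conclusion does not follow.

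The circularity is sharpened by the logical structure of the paper: degeneration at $E_1$ for this spectral sequence is Corollary~\ref{subbundles}, which is proved \emph{after} and \emph{from} Lemma~\ref{somecalcs} by observing that once the $E_1$ page is known to be as sparse as the lemma asserts, there is no room for nonzero differentials. So in this non-K\"ahler setting the vanishings come first and the degeneration is a consequence, not an input. The paper's actual route for $p=1$ is a direct sheaf-theoretic computation: one uses the extension $0 \to \Omega^1_{Y_0}/\tau^1_{Y_0} \to \Lambda^1_{Y_0} \to K \to 0$ together with Corollary~\ref{Corollary7} (and an argument from \cite{Friedman1}) to get $H^0(\Lambda^1_{Y_0})=0$, then Serre duality (via the perfect pairing $\Lambda^1_{Y_0}\otimes\Lambda^1_{Y_0}\to\omega_{Y_0}$) to get $H^2(\Lambda^1_{Y_0})=0$, and then the Poincar\'e residue sequence plus an explicit injectivity check for the coboundary $\bigoplus_i H^0(\scrO_{D_i})\to H^1(Y_0;\Lambda^1_{Y_0})$, which tracks the Chern classes of $\scrO_{\mathcal Y}(V_i)|_{Y_0}$, to deal with the log case. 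You would need to substitute an argument of this flavor for your degeneration appeal before your proof could go through.
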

\begin{proof} The cases $p=0$, $p=2$, and hence (i) and (iii), follow easily from Lemma~\ref{Lemma1}.

To deal with the case $p=1$, let us first show that $H^0(Y_0;\Lambda^1_{Y_0}) =0$ (which is in fact already proved in \cite[Lemma (2.7)]{FriedmanMiranda}). Let $K$ be the kernel of the natural morphism
$$\textstyle (a_1)_*\left(\bigoplus_{i< j}\scrO_{C_{ij}}\right) \to (a_2)_*\left(\bigoplus_{i< j< k}\scrO_{p_{ijk}}\right).$$ 
By \cite[p.\ 268]{PetersSteenbrink} or \cite[Proposition (3.5)]{Friedman1}, there is an exact sequence
$$0 \to \Omega^1_{Y_0}/\tau^1_{Y_0} \to \Lambda^1_{Y_0} \to K \to 0.$$
Moreover $H^0(Y_0; \Omega^1_{Y_0}/\tau^1_{Y_0}) =0$ by Corollary~\ref{Corollary7}. The argument on p.\ 108 of \cite{Friedman1} then shows that $H^0(Y_0;\Lambda^1_{Y_0}) =0$. By Serre duality, $H^2(Y_0;\Lambda^1_{Y_0}) =0$ as well. This handles the case of $\Lambda^1_{Y_0}$.

Finally, we deal with $\Lambda^1_{Y_0}(\log D)$. First note that we have the usual Poincar\'e residue sequence
$$\textstyle 0 \to \Lambda^1_{Y_0} \to \Lambda^1_{Y_0}(\log D) \to  \bigoplus_i\scrO_{D_i}  \to 0.$$
It follows that $H^2(Y_0; \Lambda^1_{Y_0}(\log D)) \cong  H^2(Y_0; \Lambda^1_{Y_0}) =0$. Moreover, since  $H^0(Y_0;\Lambda^1_{Y_0}) =0$, it suffices to prove that the coboundary map 
$$\textstyle \delta \colon \bigoplus_iH^0(\scrO_{D_i})  \to H^1(Y_0;\Lambda^1_{Y_0})$$
 is injective. 
 
As before, let $K=\Ker (a_1)_*\left(\bigoplus_{i< j}\scrO_{C_{ij}}\right) \to (a_2)_*\left(\bigoplus_{i< j< k}\scrO_{p_{ijk}}\right)$. It follows easily that $H^0(Y_0; K) \cong \C^{f-1}$ and that $H^1(Y_0; K) \cong \C$. Then there is a commutative diagram whose top row is exact
 $$\begin{CD}
 H^0(Y;K) @>>> H^1(Y_0; \Omega^1_{Y_0}/\tau^1_{Y_0}) @>>> H^1(Y_0; \Lambda^1_{Y_0})\\
 @| @VVV @.\\
 \C^{f-1} @>>> \bigoplus_iH^1(V_i; \Omega^1_{V_i}) @. {}\\
 @. @VVV @.\\
 {} @. \bigoplus_{i< j}H^1(C_{ij}; \Omega^1_{C_{ij}}) @. {}
 \end{CD}$$
 The classes $[D_i]$ are linearly independent in $H^0(V_0; \Omega^1_{V_0})$ and map to $0$ in $\bigoplus_{i< j}H^1(C_{ij}; \Omega^1_{C_{ij}})$, since the $D_i$ are disjoint from the double curves $C_{ij}$. Moreover, the image of $H^0(Y_0; K) \cong \C^{f-1}$ in $H^1(Y_0; \Omega^1_{Y_0}/\tau^1_{Y_0})$ consists of the $\C$-span of the Chern classes of the $f$ line bundles $\scrO_{\mathcal{Y}}(V_i)\big{|}_{Y_0}$, subject to the relation 
 $$(\scrO_{\mathcal{Y}}(V_0)\big{|}_{Y_0}) \otimes \cdots \otimes (\scrO_{\mathcal{Y}}(V_{f-1})\big{|}_{Y_0}) = \scrO_{\mathcal{Y}}(Y_0)\big{|}_{Y_0} = \scrO_{Y_0}.$$
 In particular, the image of $\scrO_{\mathcal{Y}}(V_i)\big{|}_{Y_0}$ in $H^1(V_j; \Omega^1_{V_j})$ is $[C_{ij}]$ if $i\neq j$ and 
$-[C_i]$ if $i=j$. Then by the same argument as before, the classes $[D_i]$ are linearly independent in $H^0(V_0; \Omega^1_{V_0})$ modulo the image of $H^0(Y;K)$. It follows that their images in $H^1(Y_0; \Lambda^1_{Y_0})$ are linearly independent. Thus, $\delta$ is injective.
\end{proof}

We then have: 

\begin{corollary}\label{subbundles} {\rm (i)} The spectral sequence with $E_1$ term $H^q(Y_0;\Lambda^p_{Y_0})$ converging to $\mathbb{H}^{p+q}(Y_0;\Lambda^*_{Y_0})$ degenerates at $E_1$.

\smallskip
\noindent {\rm (ii)} The spectral sequence with $E_1$ term $H^q(Y_0;\Lambda^p_{Y_0}(\log D))$ and converging to $\mathbb{H}^{p+q}(Y_0;\Lambda^*_{Y_0}(\log D))$ degenerates at $E_1$.

\smallskip
\noindent {\rm (iii)} The sheaves $R^q\pi_*\Lambda^p_{\mathcal{Y}/\Delta}$ and $R^p\pi_*\Lambda^p_{\mathcal{Y}/\Delta}(\log \mathcal{D})$ are locally free, and are the successive quotients of a filtration of $\mathbb{R}^q\pi_*\Lambda^*_{\mathcal{Y}/\Delta}(\log \mathcal{D})$ by holomorphic subbundles.
\end{corollary}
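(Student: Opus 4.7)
The plan is to prove (i), (ii), and (iii) simultaneously by comparing the central fiber to nearby smooth fibers, leveraging the Steenbrink-type local freeness of $\mathbb{R}^n\pi_*\Lambda^*_{\mathcal{Y}/\Delta}$ and $\mathbb{R}^n\pi_*\Lambda^*_{\mathcal{Y}/\Delta}(\log \mathcal{D})$ noted in the paragraph immediately before Lemma \ref{somecalcs}. On a generic fiber $Y_t$, $t\neq 0$, the complex $\Lambda^*_{Y_0}(\log D)$ restricts to $\Omega^*_{Y_t}(\log D_t)$ on the smooth projective rational surface $Y_t$ with normal crossings boundary $D_t$, so Deligne's theory of mixed Hodge structures on open smooth varieties (respectively classical Hodge theory, with $D$ omitted) yields $E_1$-degeneration of the Hodge-to-de Rham spectral sequence on $Y_t$. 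Consequently, for $t\neq 0$,
$$\sum_{p+q=n}\dim H^q(Y_t; \Omega^p_{Y_t}(\log D_t)) = \dim H^n(Y_t - D_t; \C) = \dim \mathbb{H}^n(Y_0; \Lambda^*_{Y_0}(\log D)),$$
where the last equality uses local freeness of $\mathbb{R}^n\pi_*\Lambda^*_{\mathcal{Y}/\Delta}(\log \mathcal{D})$ together with Steenbrink's identification of its central fiber with $H^n(Y_\infty - D_\infty)$; the same chain of equalities holds without the log.

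The proofs of (i) and (ii) are then a squeeze. The abutment inequality
$$\sum_{p+q=n}\dim H^q(Y_0; \Lambda^p_{Y_0}(\log D)) \geq \dim \mathbb{H}^n(Y_0; \Lambda^*_{Y_0}(\log D))$$
(since $E_\infty$ is a subquotient of $E_1$) combines with termwise upper semicontinuity applied to the coherent sheaves $R^q\pi_*\Lambda^p_{\mathcal{Y}/\Delta}(\log \mathcal{D})$, giving
$$\dim H^q(Y_0; \Lambda^p_{Y_0}(\log D)) \geq \dim H^q(Y_t; \Omega^p_{Y_t}(\log D_t))$$
for $t$ near $0$. Summing and using the equality above forces all these inequalities to be equalities for every $(p,q)$ and every $n$. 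Equality at the abutment step is exactly $E_1$-degeneration, proving (ii); the identical argument with $\mathcal{D}$ omitted proves (i). For (iii), the termwise equality $\dim H^q(Y_0; \Lambda^p_{Y_0}(\log D)) = \dim H^q(Y_t; \Omega^p_{Y_t}(\log D_t))$ makes the fiber dimension of $R^q\pi_*\Lambda^p_{\mathcal{Y}/\Delta}(\log \mathcal{D})$ constant on $\Delta$, hence this sheaf is locally free by the standard base-change criterion, and likewise without the log. The stupid filtration $\Lambda^{\geq p}_{\mathcal{Y}/\Delta}(\log \mathcal{D})$ induces a filtration $F^\bullet$ on $\mathbb{R}^n\pi_*\Lambda^*_{\mathcal{Y}/\Delta}(\log \mathcal{D})$ whose successive quotients are precisely the $R^q\pi_*\Lambda^p_{\mathcal{Y}/\Delta}(\log \mathcal{D})$ once $E_1$-degeneration holds at every point of $\Delta$; since both the ambient sheaf and the graded pieces are locally free, so is each $F^p$, confirming the subbundle statement.

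The main obstacle is conceptual rather than computational: because $\mathcal{Y}$ is not projective and the component $V_0$ is non-K\"ahler, no direct Hodge-theoretic input is available on $Y_0$ itself. The argument sidesteps this by routing all Hodge theory through the smooth fibers $Y_t$ (which are projective rational surfaces) and through Steenbrink's nearby cycle construction, which only requires the log-smooth semistable structure of $\pi$ and not projectivity of the total space.
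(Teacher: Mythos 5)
Your argument for (i)--(ii) has a logical gap at the ``squeeze'' step. You establish two inequalities:
$$\sum_{p+q=n}\dim H^q(Y_0;\Lambda^p_{Y_0}(\log D)) \;\geq\; \dim\mathbb{H}^n(Y_0;\Lambda^*_{Y_0}(\log D))$$
from the abutment, and (after summing the termwise upper semicontinuity inequalities $\dim H^q(Y_0;\Lambda^p_{Y_0}(\log D))\geq\dim H^q(Y_t;\Omega^p_{Y_t}(\log D_t))$)
$$\sum_{p+q=n}\dim H^q(Y_0;\Lambda^p_{Y_0}(\log D)) \;\geq\; \sum_{p+q=n}\dim H^q(Y_t;\Omega^p_{Y_t}(\log D_t)) \;=\; \dim\mathbb{H}^n(Y_0;\Lambda^*_{Y_0}(\log D)).$$
These two inequalities point in the \emph{same} direction and share the same lower bound, so nothing forces either one to be an equality. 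You would need the reverse inequality --- e.g.\ that the individual $h^q$ do not jump up at $t=0$ --- to close the squeeze, but upper semicontinuity only says they can jump up, not that they don't. Indeed the standard argument (as in \cite[Corollary 11.24]{PetersSteenbrink}) proves local freeness of $R^q\pi_*\Lambda^p$ \emph{from} $E_1$-degeneration on \emph{every} fiber including the central one, not the other way round; trying to deduce central-fiber degeneration from generic-fiber degeneration by semicontinuity alone is circular. Your concern in the final paragraph is precisely the point: since $V_0$ is non-K\"ahler, one cannot import Hodge theory directly on $Y_0$, and your route does not succeed in sidestepping this.

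The paper's proof is much more direct. Lemma~\ref{somecalcs} computes the groups $H^q(Y_0;\Lambda^p_{Y_0})$ and $H^q(Y_0;\Lambda^p_{Y_0}(\log D))$ outright, using Serre duality on $Y_0$, the residue sequences, and (crucially) the explicit structure of the term $H^1(Y_0;\Lambda^1_{Y_0})$ involving the classes $[D_i]$. The computation shows the $E_1$ page is supported only at $(p,q)=(0,0),(1,1),(2,2)$ in the first case and at $(0,0),(1,1),(2,0)$ in the log case; no $d_r$, $r\geq 1$, can be nonzero purely for positional reasons. The content is thus in Lemma~\ref{somecalcs}, and the degeneration statement is a one-line corollary of it. Your step (iii) --- locally free hypercohomology sheaf plus degeneration on all fibers gives local freeness of each $R^q\pi_*\Lambda^p$ and a filtration by subbundles --- is correct and matches the paper, but it relies on (i) and (ii) which your argument does not establish.
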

\begin{proof} To see (i) and (ii), a glance at the $E_1$ pages of the spectral sequences shows that there are no nonzero differentials for any $E_r$, $r\geq 1$. The argument that (i) and (ii) $\implies$ (iii) is standard, given that the sheaves $\mathbb{R}^q\pi_*\Lambda^*_{\mathcal{Y}/\Delta}(\log \mathcal{D})$ are locally free (see for example the proof of \cite[Corollary 11.24]{PetersSteenbrink}).
\end{proof}

\begin{corollary}\label{Cokerd} Let $d$ be as in Lemma~\ref{Lemma3}. The following are equivalent:
\begin{enumerate}
\item[\rm(i)] $\Coker d = 0$. 
\item[\rm(ii)] $H^2(Y_0; \Omega^1_{Y_0}/\tau^1_{Y_0}) = 0$.
\item[\rm(ii)$'$] $H^2(Y_0; \Omega^1_{Y_0}/\tau^1_{Y_0}(\log D)) = 0$.
\item[\rm(iii)] The image of the specialization map $\operatorname{sp}_\C\colon H^2(Y_0; \C) \to H^2(Y_t; \C)$ has codimension one.
\item[\rm(iii)$'$] The image of the specialization map $\operatorname{sp}_\C\colon H^2(Y_0-D_0; \C) \to H^2(Y_t-D_t; \C)$ has codimension one. 
\item[\rm(iv)] The specialization map $\operatorname{sp}_\C\colon H^2(Y_0; \C) \to H^2(Y_t; \C)$ is not surjective.
\item[\rm(iv)$'$] The specialization map $\operatorname{sp}_\C\colon H^2(Y_0-D_0; \C) \to H^2(Y_t-D_t; \C)$ is not surjective.
\end{enumerate}
\end{corollary}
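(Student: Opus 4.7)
The equivalence (i) $\iff$ (ii) is immediate from Corollary \ref{Corollary7}(i), which identifies $H^2(Y_0; \Omega^1_{Y_0}/\tau^1_{Y_0})$ with $\Coker d$. For (ii) $\iff$ (ii)$'$, the plan is to invoke the Poincar\'e residue sequence
$$0 \to \Omega^1_{Y_0}/\tau^1_{Y_0} \to \Omega^1_{Y_0}(\log D)/\tau^1_{Y_0} \to \bigoplus_i \scrO_{D_i} \to 0$$
and observe that, since each $D_i \cong \Pee^1$, the higher cohomology $H^j(\scrO_{D_i})$ vanishes for $j \geq 1$, so the induced map $H^2(\Omega^1_{Y_0}/\tau^1_{Y_0}) \to H^2(\Omega^1_{Y_0}(\log D)/\tau^1_{Y_0})$ is an isomorphism.

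The heart of the argument is to extract from the short exact sequence $0 \to \Omega^1_{Y_0}/\tau^1_{Y_0} \to \Lambda^1_{Y_0} \to K \to 0$ appearing in the proof of Lemma \ref{somecalcs}, together with the computations $H^0(K) = \C^{f-1}$, $H^1(K) = \C$, $H^2(K) = 0$ recorded there and the vanishings $H^0(\Lambda^1_{Y_0}) = H^2(\Lambda^1_{Y_0}) = 0$ from Lemma \ref{somecalcs}, the long exact cohomology sequence
$$0 \to \C^{f-1} \to \Ker d \to H^1(Y_0; \Lambda^1_{Y_0}) \to \C \to \Coker d \to 0.$$
In particular $\Coker d$ is a quotient of $\C$, so $\dim \Coker d \in \{0,1\}$. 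Using the $E_1$-degenerations of Corollary \ref{subbundles} and the vanishings in Lemma \ref{somecalcs} and Corollary \ref{Corollary7}, both $H^2(Y_0;\C)$ and $H^2(Y_t;\C)$ have a single nonzero Hodge graded piece, equal to $\Ker d$ and $H^1(Y_0; \Lambda^1_{Y_0})$, respectively, so $\operatorname{sp}_\C$ is identified with the middle arrow of the displayed sequence. Its cokernel then has dimension $1 - \dim \Coker d$, which yields (i) $\iff$ (iii) $\iff$ (iv).

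For the log analogues, the plan is to establish the parallel short exact sequence
$$0 \to \Omega^1_{Y_0}(\log D)/\tau^1_{Y_0} \to \Lambda^1_{Y_0}(\log D) \to K \to 0$$
with the \emph{same} sheaf $K$. This follows by applying the snake lemma to the diagram whose rows are the Poincar\'e residue sequences along $D$ for the inclusions $\Omega^1_{Y_0}/\tau^1_{Y_0} \hookrightarrow \Omega^1_{Y_0}(\log D)/\tau^1_{Y_0}$ and $\Lambda^1_{Y_0} \hookrightarrow \Lambda^1_{Y_0}(\log D)$: since $D$ lies in the smooth locus of $Y_0$ and is disjoint from the double curves $C_{ij}$, both residue quotients equal $\bigoplus_i \scrO_{D_i}$, and the induced map on cokernels is the identity on $K$. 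Repeating the hypercohomology computation, now using Lemma \ref{Lemma1.9} and Corollary \ref{subbundles}(ii) in place of their non-log counterparts, yields the identical formula $\dim \Coker(\operatorname{sp}'_\C) = 1 - \dim \Coker d$, giving (ii)$'$ $\iff$ (iii)$'$ $\iff$ (iv)$'$.

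The main technical obstacle I anticipate is the precise identification of the specialization maps $\operatorname{sp}_\C$ and $\operatorname{sp}'_\C$ with the hypercohomology maps induced by the complex morphisms $\Omega^*_{Y_0}/\tau^*_{Y_0} \to \Lambda^*_{Y_0}$ and its log version. This rests on the Steenbrink identification $\mathbb{H}^*(Y_0; \Lambda^*_{Y_0}) \cong H^*(Y_\infty; \C)$ recalled in the excerpt, combined with the fortunate circumstance that $H^2$ has a single nonzero Hodge filtration graded piece in both cases, so that no extension issues interfere with reading off the map on that graded piece.
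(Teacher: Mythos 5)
Your treatment of (i)~$\iff$~(ii), of (ii)~$\iff$~(ii)$'$ via the Poincar\'e residue sequence, and of (i)~$\iff$~(iii)~$\iff$~(iv) via the long exact sequence of $0 \to \Omega^1_{Y_0}/\tau^1_{Y_0} \to \Lambda^1_{Y_0} \to K \to 0$ matches the paper's proof, with the LES written out starting at $H^0$ rather than at $H^1$; the observation that $H^2(Y_0;K)=0$ because $K$ is supported on the double curves is correct and fills in a small omission. The derivation of the parallel sequence $0 \to \Omega^1_{Y_0}(\log D)/\tau^1_{Y_0} \to \Lambda^1_{Y_0}(\log D) \to K \to 0$ by the snake lemma applied to the two Poincar\'e residue sequences is a sound way to make precise what the paper leaves as ``similar.''

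There is, however, a genuine gap in the last step of your argument for (ii)$'\iff$(iii)$'\iff$(iv)$'$. You assert that in the log case too ``$H^2$ has a single nonzero Hodge filtration graded piece.'' This is false for the target: by Lemma~\ref{somecalcs}(iii), $\Lambda^2_{Y_0}(\log D) \cong \scrO_{Y_0}$ and $H^0(Y_0;\Lambda^2_{Y_0}(\log D)) \cong \C$, so $\mathbb{H}^2(Y_0;\Lambda^*_{Y_0}(\log D)) \cong H^2(Y_t-D_t;\C)$ has \emph{two} nonzero $E_1$ graded pieces, namely $H^1(Y_0;\Lambda^1_{Y_0}(\log D))$ and the extra $\C$ in bidegree $(2,0)$. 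Consequently the identification of $\operatorname{sp}'_\C$ with the middle arrow of the log LES is not the trivial observation it was in the non-log case: one has to track what happens in $F^2$, where the source $\mathbb{H}^2(\Omega^*_{Y_0}(\log D)/\tau^*_{Y_0})$ has $F^2 = H^0(Y_0;\Omega^2_{Y_0}(\log D)/\tau^2_{Y_0})$ while the target has $F^2 = \C$. As written, your reduction of the codimension of $\operatorname{im}\,\operatorname{sp}'_\C$ to the cokernel of $H^1(\Omega^1(\log D)/\tau^1) \to H^1(\Lambda^1(\log D))$ is therefore not justified. The paper itself only asserts the log equivalences are ``similar'' without supplying details, so on this point your writeup is not less rigorous than the paper's, but the explicit claim about a single nonzero graded piece in the log case should be withdrawn, and the argument needs an additional step accounting for the bidegree-$(2,0)$ contribution.
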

\begin{proof} (i) $\iff$ (ii) follows from Corollary~\ref{Corollary7}. The equivalence (i) $\iff$ (ii)$'$ follows from the last statement of Lemma~\ref{Lemma1.9}. Next, we have an exact sequence defined in the proof of Lemma~\ref{somecalcs}:
$$
H^1(Y_0; \Omega^1_{Y_0}/\tau^1_{Y_0})  \to H^1(Y_0;\Lambda^1_{Y_0}) \to H^1(Y_0; K)  
 \to  H^2(Y_0; \Omega^1_{Y_0}/\tau^1_{Y_0})\to 0,
$$
using the fact that $H^1(Y_0;\Lambda^1_{Y_0}) =0$.
Moreover  $\dim H^1(Y_0; K) = 1$, again by the proof of  Lemma~\ref{somecalcs}. Thus $H^2(Y_0; \Omega^1_{Y_0}/\tau^1_{Y_0}) = 0$ $\iff$ the homomorphism $H^1(Y_0;\Lambda^1_{Y_0}) \to H^1(Y_0; K)$ is surjective $\iff$ the image of the homomorphism $H^1(Y_0; \Omega^1_{Y_0}/\tau^1_{Y_0}) \to H^1(Y_0;\Lambda^1_{Y_0})$ has codimension one $\iff$ the homomorphism $H^1(Y_0; \Omega^1_{Y_0}/\tau^1_{Y_0}) \to H^1(Y_0;\Lambda^1_{Y_0})$ is not surjective. Finally,  $H^1(Y_0; \Omega^1_{Y_0}/\tau^1_{Y_0}) \cong H^2(Y_0; \C)$ and $H^1(Y_0;\Lambda^1_{Y_0}) \cong H^2(Y_t; \C)$, and we can identify the homomorphism $H^1(Y_0; \Omega^1_{Y_0}/\tau^1_{Y_0}) \to H^1(Y_0;\Lambda^1_{Y_0})$ with the specialization map (compare \cite[\S11.3]{PetersSteenbrink}). This proves the equivalence of (ii) with (iii) and (iv), and the equivalence of (ii)$'$ with (iii)$'$ and (iv)$'$ is similar. 
\end{proof} 

\subsection{Smoothings of Type III pairs}

Let $(Y_0, D)$ be a $d$-semistable Type III anticanonical pair. For simplicity, we shall just consider the case $r(D) > 1$ in what follows, i.e.\ all components $D_i$ of $D$ are smooth. The case where $D$ is irreducible is handled by minor modifications of the arguments. Let $D_i^2 = -d_i$. The tangent space 
for the  deformation functor of $Y_0$, keeping the divisor $D$ with normal crossings, is given by 
$$\mathbb{T}^1_{Y_0}(-\log D) = \Ext^1(\Omega^1_{Y_0}(\log D), \scrO_{Y_0}),$$
and there is the usual local to global exact sequence
\begin{gather*}
0 \to H^1(Y_0; T_{Y_0}(-\log D)) \to \mathbb{T}^1_{Y_0}(-\log D) \to H^0(Y_0; T^1_{Y_0}) \\
\to H^2(Y_0; T_{Y_0}(-\log D)) \to \mathbb{T}^2_{Y_0}(-\log D) \to H^1(Y_0; T^1_{Y_0}).
\end{gather*}

The term $H^1(Y_0; T_{Y_0}(-\log D))$ is the tangent space to the functor of locally trivial deformations. By $d$-semistability,  $T^1_{Y_0} \cong \scrO_{(Y_0)_{\text{sing}}}$.  The functor of locally trivial deformations is unobstructed,  the map $\mathbb{T}^1_{Y_0}(-\log D) \to H^0(Y_0; T^1_{Y_0})$ is surjective, and the map $\mathbb{T}^2_{Y_0}(-\log D) \to H^1(Y_0; T^1_{Y_0})$ is an isomorphism  because of the following:

\begin{lemma} $H^2(Y_0; T_{Y_0}(-\log D)) =0$.
\end{lemma}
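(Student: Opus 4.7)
My plan is to resolve $T_{Y_0}(-\log D)$ by sheaves supported on the strata of $Y_0$ and reduce the vanishing to computations on each stratum. Using a construction analogous to Friedman's exact sequence for $T_{Y_0}$ (modified by the log structure along $D$), one has a short exact sequence of the form
$$0 \to T_{Y_0}(-\log D) \to \bigoplus_i (a_0)_* T_{V_i}(-\log(C_i+D_i)) \to \bigoplus_{i<j}(a_1)_* \mathcal{N}_{ij} \to 0,$$
where $\mathcal{N}_{ij}$ is a line bundle on the double curve $C_{ij}$ capturing the failure of gluing of log vector fields from $V_i$ and $V_j$ across $C_{ij}$. The associated long exact sequence reduces the claim to two vanishings: $H^2(V_i; T_{V_i}(-\log(C_i+D_i))) = 0$ for every $i$, and $H^1(C_{ij}; \mathcal{N}_{ij}) = 0$ for every double curve. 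The second is a routine degree computation on $\Pee^1$ (or a nodal rational curve) using the triple-point formula of Definition~\ref{defTypeIII}(iv), together with $d$-semistability.

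For $i \geq 1$ the surface $V_i$ is a smooth rational surface and $(V_i, C_i+D_i)$ is an anticanonical pair. I would use the short exact sequence
$$0 \to T_{V_i}(-\log(C_i+D_i)) \to T_{V_i} \to \bigoplus_C N_{C/V_i} \to 0,$$
where $C$ ranges over the components of $C_i+D_i$. Since $H^2(V_i; T_{V_i}) = 0$ on any smooth rational surface, it suffices to show that $H^1(T_{V_i}) \to \bigoplus_C H^1(C; N_{C/V_i})$ is surjective. This is the statement that every first-order deformation of the boundary normal bundles is induced by a deformation of $(V_i, C_i+D_i)$, which is standard for anticanonical pairs (cf.\ the deformation theory in \cite{Friedman2}).

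The remaining case $i=0$ is the Hirzebruch--Inoue component $V_0$, with $-K_{V_0} = D + D'$. Since $V_0$ is non-K\"ahler, Serre duality on a rational surface is unavailable. Here I would use Inoue's explicit description \cite{Inoue} of $V_0$ as a $T^2$-fibration over an annulus whose boundary circles yield the two anticanonical cycles $D$ and $D'$, combined with the identification of $\mathbf{Def}_{\overline{\overline{V}}_0}$ with the product of the deformation functors of the two cusp singularities (Looijenga, as recalled in the introduction), to deduce that all first-order deformations of $V_0$ preserving both cycles are unobstructed, and hence that $H^2(V_0; T_{V_0}(-\log(D+D'))) = 0$. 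Alternatively, one can argue directly \`a la Lemma~\ref{Lemma1.9} using the Mayer--Vietoris decomposition of $V_0$ along $D + D'$ and the vanishing $H^0(\Omega^2_{V_0}) = 0$.

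The main obstacle is precisely this last step on $V_0$: one cannot invoke Hodge-theoretic shortcuts, so the vanishing must be extracted either from the explicit topology of the Inoue surface or from the known deformation theory of cusp singularities. Once this is established, the rest of the argument is a formal assembly via the exact sequence of the first paragraph.
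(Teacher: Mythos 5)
Your approach is genuinely different from the paper's and, as written, has several real problems.

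First, the proposed short exact sequence
$$0 \to T_{Y_0}(-\log D) \to \bigoplus_i (a_0)_* T_{V_i}(-\log(C_i+D_i)) \to \bigoplus_{i<j}(a_1)_* \mathcal{N}_{ij} \to 0$$
is not a standard sequence, and its exactness is not obvious. In particular the surjectivity on the right and the identification of the cokernel need care: a derivation of $\scrO_{Y_0}$ is a compatible collection of log vector fields on the $V_i$, where ``compatible'' imposes conditions along the $C_{ij}$ \emph{and} at the triple points $p_{ijk}$. A priori there is a further term supported on the triple points, so you may have a two-step resolution rather than a short exact sequence, and the later degree count does not go through without addressing this. (Also note that for $i\geq 1$ one has $V_i\cap D=\emptyset$, so the relevant log sheaf is just $T_{V_i}(-\log C_i)$; the $D_i$ summand is only present for $i=0$.)

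Second, the statement that ``Since $V_0$ is non-K\"ahler, Serre duality on a rational surface is unavailable'' is incorrect. Serre duality holds on any compact complex manifold; the K\"ahler hypothesis is irrelevant here. What fails on Inoue--Hirzebruch surfaces is Hodge symmetry and the $\partial\bar\partial$-lemma, not Serre duality. In fact the cleanest proof of $H^2(V_0;T_{V_0}(-\log(D+D')))=0$ is by Serre duality on $V_0$, using $K_{V_0}=-(D+D')$, reducing to a vanishing of $H^0$ of a log cotangent sheaf.

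Third, the proposed fallback argument for $V_0$ — inferring $H^2(V_0;T_{V_0}(-\log(D+D')))=0$ from unobstructedness of deformations — is a logical gap. Unobstructedness of a deformation functor means the obstruction map to the relevant $H^2$ (or $\mathbb{T}^2$) vanishes; it does not imply that the obstruction space itself is zero. (Compare: a K3 surface has unobstructed deformations yet $H^2(T_X)\cong\C$.) So this inference is not valid.

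The paper's own proof is much shorter and avoids the component-by-component decomposition entirely: by \cite[(2.10)]{Friedman1}, $T_{Y_0}(-\log D)$ is Serre dual on the compact complex space $Y_0$ to $(\Omega^1_{Y_0}(\log D)/\tau^1_{Y_0})\otimes\omega_{Y_0}=(\Omega^1_{Y_0}(\log D)/\tau^1_{Y_0})(-D)$, whose global sections inject into $H^0(Y_0;\Omega^1_{Y_0}(\log D)/\tau^1_{Y_0})$; and the latter vanishes by Lemma~\ref{Lemma1.9}. This sidesteps both of your problem areas: there is no need to produce a resolution over the strata, and no need to handle the Inoue component separately.
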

\begin{proof} By \cite[(2.10)]{Friedman1}, $H^2(Y_0; T_{Y_0}(-\log D))$ is Serre dual to 
$$H^0(Y_0; (\Omega^1_{Y_0}(\log D)/\tau_{Y_0}^1)\otimes \omega_{Y_0}) = H^0(Y_0; (\Omega^1_{Y_0}(\log D)/\tau_{Y_0}^1)(-D)).$$
But   $H^0(Y_0; (\Omega^1_{Y_0}(\log D)/\tau_{Y_0}^1) (-D))\subseteq H^0(Y_0; \Omega^1_{Y_0}(\log D)/\tau_{Y_0}^1)$, and 
$$H^0(Y_0; \Omega^1_{Y_0}(\log D)/\tau_{Y_0}^1) = 0$$ by Lemma~\ref
{Lemma1.9}.
\end{proof}

Note that the long exact $\Ext$ sequence associated to the exact sequence
$$0 \to \Omega^1_{Y_0} \to \Omega^1_{Y_0}(\log D) \to \bigoplus_i\scrO_{D_i} \to 0$$
gives the usual sequence 
\begin{gather*}\textstyle 
0= \bigoplus_iH^0(D_i; N_{D_i/Y_0} ) \to \mathbb{T}^1_{Y_0}(-\log D) \to \mathbb{T}^1_{Y_0} \to \\ \textstyle \to \bigoplus_iH^1(D_i; N_{D_i/Y_0} ) \to \mathbb{T}^2_{Y_0}(-\log D) \to \mathbb{T}^2_{Y_0} \to  0.
\end{gather*}
Moreover $\mathbb{T}^2_{Y_0}(-\log D) \to \mathbb{T}^2_{Y_0}$ is an isomorphism, as both are identified with $H^1(Y_0; T^1_{Y_0})$.  

Fixing a nowhere zero section $\xi$ of $H^0(Y_0; T^1_{Y_0})$, Lie bracket defines a surjective homomorphism $T_{Y_0}(-\log D) \to T^1_{Y_0}$ whose kernel we denote by $S_{Y_0}(-\log D)$. By the proof of \cite[(4.4)]{Friedman1}, the dual $(S_{Y_0}(-\log D))\spcheck$ is isomorphic to $\Lambda^1_{Y_0}(\log D)$, which can be defined independently of the existence of a smoothing of $Y_0$ or of the pair $(Y_0,D)$.

\begin{lemma}\label{evenmorecalcs} {\rm{(i)}} In the above notation, $H^2(Y_0; S_{Y_0}(-\log D))= 0$.
 
\smallskip 
\noindent {\rm{(ii)}} There is an exact sequence
\begin{gather*}
H^0(Y_0; T^1_{Y_0})\cong \C \to H^1(Y_0; S_{Y_0}(-\log D)) \to H^1(Y_0; T_{Y_0}(-\log D))\\
 \to H^1(Y_0; T^1_{Y_0})\cong  H^1((Y_0)_{\text{\rm{sing}}};\scrO_{(Y_0)_{\text{\rm{sing}}}})\to 0.
 \end{gather*}
 
 \smallskip 
\noindent {\rm{(iii)}} The image of $H^1(Y_0; S_{Y_0}(-\log D))$ in $ H^1(Y_0; T_{Y_0}(-\log D))$  is the tangent space to the locally trivial deformations of $(Y_0, D)$ which are $d$-semistable.

\smallskip 
\noindent {\rm{(iv)}} The image of $H^1(Y_0; S_{Y_0}(-\log D))$ in $ H^1(Y_0; T_{Y_0}(-\log D))$ has codimension $f-1$. 

\smallskip 
\noindent {\rm{(v)}} There is an exact sequence 
$$\textstyle 0 \to H^1(Y_0; S_{Y_0}(-\log D)) \to H^1(Y_0; S_{Y_0})\to \bigoplus_iH^1(D_i; N_{D_i/Y_0}) \to 0,$$
and hence the image of $H^1(Y_0; S_{Y_0}(-\log D))$ in $H^1(Y_0; S_{Y_0})$ has codimension $\sum_i(d_i-1)$.
\end{lemma}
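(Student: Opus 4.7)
The strategy is to feed two short exact sequences of sheaves into their long exact cohomology sequences, with the vanishing in part (i) providing the key input that propagates through the other statements.

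For (i), I would apply Serre duality (using $\omega_{Y_0}\cong\scrO_{Y_0}(-D)$) together with the identification $(S_{Y_0}(-\log D))\spcheck\cong\Lambda^1_{Y_0}(\log D)$ quoted in the excerpt, to reduce to showing $H^0(Y_0;\Lambda^1_{Y_0}(\log D)(-D))=0$. Since $\Lambda^1_{Y_0}(\log D)(-D)$ injects into $\Lambda^1_{Y_0}(\log D)$ and the latter has vanishing $H^0$ by Lemma~\ref{somecalcs}, this is immediate. Part (ii) then follows by writing out the long exact cohomology sequence of the defining sequence $0\to S_{Y_0}(-\log D)\to T_{Y_0}(-\log D)\to T^1_{Y_0}\to 0$, which terminates on the right thanks to (i); the identification $H^0(T^1_{Y_0})\cong\C$ comes from $T^1_{Y_0}\cong\scrO_{(Y_0)_{\text{sing}}}$ combined with connectedness of $(Y_0)_{\text{sing}}$, which follows because the $1$-skeleton of the sphere $\Gamma(Y_0)$ is connected.

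For (iii), the plan is to invoke the standard interpretation: $H^1(T_{Y_0}(-\log D))$ parameterizes locally trivial first-order deformations of the pair $(Y_0,D)$, and the edge map to $H^1(T^1_{Y_0})$ records the obstruction to extending the chosen nowhere vanishing section $\xi$ to a section of $T^1$ of the deformed pair. A locally trivial deformation preserves $d$-semistability precisely when this extension exists, so the tangent space to $d$-semistable locally trivial deformations is the kernel, which by (ii) equals the image of $H^1(S_{Y_0}(-\log D))$. For (iv) I would compute $\dim H^1(T^1_{Y_0})=h^1(\scrO_{(Y_0)_{\text{sing}}})$ from the normalization sequence
$$0\to\scrO_{(Y_0)_{\text{sing}}}\to\bigoplus_{i<j}\scrO_{C_{ij}}\to\bigoplus_{i<j<k}(\C^2)_{p_{ijk}}\to 0,$$
noting that three branches meet at each triple point and contribute a cokernel $\C^2$. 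With each smooth rational $C_{ij}$ satisfying $\chi(\scrO_{C_{ij}})=1$, this yields $\chi(\scrO_{(Y_0)_{\text{sing}}})=\epsilon-2\phi$, where $\epsilon$ and $\phi$ are the numbers of edges and $2$-faces of $\Gamma(Y_0)$. Combining with the spherical relations $f-\epsilon+\phi=2$ and $2\epsilon=3\phi$ gives $\chi=2-f$, and since $h^0=1$ by connectedness, $h^1=f-1$.

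Finally for (v), I would apply the snake lemma to the comparison of the Poincar\'e residue sequence $0\to T_{Y_0}(-\log D)\to T_{Y_0}\to\bigoplus_i N_{D_i/Y_0}\to 0$ with the identity on $T^1_{Y_0}$, using that $D$ lies in the smooth locus of $Y_0$ and is therefore disjoint from the support of $T^1_{Y_0}$, so that the induced map $\bigoplus_i N_{D_i/Y_0}\to T^1_{Y_0}$ vanishes. This produces the short exact sequence $0\to S_{Y_0}(-\log D)\to S_{Y_0}\to\bigoplus_i N_{D_i/Y_0}\to 0$. Its long exact cohomology sequence, combined with $H^0(\Pee^1,\scrO(-d_i))=0$ (since $d_i\geq 2$) on the left and the vanishing from (i) on the right, yields the claimed exact sequence, with codimension $\sum_i h^1(N_{D_i/Y_0})=\sum_i(d_i-1)$. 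The main technical obstacle will be the deformation-theoretic interpretation in (iii), which rests on the machinery from \cite{Friedman1}; the remaining parts are essentially diagram chasing once (i) is in hand.
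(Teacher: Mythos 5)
Your proposal is correct and follows essentially the same route as the paper's proof: Serre duality together with $(S_{Y_0}(-\log D))\spcheck\cong\Lambda^1_{Y_0}(\log D)$ and Lemma~\ref{somecalcs} for (i), the long exact sequence of $0\to S_{Y_0}(-\log D)\to T_{Y_0}(-\log D)\to T^1_{Y_0}\to 0$ together with \cite{Friedman1} for (ii) and (iii), the combinatorial identities $v-e+f=2$ and $2e=3v$ on $\Gamma(Y_0)$ for (iv), and the cohomology of $0\to S_{Y_0}(-\log D)\to S_{Y_0}\to\bigoplus_i N_{D_i/Y_0}\to 0$ with the vanishing from (i) for (v). You merely spell out a few steps the paper compresses (the snake-lemma derivation of the normal-bundle sequence in (v) and the explicit Euler characteristic computation in (iv), which the paper cites to \cite{FriedmanScattone}).
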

\begin{proof} (i) By Serre duality, we have
$$H^2(Y_0; S_{Y_0}(-\log D))\spcheck \cong H^0(Y_0; (S_{Y_0}(-\log D))\spcheck \otimes \scrO_{Y_0}(-D)).$$ 
Furthermore, 
\begin{align*}
H^0(Y_0; (S_{Y_0}(-\log D))\spcheck \otimes \scrO_{Y_0}(-D)) &\cong H^0(Y_0; \Lambda^1_{Y_0}(\log D)\otimes \scrO_{Y_0}(-D))\\
&\subseteq H^0(Y_0; \Lambda^1_{Y_0}(\log D)).
\end{align*}
By Lemma~\ref{somecalcs}, $H^0(Y_0; \Lambda^1_{Y_0}(\log D))=0$, hence $H^2(Y_0; S_{Y_0}(-\log D))= 0$ as well. The exact sequence in (ii) is then the long exact  cohomology sequence associated to
$$0\to S_{Y_0}(-\log D) \to T_{Y_0}(-\log D) \to T^1_{Y_0} \to 0.$$

\smallskip 
\noindent (iii) This follows from \cite[(4.5)]{Friedman1}.

\smallskip 
\noindent (iv) The codimension in question is 
$$h^1(Y_0, T^1_{Y_0}) = h^1(\scrO_{(Y_0)_{\text{sing}}}) =  \dim \Pic (Y_0)_{\text{sing}}.$$  The proof that $h^1(\scrO_{(Y_0)_{\text{sing}}}) = f-1$  follows easily from the identities $2e=3v$ and $v-e+f=2$ \cite[p.\ 25]{FriedmanScattone}.

\smallskip 
\noindent (v) The exact sequence in the first statement of (v) follows by taking the cohomology long exact sequence associated to
$$\textstyle 0 \to S_{Y_0}(-\log D) \to S_{Y_0} \to \bigoplus_iN_{D_i/Y_0} \to 0$$
and using (i). The final statement follows since $N_{D_i/Y_0}$ is a line bundle on $D_i$ of degree $-d_i < 0$.
\end{proof}

We now analyze the smoothings of the pair $(Y_0, D)$, following the method of \cite{Friedman1}  (although  we could also use the arguments of \cite{KawamataNamikawa}). 

\begin{theorem} Let $(Y_0, D)$ be a $d$-semistable Type III anticanonical pair. Then the pair $(Y_0, D)$ is smoothable. More precisely, there is a unique smoothing component $(X,0)$ of the locally semi-universal deformation of the pair $(Y_0, D)$, and moreover:
\begin{enumerate}
\item[\rm(i)] $X$ is smooth and the discriminant locus in $X$ is a smooth hypersurface.
\item[\rm(ii)] $\dim X = \dim \mathbb{T}^1_{Y_0}(-\log D)-f+1$. 
\item[\rm(iii)] Given a  germ of a  map of the disk $(\Delta,0)$ to $(X,0)$, the pulled back family $(\mathcal{Y}, \mathcal{D}) \to \Delta$ is a Type III degeneration of anticanonical pairs, i.e.\ the total space is smooth $\iff$ the map $\Delta \to X$ is transverse to the discriminant locus.
\end{enumerate}
\end{theorem}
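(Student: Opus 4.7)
The plan is to adapt the $T^1$-lifting strategy of \cite{Friedman1} for smoothing $K3$ surfaces to the pair setting. First, I would construct a first-order smoothing of $(Y_0,D)$. By $d$-semistability, $T^1_{Y_0}\cong\scrO_{(Y_0)_{\text{sing}}}$ admits a nowhere-vanishing global section $\xi\in H^0(Y_0;T^1_{Y_0})\cong\C$. Combining the local-to-global $\Ext$ sequence with $H^2(Y_0;T_{Y_0}(-\log D))=0$ yields a surjection $\mathbb{T}^1_{Y_0}(-\log D)\twoheadrightarrow H^0(Y_0;T^1_{Y_0})$; any preimage $\sigma$ of $\xi$ is a first-order deformation simultaneously smoothing every double point of $Y_0$.

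Second, I would extend $\sigma$ inductively to a formal smoothing. Fixing the section $\xi$ determines a splitting $T_{Y_0}(-\log D)\twoheadrightarrow T^1_{Y_0}$ (via the Lie-bracket construction behind Lemma~\ref{evenmorecalcs}), and $S_{Y_0}(-\log D)$ is its kernel. The obstructions to lifting the smoothing at each successive order then lie in $H^2(Y_0;S_{Y_0}(-\log D))$, which vanishes by Lemma~\ref{evenmorecalcs}(i); this is the pair analogue of the $K3$ argument in \cite{Friedman1}. Thus $\sigma$ lifts to all orders, producing a formal smoothing over $\Spec\C[[t]]$, which by Grauert's theorem comes from an analytic smoothing $\pi\colon(\mathcal{Y},\mathcal{D})\to\Delta$; this yields the smoothing component $(X,0)$. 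Its uniqueness follows because the first-order smoothings form the complement of the hyperplane $H^1(Y_0;T_{Y_0}(-\log D))$ in $\mathbb{T}^1_{Y_0}(-\log D)$, hence a connected open set, and every such first-order smoothing extends to an actual one by the same vanishing.

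For the dimension and the remaining assertions: the tangent space $T_0X$ decomposes as $\C\sigma$ plus the image of $H^1(Y_0;S_{Y_0}(-\log D))$ in $H^1(Y_0;T_{Y_0}(-\log D))$, the latter being the tangent space to $d$-semistable locally trivial deformations of the pair by Lemma~\ref{evenmorecalcs}(iii). This image has codimension $f-1$ in $H^1(Y_0;T_{Y_0}(-\log D))$ by Lemma~\ref{evenmorecalcs}(iv), so
$$\dim T_0 X = \dim H^1(Y_0;T_{Y_0}(-\log D)) - (f-1) + 1.$$
Combined with $\dim\mathbb{T}^1_{Y_0}(-\log D)=\dim H^1(Y_0;T_{Y_0}(-\log D))+1$, which follows from the local-to-global sequence and $H^2(T_{Y_0}(-\log D))=0$, this proves (ii). The unobstructedness established above implies $X$ is smooth, proving (i). The discriminant is cut out in $X$ by the single linear functional dual to $\sigma$ modulo the image of $H^1(S_{Y_0}(-\log D))$, and is therefore a smooth hypersurface. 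For (iii), a pulled-back family over $(\Delta,0)$ has smooth total space precisely when the derivative of $\Delta\to X$ at $0$ has nonzero $\sigma$-component, which is exactly transversality to the discriminant.

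The main obstacle is justifying the second step carefully: verifying that the obstructions to order-by-order lifting of the smoothing genuinely lie in $H^2(Y_0;S_{Y_0}(-\log D))$ rather than in the larger space $\mathbb{T}^2_{Y_0}(-\log D)\cong H^1(Y_0;T^1_{Y_0})$. This requires adapting the $T^1$-lifting technique of \cite{Friedman1} to the pair setting with careful attention to the logarithmic structure along $D$, making precise how the Lie-bracket splitting by $\xi$ reduces the effective obstruction space for smoothings, as opposed to arbitrary deformations, to $H^2(S_{Y_0}(-\log D))$.
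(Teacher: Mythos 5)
Your approach is genuinely different from the paper's, and while it is a reasonable program, the step you yourself flag as the ``main obstacle'' is not a minor technical point but is precisely the part the paper's argument is designed to avoid.

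The paper does not attempt to redo the logarithmic obstruction theory for the pair $(Y_0,D)$. Instead, it first invokes \cite[(5.10)]{Friedman1} to get the smoothing component $(\widehat{X},0)$ of the \emph{surface} $Y_0$ alone, with $\widehat{X}$ smooth. It then intersects $\widehat{X}$ with the loci $Z_i$ over which each component $D_i$ of $D$ extends, where $Z_i$ is (the image of) the relevant component of the relative Hilbert--Douady scheme of curves in the universal family over $\widehat{X}$. An Euler-characteristic computation using $0 \to N_{D_i/Y_0} \to N_{D_i/\widehat{\mathcal{X}}} \to \scrO_{D_i}\otimes T_{\widehat{X},0} \to 0$ bounds $\operatorname{codim} Z_i \leq d_i - 1$, while the tangent space bound from Lemma~\ref{evenmorecalcs}(v) gives the reverse inequality for $\bigcap_i Z_i$; equality forces $X=\bigcap_i Z_i$ to be smooth of codimension exactly $\sum_i(d_i-1)$ in $\widehat{X}$ and forces $T_{X,0}\to H^0(Y_0;T^1_{Y_0})$ to be surjective. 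This delivers smoothability of the pair, smoothness of $X$, the dimension count, and smoothness of the discriminant, all at once, without ever lifting $\sigma$ order by order.

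Your plan instead requires showing that, after fixing the nowhere-vanishing section $\xi$ of $T^1_{Y_0}$, the obstruction to extending a smoothing of $(Y_0,D)$ to each successive order lands in $H^2(Y_0;S_{Y_0}(-\log D))=0$ rather than in $\mathbb{T}^2_{Y_0}(-\log D)\cong H^1(Y_0;T^1_{Y_0})\cong\C^{f-1}$, which does \emph{not} vanish. That reduction is exactly the delicate core of \cite[(5.10)]{Friedman1} (or of \cite{KawamataNamikawa} in the log setting), and it is not something one gets from the local-to-global sequence alone: one has to work with the mapping cone / twisted log complex construction that produces $S_{Y_0}(-\log D)$ as an honest controlling object for smoothings keeping $T^1$ trivialized, rather than simply noting that $S_{Y_0}(-\log D)$ sits inside $T_{Y_0}(-\log D)$. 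The paper explicitly acknowledges that this alternative route exists (``we could also use the arguments of \cite{KawamataNamikawa}''), but chooses not to take it. So your outline is morally consistent with a known alternative proof, but as written it omits the one step that actually requires work; you would need to either carry out the log $T^1$-lifting argument in detail or cite \cite{KawamataNamikawa}. You should also be aware that your final sentence about (iii) — total space smooth $\iff$ transverse to discriminant — needs the local statement that a transverse one-parameter family has, near the double curve, the local model $xy=t$ (or $xyz=t$ near triple points), which is where $d$-semistability and the nowhere-vanishing of $\xi$ enter; the paper leaves this as ``straightforward'' but it is worth saying.
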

\begin{proof} By the arguments of \cite[(5.10)]{Friedman1}, there is a germ of a smoothing component $(\widehat{X},0)$ of the surface $Y_0$. Here $0$ corresponds to the surface $Y_0$,   $\widehat{X}$ is smooth at $0$ and its  tangent space at $0$ is given by the exact sequence
$$0\to \C  \to H^1(Y_0; S_{Y_0} ) \to T_{\widehat{X},0} \to H^0(Y_0; T^1_{Y_0})\cong \C \to 0.$$
Let $\widehat{\mathcal{X}} \to \widehat{X}$ be the corresponding family.
For each component $D_i$ of $D$, consider the component $\mathcal{Z}_i$  of the relative Hilbert-Douady scheme of curves in $\widehat{\mathcal{X}}$ containing $D_i$. We have the following exact sequence for the normal bundle of $D_i$ in $\widehat{\mathcal{X}}$:
$$0 \to N_{D_i/Y_0} \to N_{D_i/\widehat{\mathcal{X}}} \to \scrO_{D_i}\otimes_\C T_{\widehat{X},0} \to 0.$$
 Hence, taking  Euler characteristics, we have
 $$\chi(Y_0; N_{D_i/\widehat{\mathcal{X}}}) = \chi(Y_0; N_{D_i/Y_0}) + \dim \widehat{X} = \dim \widehat{X} -d_i + 1.$$
 The dimension of $\mathcal{Z}_i$ at the point corresponding to component $D_i$ is at least $\chi(Y_0; N_{D_i/\widehat{\mathcal{X}}})$.  Since $D_i^2< 0$, the corresponding Hilbert scheme over a point $x$ of $\widehat{X}$ is either empty or smooth of dimension zero. It follows that (as germs) $\mathcal{Z}_i$ can be identified with its image $Z_i \subseteq \widehat{X}$, and that $Z_i$ is a smooth submanifold of $\widehat{X}$ of codimension at most $d_i -1$. Setting $X = \bigcap _iZ_i$, $X$ is a subspace of $\widehat{X}$ and every component of $X$ has codimension at most $\sum_i(d_i-1)$. On the other hand, clearly the Zariski tangent space of $X$ is contained in the subspace $T'$ of $T_{\widehat{X},0}$ where the normal crossings divisor $D =\sum_iD_i$ deforms to first order. There is an exact sequence
 $$0\to \C  \to H^1(Y_0; S_{Y_0}(-\log D )) \to T' \to H^0(Y_0; T^1_{Y_0})\cong \C .$$
Comparing the above sequence with that in the first paragraph of the proof for $T_{\widehat{X},0}$ and using  Lemma~\ref{evenmorecalcs}, we see that the codimension of $T'$ in $T_{\widehat{X},0}$ is either $\sum_i(d_i-1)$ or $\sum_i(d_i-1)+1$, and the first case arises $\iff$ the map $T' \to H^1(Y_0; T^1_{Y_0})$ is surjective. By the discussion on $X$ above, we see that $X$ is smooth, $\dim X = \dim \widehat{X} - \sum_i(d_i-1)$, $T_{X,0}=T'$, and the homomorphism $T_{X,0} \to H^0(Y_0; T^1_{Y_0})$ is surjective.   The proofs of the remaining statements of the theorem are then straightforward. 
\end{proof}

\section{Monodromy}

\subsection{A formula for the monodromy}

We keep the notation of the previous section: $\pi \colon  (\mathcal{Y}, \mathcal{D}) \to \Delta$ is a Type III degeneration of anticanonical pairs. Let $(Y,D) = (Y_t, D_t)$ be a general fiber of $\pi$. Then the monodromy diffeomorphism  of the family acts on $H_2(Y; \Z)$, $H_2(Y-D;\Z)$, and on $\overline{H}_2(Y-D, \partial)$. Our goal is to analyze this action, primarily on $H_2(Y-D;\Z)$.

We begin with a discussion of the topology of $\mathcal{Y}$. Let $c\colon Y=Y_t \to Y_0$ be the Clemens collapsing map. For each triple point $p\in Y_0$, $c^{-1}(p)=\tau_p$ is a $2$-torus in $Y$. The argument of \cite[Lemma (1.9)]{FriedmanScattone} shows that, for every $p$ and $q$, the $2$-tori $\tau_p$ and $\tau_q$ are homologous in $Y-D$ (up to sign). Denote this common homology class (well-defined up to sign) by $\tau$. Next, let $\alpha=\alpha_{ij}^k$ be a (real) simple closed curve in a double curve $C_{ij}$ simply enclosing the triple point $p=p_{ijk}$; we can assume that $\alpha$ is contained in a small disk around $p$. An examination of the local form for $c$ (see for example \cite[\S2.2]{Persson}) shows that $c^{-1}(\alpha) =\tau_p$ is homologous to $\tau$.

Explicitly, there exist local analytic coordinates centered in a ball of radius $s$ at $p$ such that, for $|t|\ll s$, $\pi$ is given by $xyz=t$. We may model the collapsing map $c$ as follows: Given $(x,y,z)$ such that $xyz=t$, let $(a,b,c)\in \R_{\geq 0}^3$ be the unique point such that $abc=0$ and $(a,b,c)=(|x|,|y|,|z|)-(k,k,k)$ for some $k\in \R$. Then the Clemens collapse is $$c(x,y,z)=\left(\frac{ax}{|x|},\frac{by}{|y|},\frac{cz}{|z|}\right).$$ Thus, $$\tau_p=c^{-1}(0,0,0)=\{(x,y,z)\colon |x|=|y|=|z|=|t|^{1/3}\}.$$ Restrict $(x,y,z)$ to lie in the polydisk $|x|,|y|,|z|\leq s$. There is a topologically trivial two-torus fibration $(x,y,z)\mapsto (|x|,|y|,|z|)$ whose image is a two-simplex $\sigma_p$ and whose fibers are homologous to $\tau_p$. The curve $\alpha$ can be taken to be $(0, 0, re^{i\theta})$ and thus there is some $r'>0$ such that $$c^{-1}(\alpha)=\{(x,y,z)\colon|x|=|y|=r', |z|=|t|/(r')^2\}.$$ Hence $c^{-1}(\alpha)$ and $\tau_p$ are homologous.

Next suppose that $\alpha_{0j} \subseteq C_{0j}$ is a loop around a triple point contained in $V_0$. Then $c^{-1}(\alpha_{0j}) = c^{-1}(\tilde\alpha_{0j})$, where $\tilde \alpha_{0j} \subseteq V_0$ is the tube over $\alpha_{0j}$ in $V_0 -C_0 -D$. By the explicit description of the topology of $V_0$ (see \cite[\S4]{Inoue}), $\tilde \alpha_{0j}$ is equal to $\gamma$, where $\gamma\subseteq V_0 -C_0 -D$ is the tube over a simple closed curve in $D_i$ enclosing a double point as in Section 1. Identifying the $2$-torus $\gamma$ in $V_0$ with $c^{-1}(\gamma)$ in $Y$, we see that, in the above notation, $\tau$ and $c^{-1}(\alpha_{0j})$ are homologous to $\gamma$. In particular, they are homologous to $0$ in $H_2(Y;\Z)$, but are nontrivial in $H_2(Y-D;\Z)$. 

Similarly, for $i$ arbitrary (possibly $0$), if $\alpha_{ij}$ is a loop in a double curve $C_{ij}$ simply enclosing the triple point $p_{ijk}$, then we can define the class $\tau  \in H_2(V_i - C_i; \Z)$. 

\begin{theorem} With notation as in Section 1,
\begin{enumerate}
\item[\rm(i)] Let $T$ be the action of monodromy on $H_2(Y; \Z)$, $H_2(Y-D;\Z)$, or $\overline{H}_2(Y-D, \partial)$. Then $T$ is unipotent and, if $N = T-\Id$, then $N^2 =0$.
\item[\rm(ii)] For the action of $T$ on $H_2(Y; \Z)$, $T=\Id$ and $N=0$.
\item[\rm(iii)] There is a unique class $\lambda \in \Lambda\spcheck$ such that, for all $x\in \overline{H}_2(Y-D;\Z)$, 
$$N(x) = -(x\bullet \lambda)\gamma.$$
Moreover, $\lambda^2 = v$, where $v$ is the number of triple points.
\end{enumerate}
\end{theorem}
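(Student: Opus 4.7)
My plan is to deduce (i) from (ii) via the behavior of monodromy on $\Z\gamma$, extract $\lambda$ from the resulting factorization of $N$, and then compute $\lambda^2=v$ as the principal step.

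For (ii), the claim $T=\Id$ on $H_2(Y;\Z)\cong H^2(Y;\Z)$ rests on the rationality of $Y$ and the concrete description of the Clemens collapsing map $c\colon Y\to Y_0$. Since $Y$ is a rational surface, every class in $H^2(Y;\Z)=\Pic Y$ is algebraic, and after Poincar\'e duality is represented by cycles assembled from proper transforms of curves on the components $\tilde V_i$ together with the fiber torus class; by Corollary~\ref{Corollary2} we have $\Pic\mathcal{Y}\cong H^2(\mathcal{Y};\Z)\cong H^2(Y_0;\Z)$, and the argument is to show that modulo the vanishing torus every $H_2$-class on $Y$ is represented by a cycle pushed forward from a fixed stratum of $Y_0$, which is tautologically monodromy-invariant. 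Given (ii), part (i) drops out of the exact sequence $0\to\Z\gamma\to H_2(Y-D;\Z)\to\Lambda\to 0$: the monodromy fixes $\gamma$ because $\gamma$ is homologous in $Y-D$ to the vanishing torus $\tau_p=c^{-1}(p)=\{|x|=|y|=|z|=|t|^{1/3}\}$ over any triple point, which is manifestly preserved by the circular monodromy around $0\in\Delta$, and $T$ is trivial on $\Lambda\subseteq H_2(Y;\Z)$ by (ii). Hence $N$ carries $\Lambda$ into $\Z\gamma$ and annihilates $\Z\gamma$, yielding $N^2=0$ and a unique $\lambda\in\Lambda\spcheck$ with $N(x)=-(x\bullet\lambda)\gamma$; the dual sequence handles $\overline{H}_2(Y-D,\partial)$, giving the existence portion of (iii).

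The real work is $\lambda^2=v$. I would attempt two approaches in parallel. The analytic route uses the Steenbrink limit mixed Hodge structure: on the $E_1$-page of the weight spectral sequence, $N$ is realized as a cup product with an explicit class assembled from the triple points, so the self-pairing of $\lambda$ in $\Lambda\spcheck\otimes\Q$ reduces to an enumeration of triple points. The geometric route, closer in spirit to later sections, invokes the almost toric Lagrangian fibration $\mu\colon(X,D,\omega)\to\Gamma(Y_0)$ of Proposition~\ref{extend} together with the diffeomorphism $X\cong Y_t$ (forthcoming in Section~3.3) under which $[\omega]\leftrightarrow\lambda$ and the fiber class $\leftrightarrow\gamma$; then
\[ \lambda^2 = \int_X \omega^2 = 2\cdot\mathrm{Area}(\Gamma(Y_0)) = v, \]
since $\Gamma(Y_0)$ is triangulated into $v$ basis triangles of area $1/2$. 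The main obstacle is rigor within this section without circularity: the symplectic computation invokes the diffeomorphism $X\cong Y_t$ and the identification $[\omega]=\lambda$ both established later in the paper, so a self-contained proof here requires either the limit MHS computation or a direct Picard--Lefschetz summation, writing the global $T$ as a product of local contributions $T_p$ at each triple point $p$ and verifying that each contributes $+1$ to $\lambda^2$ with no cross-terms between distinct triple points.
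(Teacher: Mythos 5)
Your proposal outlines the right structure for (i) and the existence half of (iii) — filtration $0\to\Z\gamma\to H_2(Y-D;\Z)\to\Lambda\to 0$, monodromy fixing $\gamma$ by identifying it with the vanishing torus, etc. — and this matches the paper. But there are two genuine gaps.

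First, your argument for (ii). You want to show $T=\Id$ on $H_2(Y;\Z)$ by representing every class by a cycle pushed forward from $Y_0$, modulo the vanishing torus (which is null-homologous in $Y$). This step fails: by Proposition~\ref{lambdaprop}, the cokernel of the specialization map $\operatorname{sp}\colon H^2(Y_0;\Z)\to H^2(Y_t;\Z)$ is a free $\Z$ of rank one, so the image of cycles from $Y_0$ (together with $\gamma=0$) spans only a corank-one sublattice of $H_2(Y;\Z)$. The missing direction is precisely $\lambda$, and proving $T(\lambda)=\lambda$ this way would be circular. The paper instead invokes Clemens \cite[Theorem~5.6]{Clemens}, observing that the tori $\tau_p$ and the classes $c^{-1}(\alpha)$ over loops around triple points are all null-homologous in $Y$, so the hypotheses of that theorem force $N=0$ on $H^2(Y;\Z)$ directly, with no need for a surjectivity claim about $\operatorname{sp}$.

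Second, the computation $\lambda^2=v$. You correctly flag the symplectic route as circular and propose a ``direct Picard--Lefschetz summation'' expressing $T$ as a product of local contributions $T_p$. This is close to the right idea but the crucial device is missing. Since $N^2=0$ on $H_2(Y-D;\Z)$, you cannot read $\lambda^2$ off a second power of $N$. The paper's trick is to bring in the \emph{dual} monodromy $\hat T$ on $\overline{H}_2(Y-D,\partial)$: letting $\hat\gamma$ map to a generator of the quotient $\Z$ in $0\to\Lambda\spcheck\to\overline{H}_2(Y-D,\partial)\to\Z\to 0$, the compatibility $(Tx\bullet\hat Ty)=(x\bullet y)$ gives $\hat N(\hat\gamma)=\lambda\bmod\Z\gamma$, so the composite $\overline N\circ\hat N$ acts as multiplication by $-\lambda^2$ on $\gamma$. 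Then the local analysis of Clemens \cite[Theorem~4.4]{Clemens} near each triple point gives $\overline N\circ\hat N(y)=-v(\gamma\bullet y)\gamma$, and comparing the two yields $\lambda^2=v$. Without this mixed composition $\overline N\circ\hat N$ (replacing the vanishing $N^2$), the ``each triple point contributes $+1$'' heuristic cannot be made into a proof, and the limit-MHS alternative you mention is left entirely unworked.
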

\begin{proof} Since $\pi\colon \mathcal{Y}\to \Delta$ is a degeneration with reduced normal crossings, general results (for example \cite{Clemens}) show that $T$ is unipotent on $H_2(Y; \Z)$. Since $T([D_i]) = [D_i]$, $T$ induces a unipotent automorphism of $\Lambda$ as well. For $H_2(Y-D;\Z)$, $T$ preserves the filtration 
$$0 \to \Z\gamma \to  H_2(Y-D;\Z) \to \Lambda \to 0,$$
and is unipotent on the graded pieces (in fact it is $\Id$ on $\Z\gamma$) and hence is unipotent on $H_2(Y-D;\Z)$. A similar argument handles $\overline{H}_2(Y-D, \partial)$. We will deal with the index of unipotency shortly.

We can then apply \cite[Theorem 5.6]{Clemens} to conclude that $N=0$ on $H^2(Y;\Z)$. In that notation,
$s=0$ since the $\tau_p$ are homologous to $0$ in $H^2(Y;\Z)$, and the classes denoted by $\psi(\gamma')$ there are all of the form $c^{-1}(\alpha)$ for $\alpha$  a simple closed curve in a double curve $C_{ij}$ simply enclosing a triple point $p$, hence as we have seen they are all homologous to $0$ in our case. Thus $N=0$.

It follows that the induced automorphism $T$ of $\Lambda$ is $\Id$ as well. Thus, for the action of $T$ on $H_2(Y-D;\Z)$, if $N= T-\Id$, then $\im N \subseteq \Z\gamma$ and thus $N$ is necessarily of the form $N(x) = -(x\bullet \lambda)\gamma$ for a unique class $\lambda \in \Lambda\spcheck$ (with the understanding that $N(x)$ is defined for all $x\in H_2(Y-D; \Z)$ via the homomorphism $H_2(Y-D; \Z) \to \Lambda$), and hence $N^2 =0$. 

We finally prove that $\lambda^2 = v$. The idea of the proof is similar to the argument of \cite[Proposition (1.10)]{FriedmanScattone}, but is complicated by the fact that we cannot simply consider $N^2$ because $N^2=0$. Instead, we will use  the dual action of monodromy  $\hat{T}$   on $\overline{H}_2(Y-D, \partial)$. Let $\hat{N} = \hat{T}-\Id$.  We have the exact sequence $$0 \to \Lambda\spcheck \to \overline{H}_2(Y-D, \partial) \to \Z \to 0.$$
Let $\hat{\gamma}$ denote an element of $\overline{H}_2(Y-D, \partial)$ which maps onto a generator of the quotient $\Z$ above, i.e.\ $(\gamma \bullet \hat{\gamma}) = 1$.
By duality $\hat{N}(\Lambda\spcheck) = 0$ and $\hat{N}(\hat{\gamma}) \in \Lambda\spcheck$, so that $\hat{N}^2 =0$. 

\begin{lemma} $\hat{N}(\hat{\gamma}) = \lambda \bmod \Z\gamma$.
\end{lemma}
\begin{proof} Since monodromy is a diffeomorphism, for all $x\in H_2(Y-D)$ and for all $y\in \overline{H}_2(Y-D, \partial)$, we have 
$$(Tx\bullet \hat{T}y) = (x\bullet y).$$
Using $T = \Id + N$ and $\hat{T} = \Id + \hat{N}$, we have
$$(x\bullet y) = (Tx\bullet \hat{T}y) = ((\Id + N)x\bullet (\Id + \hat{N})y),$$
and thus 
$$(Nx\bullet y) + (x \bullet \hat{N}y) =  -(Nx\bullet \hat{N}y) = 0,$$
since $Nx\in \Z\gamma$ is orthogonal to the image of $\hat{N}$. In particular, taking  $y =\hat{\gamma}$, we see that, for all $x\in   H_2(Y-D)$,
$$(Nx\bullet \hat{\gamma}) = - (x\bullet \lambda) = -(x \bullet \hat{N}\hat{\gamma}),$$
and hence $\hat{N}(\hat{\gamma}) = \lambda \bmod \Z\gamma$.
\end{proof}

\begin{corollary}\label{Nsquared1} If $\overline{N}$ denotes the map $\Lambda_\Q \to \Q\gamma$ induced by $N$, then
$$\overline{N}\circ \hat{N}(\hat{\gamma}) = -(\lambda^2)\gamma.$$
\end{corollary} 
\begin{proof} This is clear since, by definition, 
 $\overline{N}\circ \hat{N}(\hat{\gamma}) = -(\lambda \bullet \lambda) \gamma$. 
\end{proof}

To relate this to the number $v$ of triple points, the essentially local arguments used in the proof of \cite[Theorem 4.4]{Clemens}  show:

\begin{lemma}\label{Nsquared2} For all $y\in \overline{H}_2(Y-D, \partial)$,
$$\overline{N}\circ \hat{N}(y) = -v(\gamma \bullet y)\gamma.\qed$$
\end{lemma}

Combining Corollary~\ref{Nsquared1} with Lemma~\ref{Nsquared2}, we see that 
 $$\overline{N}\circ \hat{N}(\hat{\gamma}) = -(\lambda^2)\gamma = -v(\gamma \bullet \hat{\gamma})\gamma= -v\gamma,$$
 and hence $\lambda^2 = v$ as claimed.
\end{proof}


\subsection{Further properties of $\lambda$}

To say more about the class $\lambda$, we describe a geometric representative $\Sigma$ for $\lambda$. For each double curve $C_{ij}$, choose a simple closed curve $\ell_{ij}$ connecting the two triple points $p_{ijk}$, $p_{ij\ell}$ lying in $C_{ij}$. Let $C_{ij}^0$ be the complement in $C_{ij}$ of two small open disks enclosing the two triple points on $C_{ij}$ and let $\ell_{ij}^0=\ell_{ij}\cap C_{ij}^0$ be the result of truncating $\ell_{ij}$ near each of the triple points. Then $T_{ij}=c^{-1}(\ell_{ij}^0)=\ell_{ij}^0 \times f_{ij}$ is a cylinder, where $f_{ij}$ is the fiber of the Clemens collapsing map over $C_{ij}^0$. Near the triple point $p_{ijk}$ we have the local model $xyz=t$ and a torus fibration $(x,y,z)\rightarrow (|x|,|y|,|z|)$. As above, we choose an appropriate neighborhood of $p_{ijk}$ so that the image of this torus fibration is a $2$-simplex $\sigma_p$. The sum $[f_{ij}]+[f_{jk}]+[f_{ki}]=0$ is zero in the homology of $\sigma_p\times \tau_p$---for instance, \begin{align*} f_{ij}=c^{-1}(\epsilon,0,0) & = \{(x,y,z)\colon x=\epsilon',\,|y|=|z|\} \\  f_{ki}=c^{-1}(0,\epsilon,0) & = \{(x,y,z)\colon y=\epsilon',\,|x|=|z|\} \\  f_{jk}=c^{-1}(0,0,\epsilon) & = \{(x,y,z)\colon z=\epsilon',\,|x|=|y|\} \end{align*} are three representatives which sum to zero in $H_1(\sigma_p\times \tau_p;\Z)$. Thus, there is a pair of pants $S_{ijk}$ whose boundary is $\pm (f_{ij}+f_{jk}+ f_{ki})$, depending on the choice of orientation. We may patch together the cylinders $T_{ij}$ and the pairs of pants $S_{ijk}$ to form a closed surface $\Sigma$ in $Y_t - D_t$. 

\begin{lemma}\label{orientable} The surface $\Sigma$ is orientable.
\end{lemma}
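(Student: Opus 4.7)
The plan is to construct an explicit orientation on $\Sigma$ by exploiting the fact that $\Gamma(Y_0) \cong S^2$ is orientable. I fix a global orientation of $S^2$; then each triangular face $f_{ijk}$ inherits a cyclic ordering $(i,j,k)$ of its vertices, and, as is standard for an oriented triangulation of $S^2$, any two adjacent faces induce opposite orientations on their shared edge. I also orient each fiber circle $f_{ij}$ coherently along $C_{ij}^0$, for instance via the complex structure of $Y$ (as the boundary of a small complex normal disk to $C_{ij}$); this makes every cylinder $T_{ij} = \ell_{ij}^0 \times f_{ij}$ into an oriented product.

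Next I orient each pair of pants $S_{ijk}$. The local analysis at the triple point $p_{ijk}$ (in the coordinates $xyz = t$ quoted in the paper) gives a homology relation $[f_{ij}] + [f_{jk}] + [f_{ki}] = 0$ in $H_1(\tau_p;\Z)$, after possibly reversing the orientations of a subset of the $f_{ij}$---an adjustment I would absorb globally at the outset. The pair of pants admits two possible orientations, and they induce opposite cyclic orders on its three boundary circles; I would choose the orientation of $S_{ijk}$ whose induced cyclic order on $(f_{ij}, f_{jk}, f_{ki})$ agrees with the cyclic order of the corresponding edges around $f_{ijk}$ coming from the chosen orientation of $\Gamma(Y_0)$.

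The key compatibility check is the following. At each edge $e_{ij}$ of $\Gamma(Y_0)$, the two incident faces $f_{ijk}$ and $f_{ij\ell}$ induce opposite orientations on $e_{ij}$, so by the prescription above the pairs of pants $S_{ijk}$ and $S_{ij\ell}$ induce opposite orientations on their common boundary circle $f_{ij}$. Since the product cylinder $T_{ij}$ has opposite induced orientations on its two boundary circles, all the gluings are orientation-preserving, and $\Sigma$ inherits a consistent global orientation. The main obstacle I anticipate is the careful bookkeeping of sign conventions: one must verify, in the local model $xyz = t$ near each triple point, that the abstract cyclic order of the edges around $f_{ijk}$ in $\Gamma(Y_0)$ really does correspond---under the above prescription---to the cyclic order induced on $(f_{ij}, f_{jk}, f_{ki})$ by the chosen orientation of $S_{ijk}$, and that the reversals of the $f_{ij}$ needed to make the local relation take the symmetric form $[f_{ij}] + [f_{jk}] + [f_{ki}] = 0$ at every triple point can be made globally in a manner compatible with the orientations of the cylinders.
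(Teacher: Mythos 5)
Your approach is genuinely different from the paper's.  The paper orients a single fiber torus $\tau_p$, propagates that orientation to every triple point using the simple connectivity of $\Gamma(Y_0)$, and then orients each $f_{ij}^k$ by the rule that $\alpha_{ij}^k\times f_{ij}^k$ (with $\alpha_{ij}^k$ given the complex orientation of $C_{ij}$) reproduces the orientation of $\tau_p$.  With this coherent choice one gets $\partial S_{ijk}=-(f_{ij}^k+f_{jk}^i+f_{ki}^j)$ for a suitable orientation of $S_{ijk}$ and $f_{ij}^\ell=-f_{ij}^k$, so the pieces glue.  You instead want to feed in the orientability of $\Gamma(Y_0)\cong S^2$ directly via cyclic orders of the triangles.

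There is, however, a genuine gap.  Your mechanism for orienting $S_{ijk}$ rests on the claim that ``the pair of pants admits two possible orientations, and they induce opposite cyclic orders on its three boundary circles.''  This is false: the orientation-preserving mapping class group of a pair of pants realizes the full symmetric group $S_3$ on the set of boundary circles (for instance, $\operatorname{PSL}(2,\C)$ acts triply transitively on $\Pee^1$ by orientation-preserving maps, so any permutation of three marked points on $S^2$ extends to an orientation-preserving homeomorphism).  In particular transpositions are realized orientation-preservingly, so an orientation of a pair of pants does not single out a cyclic order on its boundary circles, and the prescription ``choose the orientation of $S_{ijk}$ whose induced cyclic order agrees with that of $f_{ijk}$'' is not well defined.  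The compatibility check at each edge therefore does not go through as stated.  The secondary worry you flag at the end---whether the sign reversals of the $f_{ij}$ needed to make the relation $[f_{ij}]+[f_{jk}]+[f_{ki}]=0$ hold simultaneously at all triple points can be made globally---is in fact the heart of the matter, and it is exactly what the paper's propagation-via-simple-connectivity argument resolves.  Your underlying idea could be repaired: use the orientation of $\Gamma(Y_0)$ to orient each $\sigma_p$, and divide the complex orientation of $Y_t$ by the normal orientation $\sigma_p$ to get a consistent orientation of each $\tau_p$; this reduces to the paper's bookkeeping with orientability of $\Gamma(Y_0)$ in the role played there by simple connectivity.  But as written the key step does not hold.
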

\begin{proof} Fix a triple point $p=p_{ijk}$ and fix once and for all an orientation on $\tau_p$. This orients the classes $\tau \in H_2(V_i-C_i)$, and similarly for $H_2(V_j-C_j)$ and $H_2(V_k-C_k)$. Using the fact that the dual complex $\Gamma(Y_0)$ is simply connected, it is easy to see that this determines a consistent  orientation on $\tau_q$ for every triple point $q$. If $\alpha_{ij}^k$ is the boundary of the small disk on $C_{ij}$ enclosing $p_{ijk}$ counterclockwise (in the complex orientation on $C_{ij}$), then orient the fiber $f_{ij}^k$ over $\alpha_{ij}^k\cap \ell_{ij}$ so that $\alpha_{ij}^k\times f_{ij}^k$ gives the natural orientation on $\tau_p$. Choose the orientation on the surface $S_{ijk}$ such that  $\partial S_{ijk} = -(f_{ij}^k + f_{jk}^i + f_{ki}^j)$. For the other triple point $p_{ij\ell}$ on $C_{ij}$, note that $\alpha_{ij}^\ell=-\alpha_{ij}^k$ as homology classes in $H_1(C_{ij}^0)$, and hence $f_{ij}^\ell = -f_{ij}^k$. If $T_{ij}$ is oriented so that $\partial T_{ij} = f_{ij}^k  +f_{ij}^\ell$, then the orientations on the $T_{ij}$ and $S_{ijk}$ give a consistent orientation on $\Sigma$.
\end{proof} 

The homology class $[\Sigma]$ as described above is well-defined up to sign. But,
if we choose a different pair of pants $S_{ijk}$ whose boundary is $f_{ij}\cup f_{jk}\cup f_{ki}$ then $[\Sigma]$ changes by adding a multiple of $[\tau_p] =\gamma$. Hence $[\Sigma]$ is only naturally defined mod $\Z\gamma$.   

As oriented $2$-manifolds $T_{ij} = \ell_{ij}^0\times f_{ij}^k$, where $\ell_{ij}^0$ is oriented so that the endpoint is $\alpha_{ij}^k\cap \ell_{ij}$ and the starting point is $\alpha_{ij}^\ell\cap \ell_{ij}$. With these choices of orientation, $\ell_{ij}^0 \bullet \alpha_{ij} =-1$ for the pairing $H_1(C_{ij}^0 , \partial C_{ij}^0) \otimes H_1(C_{ij}^0) \to \Z$. This is independent of the superscript $k$, in the sense that if we replace $p_{ijk}$ by the other triple point $p_{ij\ell}$ contained in $C_{ij}$, then both $\ell_{ij}^0$ and $\alpha_{ij}$ change orientation, so that $\ell_{ij}^0 \bullet \alpha_{ij}$ is unchanged.

\begin{lemma}\label{intersection} $[\Sigma]\cdot [\Sigma]=\lambda\cdot \lambda=v$ and $[\Sigma]\cap D_t=0$. In particular, $[\Sigma]\neq 0$ in $H_2(Y_t-D_t;\Z)$. \end{lemma}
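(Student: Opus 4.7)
The assertion $[\Sigma]\cap D_t=0$ is immediate from the construction. Each cylinder $T_{ij}=c^{-1}(\ell_{ij}^0)$ fibers over a closed arc in the double curve $C_{ij}\subset (Y_0)_{\mathrm{sing}}$, and each pair of pants $S_{ijk}$ lies in the Clemens preimage of a small neighborhood of the triple point $p_{ijk}$. Since $D\subset V_0$ is disjoint from $D'=C_0$ on the Inoue surface $V_0$ and meets no other component of $Y_0$, we have $D\subset (Y_0)_{\mathrm{smooth}}$, so $D_t=c^{-1}(D)$ is disjoint from $c^{-1}((Y_0)_{\mathrm{sing}})\supseteq\Sigma$. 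Hence $\Sigma\subset Y_t-D_t$ and $[\Sigma]$ is a well-defined class in $H_2(Y_t-D_t;\Z)$.

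The plan for the self-intersection is to compute $[\Sigma]\cdot[\Sigma]=v$ directly; combined with $\lambda\cdot\lambda=v$ from the preceding theorem this produces the advertised chain $[\Sigma]\cdot[\Sigma]=\lambda\cdot\lambda=v$. By Lemma \ref{orientable}, $\Sigma$ is a closed oriented surface, and Mayer--Vietoris applied to the decomposition into $e$ cylinders (each with $\chi=0$), $v$ pairs of pants (each with $\chi=-1$), and $3v=2e$ gluing circles (each with $\chi=0$) yields $\chi(\Sigma)=-v$. Now $[\Sigma]\cdot[\Sigma]=e(N)$ is the Euler class of the rank-$2$ normal bundle $N\to\Sigma$ of $\Sigma$ in $Y_t$. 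Since each individual cylinder and pair of pants is an open surface with vanishing $H^2$, the bundle $N$ is trivial over each piece. Choosing explicit framings piecewise, the Euler number reduces to the signed total of framing twists along the $3v$ gluing circles, and each such twist is a local invariant near one of the triple points. Using the local model $xyz=t$ provided by the text, together with the concrete orientations of $\ell_{ij}^0$, $f_{ij}^k$ and $S_{ijk}$ fixed in the proof of Lemma \ref{orientable}, a direct computation shows that the three gluing circles surrounding a given triple point contribute a total twist of $+1$. Summing over the $v$ triple points gives $[\Sigma]\cdot[\Sigma]=v$.

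Finally, every Type III degeneration has $v\geq 4>0$ (Euler's formula on $\Gamma(Y_0)\cong S^2$ combined with $3v=2e$), so $[\Sigma]^2>0$ and hence $[\Sigma]\neq 0$ in $H_2(Y_t-D_t;\Z)$. The main technical obstacle is the local framing-twist computation at each triple point: the sign is orientation-sensitive, and one must carefully match the orientation choices made when the $T_{ij}$ and $S_{ijk}$ were glued. Conceptually, the equality $[\Sigma]^2=v$ records the fact that $\Sigma$ can be realized as a Lagrangian submanifold of $Y_t-D_t$ --- its cylinder pieces being the Lagrangian cylinders $A_{ij}$ of Remark \ref{fibration} and its pair-of-pants pieces being Lagrangian pairs of pants for the almost toric fibration --- so that the normal bundle is isomorphic to $T^*\Sigma$ and $[\Sigma]^2=-\chi(\Sigma)=v$; while this perspective motivates the sign, the rigorous justification at this stage of the paper is the explicit local twist count.
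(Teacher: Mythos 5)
Your approach to the self-intersection is genuinely different from the paper's: you propose to compute $[\Sigma]\cdot[\Sigma]$ as the Euler number of the normal bundle $N\to\Sigma$ by trivializing $N$ over each cylinder and pair of pants and summing framing twists, whereas the paper perturbs $\Sigma$ to a homologous surface $\Sigma'$ (via perturbing the paths $\ell_{ij}$ and replacing the pairs of pants by alternative chains built from Lagrangian cylinders $A_{ij}$ and chains $B_{ijk}$ in the torus fiber over an interior point of $\sigma_p$) so that the transverse intersections are visible and can be counted directly, one positive point per triple point. The Euler-characteristic computation $\chi(\Sigma)=-v$ is correct, and the Euler-class strategy is legitimate in principle. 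However, your argument has a genuine gap precisely where the content lies: the claim that ``a direct computation shows that the three gluing circles surrounding a given triple point contribute a total twist of $+1$'' is asserted, not carried out. That local computation \emph{is} the lemma; the sign is delicate and depends on matching the framing conventions to the orientations fixed in Lemma~\ref{orientable}, on the normal framings induced by the complex structure in the local model $xyz=t$, and on the symplectic orientation of $Y_t$. As written, you have only set up the framework and then appealed to the answer. In addition, one also needs to argue that the piecewise-constructed surface $\Sigma$ can be taken to be a smooth embedded surface (or replaced by a homologous one) so that the normal-bundle Euler number is meaningful; the paper's explicit perturbation simultaneously handles this.

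The ``conceptual'' Lagrangian remark is in fact incorrect, and you should not rely on it even as motivation. If $\Sigma$ were Lagrangian for the symplectic form $\omega$ of Proposition~\ref{symp}, then $\int_\Sigma\omega=0$; but by Propositions~\ref{geomrep} and~\ref{symp}(iii), both $[\Sigma]$ and $[\omega]$ represent $\pm\lambda$ modulo $\Z\gamma$, so $[\omega]([\Sigma])=\lambda\cdot\lambda=v>0$. Thus $\Sigma$ cannot be Lagrangian. The cylinders $T_{ij}$ are homologous to the Lagrangian cylinders $A_{ij}$ of Remark~\ref{fibration}, and the pairs of pants can be traded for chains in Lagrangian torus fibers as in the paper's proof, but the resulting \emph{intersection} computation is not a Weinstein-neighborhood self-intersection. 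Finally, a minor slip: you claim $v\geq 4$, but the paper's example following Conjecture~\ref{lambdaclassifies} has a Type III fiber with $v=2$. Euler's formula with $3v=2e$ gives $f=2+v/2$, forcing $v$ even and positive, hence $v\geq 2$; that is all that is needed to conclude $[\Sigma]\neq 0$.
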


\begin{proof} First note that $\Sigma$ is supported in a neighborhood of the singular locus of $Y_0$, and thus $\Sigma\cap D_t=0$ since $D_0$ does not intersect the singular locus of $Y_0$. To compute $[\Sigma]\cdot [\Sigma]$, we sum local contributions to the intersection number. Each cylinder $T_{ij}$ may be perturbed to a cylinder $T_{ij}'$ by perturbing the path $\ell_{ij}$ to a path $\ell_{ij}'$ which only intersects $\ell_{ij}$ at the two triple points. Then $T_{ij}\cap T_{ij}'=0$. 

In the model of the Clemens collapse above, $c^{-1}(\alpha_{ij}^k)$, $c^{-1}(\alpha_{jk}^i)$, and $c^{-1}(\alpha_{ki}^j)$ are three fibers of $\sigma_p\times \tau_p\rightarrow \sigma_p$ which map to the three center points of the edges of $\sigma_p$. Then $f_{ij}'$, $f_{jk}'$, and $f_{ki}'$ are three circles in each of these three fibers, homologous to $f_{ij}$, $f_{jk}$, and $f_{ki}$. Let $q$ and $q'$ be two points in the interior of $\sigma_p$.

Let $A_{ij}$, $A_{jk}$, and $A_{ki}$ be three cylinders which fiber over segments connecting $q$ and the midpoints of the edges of $\sigma_p$, one boundary component of each cylinder being $f_{ij}$, $f_{jk}$, and $f_{ki}$. Then the union of the boundary components of $A_{ij}$, $A_{jk}$, and $A_{ki}$ which are contained in the fiber over $q$ is null-homologous in this fiber. Thus, there is a chain $B_{ijk}$ supported in the fiber over $q$ such that $\partial(B_{ijk}\cup A_{ij}\cup A_{jk}\cup A_{ki})=-(f_{ij}+f_{jk}+f_{ki})$. In other words, we can replace the pair of pants $S_{ijk}$ by  $B_{ijk}\cup A_{ij}\cup A_{jk}\cup A_{ki}$.

Define $A_{ij}'$, $A_{jk}'$, $A_{ki}'$, and $B_{ijk}'$ analogously to replace a pair of pants $S_{ijk}'$ whose boundary is $-(f_{ij}'+f_{jk}'+f_{ki}')$. By choosing paths from $q$ and $q'$ to the midpoints appropriately, we can ensure that there is only one torus fiber in which the representatives of $S_{ijk}$ and $S_{ijk}'$ can intersect. Furthermore, we may assume that the two cylinders passing though this fiber are $A_{ij}$ and $A_{jk}'$. Then, our two representatives of $S_{ijk}$ and $S_{ijk}'$ have exactly one positively oriented intersection. Hence, each pair of pants $S_{ijk}$ contributes $1$ to the self-intersection of $[\Sigma]$, and each cylinder $T_{ij}$ contributes $0$. Thus $[\Sigma]\cdot [\Sigma]=v$.\end{proof}

\begin{proposition}\label{geomrep} $[\Sigma] \bmod \Z \gamma =\pm \lambda$. \end{proposition}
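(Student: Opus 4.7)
The plan is to use the uniqueness in the preceding theorem: $\lambda$ is the unique class in $\Lambda\spcheck$ such that $N(x)=-(x\bullet\lambda)\,\gamma$ for every $x\in H_2(Y_t-D_t;\Z)$. It therefore suffices to show that
\[
N(x)\;=\;\mp\,(x\bullet[\Sigma])\,\gamma \qquad\text{for all }x\in H_2(Y_t-D_t;\Z),
\]
after which uniqueness forces $[\Sigma]\bmod\Z\gamma=\pm\lambda$.

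First I reduce to a generating set of $H_2(Y_t-D_t;\Z)$, namely $\gamma$ together with classes $[\widetilde C]$ obtained by lifting, via the inverse of the Clemens collapse $c\colon Y_t\to Y_0$, smooth oriented real $2$-cycles $C$ on the components $V_i$ which are disjoint from $D$ and meet the double locus $\bigcup_{i,j}C_{ij}$ transversally away from triple points. A Mayer--Vietoris argument for the decomposition $Y_t=\bigcup_i c^{-1}(V_i)$ shows these generate (together with the tube classes $f_{ij}\times\ell$, which are homologous to multiples of $\gamma$ modulo the $[\widetilde C]$). For $x=\gamma$, both sides vanish: $N(\gamma)=0$ because $\gamma=[\tau_p]$ is monodromy-invariant, and $\gamma\bullet[\Sigma]=0$ since a generic torus fiber $\tau_p$ can be perturbed to miss the cylinders $T_{ij}$ and pairs of pants $S_{ijk}$.

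The core computation is the case $x=[\widetilde C]$. Near each transverse intersection $q\in C\cap C_{ij}$, $\pi$ has the standard model $xy=t$ times a disk along $C_{ij}$, so the local monodromy is a fiberwise Dehn twist whose vanishing cylinder has circle fiber $f_{ij}$. Each such $q$ therefore contributes to $T(\widetilde C)-\widetilde C$ a local vanishing tube, and summing these over all intersections along $C_{ij}$ produces a chain with boundary components on the torus fibers $\tau_p$ over the triple points of $C_{ij}$. The relations $[f_{ij}]+[f_{jk}]+[f_{ki}]=0$ in $H_1(\tau_p;\Z)$ at each triple point $p_{ijk}$---the very relations enabling the pairs of pants $S_{ijk}$ used to construct $\Sigma$---cancel these boundaries, leaving a closed $2$-cycle which must equal an integer multiple of $\gamma$ because $N$ takes values in $\Z\gamma$. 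Dually, $[\widetilde C]\bullet[\Sigma]$ is a signed count of transverse intersections of $\widetilde C$ with the cylinders $T_{ij}=\ell_{ij}^0\times f_{ij}^k$, contributing $\pm 1$ for each $q\in C\cap C_{ij}$, with $\widetilde C$ arranged to miss the pairs of pants $S_{ijk}$ clustered at the triple points.

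The hardest step will be matching the two local computations with consistent signs: one must verify that the sign with which $q\in C\cap C_{ij}$ contributes to the coefficient of $\gamma$ in $N([\widetilde C])$ equals, up to a uniform global sign, the sign with which it contributes to $[\widetilde C]\bullet[\Sigma]$. This requires careful orientation bookkeeping using the conventions fixed in the proof of Lemma~\ref{orientable}: the orientation on $\tau_p$ propagated consistently across $\Gamma(Y_0)$, the orientations on $\ell_{ij}^0$ and $f_{ij}^k$ compatible with the complex orientation of $C_{ij}$, and the orientation on $\widetilde C$ inherited from $C$. Once this match is established on generators, the identity $N(x)=\mp(x\bullet[\Sigma])\gamma$ holds on all of $H_2(Y_t-D_t;\Z)$, so uniqueness of $\lambda$ gives $[\Sigma]\bmod\Z\gamma=\pm\lambda$. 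The already-established identity $[\Sigma]^2=v=\lambda^2$ from Lemma~\ref{intersection} serves as a built-in consistency check on the overall sign and on the correctness of the matching.
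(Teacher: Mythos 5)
Your strategy is genuinely different from the paper's. You propose computing $N$ directly on a generating set of $H_2(Y_t-D_t;\Z)$ via a local Picard--Lefschetz analysis at the double curves, and then matching the resulting coefficient of $\gamma$ against the signed count $x\bullet[\Sigma]$ term by term. The paper instead avoids all local monodromy computations: it observes that $\Sigma$ is supported in an arbitrarily small neighborhood of the $1$-complex $\bigcup_{ij}\ell_{ij}$, so $[\Sigma]$ dies under the specialization map $H_2(Y_t-D_t;\Z)\to H_2(Y_0-D_0;\Z)$; then it invokes Corollary~\ref{Cokerd} (iii)$'$/(iv)$'$ to say that the image of $H^2(Y_0-D_0;\Q)$ in $H^2(Y_t-D_t;\Q)$ has codimension exactly one, so the linear functionals ``pair with $[\Sigma]$'' and ``pair with $\lambda$'' both cut out that same hyperplane, forcing $\lambda=c[\Sigma]\bmod\Z\gamma$ for some $c\in\Q^*$; finally $\lambda^2=[\Sigma]^2=v\neq 0$ (Lemma~\ref{intersection}) forces $c=\pm1$. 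So what you flagged only as a ``built-in consistency check'' is in fact the step the paper uses to pin down the constant, once proportionality has been obtained by soft cohomological means.

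The genuine gap in your proposal is that the core computation is never carried out. You set up the local picture (fiberwise Dehn twists along the double curves, boundary cancellation at triple points) and note the answer ``must equal an integer multiple of $\gamma$,'' but that much was already known from the preceding theorem; the whole point is to identify the multiple as $\mp(x\bullet[\Sigma])$, and that identification rests on exactly the orientation bookkeeping you defer. Two subsidiary points also need care: (1) the claim that $\gamma$ together with classes lifted from the components $V_i$ generate $H_2(Y_t-D_t;\Z)$ via Mayer--Vietoris is plausible but needs an argument that no further tube-type classes supported near the double locus are missed, and (2) the perturbation argument for $\gamma\bullet[\Sigma]=0$ should be phrased so that it visibly agrees with the fact that $\Sigma$ is itself built out of torus fibers and cylinders over the double locus (the pairing vanishes because $\gamma$ and the tubes/pairs-of-pants lie over disjoint parts of the base after a generic choice, but this should be said carefully since $\Sigma$ is not disjoint from all torus fibers). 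If you want to keep your route, the cleanest way to nail the signs is to do the computation in the local model $xyz=t$ and then propagate using the global orientation on the torus fibers fixed in the proof of Lemma~\ref{orientable}; but you should be aware that the paper's indirect argument sidesteps all of this.
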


\begin{proof} Note that $[\Sigma]\in H_2(\mathcal{Y}-\mathcal{D};\Z)$ can be constructed for arbitrarily small $t$. Thus, $[\Sigma]$ is supported in an arbitrarily small tubular neighborhood of $\bigcup_{ij}\ell_{ij}$. But $\bigcup_{ij}\ell_{ij}$ has real dimension one. Hence the class $[\Sigma]$ in $H_2(\mathcal{Y}-\mathcal{D};\Z) = H_2(Y_0-D_0; \Z)$ is zero. That is, $[\Sigma]$ is in the kernel of the specialization map $H_2(Y_t-D_t; \Z) \to H_2(Y_0-D_0; \Z)$.

Any element of $\im(H^2(Y_0-D_0;\Q)\to H^2(Y_t-D_t;\Q))$ pairs with $[\Sigma]$ to zero, since the specialization map preserves the pairing between homology and cohomology. Furthermore $[\Sigma]\neq 0$ in $H_2(Y_t-D_t,\Q)$ by Lemma \ref{intersection}. By the equivalence of (iii)$'$ and  (iv)$'$ of Corollary \ref{Cokerd}, $\im(H^2(Y_0-D_0;\Q)\to H^2(Y_t-D_t;\Q))$ has codimension one in $H^2(Y_t-D_t;\Q)$, and hence 
 $$\im(H^2(Y_0-D_0;\Q)\to H^2(Y_t-D_t;\Q)) =\{\alpha\colon\alpha([\Sigma])=0\} .$$

By definition, the $T$-invariant classes in $x\in \overline{H}_2(Y_t-D_t;\Z)$ are those satisfying $x\bullet \lambda=0$. Since the image of the specialization map on cohomology is contained in the set of $T$-invariant classes, we conclude that $\lambda=c[\Sigma]\mod \Z\gamma$ for some $c\in \Q^*$. Since  $\lambda\cdot \lambda = [\Sigma]\cdot [\Sigma]\neq 0$, we must have $c=\pm 1$. \end{proof}

 A priori $\lambda\in \Lambda^\vee$, but we may now conclude: 

\begin{corollary}  The class $\lambda\in\Lambda$.
\end{corollary}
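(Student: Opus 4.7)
The plan is to combine Proposition~\ref{geomrep} with the commutative diagram
$$\begin{CD}
H_2(Y-D;\Z) @>>> \overline{H}_2(Y-D, \partial)\\
@VVV @AAA\\
\Lambda @>>> \Lambda\spcheck
\end{CD}$$
established in Section~1.1. First I would observe that the cycle $\Sigma$ was constructed as an honest closed oriented surface in $Y_t - D_t$, so its homology class $[\Sigma]$ lives in $H_2(Y-D;\Z)$ (not merely in the larger group $\overline{H}_2(Y-D,\partial)$ where $\lambda$ a priori sits). The exact sequence
$$0 \to \Z\gamma \to H_2(Y-D;\Z) \to \Lambda \to 0$$
then shows that $[\Sigma] \bmod \Z\gamma$ is naturally an element of $\Lambda$.

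Next I would chase the diagram: the image of $[\Sigma] \bmod \Z\gamma \in \Lambda$ under the inclusion $\Lambda \hookrightarrow \Lambda\spcheck$ coincides with the image of $[\Sigma]$ under $H_2(Y-D;\Z) \to \overline{H}_2(Y-D,\partial)$ followed by the projection onto $\Lambda\spcheck$ (here we use that $\gamma$ is killed in $\overline{H}_2(Y-D,\partial)$, which is immediate from the definitions). By Proposition~\ref{geomrep}, this element of $\Lambda\spcheck$ equals $\pm\lambda$. Thus $\pm\lambda$ lies in the image of $\Lambda\hookrightarrow \Lambda\spcheck$, and consequently $\lambda \in \Lambda$.

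There is essentially no obstacle to carrying this out: the only point requiring care is to ensure that the equality $[\Sigma]\bmod \Z\gamma=\pm\lambda$ asserted in Proposition~\ref{geomrep} is being read in $\Lambda\spcheck$ (or its rationalization), and that the left-hand side a priori comes from $\Lambda$. Both facts are built into the construction of the diagram above.
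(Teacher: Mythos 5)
Your argument is correct and follows the same line as the paper's one-sentence proof: since $\Sigma$ is a closed oriented surface in $Y_t-D_t$, its class lives in $H_2(Y_t-D_t;\Z)$ and hence its reduction modulo $\Z\gamma$ lies in $\Lambda$ rather than merely in $\Lambda\spcheck$, so Proposition~\ref{geomrep} forces $\lambda\in\Lambda$. One small wording slip: $\Lambda\spcheck$ is a subgroup of $\overline{H}_2(Y-D,\partial)$ (the kernel of the map to $\Z$), not a quotient, so ``projection onto $\Lambda\spcheck$'' should be replaced by the observation that the image of $[\Sigma]$ lands in this subgroup by commutativity of the diagram.
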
 
\begin{proof} Visibly, $\lambda =\pm[\Sigma]$ is the image of a  class in $H_2(Y_t-D_t;\Z)$.
\end{proof}

\begin{proposition}\label{lambdaprop} Let $\operatorname{sp}\colon H^2(Y_0; \Z) \to H^2(Y_t; \Z)$ be the specialization map in cohomology. 
\begin{enumerate}
\item[\rm(i)]  $\Coker \operatorname{sp}\cong \Z$.   
 \item[\rm(ii)] In the notation of Lemma~\ref{Lemma3}, $\Coker d =0$.
\item[\rm(iii)] An element $\alpha \in H^2(Y_t;\Z)$ lifts to an element of $H^2(Y_0;\Z) \cong \Pic Y_0\cong \Pic \mathcal{Y}$ $\iff$ $\alpha(\lambda) = 0$ $\iff$ $\alpha \cdot \lambda = 0$, where in the last equality we identify $H_2(Y_t;\Z)$  with $H^2(Y_t;\Z)$ via Poincar\'e duality. 
\end{enumerate}
\end{proposition}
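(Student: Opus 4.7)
I would prove the three parts in the order (ii), the ``only if'' direction of (iii), and finally (i) together with the ``if'' direction of (iii), exploiting the geometric representative $\lambda = \pm[\Sigma]$ from Proposition \ref{geomrep} throughout.

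For (ii), the plan is to invoke the equivalence of (i) and (iv) in Corollary \ref{Cokerd} and show that $\operatorname{sp}_\C\colon H^2(Y_0;\C) \to H^2(Y_t;\C)$ is not surjective. Since $\Gamma(Y_0)$ triangulates $S^2$, it has at least four triangles, so $v \geq 4$ and $\lambda^2 = v \neq 0$; in particular $\lambda \neq 0$ in $H_2(Y_t;\Z)$. The argument in the proof of Proposition \ref{geomrep} already shows that every class in $\im(\operatorname{sp}_\C)$ annihilates $[\Sigma]$ (since $[\Sigma]$ specializes to zero in $H_2(Y_0;\C)$ and specialization is pairing-compatible), and non-degeneracy of Poincar\'e duality on $Y_t$ then supplies a class $\alpha \in H^2(Y_t;\C)$ with $\alpha(\lambda) \neq 0$ that cannot lie in $\im(\operatorname{sp}_\C)$. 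This proves (ii), and via Corollary \ref{Cokerd}(iii) also shows that $\im(\operatorname{sp}_\C)$ has codimension exactly one. The ``only if'' direction of (iii) is the same argument carried out integrally: for $\alpha = \operatorname{sp}_\Z(\beta)$ with $\beta \in H^2(Y_0;\Z)$, naturality of the pairing gives $\alpha(\lambda) = \beta(\operatorname{sp}_{H_2}[\Sigma]) = 0$, using that $\lambda$ equals $[\Sigma]$ in $H_2(Y_t;\Z)$ because the ambiguity $\gamma$ maps to zero there.

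For (i) and the converse of (iii), set $\lambda^\perp := \{\alpha \in H^2(Y_t;\Z) : \alpha(\lambda) = 0\}$. By the previous step, $\im(\operatorname{sp}_\Z) \subseteq \lambda^\perp$, and both have corank one in $H^2(Y_t;\Z)$. My plan to upgrade this to true equality is to compute $\Coker(\operatorname{sp}_\Z)$ topologically. With $\mathcal{Y}_\epsilon = \pi^{-1}(\overline{\Delta}_\epsilon)$ and $M = \partial\mathcal{Y}_\epsilon$ the mapping torus of monodromy on $Y_t$, I would use the long exact sequence of the pair $(\mathcal{Y}_\epsilon, M)$, the deformation retraction $\mathcal{Y}_\epsilon \simeq Y_0$, the Wang-sequence identification $H^2(M;\Z) \cong H^2(Y_t;\Z)$ (valid because $T|_{H^2(Y_t)} = \Id$ by Theorem \ref{0.5}(i) and $H^1(Y_t;\Z) = 0$), and Lefschetz duality $H^3(\mathcal{Y}_\epsilon, M;\Z) \cong H_3(Y_0;\Z)$, to realize $\Coker(\operatorname{sp}_\Z)$ as a subgroup of $H_3(Y_0;\Z)$. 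A Mayer--Vietoris calculation for the decomposition $Y_0 = V_0 \cup \bigcup_{i \geq 1} V_i$, using $H_3(V_0;\Z) \cong H^1(V_0;\Z) \cong \Z$, $H_3(V_i;\Z) = 0$ for $i \geq 1$ (as $V_i$ is a rational surface), and injectivity of $H_2(C_0;\Z) \to H_2(V_0;\Z)$ (from negative definiteness of the intersection matrix of the dual cusp), should yield $H_3(Y_0;\Z) \cong \Z$. Hence $\Coker(\operatorname{sp}_\Z)$ is a nonzero subgroup of $\Z$, so isomorphic to $\Z$, proving (i). The resulting surjection $\Z \cong H^2(Y_t;\Z)/\im(\operatorname{sp}_\Z) \twoheadrightarrow H^2(Y_t;\Z)/\lambda^\perp$ onto a nonzero subgroup of $\Z$ must be an isomorphism, giving $\im(\operatorname{sp}_\Z) = \lambda^\perp$ and completing (iii).

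The main obstacle is the torsion-freeness of $\Coker(\operatorname{sp}_\Z)$: without it, $\im(\operatorname{sp}_\Z)$ and $\lambda^\perp$ could agree only up to finite index. The topological argument above reduces this to the integral calculation $H_3(Y_0;\Z) \cong \Z$, but assembling the Wang sequence, Lefschetz duality, and Mayer--Vietoris pieces coherently over $\Z$ is the main technical effort.
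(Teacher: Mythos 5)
Parts (ii) and the forward implication of (iii) follow essentially the paper's line: (ii) is read off from Corollary~\ref{Cokerd} once one knows $\operatorname{sp}$ is not surjective (which is forced by $\lambda^2=v\neq 0$ as in the proof of Proposition~\ref{geomrep}), and $\operatorname{sp}(\beta)\cdot\lambda=\beta(\operatorname{sp}_{H_2}\lambda)=0$ gives the forward direction of (iii). Your closing step for (iii), a surjection $\Z\twoheadrightarrow H^2(Y_t;\Z)/\lambda^\perp$ forcing equality, is a mild rewording of the paper's primitivity argument.

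For (i), however, you take a genuinely different route. The paper proves torsion-freeness of $\Coker\operatorname{sp}$ directly from the Leray/Gysin spectral sequence for the inclusion $\mathcal{Y}^*=\mathcal{Y}\setminus Y_0\hookrightarrow\mathcal{Y}$: after identifying $H^2(\mathcal{Y}^*;\Z)\cong H^2(Y_t;\Z)$ by the Wang sequence, one sees that $\Coker\operatorname{sp}$ has a two-step filtration whose graded pieces are subgroups of $H^1(V_0;\Z)$ and of $\bigoplus H^0(C_{ij};\Z)$, both free. Your alternative is to use the long exact sequence of $(\mathcal{Y}_\epsilon,\partial\mathcal{Y}_\epsilon)$, the Wang sequence, and Lefschetz duality to embed $\Coker\operatorname{sp}$ in $H_3(Y_0;\Z)$, and then to compute $H_3(Y_0;\Z)\cong\Z$. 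The chain of identifications up to that point is sound (including $H^2(M;\Z)\cong H^2(Y_t;\Z)$, which uses $H^1(Y_t;\Z)=0$ and $T=\Id$ on $H^2$).

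The gap is the claim $H_3(Y_0;\Z)\cong\Z$. The one-step Mayer--Vietoris you sketch for $Y_0=V_0\cup B$ with $B=\bigcup_{i\geq1}V_i$ only yields $H_3(Y_0;\Z)\cong\Z\oplus H_3(B;\Z)$ after the injectivity of $H_2(C_0;\Z)\to H_2(V_0;\Z)$ kills the $H_2(C_0)$ term; the vanishing of $H_3(B;\Z)$ is neither obvious nor addressed, and the ingredient you cite ($H_3(V_i;\Z)=0$ for each $i\geq1$) does not by itself control $H_3$ of the normal crossings union $B$. One can repair the argument by running the full Mayer--Vietoris spectral sequence for $Y_0$ with $\Z$-coefficients, which gives a short exact sequence
\begin{equation*}
0 \to \Z \to H_3(Y_0;\Z) \to \Ker\Bigl(\bigoplus_{i<j} H_2(C_{ij};\Z) \to \bigoplus_i H_2(V_i;\Z)\Bigr) \to 0,
\end{equation*}
where the $\Z$ comes from $H_3(V_0;\Z)$; since $H_2$ and $H^2$ of rational surfaces and of the $C_{ij}$ are free and the pairings are perfect, the kernel on the right is free of rank equal to $\operatorname{rank}(\Coker d)$, which vanishes by part (ii). So your conclusion is correct, but the crucial link between $H_3(Y_0;\Z)$ and $\Coker d=0$ is not in the sketch, and the route ends up noticeably longer than the paper's, which never needs $H_3(Y_0;\Z)$ at all.
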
 
\begin{proof} (i) First we claim that $\Coker \operatorname{sp}$ is torsion free. Consider the Gysin spectral sequence for   $\mathcal{Y}-Y_0 = \mathcal{Y}^*$, i.e.\ the Leray spectral sequence for the inclusion $i\colon \mathcal{Y}^*\to\mathcal{Y}$, with $E_2^{p,q} = H^p (\mathcal{Y}; R^qi_*\Z)$,  converging to $H^*(\mathcal{Y}^*; \Z)$. Its $E_2$ page is (all cohomology is with $\Z$-coefficients):
\begin{center}
\begin{tabular}{|c|c|c|c|c}
$\!\!\bigoplus_{i< j< k}H^0(p_{ijk})\!\!$ &{} &{} &{} &{}\\ \hline
$\bigoplus_{i< j}H^0(C_{ij})$ & $\!\!\bigoplus_{i< j}H^1(C_{ij})\!\!$ & $\!\!\bigoplus_{i< j}H^2(C_{ij})\!\!$ &{} &{}\\ \hline
$\bigoplus_iH^0(V_i)$ & $\bigoplus_iH^1(V_i)$ & $\bigoplus_iH^2(V_i)$ & $\!\!\bigoplus_iH^3(V_i)\!\!$ & $\!\!\bigoplus_iH^4(V_i)\!\!$\\ \hline
$H^0(\mathcal{Y})$ & $H^1(\mathcal{Y})$ & $H^2(\mathcal{Y})$ & $H^3(\mathcal{Y})$ &$H^4(\mathcal{Y})$ \\ \hline
\end{tabular}
\end{center}  This yields an exact sequence
$$\textstyle \bigoplus _iH^0(V_i; \Z)\cong \Z^f \to H^2(\mathcal{Y}; \Z) \to H^2(\mathcal{Y}^*; \Z) $$ and $H^2(\mathcal{Y}^*; \Z)/\im(H^2(\mathcal{Y}; \Z))$ has a filtration with one quotient equal to 
$\Ker (\bigoplus_iH^1(V_i;\Z) \to H^3(\mathcal{Y}; \Z))$ and the second contained in 
$$\textstyle \Ker \big{(}\bigoplus_{i,j}H^0(C_{ij}; \Z) \to \bigoplus _i H^2(V_i;\Z)\big{)}.$$
Both $\bigoplus_iH^1(V_i;\Z) = H^1(V_0; \Z)\cong \Z$ and $\bigoplus_{ij}H^0(C_{ij}; \Z)$  are torsion free. Thus $H^2(\mathcal{Y}^*; \Z)/\im(H^2(\mathcal{Y}; \Z))$ is torsion free.
By the Wang sequence, since $N=T-I = 0$ on $H^2(Y_t; \Z)$,   restriction to a fiber induces an isomorphism $H^2(\mathcal{Y}^*; \Z) \cong H^2(Y_t; \Z)$.  As $H^2(\mathcal{Y}; \Z) \cong H^2(Y_0;\Z)$,  the map $H^2(\mathcal{Y}; \Z) \to H^2(\mathcal{Y}^*; \Z) $ is identified with the specialization map $H^2(Y_0; \Z) \to H^2(Y_t; \Z)$. Thus $\Coker \operatorname{sp}\cong H^2(\mathcal{Y}^*; \Z)/\im(H^2(\mathcal{Y}; \Z))$  is torsion free.
 
The specialization map in cohomology is dual to the map $H_2(Y_t; \Z) \to H_2(Y_0; \Z)$. Since $\lambda \in \Ker (\operatorname{sp}\colon H_2(Y_t; \Q) \to H_2(Y_0; \Q))$ and $\lambda \neq 0$ since $\lambda^2 > 0$, we see that the kernel of $H_2(Y_t; \Z) \to H_2(Y_0; \Z)$ has rank at least one. Dually,  the rank of $\Coker \operatorname{sp}$ is at least one. Then   $\Coker \operatorname{sp}$ has rank exactly one by Corollary~\ref{Cokerd}, and hence $\Coker \operatorname{sp} \cong \Z$.

\smallskip
\noindent (ii) This follows immediately from Corollary~\ref{Cokerd}.

\smallskip
\noindent (iii) By (i), the cokernel of $H^2(Y_0; \Z) \to H^2(Y_t; \Z)$ is $\Z$, and in particular the image of $H^2(Y_0; \Z)$ in $H^2(Y_t; \Z)$ is a primitive sublattice of corank one containing  
$$\{\alpha \in H^2(Y_t;\Z):\alpha(\lambda) = 0\}.$$
Since $\{\alpha \in H^2(Y_t;\Z):\alpha(\lambda) = 0\}$ is also primitive of corank one, it is equal to 
the image of $H^2(Y_0; \Z)$ in $H^2(Y_t; \Z)$, which is a restatement of (iii).
\end{proof}

\begin{remark} The above shows that the local invariant cycle theorem holds for $H^2(Y_t-D_t)$, in fact over $\Z$: A class $x\in H^2(Y_t-D_t;\Z)$ is in the image of $H^2(Y_0-D_0; \Z)$ $\iff$ $N(x) = 0$. Note that this result fails for $H^2(Y_t)$ (even with $\Q$-coefficients).
\end{remark}

The class $\lambda$ has the following additional property, in the  notation of Definition~\ref{defB}:

\begin{proposition}\label{extendsections} Viewing $\lambda$ as an element of $\Lambda_\R$, for a unique choice of sign, $\pm \lambda\in  \mathcal{B}_{\text{\rm{gen}}}$.
\end{proposition}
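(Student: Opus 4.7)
The class $\lambda$ lies in $\Lambda$ by the corollary following Proposition~\ref{geomrep}, and $\lambda^2 = v > 0$, so $\lambda$ belongs to one of the two components of the positive cone $\mathcal{C}$. Since $\mathcal{C}^+$ is one specific component, at most one of $\pm\lambda$ can lie in $\mathcal{C}^+$, which yields uniqueness of sign. For existence, I would fix the sign by pairing with an ample class $[H]$ on a generic fiber $Y_t$: take the sign so that $\lambda\cdot [H] > 0$, placing $\lambda \in \mathcal{C}^+$.

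By Definition~\ref{defB}, it remains to show $\lambda \cdot \alpha > 0$ for every effective numerical exceptional class $\alpha \in \Lambda$. (The condition $\lambda \cdot [D_i] = 0$ is automatic from $\lambda \in \Lambda$.) My plan is to exploit the explicit geometric representative $\Sigma$ for $\lambda$ constructed in Section 3.2 out of cylinders $T_{ij}$ over the edges and pants $S_{ijk}$ at the triple points of $Y_0$. After a small deformation of $(Y,D)$ within its deformation type to eliminate $-2$-curves (so that by Proposition~\ref{effdecomp} the effective numerical exceptional classes are represented by internal exceptional curves), I would realize $\alpha$ by an internal exceptional curve $E \subset Y$. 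Moving $E$ into general position with respect to $\Sigma$, the intersections localize in a small tubular neighborhood of the Clemens collapse of the singular locus of $Y_0$; I would then compute $E \cdot \Sigma$ as a sum of local contributions at the triple points and along the double curves of $Y_0$, using the consistent orientation of $\Sigma$ produced in the proof of Lemma~\ref{orientable} to ensure all local contributions share a common sign, giving $[E] \cdot \lambda \geq 0$.

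For strict positivity, I would invoke Proposition~\ref{lambdaprop}(iii): $[E] \cdot \lambda = 0$ would force $[E]$ to lie in the image of the specialization map $\operatorname{sp}\colon \Pic Y_0 \cong \Pic\mathcal{Y} \to \Pic Y_t$. After a small modification of the Type III degeneration preserving the invariant $\lambda$, one can arrange that no nonzero effective numerical exceptional class of the deformation type lifts, so $[E]\cdot\lambda \neq 0$, and by the preceding step $[E]\cdot\lambda > 0$. The main obstacle I anticipate is the intersection calculation in the middle step: carefully tracking signs near each triple point and along each cylinder under the canonical orientation of $\Sigma$, in order to show that every local contribution is non-negative rather than merely that the total is nonzero. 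A conceptually cleaner alternative, developed in later sections, identifies $\lambda$ with the de Rham class of the Symington symplectic form $[\omega]$ under a diffeomorphism $Y_t \cong X$, whereupon $\int_E \omega > 0$ for any algebraic curve $E \subset Y_t$ settles positivity at once; I would treat the direct calculation as a parallel, self-contained verification.
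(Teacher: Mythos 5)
Your setup is sound -- the uniqueness of sign from $\lambda^2 = v > 0$, the reduction to showing $\lambda\cdot\alpha > 0$ for effective numerical exceptional classes $\alpha$, and the appeal to Proposition~\ref{lambdaprop}(iii) are all in the right spirit -- but your two positivity arguments each have a gap that the paper's own proof sidesteps entirely.

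The intersection computation with $\Sigma$ is not carried out, and you correctly flag it as the main obstacle. The concern is real: to conclude $E\cdot\Sigma \geq 0$ from local contributions you would need to show that \emph{every} local intersection of a complex curve $E$ with the cylinders $T_{ij}$ and pants $S_{ijk}$ has a common sign, but $E$ is holomorphic while $\Sigma$ is built from Lagrangian-type pieces and is not complex (nor almost-complex), so there is no automatic positivity of intersection; one would have to perturb $\Sigma$ and track orientations carefully, and it is not evident the contributions are one-signed. The alternative you mention at the end (identify $\lambda$ with $[\omega]$ and use $\int_E\omega > 0$) has the same issue: the Symington form $\omega$ and the complex structure on $Y_t$ are not compatible, so a complex curve $E$ need not be $\omega$-symplectic, and $\int_E\omega > 0$ does not follow without further argument.

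Your strict-positivity step also goes astray. You propose that ``after a small modification of the Type III degeneration preserving the invariant $\lambda$, one can arrange that no nonzero effective numerical exceptional class lifts,'' but this is precisely what needs to be ruled out, not assumed: if $\alpha\cdot\lambda = 0$ then $\alpha \in \lambda^\perp = \overline{\Lambda}_0$ and it lifts \emph{by definition}, for the given degeneration, so no modification can avoid it. The paper instead \emph{embraces} the lift and derives a contradiction: from the line bundle $\mathcal{L}$ on $\mathcal{Y}$ with $\mathcal{L}|_{Y_t}$ effective, semicontinuity produces a nonzero section on $Y_0$, whose divisor (after stripping components of $Y_0$) is an effective Cartier divisor $C$ on $Y_0$ with $C\cdot D = 1$. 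But on the Inoue component $V_0$, which contains $D$, every curve is a component of $D\cup D'$, so $C\cdot D = \sum_i n_i(D_i\cdot D) \leq 0$, contradiction. (The consistency-of-sign step (ii) is handled the same way with $n\alpha_1 + m\alpha_2$.) This argument is purely algebro-geometric, needs no perturbation of $Y_t$ or of the degeneration, and in particular avoids both of your gaps. I would recommend replacing your positivity mechanism with this semicontinuity/Inoue-surface argument, which is shorter and complete.
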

\begin{proof} Using Lemma~\ref{altcharA}, it suffices to show that (i) $\lambda \cdot \alpha \neq 0$ for all effective numerical exceptional curves $\alpha$ and that (ii) if $\alpha_1$ and $\alpha_2$ are two different effective numerical exceptional curves, then $\lambda \cdot [\alpha_1]$ and $\lambda \cdot [\alpha_2]$ have the same sign. To see (i), suppose that $\alpha$ is an effective numerical exceptional curve and that $\lambda \cdot \alpha =0$. Note that the condition that $\alpha$ is effective is independent of $t\in \Delta^*$. By Proposition~\ref{lambdaprop}, there exists a line bundle $\mathcal{L}$ on $\mathcal{Y}$ such that $c_1(\mathcal{L}\big{|}_{Y_t}) =\alpha$ for $t\neq 0$. Thus $h^0(Y_t; \mathcal{L}\big{|}_{Y_t}) > 0$ for all $t\neq 0$, and hence by semicontinuity $h^0(Y_0; \mathcal{L}\big{|}_{Y_0}) > 0$ and there is a nonzero section $s$ of $\mathcal{L}$. Choose a such a nonzero section $s$. After  removing all possible  components of $Y_0$ from the divisor $(s)$ of zeroes of $s$ on $\mathcal{Y}$, we can assume that $(s)$ is flat over $\Delta$, i.e.\ all fibers of $\pi\big{|}_{(s)}$ have dimension one.  Restricting to $Y_0$, there exists an effective Cartier divisor $C$ on $Y_0$ such that $C\cdot D = 1$. But every such divisor $C$ is a sum of the form $\sum_in_iD_i$, where $n_i \geq 0$,  plus components disjoint from $D$. Since $D\cdot D_i \leq 0$ for every $i$, this is impossible.

The proof of (ii) is similar: If  $\alpha_1$ and $\alpha_2$ are two different effective numerical exceptional curves such that $\lambda \cdot [\alpha_1]$ and $\lambda \cdot [\alpha_2]$ have opposite signs, then there exist positive integers $n$ and $m$ such that $\lambda \cdot (n\alpha_1 + m\alpha_2) = 0$. Arguing as in the proof of (i), there exists an effective Cartier divisor $C$ on $Y_0$ such that $C\cdot D = n+m>0$, and we reach a contradiction as before.
\end{proof}

\begin{remark} In the above proof, it is  crucial that $H^1(Y_0; \scrO_{Y_0}) = 0$ so that $\Pic Y_0 \cong \Pic \mathcal{Y} \cong H^2(Y_0;\Z)$.
\end{remark}

From now on, by convention, we fix the orientation on $D$ so that $\lambda\in \mathcal{B}_{\text{\rm{gen}}}$. 

\begin{definition} If $\pi\colon (\mathcal{Y}, \mathcal{D})\to \Delta$ is a Type III degeneration, we define the \textsl{monodromy invariant of $\mathcal{Y}$} to be the class $\lambda\in \mathcal{B}_{\text{\rm{gen}}}\cap \Lambda$ modulo the action of $\Gamma(Y,D)$, the group of admissible isometries. The monodromy invariant only depends on the central fiber $(Y_0,D)$, and so we shall also use the above to define the monodromy invariant of a $d$-semistable Type III anticanonical pair. 

A Type III degeneration is \textsl{of $\Q$-type $\lambda$} if its  monodromy invariant is of the form $r\lambda$, $r\in \Q^+$.
The $\Q$-type only depends on the family $(\mathcal{Y}^*, \mathcal{D}^*) \to \Delta^*$ and is preserved by base change. In particular, let $\overline{\pi}\colon (\overline{\mathcal{Y}}, \mathcal{D}) \to \Delta$  a one parameter family  whose central fiber is the contraction of the cusp $D'$ in $V_0$ and whose general fiber has only RDP singularities. By the argument of \cite[Theorem 2.5]{FriedmanMiranda}, possibly after a base change, the family $\overline{\pi}\colon (\overline{\mathcal{Y}}, \mathcal{D}) \to \Delta$ is birational to a Type III degeneration $\pi\colon (\mathcal{Y}, \mathcal{D})\to \Delta$ via  a birational isomorphism which is a minimal resolution on the fibers away from $0$. Thus, it makes sense to say $\overline{\pi}\colon \overline{\mathcal{Y}} \to \Delta$ is of $\Q$-type $\lambda$.
\end{definition}

\subsection{A symplectic description of monodromy}

In this section we describe the monodromy of $\mathcal{Y}\rightarrow \Delta$ in terms of the Lagrangian torus fibration over $\Gamma(Y_0)\backslash \{v_0\}$. 

\begin{proposition}\label{symp} Suppose that $Y_0$ is generic as in Definition \ref{generic}. Let $$\mu\colon(X,D,\omega)\rightarrow \Gamma(Y_0)$$ be the map constructed in Proposition \ref{extend}. There is a diffeomorphism $\phi\colon (X,D)\to (Y_t,D_t)$ such that:

\begin{enumerate} \item[\rm(i)] $\phi(D_i)=D_i$, 
\item[\rm(ii)] $\phi_*([\mu^{-1}(p)])=\gamma$, and
\item[\rm(iii)] $\phi_*([\omega])=\pm\lambda\mod \Z\gamma$.
\end{enumerate}
\end{proposition}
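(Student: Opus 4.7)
My plan is to assemble $\phi$ from local models indexed by the dual complex $\Gamma(Y_0)$ and then verify (i)-(iii) directly from those models, with (iii) requiring a cohomological computation in integral-affine coordinates. For the construction, both $X$ and $Y_t$ admit compatible decompositions indexed by $\Gamma(Y_0)$. On the symplectic side, Symington's theorem provides: $\mu^{-1}$ of the star of a vertex $v_i$ with $Q(V_i,C_i)=0$ is the smooth locus of the toric surface $V_i$ with its moment map; for $Q(V_i,C_i)=1$ it is an almost toric piece with one $A_1$-fiber, diffeomorphic to the smooth locus of the unique (up to diffeomorphism) non-toric anticanonical pair of that type; and at $v_0$, Inoue's description of $V_0$ as a $T^2$-bundle over an annulus with boundary cycles $D$ and $D'$ (cf.\ \cite[\S4]{Inoue}) identifies the $D$-end of this bundle with $\mu^{-1}$ of a neighborhood of $v_0$. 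On the complex side, the Clemens collapse $c\colon Y_t\to Y_0$ gives the matching decomposition: $c^{-1}$ of interior points of $V_i$ is $V_i-C_i$, $c^{-1}$ of double curves is $S^1$-fibered necks, and $c^{-1}$ of triple points is $T^2$-fibered cores. Gluing these identifications across edges and faces of $\Gamma(Y_0)$ produces $\phi$, and (i) follows from the construction near $v_0$.

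Property (ii) is then immediate: a generic torus fiber $\mu^{-1}(p)$ for $p$ in the interior of a face $f_{ijk}$ is sent by $\phi$ to a torus in a neighborhood of $\tau_{p_{ijk}}=c^{-1}(p_{ijk})$, hence is homologous to $\tau_{p_{ijk}}$ in $Y_t-D_t$. The calculation at the start of Section 3.1 identifies this class with $\gamma$. For (iii), the Lagrangianity of the fiber of $\mu$ gives $[\omega]\cdot\gamma=0$, so $\phi_*[\omega]$ descends to a class in $\gamma^\perp\subseteq H^2(Y_t;\R)$, corresponding via Poincar\'e--Lefschetz duality to an element of $H_2(Y_t-D_t;\R)/\R\gamma=\Lambda_\R$. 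Using Symington's local formula $\omega=dx\wedge dp+dy\wedge dq$ together with the fact that $\Gamma(Y_0)$ has integral-affine area $v/2$, one computes $\int_X\omega\wedge\omega=v$, matching $\lambda\cdot\lambda=v$ from Lemma~\ref{intersection}. To pin down $\phi_*[\omega]$ modulo $\R\gamma$, I will compare pairings against a spanning family of cycles in $\Lambda_\R$: for a cycle $C$ fibering over an integral-affine path in $\Gamma(Y_0)$ with vanishing-cycle fiber, $\int_C\omega$ is computed directly from integral-affine lengths, while $\lambda\cdot C$ is read off from signed intersections of the geometric representative $\Sigma$ (Proposition~\ref{geomrep}) with $C$, which involve the same combinatorial data. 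These agree up to a global sign fixed by the orientation convention on $D$; integrality of $\lambda$ then upgrades the identity from $\bmod\,\R\gamma$ to $\bmod\,\Z\gamma$.

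The main obstacle is Step 1: producing the local diffeomorphisms precisely near the vertices $v_i$ with $Q(V_i,C_i)=1$ and near $v_0$. For the former one needs the fact (essentially from \cite{Symington} and exploited in \cite{Engel}) that the diffeomorphism type of an almost toric model with one $A_1$-fiber is determined by the combinatorial data of its pseudo-fan, and agrees with that of the unique non-toric anticanonical pair of the given type. For $v_0$, one must match Inoue's explicit $T^2$-bundle description of $V_0$ with Symington's model on the annular neighborhood of $v_0$ in $\Gamma(Y_0)$, where the $SL_2(\Z)$-monodromy around $v_0$ determines the bundle up to isomorphism. Once the diffeomorphism is in place, the cohomological computation in (iii) is formal given the integral-affine normal form for $\omega$.
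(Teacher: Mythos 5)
Your construction of $\phi$ and your arguments for (i) and (ii) follow essentially the same route as the paper: decompose $X$ and $Y_t$ according to $\Gamma(Y_0)$ (the paper uses the intersection complex $\Gamma(Y_0)^\vee$ embedded in $\Gamma(Y_0)$), identify almost toric pieces over stars/faces with Clemens-collapse neighborhoods of the $V_i$ using monodromy matching, handle $v_0$ via Inoue's $T^2$-bundle description, and glue. Your "main obstacle" in Step 1 is exactly the point the paper addresses by showing the boundary monodromies of $\mu\big{|}_{F_i}$ and $\mu_i$ agree and that the singular-fiber count matches; this is correct.

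For (iii) you diverge, and there is a genuine gap. The paper's logic is: first show $\lambda^\perp\subseteq[\omega]^\perp$ by representing any monodromy-invariant class by a Lagrangian chain $-Z+\sum n_{ij}A_{ij}$ built from the Lagrangian cylinders $A_{ij}$ over edges of $\Gamma(Y_0)^{[1]}$; this rests on an auxiliary lemma inside the proof characterizing, for each $i$, when a class $[S_i]\in H_2(V_i;\Z)$ exists with prescribed pairings $n_{ij}=[S_i]\cdot[C_{ij}]$ in terms of a null-homology condition on the boundary circles $U_{ij}\subset X_{v_i}$. Then codimension-one considerations give $[\omega]=c\lambda$, and the scalar is pinned down by $[\omega]^2=2\cdot\mathrm{Area}(\Gamma(Y_0))=v=\lambda^2$. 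You correctly have this last computation, but your "comparison on a spanning family of cycles fibering over integral-affine paths with vanishing-cycle fiber" is ambiguous in a way that matters: such cycles, as you describe them, are exactly the Lagrangian cylinders $A_{ij}$, for which $\int_C\omega=0$ and $\lambda\cdot C=0$ identically, so the comparison gives $0=0$ and can only span $\lambda^\perp$, not all of $\Lambda_\R$. To actually span $\Lambda_\R$ you would need a non-Lagrangian cycle $C$ with $\lambda\cdot C\neq0$, and for such $C$ neither the claim that $\int_C\omega$ "is computed directly from integral-affine lengths" nor the matching with the intersection number $\Sigma\cdot C$ is established. As written, your Step 3 either silently reduces to the paper's two-step argument (perp containment plus $[\omega]^2=v$) or leaves the scalar undetermined. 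You should also flag the omitted lemma about realizing prescribed intersection patterns by surfaces $S_i$, which is needed to see that the $A_{ij}$-chains actually cover all of $\lambda^\perp$; without it the containment $\lambda^\perp\subseteq[\omega]^\perp$ is not justified.
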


\begin{proof}

We remark that (ii) is vacuous unless we restrict $\phi$ to $X-D$. First, we construct the diffeomorphism $\phi$. Let $B=\Gamma(Y_0) - \{v_0\}$ and let $B_0$ denote the non-singular locus of $B$ where the fibers of $\mu$ are smooth. Then $\mu^{-1}(B_0)$ is the quotient of the symplectic manifold $T^*B_0$ by a lattice $T_\Z^*B_0\subset T^*B_0$ of Lagrangian sections of $T^*B_0\rightarrow B_0$: $$\mu^{-1}(B_0)\cong T_\Z^*B_0 \backslash T^*B_0.$$ 

Consider the intersection complex $\Gamma(Y_0)^{\vee}$ of $Y_0$, which has a face for each surface $V_i$, an edge for each double curve $C_{ij}$, and a trivalent vertex for each triple point $p_{ijk}$. There is diffeomorphism from $\Gamma(Y_0)^\vee$ into the dual complex $\Gamma(Y_0)$ such that:

\begin{enumerate}
\item The image of a vertex of $\Gamma(Y_0)^\vee$ lies in the associated triangle of $\Gamma(Y_0)$.
\item The image of an edge of $\Gamma(Y_0)^\vee$ crosses the associated edge of $\Gamma(Y_0)$.
\item The image of a face of $\Gamma(Y_0)^\vee$ contains the associated vertex of $\Gamma(Y_0)$.
\end{enumerate}

\noindent See Figure \ref{example} for example. There is a canonical isomorphism $$H_1(X_b;\Z)\cong T_{\Z,b}^*B_0\subset T_b^*B_0.$$ Let $e_{ij}^\vee$ be an edge in the one-skeleton of $\Gamma(Y_0)^\vee$. It intersects the associated edge $e_{ij}$ in the one-skeleton of $\Gamma(Y_0)$ at one point $b\in B_0$. Up to sign, there is a unique primitive co-vector $\eta_{ij}\in T_{\Z,b}^*B_0\subset T_b^*B_0$ which vanishes on the edge $e_{ij}$. The three cycles $\eta_{ij}$, $\eta_{jk}$, and $\eta_{ki}$ mutually form bases of $H_1(X_b,\Z)$ because $f_{ijk}$ is a basis triangle. Let $C_{ij}$, $C_{ik}$, and $C_{i\ell}$ be three adjacent double curves on $V_i$. Then by construction of the integral-affine structure on $\Gamma(Y_0)$, we have $\eta_{ij}+\eta_{i\ell}=d_{ik}\eta_{ik},$ where $d_{ik}=-C_{ik}^2$. For $i\neq 0$ let $$\mu_i\colon (V_i,C_i,\omega_i)\rightarrow B_i$$ be an arbitrary almost toric fibration of $(V_i,C_i)$ and for $i=0$ let $\mu_0$ denote the composition of the two-torus fibration of $V_0$ over an annulus with the map which collapses the image of $D$ to a point. Then for all $i$, the components of $C_i$ fiber over the components of the boundary of $B_i$.

Let $F_i$ be the interior of a face of $\Gamma(Y_0)^\vee$ and consider $$\mu\big{|}_{F_i}\colon \mu^{-1}(F_i)\rightarrow F_i.$$ Suppose $i\neq 0$. We claim there is a fiber-preserving diffeomorphism between $\mu\big{|}_{F_i}$ and $\mu_i\big{|}_{\text{int}(B_i)}$. First note that the vanishing cycles $\eta_{ij}'$ associated to a component $C_{ij}\subset C_i$ in an almost toric fibration of $(V_i,C_i)$ also satisfy the equation $$\eta_{ij}'+\eta_{i\ell}'=d_{ik}\eta_{ik}'.$$ Thus, the monodromy of $\mu_i$ restricted to a neighborhood of $\partial B_i$ is the same as the monodromy of $\mu$ restricted to a neighborhood of $\partial F_i$. Thus, there is a diffeomorphism between $\mu\big{|}_{F_i}$ and $\mu_i\big{|}_{\text{int}(B_i)}$ in a neighborhood of their boundaries. We may furthermore assume that the boundary components of $F_i$ and $B_i$ are identified and that in a neighborhood of a boundary component, $\eta_{ij}$ and $\eta_{ij}'$ are identified by the diffeomorphism.

As there is either zero or one irreducible nodal fiber of both $\mu_i$ and $\mu\big{|}_{F_i}$, depending on whether $Q(V_i,C_i)=0$ or $1$, we can extend this diffeomorphism to one between all of $\mu\big{|}_{F_i}$ and $\mu_i\big{|}_{\text{int}(B_i)}$. Note that these fibrations are smoothly equivalent, but not necessarily  equivalent as Lagrangian torus fibrations. Also, note that $\mu\big{|}_{F_0}$ and $\mu_0$ are diffeomorphic as they both give the $2$-torus fibration on $V_0-C_0$, and the cycle $D$ is sent to itself.

We now construct a torus fibration in a neighborhood $U\subset Y_t$ of $(Y_0)_{\text{sing}}$ as follows---in a neighborhood of a triple point $p\in Y_0$, we use the model $\sigma_p\times \tau_p\to \tau_p$ defined above. Near the degeneration to a double curve of $Y_0$, we have a local model defined by $xy=t$. Furthermore, we may choose a third coordinate $z$ which restricts on the double locus $xy=0$ to a global coordinate on $\mathbb{P}^1$. Then $$(x,y,z)\mapsto (|x|,|y|,|z|)$$ defines a $2$-torus fibration on $xy=t$ whose fibers near $z=0$ and $z=\infty$ are homologous to the fibers of the torus fibration near a triple point. Let $\pi\,:\,U\rightarrow V$ be this two-torus fibration. Then $V$ is diffeomorphic to a tubular neighborhood of the $1$-skeleton of $\Gamma(Y_0)^{\vee}$, and we may further assume that $c^{-1}((Y_0)_{\text{sing}})$ fibers over the $1$-skeleton of $\Gamma(Y_0)^{\vee}$. The class $\eta_{ij}'$ defined as the vanishing cycle of $C_{ij}$ in the almost toric fibration $\mu_i$ is identified in $c^{-1}(V_i)\cap U$ with the class of $c^{-1}(p_{ij})$ for some generic point $p_{ij}\in C_{ij}$. 

Thus, we see that $Y_t$ is the fiber connect sum of the $\mu_i\big{|}_{\text{int}(B_i)}$, in which $\eta_{ij}'$ is identified with $\eta_{ji}'$. Since $\mu\colon (X,D,\omega)\rightarrow \Gamma(Y_0)$ is the fiber connect sum of the $\mu\big{|}_{F_i}$ in which $\eta_{ij}$ and $\eta_{ji}$ are identified, and we have fiber-preserving diffeomorphisms from $\mu_i\big{|}_{\text{int}(B_i)}$ to $\mu\big{|}_{F_i}$ which identify $\eta_{ij}$ with $\eta_{ij}'$, we conclude that there is a diffeomorphism $\phi\colon (X,D)\rightarrow (Y_t,D_t)$. The cycle $D$ is sent to $D_t$ by construction, thus property 1 is satisfied. Property 2 also follows immediately from construction.

Finally, we show $\phi_*([\omega]):=(\phi^{-1})^*([\omega])=\pm\lambda\bmod \Z\gamma$, in the sense that $$\phi_*([\omega])(x)=\lambda\bullet x$$ for any class $x\in H_2(Y_t-D_t;\Z)$. First, we must have $\phi_*([\omega])(\gamma)=0$. This is clear because the fibers of $\mu$ are Lagrangian. We henceforth elide $\phi_*$ and consider $(X,D)$ and $(Y_t,D_t)$ identified.

Proposition \ref{lambdaprop} implies that an element $x\in H_2(Y_0-D,\partial)$ may be represented by a union of surfaces $S_i\subset V_i$, closed for $i\neq 0$, meeting the $C_{ij}$ transversally and such that $[S_i]\cdot [C_{ij}]=[S_j]\cdot [C_{ji}]:=n_{ij}$, and such that $\partial S_0\subset \partial$. Choose $S_i$ such that $S_i\cap C_{ij}=S_j\cap C_{ji}$ and no $S_i$ contains a triple point of $Y_0$. Letting $S=\bigcup S_i$ we have that $c^{-1}(S)$ is a surface whose class in $H_2(Y_t-D_t,\partial)$ specializes to $x$. Let $A_{ij}\subset (X,D,\omega)$ denote a Lagrangian cylinder lying over the edge $e_{ij}\subset\Gamma(Y_0)^{[1]}$ in the one-skeleton of the dual complex. Denote its two boundary components by $U_{ij}\subset X_{v_i}$ and $U_{ji}\subset X_{v_j}$. Note that when $X_{v_i}$ is singular, we may have $U_{ij}=\emptyset$, because $U_{ij}$ could be the vanishing cycle of the node of $X_{v_i}$. We need the following lemma:

\begin{lemma} For $i\neq 0$ there is a (unique) class $[S_i]\in H_2(V_i;\Z)$ satisfying $[S_i]\cdot [C_{ij}]=n_{ij}$ if and only if $$\sum_j n_{ij}[U_{ij}]=0\in H_1(X_{v_i};\Z).$$ For $i=0$, there are no linear conditions on the $n_{0j}$. \end{lemma}

\begin{proof} When $v_i$ is nonsingular, $(V_i,C_i)$ is toric, and there is an exact sequence $$0\to H_2(V_i;\Z)\to \bigoplus_j \Z C_{ij}^*\to \Z^2\to 0$$ where the first map sends $x\mapsto (n_{ij})$ if $x\cdot C_{ij}=n_{ij}$ and the second map sends $(n_{ij})\mapsto \sum_j n_{ij}v_{ij}$ where $v_{ij}=e_{ij}$ are the primitive integral vectors spanning the one-dimensional rays of the fan of $(V_i,C_i)$ corresponding to the components $C_{ij}$. By the description of the Lagrangian cylinders $A_{ij}$ given in Remark \ref{fibration}, the condition $\sum n_{ij}v_{ij}=0$ exactly implies that the sum of the boundaries of $n_{ij}A_{ij}$ lying over $v_i$ are null-homologous. 

When $v_i$ is singular and $i\neq 0$, we have $Q(V_i,C_i)=1$. Let $$\pi\colon (V_i,C_i)\to(\overline{V_i},\overline{C}_i)$$ be an internal blow-down to a toric pair, if such a blowdown exists (the general case is easy to deduce from this case). The exceptional curve $E$ meets a component $C_{ij_0}$ such that $e_{ij_0}$ lies in the monodromy-invariant line through $v_i$. Then every element of $H_2(V_i;\Z)$ is of the form $x=nE+\pi^*\overline{x}$ for some $\overline{x}\in H_2(\overline{V_i};\Z)$. Let $v_{ij}$ be the primitive integral vectors in the fan of $\overline{V_i}$. Then there is a class in $x\in H_2(V_i;\Z)$ such that $x\cdot C_{ij}=n_{ij}$ if and only if $\sum_j n_{ij}v_{ij}$ lies in the line spanned by $v_{ij_0}$. Equivalently, if we deformed $A_{ij}$ slightly so that their boundaries lay in  a fiber near the fiber over $v_i$, we would have that $\sum n_{ij}A_{ij}$ is homologous to the circle which when transported along the monodromy-invariant line  produces a Lagrangian cylinder. But this circle is exactly the vanishing cycle of the node in the fiber over $v_i$, and so $\sum_j n_{ij}[U_{ij}]$ is null-homologous in $X_{v_i}$.

Possibly scaling all the $n_{ij}$ by a large integer, we may construct a class $S_0$ satisfying $[S_0]\cdot C_{0j}=n_{0j}$ because the components of $C_0=D'$ span a negative-definite lattice. \end{proof}

Thus, for $i\neq 0$ the boundary of $\sum n_{ij}A_{ij}$ is the boundary of a chain $Z$ supported in $\bigcup X_{v_i}$. Note that $X_{v_0}=D$, so the boundary of $A_{0i}$ lying over $v_0$ is contained in $\partial$. Then under the identification defined by $\phi$, the cycle $-Z+\sum n_{ij} A_{ij}$ represents $[c^{-1}(S)]$ because each cylinder $A_{ij}$ gets pinched by the Clemens collapse to give an intersection point with $C_{ij}$ and a class in $H_2(Y_0-D,\partial)$ is uniquely determined by the numbers $n_{ij}$. Since the fibers and $A_{ij}$ are Lagrangian, we conclude that $$\int_{c^{-1}(S)}\omega =0.$$ Consider the exact sequence $$H_2(\partial)\to H_2(Y_t-D_t)\to H_2(Y_t-D_t,\partial)$$ from Section 1.1. While there is no pairing between $H^2(Y_t-D_t)$, which contains $[\omega]$, and $H_2(Y_t-D_t,\partial)$, which contains $[c^{-1}(S)]$, we may conclude that the monodromy-invariant classes in $H_2(Y_t-D_t)$ pair with $[\omega]$ to give zero, since these map to the monodromy-invariant classes in $H_2(Y_t-D_t,\partial)$. We remark that that $[\omega]$ pairs with $\Z\gamma$, the image of $H_2(\partial)$, to be zero.  

Since the monodromy-fixed classes in $H_2(Y_t-D_t)$ are those that pair with $\lambda$ to zero, we conclude $[\omega]=c\lambda\mod \gamma$ in the sense above, for some constant $c\in \R$. Finally, we note that $$[\omega]\cdot [\omega]=2\cdot \textrm{Area}(\Gamma(Y_0))=\#(\textrm{triple points of }Y_0)=\lambda\cdot \lambda.$$ Hence $c=\pm 1$. \end{proof}

\begin{remark} The above proposition has an interpretation in terms of mirror symmetry. Taking the connected sums of the almost toric fibrations $\mu_i$ gives a topological model for the SYZ fibration on a general fiber of the algebraic degeneration $\mathcal{Y}$. This fibration is, topologically, the SYZ dual of the fibration $(X,D,\omega)\rightarrow B$, and thus, the fact that $(Y_t,D_t)$ and $(X,D)$ are diffeomorphic is essentially a fluke due to working in dimension two.

If we choose $\omega_i$ in Proposition \ref{symp} such that $[\omega_i]\cdot D_{ij} = [\omega_j]\cdot D_{ji}$, then the bases $B_i$ glue together to produce an integral-affine structure on $\Gamma(Y_0)^{\vee}\backslash\{v_0\}$, see for instance the first author's thesis, Example 6.3.5. This integral-affine structure, when ``seen from a great distance" in the language of Gross-Hacking-Keel, is the main integral-affine manifold of interest in \cite{GHK2}, cf.\ Sections 0.3 and 1.2. It is essentially the pseudo-fan $\mathfrak{F}(Y,D)$, but is constructed by gluing copies of $\R^2$ together, rather than lattice triangles. The ``fine detail" lost by viewing the SYZ fibration from a great distance is exactly the information of the SNC resolution of the smoothing.

Proposition \ref{symp} has appeared in various forms throughout the mirror symmetry literature. Given a non-singular affine surface $B$, its ``radiance obstruction" is the class in $H^1(B,T_B^{\textrm{flat}})$ which in local affine coordinates is represented by $$(\partial/\partial x) \otimes dx + (\partial/\partial y) \otimes dy.$$ The evaluation of this $1$-cocycle on a $1$-chain in $B$ coincides with the evaluation of the symplectic form $\omega$ on the associated cylinder, see e.g. Section 3.1.1 of \cite{kontsoib}. The radiance obstruction measures the obstruction to the existence of a global flat section of the SYZ fibration. Furthermore, the monodromy about a large complex structure limit point of Calabi-Yau manifolds is expected to be a higher dimensional analogue of a Dehn twist given by translation by a section of the SYZ fibration, see Conjecture 3.7 of \cite{grossspecial}. Thus the monodromy action on cohomology is given by cup product with the radiance obstruction, as shown in Theorem 5.1 of \cite{gs2}. In fact, it is likely that the methods of proof there could verify Proposition \ref{symp} in this noncompact situation.
\end{remark}

We may now strengthen Looijenga's conjecture to give a lower bound for the number of smoothing components of a cusp singularity:

\begin{theorem}\label{deftypes} The number of smoothing components of the cusp singularity $p'$ with minimal resolution $D'$ is greater than or equal to the number of deformation families of anticanonical pairs $(X,D)$. \end{theorem}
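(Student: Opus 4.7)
The plan is to leverage Proposition~\ref{symp} together with the equivalence between smoothings of the cusp $p'$ and Type III degenerations of rational surfaces established via Friedman--Miranda \cite{FriedmanMiranda} and Shepherd-Barron. Specifically, I will show that for each deformation type $[(Y,D)]$ of anticanonical pair whose cycle $D$ matches the dual cusp, there exists a smoothing component of $p'$ along which the general fiber pair belongs to the deformation type $[(Y,D)]$. Because the deformation type of the general fiber is a locally constant invariant on a smoothing component, distinct deformation types necessarily give rise to distinct components, and the inequality follows.

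Given a representative $(Y,D)$, the construction proceeds as follows. First, select any class $\lambda \in \mathcal{B}_{\text{gen}} \cap \Lambda(Y,D)$; such a class exists since $\mathcal{B}_{\text{gen}}$ is a nonempty open convex cone in $\Lambda_\R$ (Definition~\ref{defB}) and hence contains lattice points after rescaling. Using the correspondence of Remark~\ref{fibration} between integral-affine surfaces with $A_k$-singularities and almost toric Lagrangian fibrations, together with the integral-affine surgery techniques of \cite{Engel}, I will produce an almost toric fibration $\mu \colon (X,D,\omega) \to S^2$ such that $(X,D)$ is diffeomorphic to the anticanonical pair $(Y,D)$, the symplectic class $[\omega]$ is identified with $\lambda$, and the base $S^2$ admits a triangulation realizing it as the dual complex $\Gamma(Y_0)$ of a generic Type III anticanonical pair $(Y_0,D)$. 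After a locally trivial deformation making $(Y_0,D)$ $d$-semistable, the smoothing theorem of Section~2.4 produces a Type III degeneration $\pi \colon (\mathcal{Y},\mathcal{D}) \to \Delta$ with central fiber $(Y_0,D)$; contracting the components $V_1,\dots,V_{f-1}$ via Shepherd-Barron's theorem yields a one-parameter smoothing of the cusp $p'$. By Proposition~\ref{symp}, the general fiber $(Y_t,D_t)$ of this Type III degeneration is diffeomorphic --- hence deformation equivalent --- to $(X,D)$, and thus to $(Y,D)$ as required.

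The principal obstacle lies in the symplectic-geometric step: producing an almost toric fibration whose total space realizes the prescribed diffeomorphism type of $(Y,D)$ and whose base admits a triangulation with the combinatorial structure compatible with being the dual complex of a generic Type III central fiber. This is exactly the input supplied by Engel's construction and is the weaker analogue of what Theorem~\ref{0.6} will subsequently sharpen, where \emph{every} $\lambda \in \mathcal{B}_{\text{gen}}\cap \Lambda$ is realized rather than merely some $\lambda$. Once the fibration has been produced, the remaining steps --- passing to a $d$-semistable model, invoking the smoothing theorem, contracting via Shepherd-Barron, and identifying the general fiber via Proposition~\ref{symp} --- are essentially mechanical.
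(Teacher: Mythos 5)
Your proposal is correct and follows essentially the same route as the paper's proof: invoke Engel's construction to produce a generic Type III anticanonical pair whose dual complex carries the almost toric fibration, pass to a $d$-semistable model and smooth, identify the general fiber with $(X,D)$ via Proposition~\ref{symp}, and conclude deformation equivalence using \cite[Theorem 5.14]{Friedman2}. The only cosmetic difference is that you spell out the intermediate steps (choosing $\lambda$, $d$-semistability, Shepherd-Barron contraction) that the paper's one-paragraph argument compresses or delegates to \cite{Engel} and \cite{FriedmanMiranda}.
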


\begin{proof} The construction of \cite{Engel} inputs an anticanonical pair $(X,D)$ and outputs some Type III anticanonical pair $Y_0$ which is generic such that $$\mu\,:\,(X,D,\omega)\rightarrow \Gamma(Y_0)$$ is the extension of a Lagrangian torus fibration as in Proposition \ref{extend}, though $\omega$ is not kept track of in \cite{Engel}. By Proposition \ref{symp}, there is a diffeomorphism $(X,D)\rightarrow (Y_t,D_t)$ such that $\phi(D_i)=D_i$. By Theorem 5.14 of \cite{Friedman2}, the existence of a diffeomorphism between two anticanonical pairs preserving the classes $[D_i]$ implies that the pairs are deformation equivalent. Thus $(Y_t,D_t)$ is deformation-equivalent to $(X,D)$. Hence, we have constructed a semistable model of a smoothing of $p'$ whose generic fiber is deformation-equivalent to $(X,D)$. Furthermore, if two one-parameter smoothings have non-deformation equivalent generic fibers, they lie on different smoothing components. The theorem follows. \end{proof}

The Type III anticanonical pair $Y_0$ in Theorem \ref{deftypes} is in fact reconstructed from $\mu$ by triangulating the base and inverting the procedure defined in Section 2.2 which constructs $\Gamma(Y_0)$ from $Y_0$. Essentially, Proposition \ref{symp} and the reconstruction procedure in \cite{Engel} reduce the problem of finding a Type III degeneration $\mathcal{Y}\rightarrow \Delta$ whose monodromy invariant is any given element of $\mathcal{B}_{\text{gen}}\cap\Lambda$ to finding an almost toric fibration $\mu:(X,D,\omega)\rightarrow S^2$ whose symplectic form has any given class in $\mathcal{B}_{\text{gen}}\cap\Lambda$. This requires a more delicate construction than in \cite{Engel}, and is the purpose of Section 5.

\section{Birational modification and base change of Type III degenerations}

In this section we collect various ways to modify a Type III degeneration $\mathcal{Y}\rightarrow \Delta$, and then describe effect of these modifications on the dual complex of the central fiber $\Gamma(Y_0)$, as an integral-affine surface.

Let $C\cong \mathbb{P}^1$ be a smooth rational curve in a smooth threefold $\mathcal{Y}$ whose normal bundle is $\mathcal{O}_{C}(-1)\oplus \mathcal{O}_{C}(-1)$. Then the exceptional locus of the blowup $\operatorname{Bl}_C\mathcal{Y}$ is isomorphic to $\mathbb{P}^1\times \mathbb{P}^1$ and we may blow down by contracting along the other ruling of $\mathbb{P}^1\times \mathbb{P}^1$. Such a birational modification is an example of a \textsl{flop}.  Flops centered on the central fiber of $\mathcal{Y}$ are (rather confusingly) called Type 0, I, and II birational modifications depending on how they meet the double locus of $Y_0$:

\begin{definition} Let $\mathcal{Y}\rightarrow \Delta$ be a Type III degeneration. A flop centered on $C\subset Y_0$ is called:

\begin{enumerate}
\item[(i)] A Type 0 modification if $C$ does not intersect $(Y_0)_{\text{\rm{sing}}}$. In this case, $C$ is a smooth $-2$-curve on a component of $Y_0$ and $C$ does not deform to the general fiber. (However, somewhat more general modifications are also allowed.) 
\item[(ii)] A Type I modification if $C$ intersects $(Y_0)_{\text{\rm{sing}}}$ but is not contained in it. In this case, $C$ is an exceptional curve on a component $V_i$ of $Y_0$.
\item[(iii)] A Type II modification if $C$ is contained in $(Y_0)_{\text{\rm{sing}}}$. In this case, $C=C_{ij}$ is an exceptional curve on both components $V_i$ and $V_j$ which contain it.

\end{enumerate}
\end{definition}

\begin{proposition}\label{flop} Let $\mathcal{Y}\rightarrow \Delta$ be a Type III degeneration, and let $C\subset Y_0$ be the center of a flop.

\begin{enumerate}
\item[\rm(i)] A Type 0 modification does not change the isomorphism type of $Y_0$.
\item[\rm(ii)] A Type I modification blows down $C$ on the component $V_i$ containing it and blows up the point $C\cap V_j$ on $V_j$, where $V_j$ is the unique component of $Y_0$ such that $C\cdot C_{ij}=1$. See Figure \ref{typeone}.
\item[\rm(iii)] A Type II modification blows down $C=C_{ij}$ on the two components $V_i$ and $V_j$ containing it, and blows up the points $C\cap V_k$ and $C\cap V_{\ell}$ on the two remaining components $V_k$ and $V_{\ell}$ which intersect $C$. See Figure \ref{typetwo}.
\end{enumerate}
 \end{proposition}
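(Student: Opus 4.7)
The proof is a local computation at the flop center $C$, naturally organized into three steps.

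\textbf{Normal bundle.} First verify $N_{C/\mathcal{Y}} \cong \scrO(-1) \oplus \scrO(-1)$ in Cases (ii) and (iii), so that the standard flop construction applies. This uses the conormal sequence $0 \to N_{C/V_i} \to N_{C/\mathcal{Y}} \to N_{V_i/\mathcal{Y}}|_C \to 0$ together with the identification $N_{V_i/\mathcal{Y}} \cong \scrO_{V_i}(-C_i) \cong \omega_{V_i}$, which follows from $K_\mathcal{Y} = \scrO_\mathcal{Y}(-\mathcal{D})$ (Definition~\ref{defTypeIII}(iii)), the triviality of $\scrO_\mathcal{Y}(Y_0)|_{Y_0}$, and adjunction. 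For Type I, $C \cdot C_i = 1$ and $C^2 = -1$ on $V_i$ give both summands as $\scrO(-1)$, and $\Ext^1(\scrO(-1),\scrO(-1)) = H^1(\Pee^1,\scrO) = 0$ forces splitting. For Type II, the decomposition $N_{C/\mathcal{Y}} = N_{V_i/\mathcal{Y}}|_C \oplus N_{V_j/\mathcal{Y}}|_C$ (valid since $V_i$ and $V_j$ meet transversally along $C$) combined with $C$ being a $(-1)$-curve on both $V_i$ and $V_j$ gives the same conclusion.

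\textbf{Strict transforms.} Analyze the blow-up $\operatorname{Bl}_C \mathcal{Y}$, whose exceptional divisor $E = \Pee(N_{C/\mathcal{Y}}) \cong \Pee^1 \times \Pee^1$ carries two rulings. The flop $\operatorname{Bl}_C \mathcal{Y} \to \mathcal{Y}^+$ contracts the ruling distinct from $E \to C$, producing a new curve $C^+ \subset \mathcal{Y}^+$. I would verify two local claims in \'etale coordinates: (a) if $V \subset \mathcal{Y}$ is smooth and contains $C$, then $\tilde V \cong V$ and $\tilde V \cap E$ is a section of $E \to C$ of self-intersection $0$ (corresponding to the subbundle $N_{C/V} \subset N_{C/\mathcal{Y}}$), hence a fiber of the flopping ruling; (b) if $V$ meets $C$ transversally at a single point $p$, then $\tilde V = \operatorname{Bl}_p V$ and $\tilde V \cap E$ is the fiber of $E \to C$ over $p$, hence a section of the flopping ruling.

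\textbf{Assembly.} Applying (a) to $\tilde V_i$ in Type I (respectively, to both $\tilde V_i$ and $\tilde V_j$ in Type II) shows that $C$ contracts on those components, producing the described blow-downs. Applying (b) to $\tilde V_j$ in Type I with $p = C \cap C_{ij}$ (respectively, to $\tilde V_k$ and $\tilde V_\ell$ in Type II with $p = p_{ijk}$ and $p_{ij\ell}$) shows the specified points are blown up, with the new exceptional curves mapping isomorphically onto $C^+$. In Type I, $C^+$ thus lies on $V_j^+$ as an internal $(-1)$-curve; in Type II, $C^+$ lies on both $V_k^+$ and $V_\ell^+$ as the new double curve between them. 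For Type 0, only (a) applies, to the single component $V_i$ containing $C$; the analysis yields $Y_0^+ \cong Y_0$, and the ``somewhat more general'' modifications follow analogously, possibly allowing non-standard splittings of $N_{C/\mathcal{Y}}$.

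The main obstacle is the ruling identification: distinguishing the flopping ruling of $E$ from $E \to C$, and determining which section of $E \to C$ arises from which subbundle of $N_{C/\mathcal{Y}}$. This is most delicate in Type II, where $V_i$ and $V_j$ play symmetric roles and contribute disjoint sections via the two $\scrO(-1)$ summands, each contracting to a distinct point of $C^+$; once these identifications are fixed, the proposition reduces to the elementary classification of curves on $\Pee^1 \times \Pee^1$ by bidegree.
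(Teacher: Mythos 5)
Your proof is correct and takes essentially the same approach as the paper, which disposes of the proposition in one sentence: ``Since the components of $Y_0$ are locally smooth divisors in $\mathcal{Y}$, the blow-up and blow-down commute with restriction to the components of $Y_0$.'' What you have done is unpack that sentence carefully: your step (a) is the observation that a component containing $C$ has strict transform isomorphic to itself (since $C$ is a Cartier divisor on it) meeting $E$ in a section of $E \to C$ that the flop contracts, and your step (b) is the observation that a component transverse to $C$ at a point $p$ has strict transform $\operatorname{Bl}_p$ meeting $E$ in the fiber over $p$, which survives the flop as $C^+$. The normal-bundle verification you supply is a useful sanity check but strictly speaking redundant here, since the paper's definition of flop already builds in $N_{C/\mathcal{Y}} \cong \scrO(-1) \oplus \scrO(-1)$; its real content is to show that Type I and Type II centers, as described combinatorially, actually satisfy that hypothesis. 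The one place where your exposition is slightly roundabout is in Type II: the cleaner decomposition is $N_{C/\mathcal{Y}} \cong N_{C/V_i} \oplus N_{C/V_j}$, which gives $\scrO(-1)^{\oplus 2}$ directly from $C$ being a $(-1)$-curve on each of $V_i$ and $V_j$; writing it as $N_{V_i/\mathcal{Y}}|_C \oplus N_{V_j/\mathcal{Y}}|_C$ is correct but requires the extra identification $N_{C/V_i} \cong N_{V_j/\mathcal{Y}}|_C$ to close the loop.
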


\begin{proof} Since the components of $Y_0$ are locally smooth divisors in $\mathcal{Y}$, the blow-up and blow-down commute with restriction to the components of $Y_0$. The proposition follows. Note that the Type I modification is an internal blow-down on $V_i$ and an internal blow-up on $V_j$ while the Type II modification is a corner blow-down on $V_i$ and $V_j$ and a corner blow-up on $V_k$ and $V_{\ell}$. \end{proof}

\begin{figure}[h]
\begin{centering}
\includegraphics[width=3.25in]{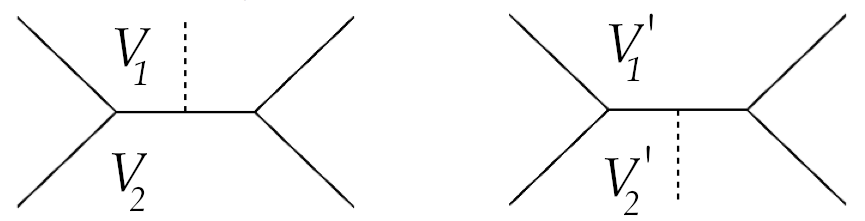} 
\caption{Two components of the central fiber before and after a Type I modification.}
\label{typeone}
\end{centering}
\end{figure}

\begin{figure}
\begin{centering}
\includegraphics[width=3in]{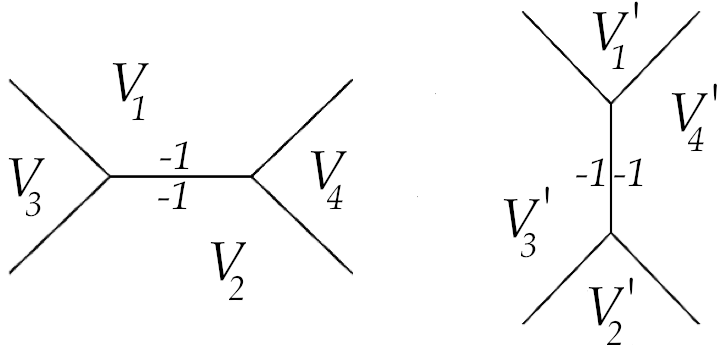} 
\caption{Four components of the central fiber before and after a Type II modification.}
\label{typetwo}
\end{centering}
\end{figure}

\begin{figure}
\begin{centering}
\includegraphics[width=4.5in]{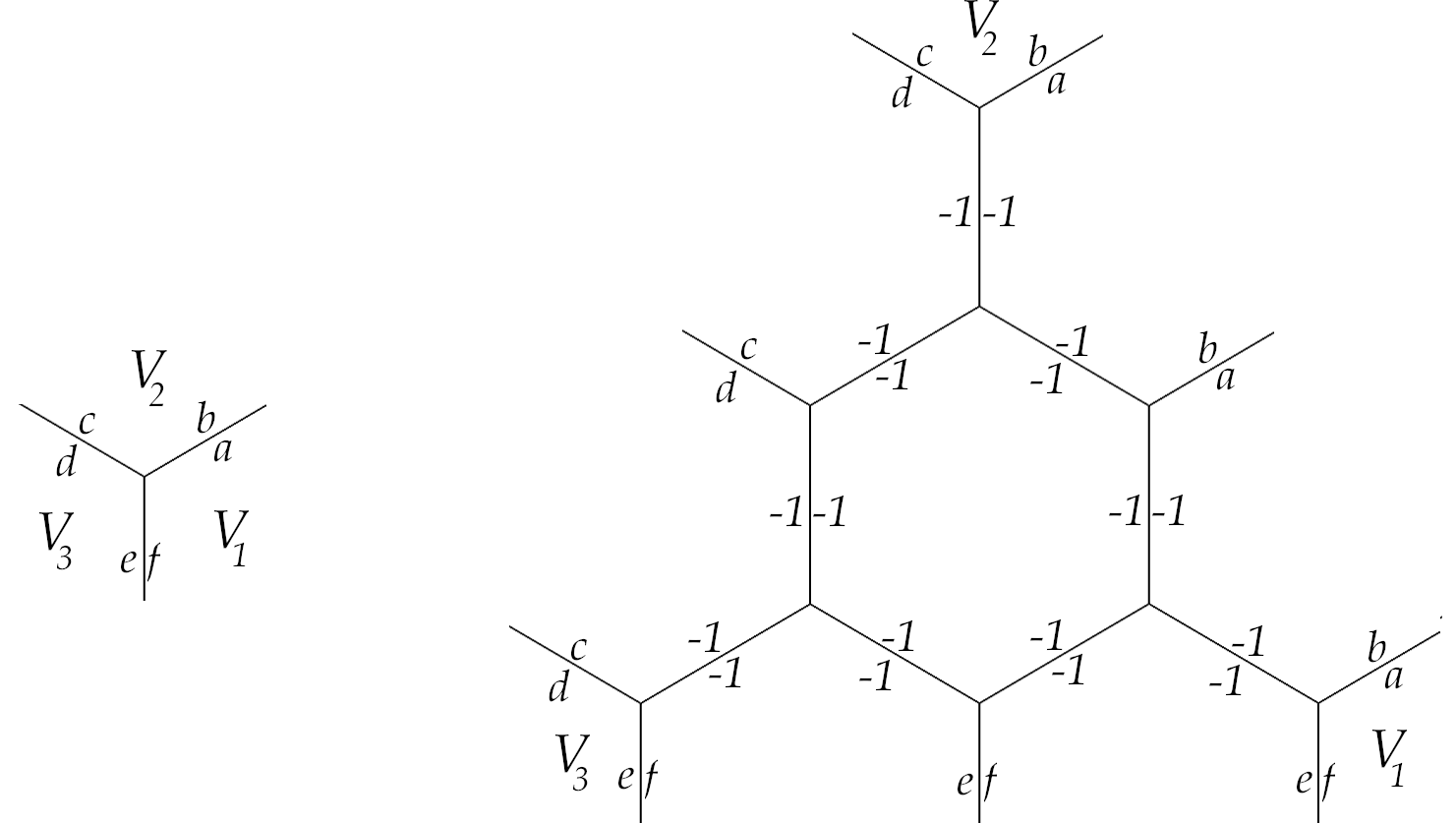} 
\caption{On the right, the components of the central fiber $Y_0[3]$ of the standard resolution $\mathcal{Y}[3]\rightarrow \Delta$ an order $3$ base change.}
\label{basechange}
\end{centering}
\end{figure}

Next, we describe a standard resolution of an order $k$ base change of a Type III degeneration which produces another Type III degeneration.

\begin{proposition}\label{base} Let $\mathcal{Y}\rightarrow \Delta$ be a Type III degeneration. Consider the order $k$ base change: $$\begin{CD} \mathcal{Y}\times_k\Delta @>>> \mathcal{Y} 
\\
 @VVV   @VVV \\
\Delta @>{t\mapsto t^k}>> \Delta
\end{CD}$$ There is a standard resolution $\mathcal{Y}[k]\rightarrow \mathcal{Y}\times_k\Delta$, see Figure \ref{basechange}, such that $\mathcal{Y}[k]\rightarrow \Delta$ is a Type III degeneration. As an integral-affine manifold the dual complex of the central fiber $\Gamma(Y_0[k])$ is the subdivision of each triangle of $\Gamma(Y_0)$ into $k^2$ triangles in the standard manner, where each of the $k^2$ sub-triangles has the structure of a lattice triangle of area $\frac{1}{2}$. \end{proposition}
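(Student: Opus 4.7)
The plan is to construct $\mathcal{Y}[k]$ by a local toric resolution on $\mathcal{Y}\times_k\Delta$ and then verify the global Type III conditions and the claimed combinatorics of $\Gamma(Y_0[k])$. I would begin by identifying the three \'etale-local forms of $\pi$ near the central fiber: $\pi = z$ at a smooth point of $Y_0$; $\pi = xy$ along a smooth point of a double curve $C_{ij}$, with $\{x=0\}, \{y=0\}$ the local equations of the two adjacent components $V_i, V_j$; and $\pi = xyz$ at a triple point $p_{ijk}$. Base change by $t = s^k$ replaces these by the toric threefolds $\{z = s^k\}$ (smooth), $\{xy = s^k\}$ (the product of an $A_{k-1}$-surface singularity with an affine line), and $\{xyz = s^k\}$ (an isolated Gorenstein toric threefold singularity).

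Next I would take canonical crepant toric resolutions in cases (2) and (3). In case (2), the standard subdivision of the two-dimensional cone spanned by $(1,0)$ and $(1,k)$ inserts a chain of $k-1$ ruled exceptional divisors between the strict transforms of $V_i$ and $V_j$. In case (3), the cone has a natural slice at height one given by the lattice triangle with vertices $(0,0), (k,0), (0,k)$, and the standard subdivision of this triangle into $k^2$ basis sub-triangles, obtained by inserting new rays through all interior and boundary lattice points, yields a smooth crepant toric refinement. These local resolutions are globally compatible because both restrict on the edges of a triangle to the standard subdivision of that edge into $k$ equal segments, so they glue to a projective modification $\mathcal{Y}[k] \to \mathcal{Y}\times_k\Delta$.

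I would then verify that $\pi[k]\colon \mathcal{Y}[k] \to \Delta$ satisfies the axioms of Definition \ref{defTypeIII}. Smoothness of $\mathcal{Y}[k]$, reduced simple normal crossings of $Y_0[k]$, and the triple point formula all follow from the fact that every maximal cone of the toric subdivision is a basis simplex. The anticanonical condition $K_{\mathcal{Y}[k]} = \scrO_{\mathcal{Y}[k]}(-\mathcal{D}[k])$ follows from crepancy of the resolution together with the analogous condition on $\mathcal{Y}$. The Inoue component $V_0$, along with its cycle $D$, lies over the smooth locus of $\pi$ and is therefore pulled back unchanged, so $(Y_0[k],D)$ still satisfies condition (ii) of the Type III definition.

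Finally, the dual complex identification is combinatorial: each new ray of the toric subdivision corresponds to a new irreducible component of $Y_0[k]$, yielding $k-1$ new vertices on each edge of $\Gamma(Y_0)$ and $\binom{k-1}{2}$ new vertices in the interior of each triangular face. The main obstacle, and the step requiring the most care, is showing that the induced integral-affine structure on $\Gamma(Y_0[k])$ is precisely the \emph{standard} subdivision of each basis triangle of $\Gamma(Y_0)$ into $k^2$ basis sub-triangles of area $1/2$: this requires computing the self-intersection numbers of each new component with its adjacent double curves and checking that the result matches the integral-affine gluing relation $c_{ij}e_{ij} = e_{ij'} + e_{ij''}$ from Section~2.2 for the standard subdivision. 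Because each new component is toric with fan read off directly from the local toric resolution, this computation is direct and gives exactly the standard triangulation.
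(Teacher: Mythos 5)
Your argument matches the paper's approach: the paper delegates the first claim to \cite{FriedmanBaseChange}, where the standard resolution is constructed by exactly the local crepant toric modifications you describe (subdividing the $A_{k-1}$-cone over each double curve and the Gorenstein cone over each triple point, the latter having height-one slice a lattice triangle of area $k^2/2$), and the dual-complex statement follows, as you say, by reading off the integral-affine structure from Section~2.2. One small imprecision to note: $V_0$ does not lie entirely over the smooth locus of $\pi$ (it meets $(Y_0)_{\text{sing}}$ along $D'$), but since the resolution is an isomorphism away from $(Y_0)_{\text{sing}}$ and in particular over a neighborhood of $D \subset V_0$, the proper transform of $(V_0,D)$ is indeed unchanged and your conclusion stands.
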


\begin{proof} See \cite{FriedmanBaseChange} for proof of the first statement. The second claim then follows directly from the definition of the integral-affine structure on the dual complex, see Section 2.2. \end{proof}

\begin{remark}\label{birationallambda} Note that Type 0, I, and II modifications do not change the monodromy invariant of the degeneration $\mathcal{Y}\rightarrow \Delta$ because the punctured family $\mathcal{Y}^*\rightarrow \Delta^*$ is unaffected by the modifications. On the other hand, taking the base change $\mathcal{Y}[k]\rightarrow \Delta$ multiplies the monodromy invariant by $k$, because the base change $t\mapsto t^k$ is a $k$-fold covering of the punctured family. \end{remark}

We now examine the effect of the birational modifications on the dual complex. The Type II modification is simplest:

\begin{proposition}\label{two} Let $\mathcal{Y}\dashrightarrow \mathcal{Y}'$ be a Type II modification along the double curve $C_{ij}=V_i\cap V_j$. The edge $e_{ij}$ of $\Gamma(Y_0)$ associated to $C_{ij}$ is the diagonal of a quadrilateral $(v_i,v_k,v_j,v_{\ell})$ integral-affine equivalent to a unit square. Furthermore $\Gamma(Y_0)$ and $\Gamma(Y_0')$ are isomorphic as integral-affine manifolds, but their triangulations differ: $\Gamma(Y_0')$ contains the other diagonal of this quadrilateral. \end{proposition}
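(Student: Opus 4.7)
The plan is to first show that the two basis triangles $f_{ijk}$ and $f_{ij\ell}$ glue to a unit lattice parallelogram, and then check that the self-intersection changes produced by the Type II modification match exactly the gluing rules required for the flipped triangulation. To pin down the self-intersections of $C_{ij}$ on $V_i$ and $V_j$, I would use that a Type II flop is centered on a curve $C=C_{ij}$ with $N_{C/\mathcal{Y}}\cong\mathcal{O}(-1)\oplus\mathcal{O}(-1)$; since $V_i,V_j\subset\mathcal{Y}$ are smooth divisors meeting transversally along $C_{ij}$, the splitting $N_{C_{ij}/\mathcal{Y}}\cong N_{C_{ij}/V_i}\oplus N_{C_{ij}/V_j}$ forces $C_{ij}^2|_{V_i}=C_{ij}^2|_{V_j}=-1$, so $c_{ij}=c_{ji}=1$.

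Next I would choose an integral-affine chart around the quadrilateral with $v_i=(0,0)$, $v_k=(1,0)$, $v_j=(0,1)$, so $f_{ijk}$ is the standard basis triangle. Applying the gluing rule $c_{ij}\,e_{ij}=e_{ik}+e_{i\ell}$ at $v_i$ across $e_{ij}$ with $c_{ij}=1$ gives $e_{i\ell}=(0,1)-(1,0)=(-1,1)$, so $v_\ell=(-1,1)$. The resulting quadrilateral $(v_i,v_k,v_j,v_\ell)$ is a parallelogram with sides $(1,0)$ and $(-1,1)$ of lattice area one, hence is $SL_2(\Z)$-equivalent to a unit square. This proves the first assertion of the proposition.

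For the second assertion, I would combine Proposition~\ref{flop}(iii) with the recipe of Section~2.2. The Type II modification is a corner blow-down on $V_i, V_j$ and a corner blow-up on $V_k, V_\ell$, yielding the self-intersection changes $c_{ik}'=c_{ik}-1$, $c_{i\ell}'=c_{i\ell}-1$ (and symmetrically on $V_j'$), and $c_{ki}'=c_{ki}+1$, $c_{kj}'=c_{kj}+1$ (and symmetrically on $V_\ell'$); moreover the corner blow-up introduces a new $(-1)$-curve glued as a double curve $C_{k\ell}$, contributing a new edge with $c_{k\ell}'=c_{\ell k}'=1$. The new triangles $f_{ik\ell}$ and $f_{jk\ell}$ with the already-computed vertex positions have lattice area $1/2$, hence are basis triangles. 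I would then verify that the gluing rules at each of $v_i,v_j,v_k,v_\ell$ for the external edges bounding the quadrilateral are preserved. For instance, at $v_i$ across $e_{ik}$, let $e_{i,\bullet}$ be the direction to the unchanged external vertex. The old gluing reads $c_{ik}\,e_{ik}=e_{ij}+e_{i,\bullet}$ and the new one reads $c_{ik}'\,e_{ik}=e_{i\ell}+e_{i,\bullet}$; subtracting and using $c_{ik}-c_{ik}'=1$, the required identity reduces to $e_{ik}=e_{ij}-e_{i\ell}$, which is exactly the original diagonal relation with $c_{ij}=1$. The analogous computations at $v_j,v_k,v_\ell$ are structurally identical.

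The main obstacle is the careful bookkeeping of signs and cyclic orientations for the edges emanating from the four distinct vertices, and verifying that the self-intersection shifts of $\pm1$ produced by the blow-down/blow-up descriptions precisely match the adjustments needed to preserve the gluing formulas after the diagonal flip. All four checks reduce to the single identity forced by $c_{ij}=c_{ji}=1$, so once the first vertex is handled the remainder is symmetry.
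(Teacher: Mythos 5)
Your proposal is correct and follows essentially the same route as the paper's proof: identify that $c_{ij}=c_{ji}=1$ forces $e_{ij}=e_{ik}+e_{i\ell}$, which places $(v_i,v_k,v_j,v_\ell)$ at the vertices of a unit lattice parallelogram, and then use Proposition~\ref{flop}(iii) to match the $\pm 1$ shifts in the $c_{\bullet\bullet}$ coming from the corner blow-ups/blow-downs against the change of diagonal. The paper states this match tersely (``Switching the diagonal of the quadrilateral corresponds exactly to the changes in the self-intersection sequence''), whereas you carry out the verification explicitly at one vertex and invoke symmetry for the others, and also rederive $C_{ij}^2|_{V_i}=C_{ij}^2|_{V_j}=-1$ from the flop normal bundle rather than simply citing the definition of a Type~II modification; both are minor elaborations of the same argument, not a different approach.
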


\begin{proof} Since $C_{ij}^2=-1$, the directed edges of $\Gamma(Y_0)$ satisfy the equation \begin{equation} e_{ik}+e_{i\ell}=e_{ij}.\end{equation} Eq.\ (1) implies the first statement because the integral-affine structure on $(v_i,v_k,v_j,v_{\ell})$ is uniquely determined by $C_{ij}^2$ and the unit square satisfies Eq (1). The second statement follows from (iii) of Proposition \ref{flop}: Switching the diagonal of the quadrilateral corresponds exactly to the changes in the self-intersection sequence of the four anticanonical pairs $V_i$, $V_j$, $V_k$, and $V_{\ell}$ involved in the Type II modification. \end{proof}

Before we describe the effect of a Type I modification on $\Gamma(Y_0)$, we must introduce a surgery on integral-affine surfaces:

\begin{definition} Let $S_0$ be an integral-affine surface and let $p\in S_0$ be an $A_1$ singularity. Parameterize a monodromy-invariant ray originating at $p$ by a segment $$\gamma\colon [0,\epsilon]\rightarrow S_0$$ where $\gamma(0)=p$. There is a family of integral-affine manifolds $S_t$ for all $t\in [0,\epsilon]$ which are isomorphic to $S_0$ in the complement of $\gamma([0,t])$ and which have an $A_1$ singularity at $\gamma(t)$. We say that $S_t$ is a \textsl{nodal slide} of $S_0$. See \cite{Symington}, Section 6. \end{definition}

\begin{remark} It is shown in \cite{Symington} that if the surfaces $S_t$ are bases of almost toric fibrations $(X_t,\omega_t)\rightarrow S_t$, then $(X_t,\omega_t)$ are symplectomorphic. In particular, the class $[\omega_t]$ is constant under the Gauss-Manin connection. \end{remark}

To quantify the ``amount" of nodal slide, we need a natural measure of the length of a straight line segment in an integral-affine manifold:

\begin{definition} Let $L\subset S$ be a straight line segment with rational slope in an integral-affine surface $S$. The \textsl{lattice length} of $L$ is the unique positive multiple $r$ of a primitive integral vector $v$ such that $rv=L$ in some chart. \end{definition}

We may now describe the effect of a Type I modification on $\Gamma(Y_0)$:

\begin{proposition}\label{one} Let $\mathcal{Y}\dashrightarrow \mathcal{Y}'$ be a Type I modification along a curve $C$ such that $C\cdot C_{ij}=1$. Then $\Gamma(Y_0)$ and $\Gamma(Y_0')$ are isomorphic as simplicial complexes but the integral-affine structure on $\Gamma(Y_0')$ is the result of a nodal slide of lattice length $1$ along the directed edge $e_{ij}$ of $\Gamma(Y_0)$. \end{proposition}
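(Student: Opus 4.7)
The plan is to reduce the proposition to three largely independent checks: that the simplicial complex is unchanged, that only two self-intersection numbers are altered, and that the resulting local change in the integral-affine structure matches a nodal slide of length one along $e_{ij}$.

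First I would note that the flop $\mathcal{Y}\dashrightarrow\mathcal{Y}'$ is an isomorphism in codimension two, so it induces a canonical bijection on components, double curves and triple points of the central fiber. Consequently $\Gamma(Y_0)$ and $\Gamma(Y_0')$ coincide as abstract simplicial complexes, which settles the first assertion.

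Next I would pin down exactly how the self-intersection data changes. By Proposition~\ref{flop}(ii), the modification blows down $C$ on $V_i$ and blows up the point $C\cap V_j$ on $V_j$. Since $C\cdot C_{ij}=1$ and $C$ is disjoint from all other double curves, the only altered self-intersections are that of $C_{ij}$ on $V_i$ (which increases by one) and of $C_{ji}$ on $V_j$ (which decreases by one). In the notation of Section~2.1 this yields $c_{ij}'=c_{ij}-1$ and $c_{ji}'=c_{ji}+1$, with all other $c_{i'j'}$ unchanged; the triple point formula $c_{ij}+c_{ji}=2$ continues to hold. Because the integral-affine structure on $\Gamma(Y_0)$ is assembled by declaring every triangle to be a basis triangle and gluing across each edge $e_{i'j'}$ using only the pair $(c_{i'j'},c_{j'i'})$, it follows that $\Gamma(Y_0)$ and $\Gamma(Y_0')$ are isomorphic as integral-affine manifolds away from any neighborhood of $e_{ij}$.

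It then remains to identify the local modification in a neighborhood of $e_{ij}$ with a nodal slide of lattice length one along $e_{ij}$. I would set up an integral-affine chart on the interior of the quadrilateral $f_{ijk}\cup f_{ij\ell}$, after cutting along a branch line from $v_0$ through the relevant singularity if needed, placing $v_i=(0,0)$ and $v_j=(1,0)$ so that $e_{ij}=(1,0)$. The positions of $v_k$ and $v_\ell$ are determined by the relations around $v_i$ and $v_j$ involving $c_{ij}, c_{ji}, c_{ik}, c_{i\ell}, c_{jk}, c_{j\ell}$. Running the same computation after applying the shift $(c_{ij},c_{ji})\mapsto(c_{ij}-1,c_{ji}+1)$, and comparing the resulting chart with Symington's local model of a nodal slide (Section~6 of \cite{Symington}), in which an $A_1$ singularity at $v_i$ with monodromy-invariant ray directed along $e_{ij}$ is translated by one lattice unit along this ray, one sees that the two modifications agree.

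The main obstacle is this last explicit verification: one must check that translating the $A_1$ singularity from $v_i$ by one lattice length towards $v_j$ produces precisely the shearing $(c_{ij},c_{ji})\mapsto(c_{ij}-1,c_{ji}+1)$ of the gluings across $e_{ij}$, with all other gluings unchanged. This amounts to a careful accounting of the monodromy conjugation incurred when the branch cut is pushed across the slid singularity, together with the observation that the slide never crosses the edges $e_{i,k}, e_{i,\ell}, e_{j,k}, e_{j,\ell}$ and hence leaves the neighboring gluings intact.
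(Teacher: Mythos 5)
Your approach is sound and essentially parallels the paper's, which derives the result from Proposition~\ref{flop}(ii) together with Proposition~2.10 of \cite{Engel} (the effect of an internal blow-up on the pseudo-fan of an anticanonical pair). Where you part ways is at the final step: the paper outsources the local verification — that an internal blow-up on $D_j$ introduces an $A_1$ singularity at the cone point of the pseudo-fan with monodromy-invariant line along $e_j$, i.e.\ produces precisely the shear you describe — to \cite{Engel}, whereas you propose to re-derive it by developing a chart on the quadrilateral $f_{ijk}\cup f_{ij\ell}$ and matching against Symington's local model of a nodal slide. That is the right computation to do, and your identification of the self-intersection shift $(c_{ij},c_{ji})\mapsto(c_{ij}-1,c_{ji}+1)$ via \ref{flop}(ii) is correct (an exceptional curve $C$ with $C\cdot C_{ij}=1$ is automatically disjoint from the other $C_{im}$ since $-K_{V_i}\cdot C=1$). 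Two minor points of imprecision: the gluing across an edge is determined by a single number $c_{i'j'}$ (not genuinely a pair, since $c_{j'i'}=2-c_{i'j'}$ by the triple-point formula), and the branch cut you need when developing the quadrilateral should emanate locally from the singular vertex into the chart, rather than ``from $v_0$.'' Finally, you implicitly treat the singularity at $v_i$ as a single $A_1$; when $Q(V_i)>1$ one first needs the factorization of Remark~\ref{factor1} and Definition~\ref{factor2} to extract the $A_1$ associated with $C$ — but the paper's own proof is equally terse on this point, deferring it to \cite{Engel}.
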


\begin{proof} The proof follows from Proposition \ref{flop}(ii) and Proposition 2.10 of \cite{Engel}, which describes the effect of an internal blow-up on the pseudo-fan of an anticanonical pair. \end{proof}

\begin{remark}\label{factor1} Let $S_t$ for $t\in [0,1]$ denote the family of integral-affine surfaces in Proposition \ref{one}, so that $S_0=\Gamma(Y_0)$ and $S_1=\Gamma(Y_0')$. Then $S_t$ has an $A_1$ singularity at $t$ for all $t\in(0,1)$ but at $t=0,1$, this singularity may collide with an already existing singularity. In particular, $p_i$ is singular for $t\neq 0$ if $Q(V_i',C_i')\geq 1$ and $p_j$ is singular for $t\neq 1$ if $Q(V_j,C_j)\geq 1$. In these cases, the singularity at $p_i$ or $p_j$ \textsl{factors} into an $A_1$ singularity and the remaining singularity as one performs the nodal slide which extracts the $A_1$ singularity. \end{remark} 

In light of Remark \ref{factor1}, we introduce the following definition:

\begin{definition}\label{factor2} Let $p$ be an integral-affine surface singularity. A \textsl{factorization of $p$ into $A_1$ singularities} is a collection of nodal slides of $A_1$ singularities which collide to form $p$. \end{definition}

\begin{proposition}\label{factor3} Let $\mathfrak{F}(V,D)$ be the pseudo-fan of an anticanonical pair $(V,D)$. Let $\pi\colon (V,D)\rightarrow (\overline{V},\overline{D})$ be a toric model, that is, a blowdown of $Q(V,D)$ disjoint internal exceptional curves $\{E_i\}$. Then $(\overline{V},\overline{D})$ is necessarily toric. Let $n_j:=\#\{E_i\,\big{|}\,E_i\cdot D_j=1\}$. Then $\mathfrak{F}(V,D)$ admits a factorization into $A_1$ singularities where $n_j$ of the singularities have monodromy-invariant lines along the edge $e_j\subset \mathfrak{F}(V,D)$ associated to the component $D_j$.  \end{proposition}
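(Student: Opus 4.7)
The plan is to reverse the toric model $\pi$ and translate each internal blow-up into an extraction of an $A_1$ singularity from the cone point of the pseudo-fan. Since $\pi$ contracts the disjoint internal exceptional curves $E_1,\dots,E_{Q(V,D)}$, we can factor $\pi^{-1}$ as a sequence of single internal blow-ups
\[
(\overline{V},\overline{D}) = (V_0,D_0) \to (V_1,D_1) \to \cdots \to (V_{Q(V,D)},D_{Q(V,D)}) = (V,D),
\]
where at step $k$ we blow up a point lying on a unique smooth component $D_{k,j(k)}$ of the boundary. By definition of $n_j$, the number of indices $k$ with $j(k)=j$ equals $n_j$.

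First, I would record the base case: the pseudo-fan $\mathfrak{F}(\overline{V},\overline{D})$ of the toric pair has no singularity, since the integral-affine structure on $\mathfrak{F}(V,D)$ defined in Section 2.2 is non-singular at the cone vertex precisely when $(V,D)$ is toric. Next, I would invoke the local analogue of Proposition \ref{one}, namely Proposition 2.10 of \cite{Engel}: performing an internal blow-up of $(V_k,D_k)$ at a point of $D_{k,j(k)}$ changes $\mathfrak{F}(V_k,D_k)$ by a nodal slide of lattice length one along the edge $e_{j(k)}$, which inserts a fresh $A_1$ singularity at the cone point (or equivalently, extracts an $A_1$ from the cone singularity when read in reverse). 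Iterating through all $Q(V,D)$ blow-ups yields $\mathfrak{F}(V,D)$ from a non-singular pseudo-fan by accumulating $n_j$ $A_1$ singularities along each ray $e_j$ and sliding them into the cone point.

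Reading this process in reverse produces the desired factorization of the cone singularity of $\mathfrak{F}(V,D)$ into $A_1$ singularities: we successively perform nodal slides along $e_j$ for $j=1,\dots,r$, splitting off $n_j$ $A_1$ singularities along each ray in turn. The monodromy-invariant line of each extracted singularity is, by construction, the edge $e_j$ corresponding to the component of $D$ that was blown up.

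The main subtlety will be verifying that these nodal slides commute and really can be carried out independently. Concretely, I need to confirm that after extracting the first $A_1$ from the cone singularity along $e_j$, the remaining singular fiber still has the correct monodromy structure to allow another $A_1$ extraction along a possibly different edge $e_{j'}$. This will follow from the locality of nodal slides, together with the fact that the integral-affine structure on the complement of the cone point in $\mathfrak{F}(V,D)$ is globally determined by the self-intersection sequence $(d_1,\dots,d_r)$ via the gluing relation $e_{j-1}+e_{j+1}=d_je_j$, so the sectors along distinct edges $e_j$ interact only through monodromy around the cone point. The bookkeeping for this commutativity is essentially built into the inductive construction of the pseudo-fan and the fact that the internal blow-ups $E_i$ are disjoint, so no single $E_i$ influences the slide axis of another.
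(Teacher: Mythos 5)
Your proof is correct and takes essentially the same route as the paper: both reduce to Proposition 2.10 of \cite{Engel}, which says that the pseudo-fan of an internal blow-up is obtained from the pseudo-fan of the blowdown by colliding an $A_1$ singularity (with monodromy-invariant line along the edge associated to the component receiving the blow-up) into the cone point. You add some helpful scaffolding — the explicit factorization of $\pi^{-1}$ into single blow-ups, the observation that the base case is non-singular, and a short remark about why the nodal slides can be carried out independently — but none of this changes the substance; the paper's one-line proof is just a compressed version of what you wrote.
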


\begin{proof} Proposition 2.10 of \cite{Engel} gives a construction of $\mathfrak{F}(V,D)$ from $\mathfrak{F}(\overline{V},\overline{D})$ which corresponds exactly to colliding an $A_1$ singularities with monodromy-invariant lines along an edges of $\mathfrak{F}(\overline{V},\overline{D})$ associated to components of $\overline{D}$ receiving an internal blow-up. The Proposition follows. \end{proof}

\begin{example} The converse to Proposition \ref{factor3} is false. That is, even if $\mathfrak{F}(V,D)$ has a factorization into $A_1$ singularities with monodromy-invariant lines along a collection of edges $e_j$, the pair $(V,D)$ may not have a toric model with exceptional curves meeting the associated components $D_j$. For instance, keeping track of the labeling, there are two deformation types of anticanonical pairs $(V,D_1+\cdots+D_5)$ with $$(D_i^2)=(1, -1-2,-2, -1).$$ One has disjoint exceptional curves meeting $D_2$ and $D_4$ and the other has disjoint exceptional curves meeting $D_3$ and $D_5$, but these two possibilities are mutually exclusive. For either deformation type, $\mathfrak{F}(V,D)$ is the same. Thus $\mathfrak{F}(V,D)$ admits two factorizations into $A_1$ singularities, only one of which comes from a toric model of $(V,D)$. \end{example}

\section{Degenerations with given monodromy invariant}

\subsection{Construction of degenerations}

We have most of the necessary tools to prove that any $\lambda\in\mathcal{B}_{\text{\rm{gen}}}\cap \Lambda$ is the monodromy invariant of some Type III degeneration, but first we must introduce a surgery on integral-affine surfaces corresponding to an internal blow-up. Let $(X,D,\omega)\rightarrow B$ be an almost toric fibration. Then $P=\partial B$ is locally polygonal, and the components $D_i\subset D$ fiber over the edges $P_i\subset P$.

\begin{definition} An \textsl{almost toric blow-up}, see \cite{Symington} Section 5.4 or \cite{Engel} Definition 3.3, is a surgery on $B$ which produces a new integral-affine surface with singularities $\tilde{B}$ and an almost toric fibration $$(\tilde{X},\tilde{D},\tilde{\omega})\rightarrow \tilde{B}$$ whose base is $\tilde{B}$. To construct $\tilde{B}$, we remove a triangle, called the \textsl{surgery triangle}, in $B$ resting on an edge $P_i$ and glue the two remaining edges together by a matrix conjugate to $$\twobytwo{1}{1}{0}{1},$$ at the expense of introducing an $A_1$ singularity at the interior vertex of the triangle, whose monodromy-invariant line is parallel to $P_i$. Then $$\pi\colon(\tilde{X},\tilde{D})\rightarrow (X,D)$$ is an interior blow-up at a point on $D_i$. The \textsl{size} $b$ of the surgery is the lattice length of the base of the surgery triangle, and is the area of the exceptional curve with respect to $\tilde{\omega}$. In fact $[\tilde{\omega}]=\pi^*[\omega]-bE$ where $E$ is the class of the exceptional curve.\end{definition}

A sphere in $\tilde{X}$ representing the exceptional curve can be constructed which fibers over the path in $\tilde{B}$ which is the identified pair of edges of the surgery triangle. This path connects $\tilde{P}_i$ to the $A_1$ singularity introduced. One forms the sphere from a family of circles homologous to the vanishing cycle of the nodal fiber over the singularity. These circles collapse to points over the two endpoints of the path.

\begin{remark} Even when we begin with a symplectic anticanonical pair $(X,D,\omega)$ which has a compatible complex structure, such as a symplectic toric surface, Symington's blow-up operation is a priori purely symplectic-topological. We never use a complex structure on $(X,D,\omega)$. But, it is possible when the size $b$ is small to upgrade the blow-up operation for Lagrangian fibrations with respect to a K\"ahler form, see e.g. Section 4 of \cite{abouzaid}. In addition, it is possible to retain a Hamiltonian $S^1$ action in a neighborhood of each exceptional curve. \end{remark}

\begin{definition} Let $L$ be a straight line segment in a non-singular integral-affine surface $S$ and let $v$ be a primitive integral vector pointing along $L$. Suppose that $\gamma\colon [0,1]\rightarrow S$ is a path in $S$ such that $\gamma(0)\in L$. The \textsl{lattice distance} (along $\gamma$) from $L\subset B$ to $\gamma(1)$ is the integral $$\int_0^1 \gamma'(t)\times v \,dt.$$ Note that the cross product is preserved by integral-affine transformations, and thus the integral is well-defined. In fact, the lattice length is invariant if we deform the path within the nonsingular locus of $B$, so long as we fix $\gamma(1)$ and keep the condition $\gamma(0)\in L$. The lattice distance from the interior vertex of a surgery triangle to its base is necessarily equal to the lattice length of the base. \end{definition}

We begin with a theorem regarding the class of an \emph{ample} divisor:

\begin{theorem}\label{anylambda} Let $(X,D)$ be an anticanonical pair. Suppose $\lambda\in \mathcal{A}_{\text{\rm{gen}}}$. There exists an almost toric fibration $(X,D,\omega)\rightarrow B$ such that $[\omega]=\lambda$. \end{theorem}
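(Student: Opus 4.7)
The plan is to reduce to the toric setting by choosing a toric model and then realize $\lambda$ by performing a sequence of almost toric blow-ups of prescribed sizes on the corresponding toric moment polygon. First, choose a toric model $\pi\colon (X,D)\to (\overline{X},\overline{D})$, given by a blow-down along $n = Q(X,D)$ disjoint internal exceptional curves $E_1,\ldots,E_n$ (as in the setup of Proposition~\ref{factor3}; when no literal toric model exists one must instead work with a factorization of the pseudo-fan $\mathfrak{F}(X,D)$ into $A_1$ singularities provided by \cite{Engel}, but the structure of the argument is the same). There is a unique decomposition
$$\lambda = \pi^{*}\overline{\lambda} - \sum_{i=1}^{n} b_{i}[E_{i}]$$
in $H^{2}(X;\R)$, forced by $b_{i} = \lambda\cdot [E_{i}]$; these coefficients are strictly positive because $\lambda\in\mathcal{A}_{\text{gen}}$.

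The class $\overline{\lambda}$ is ample on $\overline{X}$: using $\pi^{*}\overline{D}_{j} = D_{j} + \sum_{E_{i}\cdot D_{j} = 1}[E_{i}]$ together with the projection formula, one computes
$$\overline{\lambda}\cdot \overline{D}_{j} = \lambda\cdot D_{j} + \sum_{E_{i}\cdot D_{j} = 1} b_{i} > 0,$$
and on a smooth projective toric surface, positivity against every toric boundary component characterizes ampleness. Consequently there exists a toric K\"ahler form $\overline{\omega}$ on $\overline{X}$ with $[\overline{\omega}] = \overline{\lambda}$ and a toric moment map $\overline{\mu}\colon(\overline{X},\overline{\omega})\to \overline{P}$, where $\overline{P}$ is the lattice polygon whose edge $\overline{P}_{j}$ (corresponding to $\overline{D}_j$) has lattice length $\overline{\lambda}\cdot\overline{D}_{j}$.

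Next I would perform, sequentially for $i = 1,\ldots,n$, an almost toric blow-up of size $b_{i}$ on the edge of the current polygon corresponding to the component $\overline{D}_{j}$ met by $E_{i}$. Each surgery removes a triangle with base of lattice length $b_{i}$ on that edge, introduces an $A_{1}$ singularity at its apex, and modifies the symplectic class by the rule $[\tilde{\omega}] = \pi^{*}[\omega] - b_{i}E_{i}$ recorded in the definition of an almost toric blow-up. After all $n$ surgeries are complete, we have produced an almost toric fibration $\mu\colon (X,D,\omega)\to B$, and the resulting cohomology class is $[\omega] = \pi^{*}\overline{\lambda} - \sum_{i} b_{i}[E_{i}] = \lambda$, as required.

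The main obstacle is the geometric fitting of the surgery triangles: one must arrange, on each edge $\overline{P}_{j}$, disjoint triangles of prescribed total base length $\sum_{E_{i}\cdot D_{j}=1} b_{i}$, which must remain inside the evolving polygon. The tangential budget is verified exactly: the edge length $\overline{\lambda}\cdot\overline{D}_{j}$ exceeds the total surgery base length by precisely $\lambda\cdot D_{j} > 0$. To control the transverse extent of the triangles and their interaction near corners of $\overline{P}$, I would exploit the freedom to choose the slopes of the surgery triangles (both sides can be tilted subject to the $A_{1}$-apex condition) and interleave \emph{nodal slides}, which preserve the symplectic class, to displace previously introduced $A_{1}$ singularities before performing the next surgery. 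The delicate combinatorial packing argument --- particularly for edges receiving several exceptional curves, or for surgeries near a corner where two edges meet --- is where the argument requires real care and is the heart of the proof.
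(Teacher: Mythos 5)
Your plan has the right outline---reduce to a toric model, then realize $\lambda$ by almost toric blow-ups whose sizes match the coefficients in a decomposition of $\lambda$---but it passes over precisely the step that makes the argument work, and the way you set it up is unlikely to close the gap. You choose an \emph{arbitrary} toric model $\pi\colon(X,D)\to(\overline{X},\overline{D})$ and then decompose $\lambda = \pi^*\overline{\lambda} - \sum_i b_i[E_i]$ with $b_i = \lambda\cdot E_i$; this gives the correct tangential budget (edge length minus total base length equals $\lambda\cdot D_j>0$), but it gives you no control at all over the \emph{transverse} fitting of the surgery triangles, which is the real content. You flag this honestly (``the delicate combinatorial packing argument \dots\ is the heart of the proof''), but an appeal to ``freedom to choose slopes'' and ``interleave nodal slides'' does not resolve it: with an arbitrary toric model, nothing prevents $b_i$ from being large relative to the transverse width of $\overline{P}$ over the relevant edge, so that the triangle of height $b_i$ simply does not fit.

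The paper avoids this by \emph{choosing the toric model using $\lambda$}, via Proposition~\ref{effdecomp}: one first writes $\lambda = \sum_j a_j[D_j] + \sum_{i,j} b_{ij}[E_{ij}]$ with nonnegative coefficients and disjoint exceptional curves $E_{ij}$ meeting $D_j$, blows these down (plus corner blow-ups), and takes the polytope $\overline{B}$ attached to $\pi_*\nu^*\lambda$. Then the torus-invariant divisor $\sum_j c_j\overline{D}_j$ gives a distinguished \emph{lattice point} $p\in\overline{B}$, and the key geometric fact (``Fact~1'' in Lemma~\ref{nooverlap}) is that $p$ sits at lattice distance exactly $c_j$ from each edge $\overline{P}_j$. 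Since the surgery heights are $m_{ij} = c_j - b_{ij} = \lambda\cdot E_{ij} \le c_j$, every triangle fits inside the cone from $\overline{P}_j$ to $p$, and the tangential budget lets you place them disjointly. Without this anchor point --- which exists precisely because the toric model was built from the effective decomposition of $\lambda$ rather than chosen in advance --- the transverse fitting is not justified. (There is also a secondary issue you don't reach: when $b_{ij}=0$ for several curves, the triangles all have apex at $p$ and overlap; the paper resolves this by factoring $p$ into $A_1$ singularities and performing nodal slides, Lemma~\ref{slidefactor}, which is likely what your ``interleave nodal slides'' was gesturing at.) You should also record the reductions the paper makes first (work with $\lambda$ integral and $r(D)\ge 3$, then scale to rational and pass to real classes by local finiteness, and handle small $r(D)$ by corner blow-ups), which your sketch omits.
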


\begin{proof} Suppose that $r(D)\geq 3$ and that $\lambda\in\mathcal{A}_{\text{\rm{gen}}}\cap H^2(X;\Z)$. By Proposition \ref{effdecomp}, we may express $\lambda$ in the form $$\lambda=\sum a_j[D_j]+\sum b_{ij}[E_{ij}]$$ with $E_{ij}$ disjoint exceptional curves meeting $D_j$. Consider the blowdown $$(X,D)\rightarrow (X_0,D_0)$$ of the curves $\{E_{ij}\}$. By Proposition 1.3 of \cite{GHK}, after some corner blow-ups $(X_1,D_1)\rightarrow (X_0,D_0)$ there is a toric model $(X_1,D_1)\rightarrow (\overline{X},\overline{D})$. Since $D\rightarrow D_0$ is an isomorphism, we may perform the same sequence of corner blow-ups $\nu\colon(\tilde{X},\tilde{D})\rightarrow (X,D)$ to produce a toric model $$\pi\colon(\tilde{X},\tilde{D})\rightarrow (\overline{X},\overline{D})$$ which blows down the $E_{ij}$ and possibly other internal exceptional curves.

Pulling back the expression for $\lambda$ gives $$\nu^*\lambda = \sum c_j[\tilde{D}_j]+\sum b_{ij}[E_{ij}]$$ for some constants $c_j$. For simplicity of notation, we index the components of $\tilde{D}$ such that $\nu(\tilde{D}_j)=D_j$. Then $c_j=a_j$ when $\tilde{D}_j$ is not one of the corner blow-ups of $\nu$. Furthermore, we identify $\nu^{-1}(E_{ij})$ with $E_{ij}$. Because $\lambda$ is nef, we have $\lambda\cdot [E_{ij}]=c_j - b_{ij}\geq 0$. Now re-write the above expression as $$\nu^*\lambda=\sum c_j\pi^*([\overline{D}_j])+\sum (b_{ij}-c_{j})[F_{ij}]$$ where $\{F_{ij}\}\supset \{E_{ij}\}$ is the set of exceptional curves blown down by $\pi$. We have $b_{ij}=0$ exactly for those exceptional curves in $\{F_{ij}\}- \{E_{ij}\}$. The coefficients $b_{ij}-c_j$ are non-positive, and have absolute value less than or equal to $c_j$ with equality if and only if the exceptional curve  lies in $\{F_{ij}\}- \{E_{ij}\}$. We then have the following lemma, which proves Theorem~\ref{anylambda} under a technical assumption which we will later remove. 

\begin{lemma}\label{nooverlap} Let $\lambda\in \mathcal{A}_{\text{\rm{gen}}}\cap H^2(X;\Z)$ and suppose $r(D)\geq 3$. In the notation above, suppose that $b_{ij}\neq 0$ for all $F_{ij}$. Then the conclusion of Theorem~\ref{anylambda} holds for $\lambda$. \end{lemma}

\begin{proof} Linearizing the action of $(\C^*)^2$ on the space of sections $$H^0(\overline{X};\mathcal{O}_{\overline{X}}(\pi_*(\nu^*\lambda)))$$ gives a polytope $\overline{B}$ for $(\overline{X},\overline{D})$ whose integral points correspond to torus equivariant sections. Note that $\sum c_j\overline{D}_j$ is a torus invariant divisor in the linear system of $\pi_*(\nu^*\lambda)$ because it is a linear combination of boundary components, so it corresponds to a lattice point $p\in\overline{B}$.

The moment map is a Lagrangian torus fibration $$(\overline{X},\overline{D})\rightarrow \overline{B}$$ such that the components $\overline{D}_j$ fiber over the components $\overline{P}_j$. We will produce via almost toric blowups on $\overline{B}$ an almost toric fibration $(X,D,\omega)\rightarrow B$ such that $[\omega]=\lambda$. We must therefore find for all $F_{ij}$ disjoint surgery triangles of size $m_{ij}:=c_{j}-b_{ij}$ resting on the edge $\overline{P}_j$. To see why we can perform the necessary surgeries, we need the following facts:

\begin{enumerate}

\item[] Fact 1: The point $p$ has lattice distance $c_j$ from the edge $\overline{P}_j$. This is a well-known formula in toric geometry.
\item[] Fact 2: The lattice length of $\overline{P}_j$ is at least the sum of the lengths of the bases of all surgery triangles which must rest on $\overline{P}_j$.

\end{enumerate}

Fact 2 follows from the ampleness of $\lambda$---we have the equation $$\textrm{lattice length}(\overline{P}_j)=\pi_*(\nu^*\lambda)\cdot [\overline{D}_j]=\nu^*\lambda\cdot \tilde{D}_j+\sum_j m_{ij}$$ and thus the lattice length of $\overline{P}_j$ is at least $\sum_j m_{ij}$. By Fact 1, $p$ is at least as distant as is necessary to perform the surgeries associated to $\{F_{ij}\}_i$ within the triangle formed by $\overline{P}_j$ and $p$---each surgery has size $m_{ij}$. By Fact 2, the length of $\overline{P}_j$ is large enough to accommodate all the surgeries.

Figure \ref{surgery} illustrates a general method for finding the necessary triangles, though the figure is deceptive in two ways: (1) Two $F_{ij}$ satisfy $b_{ij}=0$ and correspondingly there are two surgery triangles which involve $p$, whereas by assumption of the lemma $b_{ij}\neq 0$ and (2) the edges of $B$ will have nonzero lattice length after the surgeries because $\lambda$ is ample. We will ultimately relax both the ampleness condition and the assumption that $b_{ij}\neq 0$.

Let $P=\partial B$. After surgeries, the lattice length of the edge $P_j\subset P$ over which $D_j$ fibers is $\nu^*\lambda\cdot [\tilde{D}_j]$, which is positive, since $\lambda\in \mathcal{A}_{\text{\rm{gen}}}$. Since each edge of $B$ corresponding to one of the corner blow-ups in $$\nu:(\tilde{X},\tilde{D})\rightarrow (X,D)$$ has length zero, we have that $B$ is in fact the base of an almost toric fibration $(X,D,\omega)\rightarrow B$ such that $[\omega]=\lambda$. \end{proof}

\begin{figure}
\includegraphics[width=4.5in]{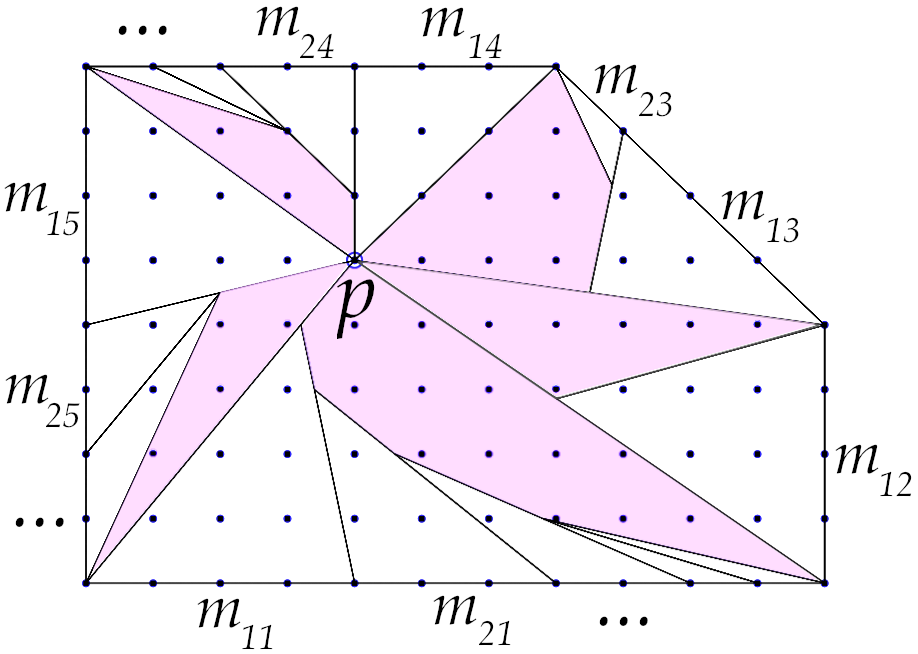} 
\caption{Surgery triangles in $\overline{B}$ of size $m_{ij}:=c_j-b_{ij}$.}
\label{surgery}
\end{figure}

We must now remove the assumption that $b_{ij}\neq 0$. When some $b_{ij}=0$, the proof of Lemma \ref{nooverlap} fails because multiple surgery triangles may overlap at the vertex $p$. See for instance Figure \ref{surgery} (though as before, the figure is still deceptive because the components $P_j$ have positive lattice length when $\lambda$ is ample). We may still construct $B$ as an integral-affine surface, but $B$ is not the base of an almost toric fibration, because $p$ need not be an $A_k$ singularity. Thus we show:

\begin{lemma}\label{slidefactor} There is a family of integral-affine surfaces $B_t$ for $t\in[0,\epsilon]$ with $B_0=B$ which factorizes of $p$ into $A_1$ singularities along nodal slides. Furthermore, $B_t$ for $t\neq 0$ is the base of an almost toric fibration $$(X_t,D_t,\omega_t)\rightarrow B_t$$ such that $[\omega_t]=\lambda$. \end{lemma}

\begin{proof} Each surgery triangle resting on $\overline{P}_j$ introduces an $A_1$ singularity with monodromy-invariant line parallel to $P_j$. We may therefore factor $p$ into a collection of $A_1$ singularities $\{q_{ij}\}$, one for each exceptional curve $F_{ij}$ such that $b_{ij}=0$. To construct $B_t$, we simultaneously perform nodal slides along segments which are cyclically ordered about $p$ in the same ordering as the edges of $P$. Consider the almost toric fibration $(X_t,D_t,\omega_t)\rightarrow B_t$. We must show that $(X_t,D_t)$ is diffeomorphic to $(X,D)$ and that $[\omega_t]=\lambda$.

There is a collection of disjoint paths $$\gamma_{ij}\colon [0,1]\to B_t$$ satisfying $\gamma_{ij}(0)\in P_j$ and $\gamma_{ij}(1)=q_{ij}$. Furthermore, the monodromy-invariant line of $q_{ij}$ is parallel to $P_j$ under the trivialization of integral-affine structure along $\gamma_{ij}$. Thus, as is the case for the usual almost toric blow-up, there are smooth $(-1)$-spheres $F_{ij}$ which fiber over $\gamma_{ij}$ and intersect $D_j$. So $(X_t,D_t)$ and $(X,D)$ have the same toric model, and are thus diffeomorphic. To prove $[\omega_t]=\lambda$, we must show $[\omega_t]\cdot F_{ij}= m_{ij}$. But $\int_{F_{ij}}\omega_t$ is the lattice distance from $P_j$ to $q_{ij}$. This lattice distance is $m_{ij}$, because the lattice distance from $P_j$ to $p$ is equal to $m_{ij}$ and nodal slides parallel to $P_j$ keep the lattice distance constant.  \end{proof}

Hence we have proven Theorem \ref{anylambda} for any $\lambda\in \mathcal{A}_{\text{\rm{gen}}}\cap H^2(X;\Z)$ such that $r(D)\geq 3$. We may cheaply deduce the result for $\lambda\in \mathcal{A}_{\text{\rm{gen}}}\cap H^2(X;\Q)$ by scaling the symplectic form. To prove Theorem \ref{anylambda} for real classes, we must show that the conclusion of Proposition \ref{effdecomp} holds with real coefficients for any $\lambda\in \mathcal{A}_{\text{\rm{gen}}}$. We avoid going into details, since we will not use this result in the rest of the paper, but the proof follows from a continuity argument---the polyhedra in $\mathcal{A}_{\text{\rm{gen}}}$ defined by $$\left\{\sum a_jD_j+\sum b_iE_i\,\big{|}\, a_j,b_i\in \R^{\geq 0}\right\},$$ 
where the $E_i$ are disjoint exceptional curves, are locally finite, and thus their union is closed. Furthermore, their union contains $\mathcal{A}_{\text{\rm{gen}}}\cap H^2(X;\Q)$. Thus, any $\lambda\in \mathcal{A}_{\text{\rm{gen}}}$ may be expressed as $\sum a_jD_j+\sum b_iE_i$ for some $a_j,b_i\in \R^+$ and the proof of Theorem \ref{anylambda} goes through as before.

Finally, when $r(D)\leq 2$ we perform some corner blow-ups until $r(D)\geq 3$. Then we may apply Proposition \ref{effdecomp} and the proof applies, with the caveat that we must blow down these initial corner blow-ups (as the symplectic form will be degenerate on them) to produce a non-degenerate almost toric fibration on $(X,D)$. \end{proof}

\begin{remark}\label{degenerate} We can weaken the assumption of Theorem \ref{anylambda} by assuming only that $\lambda$ is nef and big. In this case, we can apply the method of Theorem \ref{anylambda}, but there are two ways the fibration $\mu$ can degenerate.

First, the length of a boundary component $P_j\subset P$ has length zero if $\lambda\cdot [D_j]=0$. This results in a fibration with $\mu(D_j)$   a point, in fact a vertex of $B$. Furthermore, the symplectic form is only non-degenerate  on the complement of $D_j$. If $\lambda\cdot [D_j]=0$ for all $j$, that is $\lambda\in \Lambda$, then the whole boundary $P$ consists of a single point, and thus the base of $\mu$ is an integral-affine sphere and the symplectic form is only non-degenerate on $X-D$. This is the case shown in Figure \ref{surgery}.

Second, if $\lambda\cdot E_{ij}=0$ then the surgery associated to $E_{ij}$ will have size zero, that is, $\mu(E_{ij})$ will be some point on the interior of the edge $P_j$ and $\omega$ will be the pull-back of a symplectic form from the blowdown of $E_{ij}$. \end{remark}

\begin{proposition}\label{makeint} Let $\lambda\in H^2(X;\Z)$ be big and nef. Let $(X,D,\omega)\rightarrow B$ be the almost toric fibration constructed in Theorem \ref{anylambda}, which may be degenerate in the sense of Remark \ref{degenerate}. There are nodal slides on $B$ until every singularity lies at an integral point. \end{proposition}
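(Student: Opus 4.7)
The plan has two stages: first, factorize every singularity of $B$ into $A_1$ singularities; second, slide each $A_1$ singularity along its monodromy-invariant line to a lattice point of the integral-affine structure.

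For the first stage, the base $B$ produced by Theorem \ref{anylambda} arises from the toric moment polytope $\overline{B}$ by a sequence of (possibly degenerate) almost toric blow-ups, together with the initial factorization of Lemma \ref{slidefactor} at vertices of $\overline{B}$ where multiple surgery triangles coincide. A priori, $B$ may still contain non-$A_1$ singularities (for instance, if several surgery triangles apex at the same interior point, or if an apex collides with an already-singular vertex of $\overline{B}$). In each such case we apply further nodal slides in the sense of Definition \ref{factor2}, separating every singularity into its constituent $A_1$ singularities. Throughout this process, the diffeomorphism type of $(X, D)$ and the class $[\omega] = \lambda$ are preserved, since each nodal slide is a symplectomorphism at the level of total spaces. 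After this stage, every singularity of $B$ is of type $A_1$, with monodromy-invariant line parallel to some boundary edge $\overline{P}_j$ of $\overline{B}$.

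For the second stage, fix an $A_1$ singularity $q$ produced by a surgery of size $m_{ij} = c_j - b_{ij}$ resting on the edge $\overline{P}_j$. Since $\lambda \in H^2(X;\Z)$, the effective decomposition $\lambda = \sum a_j [D_j] + \sum b_{ij}[E_{ij}]$ provided by Proposition \ref{effdecomp} may be taken with integer coefficients (the proof of that proposition, which proceeds by extracting components of $D$ from effective representatives, yields integer multiplicities when the initial class is integral), and consequently the pulled-back coefficients $c_j, b_{ij}$, and hence $m_{ij}$, are all integers. The monodromy-invariant line $L_q$ through $q$ is parallel to $\overline{P}_j$ and lies at lattice distance $m_{ij}$ from it. In a local integral-affine chart pulled back from $\overline{B}$ in which $\overline{P}_j$ is the $x$-axis with its inherited integral structure, $q$ lies on the lattice line $y = m_{ij}$, which contains the infinite family of integer points $(n, m_{ij})$, $n \in \Z$. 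A nodal slide along $L_q$ then moves $q$ to one of these lattice points. We perform these slides sequentially over the finite collection of $A_1$ singularities, at each step choosing a target lattice point and a slide direction that avoids collisions with the remaining singularities and the boundary (always possible, since each $L_q$ contains infinitely many reachable lattice points and only finitely many singularities need to be avoided).

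The main technical obstacle is the integrality of the surgery sizes $m_{ij}$ in the second stage, which is what forces the monodromy-invariant lines through the singularities to contain lattice points; this reduces to the integral refinement of Proposition \ref{effdecomp} mentioned above. A secondary concern is the factorization step: ensuring that the construction of Theorem \ref{anylambda} produces only singularities expressible, up to nodal slides, as unions of $A_1$'s with monodromy-invariant directions among the edges of $\overline{B}$. This is exactly the content of Lemma \ref{slidefactor}, applied iteratively if several surgery apices collide.
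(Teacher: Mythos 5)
Your overall strategy—nodal slides to push each singularity to a lattice point—is in the same spirit as the paper's, but the key step is asserted without proof, and the assertion is in general false. You claim that "each $L_q$ contains infinitely many reachable lattice points," but the monodromy-invariant line $y = m_{ij}$ only contains infinitely many lattice points as an abstract line in $\R^2$; the portion that is actually available for the slide, i.e.\ the segment lying inside the region of $B$ where the slide can be performed without colliding with other branch cuts, other singularities, or the boundary, has finite lattice length. The paper computes this length precisely: keeping the apex inside the triangle $\Delta_{ij}$ (base a subsegment of $\overline{P}_j$ of lattice length $m_{ij}$, apex $p$), the level set $L_{ij}$ at lattice distance $m_{ij}$ has lattice length $m_{ij}(c_j-m_{ij})/c_j$. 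When $m_{ij}=1$ or $m_{ij}=c_j-1$ this equals $(c_j-1)/c_j<1$, so a priori the segment might contain no lattice point; the paper rescues these cases by noting that the endpoints of $L_{ij}$ are rational with denominator $c_j$, which forces the segment to contain a lattice point anyway. Without this computation and case analysis, the existence of a reachable integral target is not established, so the central step of your stage 2 has a genuine gap.

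A secondary issue: your stage 1 factors every singularity into $A_1$'s, whereas the paper does the opposite—it \emph{undoes} the factorizing slides of Lemma \ref{slidefactor}, since $p$ is already a lattice point and there is nothing to fix there. After your factorization, the $A_1$'s split off from $p$ sit at lattice distance $c_j$ from $\overline{P}_j$, and inside $\Delta_j$ the only lattice point at that distance is $p$ itself; so those singularities must be slid right back where they came from. This is not wrong, but it is a detour, and it is a concrete instance where your "infinitely many reachable lattice points" heuristic visibly fails. Your observation that the $m_{ij}$ are integers (via an integral form of Proposition \ref{effdecomp}) is correct and is implicit in the paper's argument.
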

 
\begin{proof} First, undo the nodal slides of Lemma \ref{slidefactor} which factor $p$ into $A_1$ singularities, as $p$ is an integral point. Even so, as shown in Figure \ref{surgery}, the surgery associated to an exceptional curve $E_{ij}$ may introduce a singularity at a rational but non-integral point, and thus we must modify the construction of Lemma \ref{nooverlap} slightly. Within the triangle whose base is $\overline{P}_j$ and whose third vertex is $p$, we fit triangles $\Delta_{ij}$ whose base is a subsegment of $\overline{P}_j$ of lattice length $m_{ij}$ and whose third vertex is $p$, see Figure \ref{integral}.

We would like to show that there is a \emph{lattice point} $p_{ij}$ of lattice distance $m_{ij}$ from $\overline{P}_j$ within the triangle $\Delta_{ij}$. Let $L_{ij}$ denote the segment inside $\Delta_{ij}$ of points whose lattice distance is $m_{ij}$ from $\overline{P}_j$. Then the lattice length of $L_{ij}$ is equal to $$m_{ij}\frac{c_j-m_{ij}}{c_j}.$$ If this length is at least $1$, then $L_{ij}$ contains an integral point. Note $$m_{ij}\frac{c_j-m_{ij}}{c_j}\geq 1\iff m_{ij}(c_j-m_{ij})\geq c_j.$$ Thus $L_{ij}$ contains an integral point $p_{ij}$ whenever $2\leq m_{ij}\leq c_j-2$.

The remaining cases are when $m_{ij}=0$, $1$, $c_j-1$, or $c_j$. The cases $m_{ij}=0$ and $m_{ij}=c_j$ are trivial---in the former case, there is no surgery to make and in the latter case, we must choose $p_{ij}=p$. Consider the cases $m_{ij}=1$ and $m_{ij}=c_j-1$. Then $L_{ij}$ has lattice length $\frac{c_j-1}{c_j}$ and its endpoints are rational points with denominator $c_j$. Thus it contains a lattice point $p_{ij}$. Choosing the surgery triangle associated to $F_{ij}$ to be the triangle whose base is the segment of length $m_{ij}$ and whose third vertex is $p_{ij}$ produces an integral-affine surface with singularities at integral points which can be connected to $B$ by nodal slides. \end{proof}
 
\begin{figure}
\includegraphics[width=4.5in]{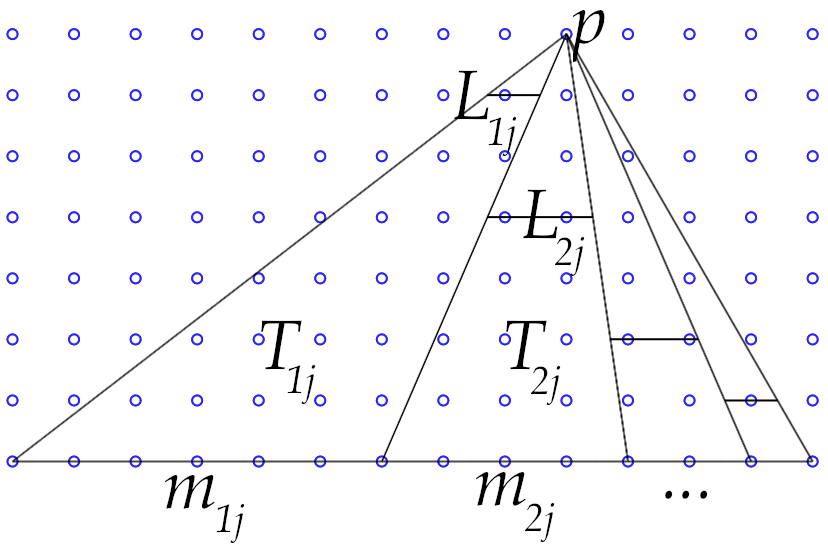} 
\caption{Finding integral points for the surgery triangles resting on the edge $\overline{P}_j$.}
\label{integral}
\end{figure}

\begin{theorem}\label{lambdas} For all $\lambda\in \mathcal{B}_{\text{\rm{gen}}}\cap \Lambda$ there is a Type III degeneration $\mathcal{Y}\rightarrow \Delta$ of anticanonical pairs such that the monodromy invariant of $\mathcal{Y}$ is $\lambda$. \end{theorem}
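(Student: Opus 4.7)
The proof rests on combining the results of Section 5.1 with the reconstruction procedure of \cite{Engel} and the symplectic identification of monodromy from Proposition \ref{symp}. Fix a representative $(X,D)$ of the given deformation type. For $\lambda\in \mathcal{B}_{\text{\rm{gen}}}\cap \Lambda$, we have $\lambda\cdot [D_i]=0$ for every $i$, $\lambda^2>0$, and $\lambda\cdot \alpha>0$ for every effective numerical exceptional curve $\alpha$; in particular $\lambda$ is an integral, big, and nef class on a very general deformation of $(X,D)$. Applying Theorem \ref{anylambda} in the form extended to the big and nef case by Remark \ref{degenerate}, followed by Proposition \ref{makeint}, produces an almost toric fibration $\mu\colon (X,D,\omega)\rightarrow B$ with $[\omega]=\lambda$ whose singularities all lie at integral points of $B$. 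Since $\lambda\cdot [D_i]=0$ for every $i$, each boundary edge of $B$ has lattice length zero and the entire boundary contracts to a single distinguished vertex $v_0$; as a topological space $B$ is therefore a sphere.

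Next I would triangulate $B$ by basis triangles so that every singular point, and in particular $v_0$, is a vertex. Using the nodal-slide freedom of Proposition \ref{one} and the factorization operation of Definition \ref{factor2}, one arranges each interior vertex to be either nonsingular or to carry a single $A_1$ singularity, so that the resulting triangulated integral-affine sphere matches the dual complex of a generic Type III anticanonical pair in the sense of Definition \ref{generic}. The reconstruction procedure of \cite{Engel}, which inverts the assignment $Y_0\mapsto \Gamma(Y_0)$ spelled out in Section 2.2, then produces a generic Type III anticanonical pair $Y_0$ whose dual complex, as a triangulated integral-affine surface, equals $B$, with the Hirzebruch--Inoue component $V_0$ corresponding to $v_0$. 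After a locally trivial deformation to arrange $d$-semistability, the smoothing theorem of Section 2.4 yields a Type III degeneration $\pi\colon (\mathcal{Y},\mathcal{D})\rightarrow\Delta$ whose central fiber is locally trivially deformation equivalent to $Y_0$.

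To identify the monodromy invariant, I invoke Proposition \ref{symp}: there is a diffeomorphism $\phi\colon (X,D)\rightarrow (Y_t,D_t)$ with $\phi_*([\omega])\equiv \pm\lambda\bmod \Z\gamma$. The orientation convention fixed immediately after Proposition \ref{extendsections} selects the sign so that the monodromy invariant of $\mathcal{Y}$ is exactly $\lambda$ mod $\Gamma(Y,D)$, proving the theorem. The principal technical obstacle lies in the triangulation step: one must produce a basis-triangle triangulation of $B$ whose vertex set contains all singular points \emph{and} which is compatible with the genericity constraint $Q(V_i,C_i)\leq 1$ for $i\neq 0$. Overcoming this requires a delicate interplay among nodal slides, factorizations of higher-order singularities into $A_1$ pieces, and, where needed, additional almost toric blow-ups (which correspond under reconstruction to insertions of extra toric components into $Y_0$), all arranged so as to preserve $[\omega]=\lambda$.
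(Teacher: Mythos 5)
Your proposal follows the same opening moves as the paper's proof (Theorem \ref{anylambda}, Remark \ref{degenerate}, Proposition \ref{makeint}, then Proposition \ref{symp} to identify the monodromy invariant), and you correctly identify the crux of the argument: after Proposition \ref{makeint} the singularities of $B$ sit at integral points, but the collision phenomenon means they need not all be of type $A_1$, so $B$ need not be the dual complex of a \emph{generic} $Y_0$ and Proposition \ref{symp} cannot be applied directly. However, you do not actually resolve this obstacle; the phrase ``a delicate interplay among nodal slides, factorizations of higher-order singularities into $A_1$ pieces, and, where needed, additional almost toric blow-ups'' is not a proof, and the strategy it gestures at does not work as stated. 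The factorization of a higher-order singularity at an integral point into $A_1$ pieces is achieved by nodal slides of \emph{small} (in particular, non-integral) lattice length, so the resulting $A_1$ singularities generically land at rational, non-integral points. You are therefore stuck in exactly the tension you named: you can have integral-point singularities or $A_1$ singularities, but not both simultaneously, directly.

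The paper's way out is a genuinely additional idea that is missing from your proposal: an order-$n$ base change. One keeps two integral-affine spheres in play, the generic $B'$ with $A_1$ singularities at rational points of denominator dividing $n$, and the integral $B$ obtained from $B'$ by small nodal slides of lattice length in $\frac{1}{n}\Z$; $B$ yields a (possibly non-generic) Type III degeneration $\mathcal{Y}\to\Delta$. Refining the lattice by a factor of $n$ turns $B'$ into $B'[n]$, which now has $A_1$ singularities at \emph{integral} points, hence is the dual complex of a generic $Y_0'[n]$ whose smoothing $\mathcal{Y}'[n]$ has monodromy invariant $n\lambda$ by Proposition \ref{symp}. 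One then identifies $\mathcal{Y}'[n]$ with the order-$n$ base change $\mathcal{Y}[n]$ of $\mathcal{Y}$ up to Type I and Type II birational modifications (Propositions \ref{one} and \ref{two}), which by Remark \ref{birationallambda} leave the monodromy invariant unchanged; since base change of order $n$ multiplies the invariant by $n$, one concludes that $\mathcal{Y}$ itself has monodromy invariant $\lambda$. Without this base-change mechanism, the argument you sketch does not close.
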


\begin{proof} By Theorem \ref{anylambda} and Remark \ref{degenerate} we may construct a degenerate almost toric fibration $(X,D,\omega)\rightarrow B'$ with $[\omega]=\lambda$. Then $B'$ is a sphere, and $D$ maps to a point $v_0\in B'$. Furthermore, we may assume that the only other singularities of $B'$ are type $A_1$. By Proposition \ref{makeint}, we may assume that there is an integral-affine surface $B$ with singularities at integral points, connected to $B'$ by small nodal slides of lattice lengths in $\frac{1}{n}\Z$. By Section 4 of \cite{Engel}, after triangulation, $B$ is the dual complex of the central fiber of a Type III degeneration $\mathcal{Y}\rightarrow \Delta$ of anticanonical pairs.  But because $\Gamma(Y_0)$ is not generic, we cannot apply Proposition \ref{symp} directly. 

Let $B'[n]$ denote the integral-affine manifold which refines the lattice in $B'$ to order $n$. Equivalently, we post-compose the charts on $B'$ with multiplication by $n$. Then $B'[n]$ is the base of an almost toric fibration on $(X,D,n\omega)$. Note that $B'[n]$ has singularities at integral points, and thus after triangulation, $B'[n]$ is the dual complex of the central fiber $Y_0'[n]$ of a Type III degeneration $$\mathcal{Y}'[n]\rightarrow \Delta.$$ Furthermore, $Y_0'[n]$ is generic, and thus by Proposition \ref{symp}, there is a diffeomorphism $\phi\colon X\rightarrow Y'_t[n]$ such that $n\phi_*([\omega])$ is the monodromy invariant of $\mathcal{Y}'[n]$.

Let $\mathcal{Y}[n]\rightarrow \Delta$ be the standard resolution as in Proposition \ref{base} of the base change of $\mathcal{Y}\rightarrow \Delta$ of order $n$. Then $\Gamma(Y_0[n])=B[n]$ and the triangulation is the standard order $n$ refinement of the triangulation of $B$. Note that $B'[n]$ is given by a set of nodal slides on $B[n]$ all of integer lattice length. Applying Proposition \ref{two}, we may perform Type II modifications on $\mathcal{Y}[n]$ to flip edges of the triangulation of $B[n]$ along diagonals of quadrilaterals. Choosing $n$ large enough, we may ensure by \cite{lawson} that there is a sequence of edge flips on $B[n]$ such that the segments along which one performs nodal slides to get from $B[n]$ to $B'[n]$ are edges of the resulting re-triangulation. Let $\mathcal{Y}[n]\dashrightarrow\mathcal{Y}''[n]$ be this sequence of Type II modifications. Then by Proposition \ref{one}, there is a series of Type I modifications $\mathcal{Y}''[n]\dashrightarrow \mathcal{Y}'[n]$ such that $\Gamma(\mathcal{Y}'_0[n])=B'[n]$, assuming we choose the appropriate toric model for the component of $Y_0''[n]$ corresponding to the point $p$.

Thus, $\mathcal{Y}[n]$ and $\mathcal{Y}'[n]$ are birational. By Remark \ref{birationallambda}, $n[\omega]=n\lambda$ is the monodromy invariant of $\mathcal{Y}[n]$. Again by Remark \ref{birationallambda}, the base change of order $n$ multiplies the monodromy invariant by $n$, and thus the monodromy invariant of $\mathcal{Y}\rightarrow \Delta$ is $\lambda$ under the identification of $H^2(Y_t;\Z)$ with $H^2(X;\Z)$. 

We have only proven existence up to diffeomorphism in the sense that we have constructed, for any $\lambda\in \mathcal{B}_{\text{\rm{gen}}}(X)\cap\Lambda$, a Type III degeneration $\mathcal{Y}\rightarrow \Delta$ and a diffeomorphism $\phi\colon X\rightarrow  Y_t$ such that $\phi_*[D_i]$ is the class of the corresponding component of $D_t$ and the monodromy invariant of $\mathcal{Y}$ is the push-forward of $\lambda$. By Theorem 5.14 of \cite{Friedman2}, we can conclude that $(X,D)$ and $(Y_t,D_t)$ are deformation equivalent. Hence, up to self-diffeomorphism of $Y_t$ preserving the components of $D_t$, every monodromy invariant can be attained. Finally, by Theorem 5.14 of \cite{Friedman2}, the subgroup of the self-diffeomorphism group of $Y_t$ which preserves the components of $D_t$ acts on cohomology by the group $\Gamma =\Gamma(Y_t,D_t)$ of admissible isometries, and thus, there is no difference between the monodromy invariant up to such diffeomorphisms and up to the action of $\Gamma$. \end{proof}

\begin{remark} Theorem \ref{lambdas} is to be expected from the perspective of mirror symmetry. The deformation space of the partially contracted Inoue surface $\overline{V}_0$ represents the ``complex moduli space" for mirror symmetry, and thus should be isomorphic to the ``K\"ahler moduli space" of the mirror $(X,D,\omega)$, cf. \cite{morrison}. When $\lambda\in \mathcal{A}_{\text{\rm{gen}}}- \mathcal{B}_{\text{\rm{gen}}}$ the linear system associated to $\lambda$ does not contract all of $D$ on the pair $(Y,D)$, and thus, the mirror should be a Landau-Ginzburg model, whereas when $\lambda\in \mathcal{B}_{\text{\rm{gen}}}$, the situation is more similar to the Calabi-Yau case, as opposed to the log Calabi-Yau  case.

In particular, given any class $\lambda\in\mathcal{B}_{\text{\rm{gen}}}$, there is a large symplectic structure limit on $(Y,D)$ given by scaling a K\"ahler form $\omega$ satisfying $[\omega]=\lambda$ in the complexified K\"ahler cone. Here $\omega$ is only non-degenerate on $Y-D$ and trivial on $D$. The corresponding deformation in the complex moduli space is a family of anticanonical pairs with monodromy invariant $\lambda$ over a punctured disk. This family should be fillable at the origin of the disc by the large complex structure limit, i.e. $\overline{V}_0$. Thus, one expects that every $\lambda\in\mathcal{B}_{\text{\rm{gen}}}$   arises as a monodromy invariant, and conversely that every monodromy invariant lies in $\mathcal{B}_{\text{\rm{gen}}}$, as shown in Proposition \ref{extendsections}. \end{remark}

\subsection{An application to symplectic geometry}

Theorem \ref{anylambda} has a number of interesting consequences for the symplectic geometry of anticanonical pairs. But first, we cite the following theorem about the symplectic geometry of rational surfaces, due to Li and Liu \cite{LiLiu}:

\begin{theorem}\label{rationalsymp} Let $X$ be a smooth $4$-manifold with $b_2^+(X)=1$ and let $C_{X,K}$ denote the cone of classes of symplectic forms on $X$ with symplectic canonical class $K$. Then \begin{align*} C_{X,K}=\{x\in \mathcal{C}^+\colon &x\cdot [E]>0\textrm{ for all contractible} \\ &2\textrm{-spheres }E\textrm{ s.t. }[E]\cdot K=-1\}\end{align*} \end{theorem}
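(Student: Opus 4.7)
The plan is to prove the two inclusions of the advertised equality separately, with the easy direction establishing necessity and the harder direction using pseudoholomorphic curve techniques to establish sufficiency.

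For the necessity direction, suppose $\omega$ is a symplectic form with symplectic canonical class $K$. Then $\omega^2 > 0$, so $[\omega] \in \mathcal{C}$; choosing an orientation compatibly with Poincar\'e duality shows $[\omega]\in \mathcal{C}^+$. Next, for any smoothly embedded $(-1)$-sphere $E$ with $[E]\cdot K = -1$, I would argue that $[E]$ has a pseudoholomorphic (in fact embedded symplectic) representative, so that $[\omega]\cdot[E] > 0$. The mechanism here is Taubes' identification $SW = Gr$ together with wall-crossing for $b_2^+ = 1$: the class $[E]$ has $(-K)\cdot[E] = 1$ and $[E]^2 = -1$, so one checks that $[E]$ has nonzero Seiberg--Witten (equivalently Gromov) invariant on the appropriate side of the wall determined by $[\omega]$, hence admits a $J$-holomorphic representative for any tame $J$.

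For the sufficiency direction, I would take $x$ in the right-hand cone and construct a symplectic form in class $x$ with canonical class $K$. The strategy is to reduce to the minimal model and then use McDuff's inflation. First, fix an orthogonal decomposition of $X$ as an iterated blow-up of a minimal rational surface $X_{\min}$ (namely $\C\Pee^2$ or a Hirzebruch surface $\F_n$); the classes of the exceptional divisors form an orthogonal system $E_1,\dots,E_k$ of $(-1)$-spheres, each pairing with $K$ to $-1$. Choose a symplectic form $\omega_0$ on $X_{\min}$ and blow up points of small size $\varepsilon_i$ to obtain a symplectic form with class close to $\pi^*[\omega_0] - \sum \varepsilon_i [E_i]$; this lies in $C_{X,K}$ by construction. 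Then one argues that $C_{X,K}$ is connected and open in the set described on the right: openness is immediate, and connectedness of the right-hand cone follows from its explicit description as an intersection of a positive cone with countably many open half-spaces. Inside the right-hand cone one can connect any two classes by a path avoiding the walls $[E] = 0$ for exceptional $E$ with $K\cdot[E] = -1$.

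The main technical step, which I expect to be the principal obstacle, is to show that $C_{X,K}$ is in fact relatively open-and-closed in the right-hand cone, so that by connectedness the two coincide. The key tool is McDuff's inflation theorem: if $\omega$ is a symplectic form and $A$ is the class of an embedded $J$-holomorphic curve of non-negative self-intersection for some $\omega$-tame $J$, then $[\omega] + tA$ lies in $C_{X,K}$ for all $t \geq 0$. One then shows that sufficiently many such inflation directions are available, using Taubes' $SW = Gr$ to produce $J$-holomorphic representatives for classes in the ``positive'' part of the cone (classes $A$ with $A^2 \geq 0$ and $A\cdot[\omega] > 0$ and suitable pairing with $K$). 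A delicate wall-crossing analysis is required to identify which classes carry Gromov invariants, but the upshot is that the inflation directions span enough of $H^2(X;\R)$ to move any starting symplectic class to any target class in the right-hand cone, yielding the equality $C_{X,K} = \{x \in \mathcal{C}^+ : x\cdot[E] > 0 \text{ for all relevant } E\}$.
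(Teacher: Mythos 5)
This statement appears in the paper as a citation to Li and Liu---the paper gives no proof of it---so there is no ``paper's proof'' to compare against, and I will instead assess your sketch against Li--Liu's published argument. Your outline captures the correct ingredients: Taubes' identification of the Seiberg--Witten and Gromov invariants, the wall-crossing formula in the $b_2^+ = 1$ setting, and McDuff's inflation technique. These are indeed the main tools of Li--Liu. The necessity direction is essentially right: for an exceptional class $E$ with $E^2 = -1$ and $K\cdot E = -1$, the expected dimension $E^2 - K\cdot E$ is zero, wall-crossing gives a nontrivial Gromov invariant in the Taubes chamber, and a $J$-holomorphic representative forces $[\omega]\cdot E > 0$.

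There is, however, a substantive gap in the sufficiency direction. Your reduction to an iterated blow-up of a minimal rational surface ($\C\Pee^2$ or a Hirzebruch surface $\F_N$) is not valid for a general smooth $4$-manifold with $b_2^+ = 1$: the Enriques surface, Dolgachev surfaces, and blow-ups thereof all have $b_2^+ = 1$ and admit symplectic structures, but are not rational, so no such exceptional system exists. Li--Liu's proof must therefore treat the minimal case directly (there the answer is a component of the light cone determined by $K$) before handling blow-ups, and they do so for arbitrary $b_2^+=1$ manifolds. More importantly, you correctly identify the relative closedness of $C_{X,K}$ inside the right-hand cone as the crux of the argument, but the proposal stops before establishing it: the essential content of Li--Liu is precisely the wall-crossing computation showing which classes carry nontrivial Gromov invariants and hence supply enough inflation directions to fill out the cone, and that computation is not carried out here. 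As it stands the sketch is a sound outline of strategy, not a proof.
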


Let us further restrict to the cone $C_{(X,D)}$ of classes of symplectic forms on $X$ along with a normal crossings symplectic anticanonical divisor $D$. By Theorem \ref{rationalsymp}, we can conclude that \begin{align*}C_{(X,D)}\subset \{x\in \mathcal{C}^+\colon &x\cdot [E]>0\textrm{ for all }E \\ &\textrm{exceptional and }x\cdot [D_j]>0\}= \mathcal{A}_{\text{\rm{gen}}}.\end{align*} Conversely, any element $x\in \mathcal{A}_{\text{\rm{gen}}}$ is in the K\"ahler cone of a generic $(X,D)$ because every ample class is K\"ahler. Thus we in fact have an equality $C_{(X,D)}=\mathcal{A}_{\text{\rm{gen}}}$. Let $\mathcal{F}$ denote the \textsl{set} of classes of symplectic forms of almost toric fibrations $(X,D,\omega)\rightarrow B.$ Clearly we have $\mathcal{F}\subset\mathcal{C}_{(X,D)}$, but by Theorem \ref{anylambda}, we also have $\mathcal{A}_{\text{\rm{gen}}}\subset\mathcal{F}$. Thus, all three are equal: 

\begin{corollary}\label{fun} $\mathcal{F}=C_{(X,D)}=\mathcal{A}_{\text{\rm{gen}}}.$ \end{corollary}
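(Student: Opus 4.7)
The plan is to prove the triple equality by establishing a cyclic chain of inclusions $\mathcal{A}_{\text{\rm{gen}}} \subseteq \mathcal{F} \subseteq C_{(X,D)} \subseteq \mathcal{A}_{\text{\rm{gen}}}$, so that each of the three sets coincides with the others. Each of the three inclusions is essentially already isolated in the paragraph preceding the statement, but I will spell out how I intend to justify them.

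First I would handle $\mathcal{A}_{\text{\rm{gen}}} \subseteq \mathcal{F}$. Given $\lambda \in \mathcal{A}_{\text{\rm{gen}}}$, Theorem~\ref{anylambda} directly produces an almost toric fibration $(X,D,\omega)\to B$ with $[\omega] = \lambda$; by definition of $\mathcal{F}$ this places $\lambda$ in $\mathcal{F}$. This is the substantive direction, and morally the hard one, but all the work has been done in Theorem~\ref{anylambda} (together with the decomposition from Proposition~\ref{effdecomp} and the surgery-triangle arguments in Lemmas~\ref{nooverlap} and \ref{slidefactor}), so here I only need to invoke that theorem.

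Next I would observe that $\mathcal{F} \subseteq C_{(X,D)}$. This is tautological from the definitions: an almost toric fibration $(X,D,\omega)\to B$ comes equipped with a smooth symplectic form $\omega$ on $X$ for which $D$ is a symplectic normal crossings anticanonical divisor (the components $D_i$ fiber over the edges of $\partial B$ and are symplectic submanifolds meeting symplectically transversally at the vertices). Hence $[\omega] \in C_{(X,D)}$ and no real argument is needed beyond matching the definitions.

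Finally I would prove $C_{(X,D)} \subseteq \mathcal{A}_{\text{\rm{gen}}}$. Here I would appeal to Theorem~\ref{rationalsymp} of Li–Liu: any class of a symplectic form with canonical class $K = -[D]$ lies in the connected component $\mathcal{C}^+$ of the positive cone and pairs strictly positively with every smoothly embedded symplectic $(-1)$-sphere $E$, and in particular with every exceptional curve class. Moreover such a class must pair positively with each symplectic $[D_j]$ since the $D_j$ are symplectic. Comparing with the defining inequalities of $\mathcal{A}_{\text{\rm{gen}}}$ recalled in Section~1.2 finishes the inclusion. Combining the three inclusions yields the corollary. The only nontrivial ingredient is Theorem~\ref{anylambda}, so no further obstacle is expected once that is in hand.
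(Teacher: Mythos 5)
Your proof is correct and follows essentially the same approach as the paper, assembling the same three ingredients (Theorem~\ref{anylambda}, the Li--Liu Theorem~\ref{rationalsymp}, and the tautological inclusion of almost toric symplectic classes into $C_{(X,D)}$). The only organizational difference is that you arrange them into a single cyclic chain $\mathcal{A}_{\text{\rm{gen}}} \subseteq \mathcal{F} \subseteq C_{(X,D)} \subseteq \mathcal{A}_{\text{\rm{gen}}}$, which makes the paper's separate observation that $\mathcal{A}_{\text{\rm{gen}}} \subseteq C_{(X,D)}$ (via ample classes being K\"ahler on a generic pair) logically unnecessary --- a harmless and slightly cleaner bookkeeping.
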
 In particular, $\mathcal{F}$ is a convex cone. Furthermore, every symplectic anticanonical pair $(X,D,\omega)$ has some other symplectic form in the class $[\omega]$ which admits a Lagrangian torus fibration. By the main result of Li and Mak \cite{LiMak}, we conclude that when $X$ is rational, any symplectic anticanonical pair $(X,D)$ is symplectic-isotopic (through symplectic anticanonical pairs with cycle $D$) to an almost toric fibration.

\subsection{A conjecture on Type III degenerations}

Before we state  the conjecture, let us recall the main results from \cite{FriedmanScattone}. Let $X$ be a $K3$ surface. The analogue of the group $\Gamma$ of admissible isometries is the group $O^+(\Lambda_{K3})$ of integral isometries of spinor norm $1$ of the $K3$ lattice $\Lambda_{K3}$. Let  $\mathcal{X}\to \Delta$ be a (polarized or weakly K\"ahler) Type III degeneration of $K3$ surfaces. If $N$ is the logarithm of the monodromy of the degeneration $\mathcal{X}\to \Delta$ acting on $H^2(X)$, then the saturation of $\im N^2$ is of the form $\Z \gamma$, where $\gamma$ is a primitive element of $\Lambda_{K3}$, and there exists an element $\lambda\in \gamma^\perp=W_0$ such that, for all $x\in H^2(X)$,
$$N(x) = (x\cdot\gamma)\lambda -  (x\cdot \lambda)\gamma.$$
It is easy to see that $\lambda$ is well-defined mod $\gamma$, i.e.\ as an element of $\gamma^\perp/\Z \gamma \cong U^2 \oplus (-E_8)^2$. Furthermore $\lambda^2 >0$.
 The analogue of the cone $\mathcal{B}_{\text{\rm{gen}}}$ for an anticanonical pair $(Y,D)$ is an appropriate component $\mathcal{C}^+_{K3, \gamma}$ of the positive cone  in $(\gamma^\perp/\Z \gamma)\otimes _\Z\R$.   Given an element $\lambda \in (\gamma^\perp/\Z \gamma)\cap \mathcal{C}^+_{K3, \gamma}$, we can associate to $\lambda$ the pair of positive integers $(k, t)$, where $t = \lambda^2$ and $k$ is the largest positive integer such that $\lambda = k\lambda_0\bmod \Z \gamma$ for some $\lambda_0 \in \Lambda_{K3}$. It is easy to see that the pair $(k,t)$ is a complete set of invariants for the pair $(\gamma, \lambda)$, in the sense that, given two pairs $(\gamma, \lambda)$ and   $(\gamma', \lambda')$ as above, the pairs have the same associated pair $(k,t)$ $\iff$ there exists a $\psi \in O^+(\Lambda_{K3})$ such that $\psi(\gamma) = \gamma'$ and, via the induced isometry $\gamma^\perp/\Z \gamma\to (\gamma')^\perp/\Z \gamma'$, $\psi(\lambda) = \lambda'$. The main result of \cite{FriedmanScattone} is then:
 
 \begin{theorem}\label{mainthmFS} {\rm (i)} For every pair of positive integers $(k,t)$, there exists a Type III degeneration of $K3$ surfaces with invariants $(k,t)$.
 
 \smallskip
 \noindent {\rm (ii)} Given two Type III degenerations $\mathcal{X}\to \Delta$ and $\mathcal{X}'\to \Delta$ of $K3$ surfaces, $\mathcal{X}\to \Delta$ and $\mathcal{X}'\to \Delta$ have the same invariants $\iff$ there exists a locally trivial deformation from $X_0$ to a Type III $K3$ surface $X_0''$ over a smooth connected base, with all fibers $d$-semistable, and a sequence of Type I and II birational modifications $X_0'' \dasharrow X_0'$. 
 \end{theorem}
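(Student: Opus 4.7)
\textbf{Proof proposal for Theorem \ref{mainthmFS}.} The plan is to adapt the techniques developed in the body of this paper for anticanonical pairs to the $K3$ setting, where the role of $\mathcal{B}_{\text{\rm{gen}}}\cap \Lambda$ is played by $(\gamma^\perp/\Z\gamma)\cap \mathcal{C}^+_{K3, \gamma}$ and the role of the dual complex of a Type III anticanonical pair is played by that of a Type III $K3$ surface, which is a triangulated lattice $2$-sphere whose associated integral-affine structure carries $24$ $A_1$ singularities (reflecting the total charge $24$). Throughout, the basic dictionary is: nodal slides correspond to Type I modifications via Proposition \ref{one}; re-triangulations correspond to Type II modifications via Proposition \ref{two}; and base change of order $n$ refines the integral-affine lattice by a factor of $n$ via Proposition \ref{base}.

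For part (i), given $(k,t)$ one first chooses, by elementary lattice theory in $U^2\oplus(-E_8)^2$, a class $\lambda\in \gamma^\perp/\Z\gamma$ primitively divisible by $k$ with $\lambda^2=t$. First I would construct a (possibly degenerate) almost toric fibration $\mu\colon (X,\omega)\to S^2$ with $X$ a $K3$ surface and $[\omega]$ corresponding to $\lambda$. This is the $K3$ analogue of Theorem \ref{anylambda}, and can be carried out by exhibiting $X$ as a fiber connect sum of almost toric fibrations of simpler pieces (for instance, a rational elliptic surface glued to its mirror image) and then adjusting by almost toric blow-ups and nodal slides to realize any target cohomology class in the positive cone. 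Then I would follow the strategy of Theorem \ref{lambdas}: slide the base singularities to rational and subsequently to integer points (possibly after a base change of some order $n$), triangulate, and invoke the reconstruction procedure of \cite{Engel} to build a Type III $K3$ degeneration whose dual complex is this integral-affine sphere. Finally, Type II modifications are used to undo the base change so that the monodromy invariant is exactly $\lambda$ and not a multiple of it.

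For part (ii), the $(\Leftarrow)$ direction is immediate: Type I and II modifications do not alter the punctured family $\mathcal{X}^*\to \Delta^*$, and a locally trivial $d$-semistable deformation preserves the monodromy operator on $H^2$; hence the data $(\gamma,\lambda)$ is unchanged up to the action of $O^+(\Lambda_{K3})$. For the $(\Rightarrow)$ direction, suppose $\mathcal{X}\to\Delta$ and $\mathcal{X}'\to\Delta$ share the invariants $(k,t)$. After applying a global isometry of $\Lambda_{K3}$ I may assume $\gamma=\gamma'$ and $\lambda=\lambda'$. The plan then is to show (a) that the two integral-affine spheres $\Gamma(X_0)$ and $\Gamma(X_0')$ are equivalent up to nodal slides, and (b) that any two triangulations of a given integral-affine $2$-sphere with $24$ $A_1$ singularities can be connected by a sequence of bistellar flips. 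Step (a) upgrades to a chain of Type I modifications and step (b) to a chain of Type II modifications, while locally trivial $d$-semistable deformations mediate between Type III $K3$ surfaces whose base integral-affine spheres coincide but whose analytic structures differ (these deformations fill out the period directions that are invisible to the integral-affine data).

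The main obstacle is step (a) in the $(\Rightarrow)$ direction of part (ii): the uniqueness statement that an integral-affine sphere with $24$ $A_1$ singularities is determined, up to nodal slides, by the class $\lambda$ (equivalently by $(k,t)$). This is a global moduli statement. The approach I would take is to show that the space of such integral-affine spheres with fixed $\lambda$ fibers over a connected parameter space — essentially the $24$-fold symmetric product of a suitable base minus a discriminant — and that its fibers are connected because any two triangulations are bistellar-equivalent (a Pachner-type theorem for lattice triangulations of the sphere, provable by degenerating any two triangulations to a common fine subdivision, which is exactly what an order $n$ base change achieves on the side of degenerations). This global connectedness, combined with the local nodal-slide flexibility from \cite{Symington}, is the technical heart of the classification and is where most of the work of \cite{FriedmanScattone} lives.
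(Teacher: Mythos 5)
The paper does not actually prove Theorem~\ref{mainthmFS}: it is stated in Section 5.3 purely as a citation to \cite{FriedmanScattone}, to motivate the anticanonical-pair analogues Theorem~\ref{lambdas} (of part (i)) and Conjecture~\ref{lambdaclassifies} (of part (ii)). There is no proof in the paper to compare against, so your proposal should be judged against the original Friedman--Scattone argument, which proceeds by a quite different route (explicit construction and combinatorial classification of Type III $K3$ fibers, degeneration to standard forms, etc.) rather than via integral-affine geometry and almost toric fibrations.

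Your approach to part (i) is plausible and is essentially what this paper does for anticanonical pairs in Theorem~\ref{lambdas}, transplanted to the $K3$ setting. But you have a concrete error in the last step: you say ``Type II modifications are used to undo the base change so that the monodromy invariant is exactly $\lambda$ and not a multiple of it.'' Type II modifications \emph{do not change} the monodromy invariant (Remark~\ref{birationallambda}); they cannot divide by $n$. In Theorem~\ref{lambdas} the undoing of the base change is not done by any birational modification: one constructs $\mathcal{Y}$ and its base change $\mathcal{Y}[n]$ with monodromy invariant $n\lambda$, uses Type~I and II modifications and Remark~\ref{birationallambda} to identify $\mathcal{Y}[n]$ and $\mathcal{Y}'[n]$ up to monodromy invariant, and then simply \emph{observes} that since the base change multiplies the invariant by $n$, the original $\mathcal{Y}\to\Delta$ already had invariant $\lambda$.

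For part (ii), the real gap is the one you acknowledge: step~(a), the claim that an integral-affine $2$-sphere with $24$ $A_1$ singularities is determined by $\lambda$ up to nodal slides. You sketch an idea but supply no proof, and this is precisely the content you would need. The paper itself is telling you that this step is not known by these methods: the statement analogous to part~(ii) for anticanonical pairs is presented only as Conjecture~\ref{lambdaclassifies}, not as a theorem. If the nodal-slide/bistellar-flip strategy sufficed, the authors would have been able to prove Conjecture~\ref{lambdaclassifies}. So you cannot claim that the methods of this paper give a proof of part (ii) of Theorem~\ref{mainthmFS}; the only known proof of part (ii) is the original one in \cite{FriedmanScattone}, which does not go through integral-affine geometry. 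There are also unaddressed subtleties in even formulating step~(a): the class $\lambda$ is a priori attached to a degeneration, and recovering it from the bare integral-affine sphere requires an identification of the almost toric total space with $H^2(X_t;\Z)$ up to $O^+(\Lambda_{K3})$, which has both existence and well-definedness issues you do not discuss.
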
 
 
Theorem~\ref{lambdas} is then the analogue for Type III anticanonical pairs of the existence part (i) 
of Theorem~\ref{mainthmFS}. It is natural to conjecture that there is an analogue  of (ii) 
of Theorem~\ref{mainthmFS} as well:
 
\begin{conjecture}\label{lambdaclassifies} Let $(\mathcal{Y}, \mathcal{D})\to \Delta$ and $(\mathcal{Y}', \mathcal{D}')\to \Delta$ be two Type III degenerations of anticanonical pairs with monodromy invariants $\lambda$ and $\lambda'$, respectively. Assume that $D$ and $D'$ have the same self-intersection sequence and choose compatible labelings on them. Then, there exists an admissible isometry \cite[Definition 5.7]{Friedman2} $\psi \colon H^2( Y_t; \Z) \to H^2(Y'_t; \Z)$, $t\neq 0$, such that $\psi(\lambda) = \lambda'$ $\iff$ there exists a locally trivial deformation from $( Y_0, D)$ to a Type III anticanonical pair $( Y''_0, D)$ over a smooth connected base, with all fibers $d$-semistable, and a sequence of Type I and II birational modifications $(Y''_0, D) \dasharrow (Y'_0, D)$. 
\end{conjecture}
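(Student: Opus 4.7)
The backward direction is essentially immediate from results already established in the paper. A locally trivial deformation of the pair $(Y_0,D)$ over a smooth connected base $S$ yields a family of $d$-semistable Type III anticanonical pairs whose associated smoothings fit together: the general fibers are mutually diffeomorphic and the monodromy invariants agree up to the action of the Gauss--Manin connection, which factors through $\Gamma$. Similarly, Type I and Type II birational modifications of a Type III degeneration $\pi\colon \mathcal{Y}\to \Delta$ leave the punctured family $\mathcal{Y}^*\to\Delta^*$ unchanged by Remark~\ref{birationallambda}, so they preserve $\lambda$ exactly. Concatenating these observations shows that if $(Y_0,D)$ and $(Y'_0,D)$ are related by the chain in the conjecture, the associated $\lambda$ and $\lambda'$ differ by an admissible isometry.

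For the forward direction, the plan is to realize both Type III degenerations starting from the \emph{same} almost toric data and then translate moves on the symplectic side to the algebraic side. Given an admissible isometry $\psi$ with $\psi(\lambda)=\lambda'$, the compatibility $\psi([D_i])=[D'_i]$ together with \cite[Theorem 5.14]{Friedman2} shows that the generic fibers are deformation equivalent as anticanonical pairs, so $\psi$ is induced by a diffeomorphism $\phi\colon(Y_t,D_t)\to(Y'_t,D'_t)$. Applying Proposition~\ref{symp} to each family realizes the central fibers as integral-affine spheres $\Gamma(Y_0)$ and $\Gamma(Y'_0)$, each carrying an almost toric fibration whose symplectic class is $\pm\lambda$ modulo $\Z\gamma$. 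After identifying $(Y_t,D_t)$ with $(Y'_t,D'_t)$ via $\phi$, the two fibrations live on the \emph{same} symplectic anticanonical pair $(X,D,\omega)$, and the task reduces to comparing two almost toric structures on this single symplectic four-manifold.

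The heart of the argument is then a connectedness statement for the space of such structures, using only the three operations available on the algebraic side: locally trivial deformation of $(Y_0,D)$ (which corresponds to colliding or factoring integral-affine singularities, in the sense of Remark~\ref{factor1} and Definition~\ref{factor2}), Type I modification (which corresponds to a nodal slide by Proposition~\ref{one}), and Type II modification (which corresponds to flipping a triangulation edge by Proposition~\ref{two}). What must be proven is that any two integral-affine triangulated spheres with matching charge data coming from $(X,D,[\omega])$ can be connected by a finite sequence of such moves. A promising route passes through the base-change construction used in the proof of Theorem~\ref{lambdas}: for $n$ sufficiently large, the order-$n$ refinements $\Gamma(Y_0[n])$ and $\Gamma(Y'_0[n])$ become triangulations of the same generic integral-affine sphere, both degenerations become birational after base change (since $n\lambda=n\lambda'$ there), and the resulting chain of Type I, II modifications and locally trivial deformations between $Y_0[n]$ and $Y'_0[n]$ can then be pushed back down to $n=1$ after verifying compatibility with the covering transformations.

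The principal obstacle is this connectedness statement, which is the direct analogue of part (ii) of Theorem~\ref{mainthmFS}. In the $K3$ case the proof in \cite{FriedmanScattone} uses the global Torelli theorem to move inside the fiber of the period map; for anticanonical pairs the natural substitute is the period map machinery developed in Sections~6--8, but it is not at present clear that surjectivity alone yields the combinatorial connectedness needed. An alternative, more intrinsically symplectic strategy would be to prove that any two almost toric fibrations of $(X,D,\omega)$ are related by a finite sequence of nodal slides and triangulation flips, using the deformation theory of Lagrangian fibrations from Symington~\cite{Symington} together with the Li--Mak symplectic isotopy theorem underlying Corollary~\ref{fun}. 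Either route must deal carefully with the non-generic vertices of $\Gamma(Y_0)$---those at which multiple $A_1$ singularities have collided---by factoring them through $d$-semistable locally trivial deformations of the central fiber using the analysis of Section~2.4, so that the chain of moves never leaves the category of $d$-semistable Type III anticanonical pairs.
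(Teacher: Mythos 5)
The statement you have attempted is Conjecture~\ref{lambdaclassifies}, not a theorem: the paper explicitly leaves it open, stating it only by analogy with part (ii) of Theorem~\ref{mainthmFS} from the $K3$ case of \cite{FriedmanScattone}, and supporting it only with the single worked example (the $(-12,-2)$ cusp) that follows in the text. There is therefore no proof in the paper against which your proposal can be compared, and your argument, which you yourself describe as incomplete, does not close the gap.

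Your reading of the structure of the problem is nonetheless accurate. You correctly identify that the $\impliedby$ direction follows from Remark~\ref{birationallambda} (Type I and II modifications leave the punctured family $\mathcal{Y}^*\to\Delta^*$, hence $\lambda$, untouched) combined with the constancy of $\lambda$ under a $d$-semistable locally trivial deformation over a connected base, modulo the monodromy of that family, which acts through $\Gamma$. And you correctly locate the content of the $\implies$ direction: a connectedness statement asserting that locally trivial deformation, nodal slide (Type I, Proposition~\ref{one}), and edge flip (Type II, Proposition~\ref{two}) suffice to connect any two central fibers with the same generic fiber and the same $\lambda\bmod\Gamma$. Your translation of this to the almost-toric side --- connectedness of the space of almost toric fibrations of a fixed symplectic anticanonical pair $(X,D,\omega)$ under nodal slides and triangulation flips --- is a faithful restatement of that combinatorial core.

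The genuine gap is exactly the one you flag, and it cannot be filled by the tools in the paper. The $K3$ proof in \cite{FriedmanScattone} relies on the global Torelli theorem to move within a fiber of the period map; nothing in Sections 6--8 provides an analogous rigidity or injectivity statement for anticanonical pairs --- only surjectivity of the period map, which is insufficient. The base-change maneuver you sketch also does not obviously descend: even if $\mathcal{Y}[n]$ and $\mathcal{Y}'[n]$ become birational after base change for large $n$, the covering automorphism acts nontrivially on the subdivided dual complex, and there is no mechanism in the paper for pushing a chain of moves on $\Gamma(Y_0[n])$ down to $\Gamma(Y_0)$ while keeping all intermediate fibers $d$-semistable Type III pairs. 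The symplectic alternative via Symington and Li--Mak likewise stops at isotopy of symplectic forms, short of the required finiteness-of-moves assertion. So your proposal is an honest outline of a strategy with an explicitly acknowledged open step, which matches the status of the statement in the paper: it remains a conjecture.
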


In other words, the class $\lambda \bmod \Gamma(Y,D)$ is a complete invariant of the combinatorial type of the central fiber. Note that the existence of an admissible isometry implies that the pairs $( Y_t,D)$ and $( Y'_t, D)$ are deformation equivalent, by \cite[Theorem 5.13]{Friedman2}.

\begin{example} Let $D'$ be the cusp with self-intersection sequence $(-12, -2)$. Consider the possible triples $(Y,D, \lambda)$, where $D$ is the dual cusp, $\lambda \in \Lambda\cap \mathcal{B}_{\text{gen}}$, and $\lambda^2 = 2$. There are two different Type III fibers $Y_0 = V_0 \cup V_1 \cup V_2$ with $2$ triple points. Here $V_0$ is the Inoue surface, $V_1 =\F_6$ and the self-intersection sequence is $(10, -6)$, and $V_2$ has self-intersection sequence $(0,4)$. Thus $V_2$ can be either $\F _0/\F _2$ (and these are deformation equivalent) or $\F _1$ (see Figure \ref{twodeftypes}). These two Type III fibers deform into two different deformation types of anticanonical pair $(Y,D)$ with dual self-intersection sequence $(-4, -2, \dots, -2)$. The first is discussed in \cite[Example 4.4]{Friedman1.5}. Here $\Lambda$ is spanned by $G_1, G_2$ with $G_1^2=10$, $G_2^2=-2$, and $G_1\cdot G_2 = 0$. It follows that the $D_i$ span a primitive sublattice of $\Pic Y$. There is just one $\lambda\in \Lambda$ with $\lambda^2= 2$ in $\mathcal{B}_{\text{gen}}$. There are no roots (i.e.\ $R = \emptyset$), $\mathcal{B}_{\text{gen}}$ is an angular sector not meeting the boundary of $\mathcal{C}^+$ away from $0$, and $\Gamma(Y,D) =\{1\}$. This deformation type of $(Y,D)$ corresponds to the case where $V_2=\F_1$.

\begin{figure}
\begin{centering}
\includegraphics[width=2.3in]{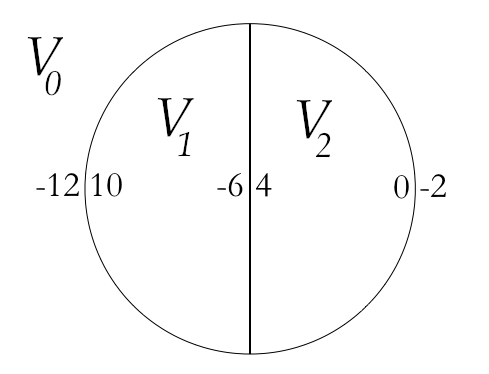} 
\caption{The combinatorial type of two Type III anticanonical pairs.}
\label{twodeftypes}
\end{centering}
\end{figure}

To find the second embedding of the dual $D$ in a rational surface, index the curves in the dual cycle by $D_0, \dots, D_9$ with $D_0^2=-4$, $D_i^2 = -2$, $i>0$, and $D_i$ meets $D_{i+1}$, $D_9$ meets $D_0$. Note generally that if there is a $-1$ curve meeting $D_1$, then contracting it and then $D_1, \dots, D_9$ leads to a surface $\overline{Y}$ and a nodal curve of square $8$ on $\overline{Y}$. Thus $\overline{Y}$ must be either $\F _0/\F _2$ or $\F _1$. The case of $\overline{Y}=\F _1$ is the case considered above. In case  $\overline{Y}=\F _0/\F _2$, there is obviously   a  root for $(Y,D)$ (the proper transform of the $-2$ curve on $\F _2$). Also, one checks that the $D_i$ span an index two sublattice of $\Pic Y$ and that $\Lambda$ is spanned by $G_1$, $G_2$ with $G_1^2 = G_2^2=-2$ and $G_1\cdot G_2 = 3$. One can show that in fact both $G_1$ and $G_2$ are  roots. It follows that there are infinitely many roots, and hence $\Gamma(Y,D)$ is infinite as well. The set $\mathcal{B}_{\text{gen}}$ is   equal to  $\mathcal{C}^+$. It is straightforward to check that $\Gamma = \mathsf{W}(R)$, the reflection group generated by the roots, and all of the $\lambda \in \Lambda\cap \mathcal{B}_{\text{gen}}$ with $\lambda^2 = 2$ are conjugate under $\Gamma$. This case then corresponds to the case where $V_2$ is either $\F_0$ or $\F_2$.  

To sum up, we see that within each deformation type, there is exactly one combinatorial possibility for $(Y_0, D)$ and exactly one element of square $2$ in $\mathcal{B}_{\text{\rm{gen}}}\cap \Lambda\mod \Gamma$. So Conjecture \ref{lambdaclassifies} is true in this case for classes of square $2$. \end{example}

\section{Asymptotic behavior of the period map}

Let $\pi\colon \mathcal{Y} \to \Delta$ be a Type III degeneration of anticanonical pairs. Fix a coordinate $t$ on $\Delta$ and let $\displaystyle z =\frac{\log t}{2\pi \sqrt{-1}}$ be the corresponding coordinate on the universal cover $\nu\colon \widetilde{\Delta}^*\to \Delta^*$. Thus $t = e^{2\pi \sqrt{-1} z}$ and $\displaystyle\im z = -\frac{\log |t|}{2\pi}$ is well-defined on $\Delta^*$. As usual, let $\mathcal{Y}^* = \mathcal{Y}\big{|}_{\Delta^*}$ and let $\rho\colon \mathcal{Y}^*-\mathcal{D} \to \Delta^*$ be the restriction of $\pi$. Then $R^2\rho_*\Z/(\text{torsion})$ is a local system with unipotent monodromy $T$, which we denote by $\mathcal{H}$.  Let $N =\log T$. Note that the pullback of $\mathcal{H}$ is trivialized by the choice of a point $t_0\in \Delta^*$:  $\nu^*\mathcal{H}$ is isomorphic to the constant sheaf with fiber $\overline{H}^2(Y_{t_0} -D; \Z)$.

The associated flat vector bundle $\mathcal{H}\otimes_\Z\scrO_{\Delta^*}$ has the canonical extension $\overline{\mathcal{H}}$ of Deligne \cite{Deligne}: $\overline{\mathcal{H}}$ can be taken to be the trivial holomorphic vector bundle  $\overline{H}^2(Y_{t_0} -D; \Z)\otimes _\Z\scrO_{\Delta}= H^2(Y_{t_0} -D; \C)\otimes _\C\scrO_{\Delta}$, with the connection $\displaystyle D = -\frac{N}{2\pi \sqrt{-1}}\frac{dt}{t}$. A (local) flat section over $\Delta^*$ is then a section locally of the form $e^{zN}v$, for $v\in H^2(Y_{t_0}-D; \C)$. In particular, we see that:

\begin{proposition}\label{rsp} Let $s$ be a  holomorphic section $s$ of $\mathcal{H}\otimes_\Z\scrO_{\Delta^*}$. Then $s$ extends to a holomorphic section of $\overline{\mathcal{H}}$ if and only if,  writing $\nu^*s$ as a holomorphic function on $\widetilde{\Delta}^*$ with values in $H^2(Y_{t_0} -D; \C)$, the function $e^{-zN}\nu^*s$, viewed as a function on $\Delta^*$,  extends to a (single-valued) holomorphic function on $\Delta$ with values in $H^2(Y_{t_0} -D; \C)$. \qed
\end{proposition}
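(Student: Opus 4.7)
Proof proposal. The plan is to unravel the explicit description of the canonical extension $\overline{\mathcal{H}}$ given in the paragraph preceding the statement. Fix a basis $\{v_1,\ldots,v_r\}$ of $H^2(Y_{t_0}-D;\C)$; it serves as a global holomorphic frame of the trivial bundle $\overline{\mathcal{H}}=H^2(Y_{t_0}-D;\C)\otimes_\C\scrO_\Delta$, so a holomorphic section of $\overline{\mathcal{H}}$ over $U\subseteq\Delta$ is nothing but a holomorphic function $f\colon U\to H^2(Y_{t_0}-D;\C)$. In particular, a section $s$ of $\mathcal{H}\otimes\scrO_{\Delta^*}=\overline{\mathcal{H}}|_{\Delta^*}$ extends holomorphically to $\overline{\mathcal{H}}$ over $\Delta$ if and only if its expression $f$ in the frame $\{v_i\}$ extends across $0$ as a holomorphic function $\Delta\to H^2(Y_{t_0}-D;\C)$.

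The proposition therefore reduces to the identity
\[
\nu^*s(z)\;=\;e^{zN}\,f(t(z))\qquad\text{on }\widetilde{\Delta}^*.
\]
Granted this, $e^{-zN}\nu^*s(z)=f(t(z))$, which as a function on $\widetilde{\Delta}^*$ is invariant under $z\mapsto z+1$ (since $t(z+1)=t(z)$), so it descends to the single-valued holomorphic function $f\colon\Delta^*\to H^2(Y_{t_0}-D;\C)$. This $f$ extends holomorphically to $\Delta$ if and only if $s$ extends to a holomorphic section of $\overline{\mathcal{H}}$, by the first paragraph, and the proposition follows.

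The one substantive ingredient is thus the identity $\nu^*s(z)=e^{zN}f(t(z))$. This is a direct translation of the construction of Deligne's canonical extension: by the paragraph preceding the statement, the pullback to $\widetilde{\Delta}^*$ of the global holomorphic frame $\{v_i\}$ of $\overline{\mathcal{H}}$ is identified, under the trivialization of $\nu^*\mathcal{H}$ used in the proposition (the one in which multi-valued flat sections of $\mathcal{H}$ correspond to the constants defining them), with the multi-valued flat frame $\{e^{zN}v_i\}$ of $\mathcal{H}\otimes\scrO_{\Delta^*}$; in other words, the two trivializations of $\nu^*(\overline{\mathcal{H}}|_{\Delta^*})$ differ by multiplication by $e^{zN}$. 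Expressing $s=\sum_i f_i(t)v_i$ in the frame $\{v_i\}$ of $\overline{\mathcal{H}}$ and converting via this change of trivialization yields the formula. I do not anticipate any real obstacle beyond this formal change-of-trivialization.
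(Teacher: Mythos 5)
Your proof is correct and fills in exactly the unwinding that the paper treats as immediate from the description of Deligne's canonical extension in the preceding paragraph (the paper supplies no proof, just \qed). The reduction to the single identity $\nu^*s(z)=e^{zN}f(t(z))$, where $f$ is the expression of $s$ in the constant frame $\{v_i\}$ of $\overline{\mathcal{H}}$, is the right move, and the rest is the observation that $f$ extends over $0$ iff $s$ does. One small point: the sentence asserting that the pullback of the frame $\{v_i\}$ ``is identified with the multi-valued flat frame $\{e^{zN}v_i\}$'' is phrased a bit loosely (the $v_i$ themselves are not flat), but what you mean, and what you actually use, is that the change of trivialization from the flat (parallel-transport) frame on $\widetilde{\Delta}^*$ to the constant frame of $\overline{\mathcal{H}}$ is multiplication by $e^{zN}$, which is correct given that flat sections in the $\{v_i\}$-frame are $e^{zN}v$. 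This is the same calculus the paper uses in Proposition~\ref{asympbehav} to obtain $\omega(t)=\frac{\log t}{2\pi\sqrt{-1}}\lambda+f(t)$, and your formula specializes correctly to the classical $\tau=\frac{\log t}{2\pi\sqrt{-1}}+\text{holomorphic}$ in the rank-two case, so the sign conventions are consistent.
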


On the other hand, by  \cite{Steenbrink} (see also \cite{Deligne}, \cite{PetersSteenbrink}), the canonical extension $\overline{\mathcal{H}}$ is also given by $\mathbb{R}^2\pi_*\Lambda^*_{\mathcal{Y}/\Delta}(\log \mathcal{D})$. By Corollary~\ref{subbundles}, the subsheaf $R^0\pi_*\omega_{\mathcal{Y}/\Delta}(\log \mathcal{D})$ is a rank one subbundle of $\mathbb{R}^2\pi_*\Lambda^*_{\mathcal{Y}/\Delta}(\log \mathcal{D})$. An easy argument shows that, for $\phi_t \in H^0(Y_t; \omega_{Y_t}(D_t))$, if $\phi_t\neq 0$, then $\displaystyle \int _\gamma \phi_t \neq 0$ (including the case $t=0$). We uniquely define an everywhere generating holomorphic section $\omega(t)$ of $R^0\pi_*\omega_{\mathcal{Y}/\Delta}(\log \mathcal{D})$ by the requirement that
$$\int_\gamma\omega(t) = 1 \text{ for all $t\in \Delta$ }.$$

Now fix an integral splitting of the exact sequence 
$$0 \to \Lambda\spcheck \to \overline{H}^2(Y_{t_0}-D;\Z) \to \Z \to 0,$$
so that every element of $\overline{H}^2(Y_{t_0}-D;\C)$ is uniquely written as $a\hat\gamma + \beta$, where $\hat\gamma \in \overline{H}^2(Y_{t_0}-D;\Z)$ maps to the (oriented) generator of $\Z$ and $\beta \in \Lambda\spcheck_\C =\Lambda_\C$. Note that, for $\overline{H}^2(Y_{t_0}-D;\Z)$, $N(\xi) =  \xi(\gamma) \lambda$, and in particular  $N(\hat\gamma) =  \lambda$, where we identify $\lambda$ with its Poincar\'e dual. With our choice of $\omega(t)$, we can write
$$\omega(t) = \hat\gamma + \beta(t),$$
where $\beta(t)$ is a multi-valued section of $\Lambda_\C$.
By Proposition~\ref{rsp}, the section  $e^{-zN}\omega(t)$ extends to a single-valued holomorphic function on $\Delta$ with values in $H^2(Y_{t_0} -D; \C)$. As 
$$e^{-zN}(\xi) = \xi -z\xi(\gamma) \lambda,$$
we see that 
$$e^{-zN}(\omega(t)) =  \hat\gamma + \beta(t) -z \lambda  =  f(t),$$
where $f(t)$ is a single valued holomorphic function on $\Delta^*$ with values in $H^2(Y_{t_0}-D; \C)$ which extends to a holomorphic function on $\Delta$.

Thus $\omega(t) = z\lambda + f(t)$, or equivalently:

\begin{proposition}\label{asympbehav} $\displaystyle \omega(t) = \frac{\log t}{2\pi \sqrt{-1}}\lambda + f(t)$, where $f(t) \in H^2(Y_{t_0}-D; \C)$ and $f$ is holomorphic at $0$. \qed
\end{proposition}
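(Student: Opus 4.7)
The plan is to extract the statement directly from the canonical extension formalism already set up: the section $\omega(t)$ is a holomorphic section of the rank one bundle $R^0\pi_*\omega_{\mathcal{Y}/\Delta}(\log\mathcal{D})$, which sits inside the canonical extension $\overline{\mathcal{H}} = \mathbb{R}^2\pi_*\Lambda^*_{\mathcal{Y}/\Delta}(\log\mathcal{D})$ by Corollary~\ref{subbundles}. So $\omega(t)$ is in particular a holomorphic section of $\overline{\mathcal{H}}$, and Proposition~\ref{rsp} will tell us this is equivalent to the single-valuedness and holomorphicity of $e^{-zN}\nu^*\omega(t)$ on $\Delta$. The whole proof is then a bookkeeping computation of what $e^{-zN}$ does to $\omega(t)$.

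First I would fix, as in the paragraph preceding the proposition, an integral splitting
$$\overline{H}^2(Y_{t_0}-D;\C) = \C\hat\gamma \oplus \Lambda_\C,$$
where $\hat\gamma$ is a lift of the generator of the quotient $\Z$ dual to $\gamma$. The normalization $\int_\gamma \omega(t) = 1$ (valid for all $t\in\Delta$, including $t=0$, and used to define $\omega(t)$ uniquely) then forces $\omega(t) = \hat\gamma + \beta(t)$ for a multi-valued section $\beta(t)$ of $\Lambda_\C$. The key input from the monodromy side is the formula $N(\xi) = \xi(\gamma)\,\lambda$ established in Section~3.1; under the Poincar\'e duality identification of $\lambda$ with a class in $\Lambda$, this gives $N(\hat\gamma) = \lambda$ and $N|_{\Lambda_\C}=0$, so $N^2 = 0$ and
$$e^{-zN}\omega(t) = (1-zN)\bigl(\hat\gamma + \beta(t)\bigr) = \hat\gamma + \beta(t) - z\lambda.$$

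Set $f(t) := e^{-zN}\nu^*\omega(t)$, viewed as a function on $\Delta^*$. By Proposition~\ref{rsp}, since $\omega(t)$ extends to a holomorphic section of $\overline{\mathcal{H}}$, the function $f$ is single-valued and extends holomorphically across $t=0$. Rearranging the displayed identity gives
$$\omega(t) = z\lambda + f(t) = \frac{\log t}{2\pi\sqrt{-1}}\,\lambda + f(t),$$
which is exactly the proposition.

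There is essentially no obstacle here beyond what has already been proved: the content of the proposition is distilled from (i) Steenbrink's identification of the canonical extension with $\mathbb{R}^2\pi_*\Lambda^*_{\mathcal{Y}/\Delta}(\log\mathcal{D})$, (ii) the local freeness of $R^0\pi_*\omega_{\mathcal{Y}/\Delta}(\log\mathcal{D})$ and the resulting good extension of $\omega(t)$, and (iii) the explicit formula for $N$ in terms of $\lambda$. The one point that needs a line of justification, but is standard, is that the normalizing integral $\int_\gamma \omega(t)$ does not vanish at $t=0$ so that $\omega(t)$ is genuinely an everywhere-generating section of the relevant line bundle and not just of its restriction to $\Delta^*$.
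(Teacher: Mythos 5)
Your proof is correct and follows essentially the same route as the paper: fix the integral splitting $\overline{H}^2(Y_{t_0}-D;\C) = \C\hat\gamma \oplus \Lambda_\C$, normalize $\omega(t) = \hat\gamma + \beta(t)$ via $\int_\gamma\omega(t)=1$, use the formula $N(\hat\gamma)=\lambda$ with $N|_{\Lambda_\C}=0$ to compute $e^{-zN}\omega(t) = \hat\gamma + \beta(t) - z\lambda$, and invoke Proposition~\ref{rsp} together with the fact that $\omega(t)$ is a section of the canonical extension to get holomorphic extension of $f(t)$ across $t=0$. The only minor remark is that your final worry about the non-vanishing of $\int_\gamma\omega(t)$ at $t=0$ is already handled in the paper's setup immediately preceding the proposition, where it is noted that $\int_\gamma\phi_t\neq 0$ whenever $\phi_t\neq 0$, including at $t=0$.
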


We then have the following corollary, which is a partial strengthening of \cite[III(2.12)]{Looij}:

\begin{corollary} The real, single-valued $C^\infty$ function $\im \omega(t)=\im\beta(t)$ with values in $\Lambda_\R$ satisfies: for $|t|\ll 1$, $\im\omega(t) \in \mathcal{C}^+$, and in fact $\im\omega(t)$ is in $\mathcal{B}_{\text{\rm{gen}}}$.
\end{corollary}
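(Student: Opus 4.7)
My plan is to extract the dominant term of $\omega(t)$ near $t=0$ from Proposition~\ref{asympbehav} and use the fact that $\mathcal{B}_{\text{\rm{gen}}}$ is an open cone containing $\lambda$. Concretely, Proposition~\ref{asympbehav} gives
$$\omega(t) = \frac{\log t}{2\pi \sqrt{-1}}\,\lambda + f(t),$$
with $f$ holomorphic at $0$, so taking imaginary parts yields
$$\im\omega(t) = -\frac{\log|t|}{2\pi}\,\lambda + \im f(t).$$
The coefficient $c(t) := -\log|t|/(2\pi)$ tends to $+\infty$ as $|t|\to 0$, while $\im f(t)$ is bounded in a neighborhood of $0$.

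Before factoring out $c(t)$, I would first verify that $\im f(t)$ takes values in $\Lambda_\R$ (a priori it lies only in $H^2(Y_{t_0}-D;\R)$). This follows from the two presentations of $\omega(t)$: writing $\omega(t) = \hat\gamma + \beta(t)$ with $\hat\gamma$ real-integral and $\beta(t)\in \Lambda_\C$ gives $\im\omega(t) = \im\beta(t) \in \Lambda_\R$, and since $\lambda\in\Lambda_\R$ as well, the equality $\im f(t) = \im\omega(t) - c(t)\lambda$ forces $\im f(t)\in \Lambda_\R$. This also recovers the (already asserted) statement $\im\omega(t)=\im\beta(t)$.

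Next I would write, for $t\ne 0$ with $|t|$ small enough that $c(t)>0$,
$$\frac{1}{c(t)}\im\omega(t) \;=\; \lambda + \frac{\im f(t)}{c(t)}.$$
As $|t|\to 0$ the right-hand side converges to $\lambda$ in $\Lambda_\R$. By our fixed convention (Proposition~\ref{extendsections}) $\lambda\in \mathcal{B}_{\text{\rm{gen}}}$, and by Definition~\ref{defB} the set $\mathcal{B}_{\text{\rm{gen}}}$ is open in $\Lambda_\R$. Hence for $|t|$ sufficiently small, $c(t)^{-1}\im\omega(t)\in \mathcal{B}_{\text{\rm{gen}}}$.

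Finally, since $\mathcal{B}_{\text{\rm{gen}}}$ is a cone (being the interior of the intersection of $\Lambda_\R$ with the convex cone $\overline{\mathcal{A}}_{\text{\rm{gen}}}$) and $c(t)>0$, multiplication by $c(t)$ preserves membership in $\mathcal{B}_{\text{\rm{gen}}}$, giving $\im\omega(t)\in \mathcal{B}_{\text{\rm{gen}}}$ for all sufficiently small nonzero $t$. The inclusion $\mathcal{B}_{\text{\rm{gen}}}\subseteq \mathcal{C}^+$ then yields $\im\omega(t)\in \mathcal{C}^+$ as well. There is no real obstacle here; the only mild subtlety is ensuring the error term $\im f(t)$ actually lands in $\Lambda_\R$ so that the limiting argument takes place in the ambient space on which $\mathcal{B}_{\text{\rm{gen}}}$ is defined.
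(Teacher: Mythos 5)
Your proposal is correct and follows essentially the same approach as the paper: take imaginary parts of the expansion from Proposition~\ref{asympbehav}, factor out the positive scalar $-\log|t|/(2\pi)$, observe that the resulting expression converges to $\lambda$, and invoke the openness of $\mathcal{B}_{\text{\rm{gen}}}$ in $\Lambda_\R$ together with the fact that $\mathcal{B}_{\text{\rm{gen}}}$ is a cone. The one small refinement you add is the explicit check that $\im f(t)$ (equivalently $g(t)$ in the paper's notation) already lies in $\Lambda_\R$ rather than merely in $H^2(Y_{t_0}-D;\R)$, which the paper leaves implicit via the identity $\im\omega(t)=\im\beta(t)$; this is a reasonable thing to make explicit, since the openness argument must be carried out inside $\Lambda_\R$.
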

\begin{proof} We have
$$\im\omega(t) = -\frac{\log|t|}{2\pi}\lambda + g(t) =\left(-\frac{\log|t|}{2\pi}\right)\left(\lambda - \frac{2\pi g(t)}{\log|t|}\right),$$
where $g\colon \Delta \to H^2(Y_{t_0}-D; \R)$ is a $C^\infty$ function. The function $-\log|t|/2\pi$ is positive and tends to $+\infty$ when $t\to 0$, and hence  $2\pi g(t)/\log|t|$ extends to a continuous function on $\Delta$ with $\lim_{t\to 0}2\pi g(t)/\log|t|= 0$. Thus $(\im\omega(t))^2 > 0$ for all $|t|\ll 1$. With our sign conventions, it follows that $\im\omega(t) \in \mathcal{C}^+$. Moreover, $\lim _{t\to 0}(\lambda -  2\pi g(t)/\log|t|) =\lambda$. Since $\mathcal{B}_{\text{\rm{gen}}}$ is open in $\Lambda_\R$, for  $|t|\ll |t_0|$, $\lambda -  2\pi g(t)/\log|t|\in \mathcal{B}_{\text{\rm{gen}}}$. Thus, for $|t|\ll |t_0|$, $\im\omega(t)\in  \mathcal{B}_{\text{\rm{gen}}}$.
\end{proof}

\begin{remark} There is a several variable version of the above. The poldisk $\Delta^k$ has $k$ boundary components, defined by $t_i =0$, where $t_i$ is the coordinate on the $i^{\text{th}}$ factor. Given $\pi \colon \mathcal{Y} \to \Delta^k$, such that the generic fiber over each boundary component is  $d$-semistable, there is an associated monodromy transformation $T_i$ with $N_i =\log T_i$ and $N_i(x) = -(x\bullet \lambda_i)\gamma$. Then the period $\omega(t_1, \dots, t_k)$  has the form
$$\omega(t_1, \dots, t_k) = \sum_{i=1}^k\frac{\log t_i}{2\pi \sqrt{-1}}\lambda_i + f(t_1, \dots, t_k),$$
where $f\colon \Delta^k \to H^2(Y_{t_0}-D; \C)$ is holomorphic.  

By applying semistable reduction to a one parameter family over a disk $\Delta \subseteq \Delta^k$ which has order of contact $n_i$ with the $i^{\text{th}}$ component of the boundary, it follows that, for all $n_i\in \Z$, $n_i > 0$, we have  ($\sum_in_i\lambda_i)^2 > 0$. This easily implies that, for $i\neq j$, $\lambda_i\cdot \lambda_j > 0$, and hence either   $\lambda_i\in \mathcal{C}^+$ for all $i$ or  $-\lambda_i\in \mathcal{C}^+$ for all $i$. Choosing orientations so that $\lambda_i\in \mathcal{C}^+$ for all $i$, it follows that, if $|t_i| \ll 1$ for every $i$, then $\im\omega(t_1, \dots, t_k)\in\mathcal{B}_{\text{\rm{gen}}}$.
\end{remark} 

\section{The differential of the period map} 

We begin by discussing the smooth case, using \cite{Friedman2} as a general reference.  Let $(Y,D)$ be an anticanonical pair (with a fixed orientation of $D$). For notational simplicity, we assume that each component of $D$ is smooth, i.e.\ that $r >1$; minor modifications handle the case $r=1$. The \textsl{period homomorphism} is the homomorphism $\varphi_Y\colon \Lambda \to \C^*$ defined by: if $\alpha \in \Lambda$ and $L_\alpha$ is the corresponding line bundle, then $\varphi_Y(\alpha)=L_\alpha\big{|}_D\in \Pic^0D\cong \C^*$, and the \textsl{period map} is the map $Y\mapsto \varphi_Y\in \Hom (\Lambda, \C^*)$. The period map is holomorphic: for a fixed $\alpha \in \Lambda$, given a family $(\mathcal{F}, \mathcal{D})$ of pairs over $S$, after shrinking $S$ we can assume that $L_\alpha$ extends to a holomorphic line bundle $\mathcal{L}_\alpha$ over $\mathcal{F}$ and that $\mathcal{D}\cong S\times D$. The line bundle $\mathcal{L}_\alpha\big{|}_\mathcal{D} \cong S\times D$ then defines a holomorphic map from $S$ to $\Pic^0D$, and fitting these together for each $\alpha$ defines the period map as a holomorphic map $S\to \Hom (\Lambda, \C^*)$. The relationship between this period map and that considered in the previous section is as follows \cite[(3.12)]{Friedman2}: 
$$\varphi_Y(\alpha) = \exp\left(2\pi \sqrt{-1}\int_\alpha\omega\right).$$

The differential of the period map (for the semi-universal family) is then a map 
\begin{align*} \psi\colon H^1(Y; T_Y(-\log D))\to &\Hom(\Lambda, H^1(D; \scrO_D))= \\ &\Hom_\C(\Lambda_\C, H^1(D; \scrO_D)).\end{align*}
 We would like to describe the map $\psi$. A local calculation shows:

\begin{lemma}\label{locexactseq} Let $\nu\colon \widetilde D=\coprod_iD_i \to Y$ be the composition of normalization and inclusion.  Then there is an exact sequence
$$0 \to \Omega^1_Y(\log D)(-D) \to \Omega^1_Y \to \nu_*\Omega^1_{\widetilde{D}} \to 0.$$ 
\end{lemma}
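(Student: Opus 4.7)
The plan is to verify the exact sequence pointwise in analytic coordinates on $Y$, treating the three cases determined by the stratification by $D$ and its singular locus. First I would describe the map $\Omega^1_Y \to \nu_*\Omega^1_{\widetilde D}$ intrinsically: for each component $D_i$, the inclusion $D_i\hookrightarrow Y$ induces the conormal surjection $\Omega^1_Y|_{D_i}\twoheadrightarrow \Omega^1_{D_i}$; composing restriction with this surjection and summing over $i$ gives the map in question. Note the crucial feature that we map to the normalization $\widetilde D = \coprod_i D_i$, so the two branches at a node are separated and we avoid any subtlety coming from differentials on the nodal curve $D$ itself.

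Off $D$, both $\Omega^1_Y(\log D)(-D)$ and $\Omega^1_Y$ agree and $\nu_*\Omega^1_{\widetilde D}$ vanishes, so the sequence is trivially exact. At a smooth point of $D$ on the component $D_i$, I would choose analytic coordinates $(x,y)$ with $D_i=\{y=0\}$; then $\Omega^1_Y(\log D)$ has local basis $\{dx, dy/y\}$, so $\Omega^1_Y(\log D)(-D)$ is generated by $\{y\,dx,\,dy\}$, while the restriction map sends $dx\mapsto dx|_{D_i}$, $dy\mapsto 0$. Its kernel is therefore $\langle y\,dx,\,dy\rangle$ and its image is all of $\scrO_{D_i}\,dx$, matching what is required.

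At a node $p = D_i\cap D_{i+1}$, choose coordinates with $D_i = \{y=0\}$ and $D_{i+1}=\{x=0\}$, so $D = \{xy = 0\}$. Then $\Omega^1_Y(\log D)(-D)$ is locally generated by $xy\cdot(dx/x) = y\,dx$ and $xy\cdot(dy/y) = x\,dy$, and the restriction into the two disjoint branches of $\widetilde D$ sends $dx\mapsto (dx|_{D_i},0)$, $dy\mapsto (0,dy|_{D_{i+1}})$, whose kernel is exactly $\langle y\,dx,\,x\,dy\rangle$ and whose image is all of $\nu_*\Omega^1_{\widetilde D}$ near the two preimages of $p$. Combining the three local pictures proves both surjectivity and the identification of the kernel with $\Omega^1_Y(\log D)(-D)$. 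There is no real obstacle; the only point to keep in mind is to take generators of $\Omega^1_Y(\log D)(-D)$ by multiplying the log basis by the local defining equation of $D$, after which the kernel and image computations are immediate.
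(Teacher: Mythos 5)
Your proof is correct and takes the same route the paper indicates: the paper simply says ``a local calculation shows'' before the statement, and you have carried out precisely that local check, stratifying by $Y\setminus D$, the smooth locus of $D$, and the nodes, and verifying in coordinates that the kernel of the restriction $\Omega^1_Y\to\nu_*\Omega^1_{\widetilde D}$ is generated by (local defining equation of $D$) times the log basis. Nothing further is needed.
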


Given a line bundle $L$ on $Y$ such that $\deg (L\big{|}_{D_i}) = 0$ for every $i$, its Chern class $c_1(L)\in H^1(Y;\Omega^1_Y)$ and its image in $\bigoplus _iH^1(D_i; \Omega^1_{D_i})$ is zero. Thus, from the exact sequence
\begin{align*} 0=&\textstyle \bigoplus _iH^0(D_i; \Omega^1_{D_i}) \to H^1(Y;\Omega^1_Y(\log D)(-D)) \to H^1(Y; \Omega^1_Y) \to \\  & \textstyle  \bigoplus _iH^1(D_i; \Omega^1_{D_i}),\end{align*} 
we see that $c_1(L)$ lifts to a unique element of $H^1(Y;\Omega^1_Y(\log D)(-D))$, which we denote by $\hat{c}_1(L)$.  Note that $H^1(Y;\Omega^1_Y(\log D)(-D)) \cong \Lambda_\C$ and we can think of $\hat{c}_1(L)$ as a Chern class with values in $\Lambda_\C$. The differential $\psi$ of the period map is then described as follows:

\begin{theorem}\label{diffcomp} Let $\partial \colon H^1(D; \scrO_D) \to H^2(Y; \scrO_Y(-D))$ be the coboundary map arising from the exact sequence
$$0 \to \scrO_Y(-D) \to \scrO_Y \to \scrO_D \to 0,$$
which is an isomorphism since $H^1(Y; \scrO_Y) =  H^2(Y; \scrO_Y) = 0$.  Then
$$\partial \circ \psi (\theta)(\alpha) = \theta \smallsmile \hat{c}_1(L_\alpha),$$
where $\smallsmile$ denotes the cup product 
$$H^1(Y; T_Y(-\log D))\otimes H^1(Y;\Omega^1_Y(\log D)(-D)) \to H^2(Y; \scrO_Y(-D)).$$
\end{theorem}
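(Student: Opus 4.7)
The plan is to compute both sides of the identity by exhibiting Čech cocycle representatives on a suitable cover of $Y$ and carrying out an infinitesimal Kodaira--Spencer calculation. Since $\psi$ is a differential, it suffices to work over the dual numbers: realize $\theta$ by a first-order deformation $(\mathcal{Y}_\epsilon, \mathcal{D}_\epsilon) \to \Spec \C[\epsilon]/(\epsilon^2)$ and extend $L_\alpha$ to a line bundle $\mathcal{L}_\alpha$ on $\mathcal{Y}_\epsilon$. Because $\theta$ lies in $T_Y(-\log D)$, the divisor is preserved, so $\mathcal{D}_\epsilon = D \times \Spec \C[\epsilon]/(\epsilon^2)$ is the trivial first-order extension of $D$, and it is through this trivialization that we evaluate the period.

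Next, I would choose a Čech cover $\{U_a\}$ of $Y$ such that: (i) $\mathcal{Y}_\epsilon$ trivializes over each $U_a$ in such a way that the identifications on overlaps take the form $x_b = x_a + \epsilon \theta_{ab}(x_a)$, with $\theta_{ab} \in T_Y(-\log D)(U_a \cap U_b)$ a Čech representative of $\theta$; (ii) $\mathcal{L}_\alpha$ has transitions $g_{ab}^\epsilon = g_{ab} + \epsilon h_{ab}$; and (iii) each restriction $g_{ab}|_{D_i}$ is a constant. Condition (iii) is attainable because $L_\alpha|_{D_i}$ has degree zero on the smooth rational curve $D_i$ and hence is trivial, so $\{g_{ab}\}$ may be replaced by a cohomologous cocycle with this property after a suitable refinement. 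Under (iii) the form $d\log g_{ab}$ restricts to zero on $D$, and by Lemma \ref{locexactseq} it lifts uniquely to a section of $\Omega^1_Y(\log D)(-D)$; the resulting cocycle $\{d\log g_{ab}\}$ then represents $\hat c_1(L_\alpha)$.

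The key computation is to expand the line-bundle cocycle condition $g_{ab}^\epsilon g_{bc}^\epsilon = g_{ac}^\epsilon$ in $U_a$-coordinates. Writing $g_{bc}^\epsilon$ natively in $U_b$-coordinates and applying the change of variables produces an extra Taylor term $\epsilon \theta_{ab}(g_{bc})$. Setting $u_{ab} := h_{ab}/g_{ab}$, dividing by $g_{ac}$ and collecting the $\epsilon$-order terms yields the identity
$$u_{ac} - u_{ab} - u_{bc} \,=\, \theta_{ab} \cdot d\log g_{bc},$$
where the right-hand side is the canonical pairing $T_Y(-\log D) \otimes \Omega^1_Y(\log D)(-D) \to \scrO_Y(-D)$. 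Hence the Čech coboundary of the $\scrO_Y$-valued $1$-cochain $\{u_{ab}\}$ represents $\pm (\theta \smallsmile \hat c_1(L_\alpha))$, with sign dictated by the chosen conventions for cup product and Kodaira--Spencer.

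On the other hand, restricting $\mathcal{L}_\alpha$ to $\mathcal{D}_\epsilon$ gives transitions $g_{ab}|_D\,(1 + \epsilon\, u_{ab}|_D)$, and since each $g_{ab}|_D$ is constant, the corresponding class in $\Pic^0(D) = H^1(D;\scrO_D)/H^1(D;\Z)$ has first-order part $[\{u_{ab}|_D\}] \in H^1(D;\scrO_D)$; therefore $\psi(\theta)(\alpha) = [\{u_{ab}|_D\}]$. The connecting map $\partial$ is Čech-computed by lifting a cocycle from $\scrO_D$ to a cochain in $\scrO_Y$ and taking its coboundary, which automatically lands in $\scrO_Y(-D)$; since $\{u_{ab}\}$ is itself such a lift, the preceding identity gives $\partial(\psi(\theta)(\alpha)) = \theta \smallsmile \hat c_1(L_\alpha)$ up to the overall sign convention. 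The main obstacle I expect is reconciling the sign conventions across Kodaira--Spencer, Čech cup product, and the coboundary $\partial$ so that the equality comes out with the sign stated; a minor technical point is verifying that condition (iii) can be arranged globally and simultaneously for all components $D_i$, which follows from the triviality of $L_\alpha|_{D_i}$ together with a refinement-of-cover argument.
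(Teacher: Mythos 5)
Your proof is correct and follows the same broad strategy as the paper (a \v{C}ech cocycle computation for both sides), but it trades the paper's key normalization lemma for a dual one. The paper's argument modifies the Kodaira--Spencer cocycle $\{\theta_{ij}\}$ so that it vanishes along $D$ (using the surjectivity of $T_Y(-\log D) \to T_D$ and $H^1(D;T_D)=0$); the representative $d\log \lambda_{ij}$ of $c_1(L_\alpha)$ is then \emph{not} in $\Omega^1_Y(\log D)(-D)$, and a correction cochain $\sigma_i$ appears, whose contribution is killed precisely because $\theta_{ij}$ vanishes on $D$. You instead modify the transition functions $\{g_{ab}\}$ so that each restriction to $D$ is locally constant, which makes $d\log g_{ab}$ land directly in $\Omega^1_Y(\log D)(-D)$ and eliminates the correction term altogether. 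This is a genuine simplification of the cocycle bookkeeping. The technical fact you invoke, though, is a bit stronger than ``$L_\alpha|_{D_i}$ is trivial on each $\Pee^1$ component'': to make $g_{ab}$ locally constant in a neighborhood of every node of $D$ you need $L_\alpha|_D$ to admit a \emph{flat} structure globally, i.e.\ you are using the identification $\Pic^0 D \cong H^1(D;\C^*) \cong \C^*$ for the cycle of rational curves. That fact is true (and easy for a nodal cycle of $\Pee^1$'s), but it should be invoked explicitly rather than deduced solely from componentwise triviality, since the constants at adjacent components must be matched across nodes. With that clarification, and the usual caveat about pinning down the signs in the \v{C}ech coboundary and cup product, your argument is a clean, valid alternative to the proof in the paper.
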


\begin{corollary} $\psi$ is an isomorphism.
\end{corollary}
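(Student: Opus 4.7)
The plan is to recognize that the formula $\partial\circ \psi(\theta)(\alpha)=\theta\smallsmile \hat{c}_1(L_\alpha)$ of Theorem~\ref{diffcomp} identifies $\psi$, up to two canonical identifications, with the Serre duality pairing on the rational surface $Y$. Since $D\in|-K_Y|$ one has $K_Y\cong \scrO_Y(-D)$, and since $T_Y(-\log D)$ is by definition dual to $\Omega^1_Y(\log D)$, we can rewrite
$$\Omega^1_Y(\log D)(-D)\cong T_Y(-\log D)\spcheck\otimes K_Y.$$
Consequently, the cup-product pairing
$$H^1(Y;T_Y(-\log D))\otimes H^1(Y;\Omega^1_Y(\log D)(-D))\to H^2(Y;\scrO_Y(-D))=H^2(Y;K_Y)$$
(built from the natural contraction $T_Y(-\log D)\otimes \Omega^1_Y(\log D)\to \scrO_Y$ twisted by $\scrO_Y(-D)$) is precisely the Serre duality pairing on $Y$, and therefore induces an isomorphism
$$H^1(Y;T_Y(-\log D))\xrightarrow{\sim}\Hom_\C\bigl(H^1(Y;\Omega^1_Y(\log D)(-D)),\,H^2(Y;\scrO_Y(-D))\bigr).$$

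To deduce the corollary I would combine this with two further natural isomorphisms. First, $\alpha\mapsto \hat{c}_1(L_\alpha)$ gives an isomorphism $\Lambda_\C\xrightarrow{\sim} H^1(Y;\Omega^1_Y(\log D)(-D))$: taking the long exact cohomology sequence of Lemma~\ref{locexactseq} and using the vanishings $H^0(Y;\Omega^1_Y)=0$ and $H^0(D_i;\Omega^1_{D_i})=0$ (the latter since each $D_i\cong \Pee^1$ in the case $r>1$), we see that $H^1(Y;\Omega^1_Y(\log D)(-D))$ is the kernel of the restriction map $H^1(Y;\Omega^1_Y)\to \bigoplus_iH^1(D_i;\Omega^1_{D_i})$. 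Since $Y$ is rational we have the Hodge-theoretic identification $H^1(Y;\Omega^1_Y)\cong H^2(Y;\C)$, and under it this kernel is exactly $\Lambda_\C$; the map $\hat{c}_1$ realizes the isomorphism by construction. Second, $\partial\colon H^1(D;\scrO_D)\to H^2(Y;\scrO_Y(-D))$ is an isomorphism, as noted in Theorem~\ref{diffcomp}. Plugging these two identifications into the formula of Theorem~\ref{diffcomp} presents $\psi$ as a composition of isomorphisms.

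The only real (and purely formal) obstacle is to verify that the cup-product pairing appearing in Theorem~\ref{diffcomp} coincides with the Serre duality pairing on $Y$; this is immediate from the definition of Serre duality as cup product followed by the trace $H^2(Y;K_Y)\xrightarrow{\sim}\C$, together with the identification $\scrO_Y(-D)=K_Y$. In effect, all the substantive work was carried out in Theorem~\ref{diffcomp}, and the corollary is then a direct application of Serre duality on the smooth rational surface $Y$.
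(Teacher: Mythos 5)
Your proposal is correct and follows the same route as the paper: by Theorem~\ref{diffcomp}, $\partial\circ\psi$ is the cup-product map $H^1(Y;T_Y(-\log D))\to \Hom(H^1(Y;\Omega^1_Y(\log D)(-D)),H^2(Y;\scrO_Y(-D)))$, which is the Serre-duality pairing because $\scrO_Y(-D)\cong K_Y$ and $\Omega^1_Y(\log D)(-D)\cong T_Y(-\log D)\spcheck\otimes K_Y$, hence an isomorphism. Your write-up merely makes explicit the two identifications ($\hat c_1$ gives $\Lambda_\C\cong H^1(Y;\Omega^1_Y(\log D)(-D))$ via Lemma~\ref{locexactseq}, and $\partial$ is an isomorphism) that the paper records just before the Corollary and invokes silently in its one-line proof.
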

\begin{proof} By Theorem~\ref{diffcomp}, $\partial \circ \psi$ is the cup product map
$$H^1(Y; T_Y(-\log D))\to \Hom (H^1(Y;\Omega^1_Y(\log D)(-D)), H^2(Y; \scrO_Y(-D))).$$
This map is an isomorphism since $\scrO_Y(-D) = \omega_D$ and by Serre duality.
\end{proof}

\begin{remark} The vector space $H^1(Y;\Omega^1_Y(\log D)(-D))$ is dual to $H^1(Y;\Omega^1_Y(\log D)) =\Lambda_\C\spcheck$. Furthermore, $\Hom_\C(\Lambda_\C, H^2(Y; \scrO_Y(-D)))$ is canonically isomorphic to $\Hom_\C(H^0(\Omega^2_Y(\log D)), H^1(Y;\Omega^1_Y(\log D)))$. This gives the usual form of the differential of the period map, via cup product and contraction:
$$H^1(Y; T_Y(-\log D))\otimes H^0(Y;\Omega^2_Y(\log D))\to H^1(Y;\Omega^1_Y(\log D)).$$
\end{remark}

\begin{proof}[Proof of Theorem~\ref{diffcomp}] The class $\theta\in H^1(Y; T_Y(-\log D))$ corresponds to a first order deformation $(\mathcal{Z}, \mathcal{D}) \to \Spec \C[\varepsilon]$. Explicitly, the correspondence is as follows: For a sufficiently small open cover $\{U_i\}$ of $Y$, there exists an isomorphism $\phi_i\colon \scrO_{\mathcal{Z}}\big{|}_{U_i} \cong \scrO_{U_i}\oplus \scrO_{U_i}\cdot \varepsilon$. Moreover, on $U_i\cap U_j$, $\phi_j\circ \phi_i^{-1} -\Id = \theta_{ij}$, where $\theta_{ij}$ is a $1$-cocycle. The condition that we keep the normal crossings divisor $D$ means that, in local coordinates, 
$$\theta_{ij} = az_1\frac{\partial}{\partial z_1} + bz_2\frac{\partial}{\partial z_2}$$
for some holomorphic functions $a$, $b$, i.e.\ that $\theta_{ij}$ is a section of $T_Y(-\log D)$ over $U_i\cap U_j$.  However, in our situation we can do a little better:

\begin{lemma} After possibly shrinking the cover $\{U_i\}$ and modifying $\{\theta_{ij}\}$ by a coboundary, we can assume that $\theta_{ij}\big{|}_D = 0$, i.e.\ that $\theta_{ij}$ has the local form $a'z_1z_2\frac{\partial}{\partial z_1} + b'z_1z_2\frac{\partial}{\partial z_2}$ for some holomorphic functions $a'$, $b'$.
\end{lemma}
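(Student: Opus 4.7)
The plan is to show that $[\theta]\in H^1(Y;T_Y(-\log D))$ lifts to a class in $H^1(Y;T_Y(-\log D)(-D))$; any cocycle representative of such a lift takes values in $T_Y(-\log D)(-D)$ and hence automatically satisfies $\theta_{ij}|_D=0$ with the local form asserted. Via the short exact sequence
\[
0 \to T_Y(-\log D)(-D) \to T_Y(-\log D) \to F \to 0, \qquad F:=T_Y(-\log D)|_D,
\]
this is equivalent to showing that the restriction map $r\colon H^1(Y;T_Y(-\log D))\to H^1(D;F)$ sends $[\theta]$ to zero.

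First, on each component $D_i\cong\Pee^1$ there is a canonical short exact sequence
\[
0 \to L_i \to F|_{D_i} \to T_{D_i}(-p_{i-1}-p_i)\to 0,
\]
where $L_i$ is the logarithmic normal line subbundle generated locally by $y\partial_y$ in coordinates $(x,y)$ with $D_i=\{y=0\}$. A direct coordinate-change calculation $(uy)\partial_{uy}|_{y=0}=y\partial_y|_{y=0}$ shows $L_i\cong\scrO_{D_i}$ canonically; the quotient $T_{D_i}(-p_{i-1}-p_i)\cong\scrO_{\Pee^1}$ since each $D_i$ carries exactly two nodes of $D$; and since $\Ext^1(\scrO,\scrO)=0$ on $\Pee^1$ the extension splits, giving $F|_{D_i}\cong\scrO_{D_i}^{\oplus 2}$ and hence $H^1(D_i;F|_{D_i})=0$. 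The normalization sequence $0\to F\to\nu_*\nu^*F\to Q\to 0$, with $Q$ the $2$-dimensional skyscraper at the nodes, then identifies $H^1(D;F)$ with the cokernel of the ``jump'' map $\alpha\colon\bigoplus_i H^0(D_i;F|_{D_i})\to\bigoplus_p F_p$, both of dimension $2r$; this cokernel can be nonzero (already in the toric case).

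To show $r([\theta])=0$, consider the natural map $F\to T_D$ to the tangent sheaf of the nodal curve $D$, which is an isomorphism at each node and has $1$-dimensional kernel $L_i$ at each smooth point of $D_i$. The composite $H^1(T_Y(-\log D))\to H^1(F)\to H^1(D;T_D)$ sends $[\theta]$ to the class of the induced first-order deformation of $D$ as an abstract nodal curve; a Mayer--Vietoris computation using $T_D|_{D_i}\cong\scrO_{\Pee^1}$ gives $H^1(D;T_D)=0$, reflecting the rigidity of a cycle of $\Pee^1$'s with marked nodes, so this composite vanishes. The main obstacle will then be to rule out any residual contribution to $r([\theta])$ coming from the image of $H^1(D;K)\to H^1(F)$, where $K=\ker(F\to T_D)$ is supported on the smooth locus with $K|_{D_i^\circ}\cong\scrO$; I expect this to follow from the subscheme-rigidity $H^0(D_i;N_{D_i/Y})=0$ arising from $D_i^2\le -2$, which forces the log-normal contribution of $[\theta]$ to vanish globally on each component. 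Once $r([\theta])=0$ is established, one shrinks the cover so that the surjection $T_Y(-\log D)\twoheadrightarrow F$ admits local sections, lifts a trivializing cochain for $\{\theta_{ij}|_D\}$ on $D$ to a cochain $\{\tau_i\}$ in $T_Y(-\log D)$ over the $U_i$, and the modified cocycle $\theta_{ij}+\tau_i-\tau_j$ vanishes on $D$ as required.
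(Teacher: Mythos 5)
Your approach targets the wrong subsheaf, and in a way that makes the argument much harder than necessary. The conclusion of the lemma, that $\theta_{ij}$ has local form $a'z_1z_2\partial_{z_1}+b'z_1z_2\partial_{z_2}$ near a node, is precisely the condition $\theta_{ij}\in T_Y(-D)$ (vector fields on $Y$ vanishing along $D$). You instead try to lift $[\theta]$ into $H^1(Y;T_Y(-\log D)(-D))$, and since $T_Y(-\log D)(-D)\subsetneq T_Y(-D)$ this is a strictly stronger statement than what is needed. The paper uses the natural choice $T_Y(-D)\subset T_Y(-\log D)$: a diagram chase shows $T_Y(-\log D)\to T_D$ is surjective with kernel $T_Y(-D)$, yielding
$$0\to T_Y(-D)\to T_Y(-\log D)\to T_D\to 0,$$
and since $T_D\cong\nu_*T_{\tilde D}(-\text{nodes})\cong\nu_*\scrO_{\tilde D}$ one has $H^1(D;T_D)=0$, so $H^1(T_Y(-D))\to H^1(T_Y(-\log D))$ is surjective and the lemma follows immediately. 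Your quotient $F=T_Y(-\log D)\big{|}_D$, by contrast, has $H^1(D;F)\neq 0$ (as you yourself observe), which is precisely why you are forced into a delicate class-tracking argument where none is needed.

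That argument contains a concrete error and then a gap. The error: $K=\Ker(F\to T_D)$ is \emph{not} supported on the smooth locus of $D$. At a node $p$, $F_p\cong\scrO_{D,p}^{\oplus 2}$ is free of rank $2$, while $(T_D)_p$ is not free (a derivation of $\C[[z_1,z_2]]/(z_1z_2)$ must vanish at the two branch preimages, so $(T_D)_p\cong z_1\C[[z_1]]\partial_{z_1}\oplus z_2\C[[z_2]]\partial_{z_2}$); in particular $z_2\cdot z_1\partial_{z_1}$ and $z_1\cdot z_2\partial_{z_2}$ are nonzero in $F_p$ but map to $0$ in $(T_D)_p$, so $K_p\neq 0$. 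Your claim that $F\to T_D$ is an isomorphism at each node confuses fibers with stalks. The gap: the final step, ruling out the residual contribution from $H^1(D;K)$, is stated as an expectation; the appeal to $H^0(D_i;N_{D_i/Y})=0$ (true, since $D_i^2\leq -2$) is not connected by any actual argument to the class you need to kill in $H^1(D;K)$. In fact it is not clear that $H^1(T_Y(-\log D)(-D))\to H^1(T_Y(-\log D))$ is surjective at all; targeting $T_Y(-D)$, whose quotient $T_D$ has vanishing $H^1$, removes the entire problem.
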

\begin{proof} We have a commutative diagram
$$\xymatrix{
0\, \ar[r] & T_Y(-\log D)  \ar[r]  \ar[d] & \,\,T_Y\,\, \ar[r] \ar[d]  &  \bigoplus_i N_{D_i/Y} \ar[r] \ar[d] & \, 0\\
0\, \ar[r]&  \,\,T_D\,\, \ar[r] & \,T_Y\big{|}_D\, \ar[r] &  \,\,N_{D/Y}\,\, \ar[r] & \,\,T^1_D \,\,\ar[r] & \, 0} $$

A diagram chase shows that  $T_Y(-\log D)\to T_D$ is surjective. Hence there is an exact sequence
$$0 \to T_Y(-D) \to T_Y(-\log D) \to T_D \to 0.$$
But an easy calculation gives $H^1(D; T_D) =0$ (i.e.\  every locally trivial deformation of $D$ is a product). Thus the natural map $H^1(Y; T_Y(-D)) \to H^1(Y; T_Y(-\log D))$ is surjective, which is the statement of the lemma. 
\end{proof}

Returning to the proof of Theorem~\ref{diffcomp}, fix once and for all a representative $\{\theta_{ij}\}$ for $\theta$ satisfying the conclusion of the lemma. We have the usual exact sequence 
$$0\to \scrO_Y \to \scrO_{\mathcal{Z}}^* \to \scrO_Y^* \to 0,$$
and hence  $\Pic \mathcal{Z} \cong \Pic Y$ since $H^1(Y; \scrO_Y) = H^2(Y; \scrO_Y) =0$. Hence, given a line bundle $L$ on $Y$, $L$ lifts uniquely to a line bundle $\mathcal{L}$ on $\mathcal{Z}$, which we see explicitly as follows: Every lifting $\tilde{\lambda}_{ij}$ of $\lambda_{ij}$ to an element of $\scrO_{\mathcal{Z}}^*|U_i$ is necessarily of the form $\phi_i^{-1}(\lambda_{ij} + \mu_{ij}\varepsilon)$ for some holomorphic functions $\mu_{ij}$. The condition that $\tilde{\lambda}_{ij}$ can be chosen to be  a $1$-cocycle is the equality
$$\frac{\theta_{ji}(\lambda_{jk})}{\lambda_{jk}} = \lambda_{ij}^{-1}\mu_{ij} -\lambda_{jk}^{-1}\mu_{jk} + \lambda_{ik}^{-1}\mu_{ik} =\delta(\{\rho_{ij}\}),$$
where $\rho_{ij}$ is the $1$-cochain $\lambda_{ij}^{-1}\mu_{ij}$ and $\delta$ here and for the rest of the proof denotes \v{C}ech coboundary. In other words, the $2$-cocycle $\theta\smallsmile d\log \lambda_{ij} = \theta\smallsmile c_1(L)$ is zero in $H^2(Y;\scrO_Y)$. In this case, the transition functions 
$$\tilde{\lambda}_{ij}\phi_i^{-1}(\lambda_{ij}(1+  \rho_{ij}\varepsilon))$$ define the lift of $L$ to $\mathcal{L}$. Moreover, $\rho_{ij}\big{|}_D$ is then a $1$-cocycle since 
$$\delta(\{\rho_{ij}\}\big{|}_D)= \frac{\theta_{ji}(\lambda_{jk})}{\lambda_{jk}}\big{|}_D = 0,$$
and, in case $L=L_\alpha$ for $\alpha \in \Lambda$,  then $\rho_{ij}\big{|}_D$ represents the element $\psi(\theta)(\alpha)\in H^1(D;\scrO_D)$.

We compute $\partial \circ \psi(\theta)(\alpha)$ by lifting to a $1$-cochain with values in $\scrO_Y$ and taking its \v{C}ech coboundary. Clearly, $\rho_{ij}$ is such a lift, and thus $\partial \circ \psi(\theta)(\alpha)$ is represented by the $2$-cocycle  $\theta_{ji}(\lambda_{jk})/\lambda_{jk}$. 

Finally we compute the term $\theta \smallsmile \hat{c}_1(L)$. We can write
$$\frac{d\lambda_{ij}}{\lambda_{ij}} = \eta_{ij} + (\delta \{\sigma_i\})_{ij},$$
where $\eta_{ij}$ is a $1$-cocycle with values in $\Omega^1_Y(\log D)(-D)$ representing $\hat{c}_1(L)$ and $\sigma_i$ is a $0$-cochain with values in $\Omega^1_Y$. Thus  $\theta \smallsmile \hat{c}_1(L)$ is represented by the  $1$-cocycle
$$\frac{\theta_{ji}(\lambda_{jk})}{\lambda_{jk}} - \theta_{ji}(\delta \{\sigma_i\})_{jk}.$$
But, since $\delta \theta =0$, 
$$\theta_{ji}(\delta \{\sigma_i\})_{jk} = \theta\smallsmile \delta \{\sigma_i\} =\pm \delta (\theta\smallsmile \{\sigma_i\}).$$
Since $\theta_{ij}$ vanishes on $D$, $\delta (\theta\smallsmile \{\sigma_i\})$ is a $1$-coboundary with coefficients in $\scrO_Y(-D)$. Thus, in cohomology, $\partial \circ \psi (\theta)(\alpha) = \theta \smallsmile \hat{c}_1(L_\alpha)$ as claimed.
\end{proof}

We turn now to the case of a Type III degeneration $Y_0$. 
We define $\Lambda_0$ and $\Lambda_0\spcheck$ by analogy with the smooth case:
\begin{align*} 
\Lambda_0 &=\{L\in  \Pic Y_0: \deg (L\big{|}_{D_i}) =0 \text{ for all $i$}\};\\
\Lambda_0\spcheck &= \textstyle \Pic Y_0/\bigoplus _i\Z[D_i].
\end{align*}

Note that, by Lemma~\ref{Lemma1.9}, 
$$(\Lambda_0)_\C\spcheck  \cong H^1(Y_0; \Omega_{Y_0}^1(\log D)/\tau_{Y_0}^1).$$

\begin{lemma} $(\Lambda_0)_\C  \cong H^1(Y_0; \Omega_{Y_0}^1(\log D)/\tau_{Y_0}^1(-D))$.
\end{lemma}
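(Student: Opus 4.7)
The plan is to adapt Lemma \ref{locexactseq} to the normal crossings setting and then take the long exact sequence in cohomology, identifying the terms using the analytic computations already carried out in Section 2.3. Specifically, I would first establish the short exact sequence of sheaves on $Y_0$
$$0 \to \Omega^1_{Y_0}(\log D)/\tau^1_{Y_0}(-D) \to \Omega^1_{Y_0}/\tau^1_{Y_0} \to \nu_*\Omega^1_{\widetilde D} \to 0,$$
where $\nu\colon \widetilde D = \coprod_i D_i \to Y_0$ is the normalization of $D$. By hypothesis $D$ lies in the smooth locus of $Y_0$ (in fact entirely inside $V_0$), so on a neighborhood of $D$ the torsion subsheaf $\tau^1_{Y_0}$ vanishes and the sequence reduces verbatim to the smooth case of Lemma \ref{locexactseq} applied on $V_0$. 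Away from $D$, $\Omega^1_{Y_0}(\log D) = \Omega^1_{Y_0}$, the twist by $-D$ is trivial, and $\nu_*\Omega^1_{\widetilde D}$ vanishes, so the sequence is tautologically exact there. Patching gives the global short exact sequence.

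Next I would take the long exact cohomology sequence. Since $D_i \cong \Pee^1$, $H^0(D_i; \Omega^1_{D_i}) = 0$, so the sequence begins
$$0 \to H^1(Y_0; \Omega^1_{Y_0}(\log D)/\tau^1_{Y_0}(-D)) \to H^1(Y_0; \Omega^1_{Y_0}/\tau^1_{Y_0}) \to \bigoplus_i H^1(D_i; \Omega^1_{D_i}).$$
By Corollary \ref{Corollary2}, $\Pic Y_0 \cong H^2(Y_0;\Z)$; combining with the degeneration of the Hodge-to-de Rham spectral sequence for $\Omega^*_{Y_0}/\tau^*_{Y_0}$ and the vanishings $H^0(\Omega^2_{Y_0}/\tau^2_{Y_0}) = H^2(\scrO_{Y_0}) = 0$ from Corollary \ref{Corollary7} and Lemma \ref{Lemma1}, the first Chern class gives an isomorphism $(\Pic Y_0)\otimes\C \cong H^2(Y_0;\C) \cong H^1(Y_0;\Omega^1_{Y_0}/\tau^1_{Y_0})$.

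The final step is to check that, under these identifications, the map to $\bigoplus_i H^1(D_i;\Omega^1_{D_i}) \cong \bigoplus_i \C$ is the restriction-of-degree map $L\mapsto (\deg L|_{D_i})_i$; its kernel over $\Z$ is by definition $\Lambda_0$, and $\Lambda_0$ is torsion free since $\Pic Y_0$ is by Lemma \ref{notorsinH1}, so the kernel over $\C$ is exactly $(\Lambda_0)_\C$. This identification follows from the naturality of the Chern class together with the compatibility of the sheaf sequence above with the sequence in Lemma \ref{locexactseq} pulled back to a tubular neighborhood of $D$ in the smooth component $V_0$, where the standard comparison between the algebraic first Chern class in $H^1(\Omega^1)$ and the topological degree of a line bundle on $\Pee^1$ applies. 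The main point requiring care is this last identification of the restriction map with the degree map; once it is in place, the isomorphism $(\Lambda_0)_\C \cong H^1(Y_0; \Omega^1_{Y_0}(\log D)/\tau^1_{Y_0}(-D))$ is immediate from the exactness above.
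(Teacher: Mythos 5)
Your proof is correct and follows essentially the same route as the paper: the short exact sequence of sheaves $0 \to \Omega^1_{Y_0}(\log D)/\tau^1_{Y_0}(-D) \to \Omega^1_{Y_0}/\tau^1_{Y_0} \to \nu_*\Omega^1_{\widetilde D} \to 0$, its long exact cohomology sequence, the vanishing of $H^0$ of the last term, and the identification of $H^1(Y_0;\Omega^1_{Y_0}/\tau^1_{Y_0}) \to \bigoplus_i H^1(D_i;\Omega^1_{D_i})$ with the degree/restriction map $H^2(Y_0;\C)\to \bigoplus_i H^2(D_i;\C)$. Your version simply fills in more of the intermediate steps the paper treats as already established (exactness on and off a neighborhood of $D$, degeneration of the spectral sequence from Section 2.3, and the Chern class comparison).
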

\begin{proof} We have the analogue of the exact sequence of Lemma~\ref{locexactseq}:
$$0 \to \Omega^1_{Y_0}(\log D)/\tau_{Y_0}^1(-D) \to \Omega_{Y_0}^1/\tau_{Y_0}^1 \to \nu_*\Omega^1_{\widetilde{D}} \to 0.$$
Thus there is an exact sequence
$$\textstyle 0\to H^1(Y_0; \Omega_{Y_0}^1(\log D)/\tau_{Y_0}^1(-D)) \to H^1(Y_0; \Omega_{Y_0}^1/\tau_{Y_0}^1) \to \bigoplus_iH^1(D_i; \Omega^1_{D_i}).$$
This identifies $H^1(Y_0; \Omega_{Y_0}^1(\log D)/\tau_{Y_0}^1(-D))$ with the kernel of 
$$\textstyle H^1(Y_0; \Omega_{Y_0}^1/\tau_{Y_0}^1) \cong H^2(Y_0;\C) \to \bigoplus_iH^1(D_i; \Omega^1_{D_i}) \cong \bigoplus _iH^2(D_i; \C),$$
which is $(\Lambda_0)_\C$.
\end{proof}

There is a period homomorphism $\varphi_{Y_0}\in \Hom(\Lambda_0, \C^*)$ and it extends to a holomorphic map on germs of deformations of $Y_0$. In particular, there is a well-defined differential 
$$\psi\colon  \mathbb{T}^1_{Y_0}(-\log D) \to \Hom_\C((\Lambda_0)_\C, H^1(\scrO_D)).$$
To compute it, note first that the analogue of Lemma~\ref{locexactseq} holds, so that given $L\in \Lambda_0$, we have $\hat{c}_1(L) \in H^1(Y_0;\Omega^1_{Y_0}(\log D)(-D))$ and its image $\bar{c}_1(L) \in H^1(Y_0;\Omega^1_{Y_0}(\log D)/\tau_{Y_0}^1(-D))$.  Then we have

\begin{theorem}\label{diffcomp2} Let $\partial \colon H^1(D; \scrO_D) \to H^2(Y_0; \scrO_{Y_0}(-D))$ be the coboundary map arising from the exact sequence
$$0 \to \scrO_{Y_0}(-D) \to \scrO_{Y_0} \to \scrO_D \to 0,$$
which is an isomorphism since $H^1(Y_0; \scrO_{Y_0}) =  H^2(Y_0; \scrO_{Y_0}) = 0$.  Then
$$\partial \circ \psi (\theta)(L) = \langle \theta ,  \hat{c}_1(L)\rangle,$$
where $\langle \cdot, \cdot\rangle$ denotes Yoneda pairing 
$$\operatorname{Ext}^1(\Omega^1_{Y_0}(\log D), \scrO_{Y_0})\otimes H^1(Y_0;\Omega^1_{Y_0}(\log D)(-D)) \to H^2(Y_0; \scrO_{Y_0}(-D)).$$
\end{theorem}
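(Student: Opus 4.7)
The proof will follow the same blueprint as Theorem~\ref{diffcomp} in the smooth case, with two substantive modifications forced by the singularities of $Y_0$: \emph{(a)} derivations must be replaced by extension classes, and \emph{(b)} the basic lemma arranging $\theta_{ij}|_D = 0$ must be re-established. First, represent $\theta \in \operatorname{Ext}^1(\Omega^1_{Y_0}(\log D), \scrO_{Y_0})$ by a first-order deformation $(\mathcal{Z}, \mathcal{D}) \to \Spec \C[\varepsilon]$ of $(Y_0, D)$ preserving $D$ as a relative Cartier divisor. Fix an analytic open cover $\{U_i\}$ of $Y_0$ fine enough that, on each $U_i$, we can choose an isomorphism $\phi_i\colon \scrO_{\mathcal{Z}}|_{U_i} \xrightarrow{\sim} \scrO_{\tilde U_i}$ to a standard local model: a polydisk near a smooth point, the germ $xy = \varepsilon t_{ij}$ over an open subset of a double curve, and the rigid local model $xyz=0$ at a triple point (which is unobstructed as a deformation since triple points of a $d$-semistable Type III pair admit no local smoothings supported at a single point). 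The transition data $\tilde\theta_{ij} := \phi_j\phi_i^{-1} - \Id$ then represents $\theta$ via the local-to-global spectral sequence for $\operatorname{Ext}^1(\Omega^1_{Y_0}(\log D), \scrO_{Y_0})$; its image in $H^0(T^1_{Y_0})$ encodes the smoothings $t_{ij}$ of the double curves, and its projection to $H^1(T_{Y_0}(-\log D))$ records the locally trivial part.

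Next, establish the analogue of the lemma that allows us to arrange $\tilde\theta_{ij}|_D = 0$. The exact sequence
$$0 \to T_{Y_0}(-D) \to T_{Y_0}(-\log D) \to T_D \to 0$$
together with the fact that $D$ is a cycle of smooth rational curves contained in the smooth locus of $Y_0$, gives $H^1(D; T_D) = 0$ (since on the normalization of $D$ the relevant bundle is $\scrO(0)$). Hence on the locally trivial piece we may modify the $\tilde\theta_{ij}$ by a coboundary so that $\tilde\theta_{ij}$ is a section of $T_{Y_0}(-D)$. The smoothing part of $\tilde\theta_{ij}$ is automatically supported away from $D$ (as $D \subset (Y_0)_{\mathrm{sm}}$), and so already vanishes on $D$.

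With these local data in hand, lift $L$ uniquely to $\mathcal{L} \in \Pic \mathcal{Z}$ using Corollary~\ref{Corollary2} ($\Pic \mathcal{Z} \cong \Pic Y_0$); as in the smooth case, write the transition functions of $\mathcal{L}$ in the form $\phi_i^*(\lambda_{ij}(1 + \rho_{ij}\varepsilon))$, where the cocycle condition translates into
$$\tilde\theta_{ji}(\lambda_{jk})/\lambda_{jk} = \delta\{\rho_{ij}\}$$
(interpreting the action of $\tilde\theta_{ji}$ on a local unit via the standard rule for each local model, which agrees with the action of a log derivation modulo the ideal of $D$). The class $\rho_{ij}|_D$ is a Čech 1-cocycle on $D$ representing $\psi(\theta)(L)$; applying the coboundary $\partial\colon H^1(\scrO_D) \to H^2(\scrO_{Y_0}(-D))$ yields the 2-cocycle $\tilde\theta_{ji}(\lambda_{jk})/\lambda_{jk}$.

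Finally, identify this 2-cocycle with the Yoneda product $\langle \theta, \hat c_1(L)\rangle$. Writing $d\log\lambda_{ij} = \eta_{ij} + (\delta\{\sigma_i\})_{ij}$ with $\eta_{ij}$ a Čech representative of $\hat c_1(L) \in H^1(Y_0;\Omega^1_{Y_0}(\log D)(-D))$, the Yoneda pairing is computed on Čech cocycles exactly as in the smooth case: $\tilde\theta_{ji}$ pairs with $\eta_{jk}$ by the canonical evaluation $T_{Y_0}(-\log D) \otimes \Omega^1_{Y_0}(\log D) \to \scrO_{Y_0}$, while the correction term $\tilde\theta_{ji}(\delta\{\sigma_i\})_{jk}$ is a coboundary because $\tilde\theta_{ji}|_D = 0$ forces it to take values in $\scrO_{Y_0}(-D)$. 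The hard step will be the last paragraph: making the Yoneda pairing explicit on the $T^1_{Y_0}$ part of $\theta$ (where $\tilde\theta_{ij}$ is not a genuine derivation on $Y_0$ but only on the total space $\mathcal{Z}$), and verifying that the identity holds on the nose, not merely modulo lower-order terms, will require a careful analysis of the relative cotangent complex of the local model $xy = \varepsilon t$.
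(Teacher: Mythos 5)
Your plan takes a genuinely different route from the paper's. You attempt a direct Čech-level computation with explicit local models for the first-order deformation $\mathcal{Z}$, essentially repeating the smooth-case argument of Theorem~\ref{diffcomp} but now carrying along the $T^1_{Y_0}$-part of the Kodaira--Spencer class. The paper's proof instead splits into two cases: on the locally trivial hyperplane $H^1(Y_0; T_{Y_0}(-\log D)) \subset \mathbb{T}^1_{Y_0}(-\log D)$ the Yoneda pairing collapses to cup product and the argument of Theorem~\ref{diffcomp} applies verbatim; for a class $\theta$ projecting onto a nonzero element of $H^0(Y_0;T^1_{Y_0})$, the paper embeds $\theta$ in a genuine one-parameter smoothing $\pi\colon\mathcal{Y}\to\Delta$ via the Kodaira--Spencer map, invokes Theorem~\ref{diffcomp} on the smooth fibers $Y_t$, and passes to the limit $t=0$ by continuity, the key point being that $R^2\pi_*\scrO_{\mathcal{Y}}(-\mathcal{D})\cong\scrO_\Delta$ is torsion-free. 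In short, the paper reduces to the smooth case rather than re-running the local computation.

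The genuine gap in your proposal is exactly where you flag it: you say the "hard step" of making the Yoneda pairing explicit on the $T^1_{Y_0}$ part "will require a careful analysis of the relative cotangent complex of the local model $xy=\varepsilon t$," but you do not carry this out. That step is the entire content of the extension from Theorem~\ref{diffcomp} to Theorem~\ref{diffcomp2}. As you note, when $\theta$ has a nonzero image in $H^0(T^1_{Y_0})$, the data $\tilde\theta_{ij}$ is not a cocycle in $T_{Y_0}(-\log D)$ — it is a hypercocycle involving both gluing vector fields and local smoothing parameters — and the Yoneda product of such a class with $\hat c_1(L)$ requires chasing the extension
$0\to\scrO_{Y_0}\to\mathcal{E}\to\Omega^1_{Y_0}(\log D)\to 0$
classified by $\theta$, not merely contracting derivations against forms. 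Without resolving this, your argument proves the identity only on the codimension-one locally trivial subspace, which is precisely the part where Yoneda product is already cup product and where the original Theorem~\ref{diffcomp} argument works unchanged. The paper's degeneration-by-continuity trick is designed to sidestep exactly this computation. If you wish to pursue the direct approach, you need to produce the explicit Čech hypercocycle representative of $\theta$ in the two-term complex computing $\operatorname{Ext}^1$, compute the Yoneda product at the cochain level, and then verify it agrees with $\partial\circ\psi(\theta)(L)$ computed from the deformation $\mathcal{Z}$ — a real, if tractable, local calculation.

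A secondary, more minor, imprecision: the parenthetical assertion that triple points "admit no local smoothings supported at a single point" is not the right statement; the germ $xyz=0$ has a one-dimensional versal deformation $xyz=s$. The constraint imposed by $d$-semistability concerns the global compatibility of the smoothing parameters along the whole double locus (via $T^1_{Y_0}\cong\scrO_{(Y_0)_{\text{sing}}}$), not rigidity of triple points themselves.
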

\begin{proof} On the hyperplane $H^1(Y_0; T_{Y_0}(-\log D))$ of $\mathbb{T}^1_{Y_0}(-\log D)$, the Yoneda product is just cup product, and the proof is exactly the same as the proof for Theorem~\ref{diffcomp}. By linearity, it  suffices to prove the formula of Theorem~\ref{diffcomp2} for a single class $\theta$ projecting onto a nonzero element of $H^0(Y_0; T^1_{Y_0})\cong \C$. Fix a one parameter smoothing $\pi\colon (\mathcal{Y},\mathcal{D})\to \Delta$. There are relative versions $\mathbb{T}^1_{\mathcal{Y}/\Delta}(-\log \mathcal{D})$, $R^1\pi_*\Omega^1_{\mathcal{Y}/\Delta}(\log \mathcal{D})(-\mathcal{D})$, $R^1\pi_*\scrO_{\mathcal{D}}$, $R^2\pi_*\scrO_{\mathcal{Y}}(-\mathcal{D})$. We also have the relative Kodaira-Spencer map $\kappa \colon T_\Delta \to \mathbb{T}^1_{\mathcal{Y}/\Delta}(-\log \mathcal{D})$, and $\kappa(\partial/\partial t)$ is a section which restricts to an element $\theta$ with nonzero image in  $H^0(Y_0; T^1_{Y_0})$. Given $L\in \Pic Y_0$, we can lift $L$ to a line bundle $\mathcal{L}\in \Pic \mathcal{Y}$, and there is a relative (modified) Chern class $\hat{c}_1(\mathcal{L})\in R^1\pi_*\Omega^1_{\mathcal{Y}/\Delta}(\log \mathcal{D})(-\mathcal{D})$. We want to establish the equality
$$\partial \circ \psi (\kappa(\partial/\partial t))(\mathcal{L}) = \langle \kappa(\partial/\partial t),  \hat{c}_1(\mathcal{L})\rangle.$$
This holds for a general $t$ by Theorem~\ref{diffcomp}, and thus follows by continuity for $t=0$, noting that $R^2\pi_*\scrO_{\mathcal{Y}}(-\mathcal{D})\cong \scrO_{\Delta}$ is torsion free (see also Proposition~\ref{extpermapholom} below). Restricting to $t=0$ gives the result for $\theta$.
\end{proof}

\begin{remark} We will only need this result for $H^1(Y_0; T_{Y_0}(-\log D))$, the hyperplane  tangent to the equisingular deformations. 
\end{remark}

\begin{corollary}\label{diffcomp2cor} The restriction of the differential of the period map to $H^1(Y_0; T_{Y_0}(-\log D))$ is an isomorphism and the differential of the period map on $\mathbb{T}^1_{Y_0}(-\log D)$ is surjective.
\end{corollary}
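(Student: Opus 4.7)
The plan is to reduce the statement to a perfectness assertion for a Serre duality pairing on $Y_0$, and then to deduce surjectivity of the full differential from surjectivity of the restriction.

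First, I would use Theorem~\ref{diffcomp2} to identify the restriction of $\partial\circ\psi$ to $H^1(Y_0; T_{Y_0}(-\log D))$ with the cup product. For $\theta$ in this subspace, the Yoneda pairing of Theorem~\ref{diffcomp2} reduces to ordinary cup product, and since $T_{Y_0}(-\log D) = \mathcal{H}\!om(\Omega^1_{Y_0}(\log D)/\tau^1_{Y_0}, \scrO_{Y_0})$, the natural contraction factors through the quotient $\Omega^1_{Y_0}(\log D) \to \Omega^1_{Y_0}(\log D)/\tau^1_{Y_0}$. Thus, using the identification $(\Lambda_0)_\C \cong H^1(Y_0; \Omega^1_{Y_0}(\log D)/\tau^1_{Y_0}(-D))$ established just before Theorem~\ref{diffcomp2} (and noting that $\bar c_1(L)$ corresponds to $L \in (\Lambda_0)_\C$ under this identification), the restriction $\partial\circ\psi\big{|}_{H^1(T_{Y_0}(-\log D))}$ is the map adjoint to the cup product
\[
H^1(Y_0; T_{Y_0}(-\log D)) \otimes H^1(Y_0; \Omega^1_{Y_0}(\log D)/\tau^1_{Y_0}(-D)) \longrightarrow H^2(Y_0; \scrO_{Y_0}(-D)).
\]

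Next, I would invoke Serre duality on $Y_0$. Since $\omega_{Y_0} \cong \scrO_{Y_0}(-D)$ and $T_{Y_0}(-\log D)$ is Serre-dual to $\Omega^1_{Y_0}(\log D)/\tau^1_{Y_0}$ (this is precisely the duality statement of \cite[(2.10)]{Friedman1} cited earlier in the paper), the cup product pairing displayed above is a perfect pairing, and $H^2(Y_0; \scrO_{Y_0}(-D)) \cong \C$ is the trace. Combined with the isomorphism $\partial\colon H^1(D;\scrO_D) \xrightarrow{\sim} H^2(Y_0; \scrO_{Y_0}(-D))$ already asserted in the statement of Theorem~\ref{diffcomp2}, this identifies the restriction
\[
\psi\big{|}_{H^1(T_{Y_0}(-\log D))}\colon H^1(Y_0; T_{Y_0}(-\log D)) \longrightarrow \Hom_\C((\Lambda_0)_\C, H^1(D;\scrO_D))
\]
with an isomorphism of dual vector spaces of equal (finite) dimension, giving the first claim.

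Finally, for the surjectivity statement, I would apply the local-to-global exact sequence
\[
0 \to H^1(Y_0; T_{Y_0}(-\log D)) \to \mathbb{T}^1_{Y_0}(-\log D) \to H^0(Y_0; T^1_{Y_0}) \to 0
\]
recalled in Section~2.4. Since $\psi$ restricted to the subspace $H^1(Y_0; T_{Y_0}(-\log D))$ already hits all of $\Hom_\C((\Lambda_0)_\C, H^1(D;\scrO_D))$, the full map $\psi$ on $\mathbb{T}^1_{Y_0}(-\log D)$ is a fortiori surjective.

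The main content is the Serre duality step: one must check that the abstract cup product pairing really is the Serre pairing, which is a standard but nontrivial compatibility. Once that compatibility is in hand (as provided by \cite[(2.10)]{Friedman1}), the rest of the argument is formal.
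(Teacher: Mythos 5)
Your argument is correct and follows essentially the same route as the paper: identify $T_{Y_0}(-\log D)$ with $\mathcal{H}om(\Omega^1_{Y_0}(\log D)/\tau^1_{Y_0},\scrO_{Y_0})$ so that the Yoneda/cup pairing factors through the torsion-free quotient, replace $\hat c_1(L)$ by $\bar c_1(L)\in(\Lambda_0)_\C$, invoke the Serre duality of \cite[(2.10)]{Friedman1} to get perfectness of the pairing into $H^2(Y_0;\scrO_{Y_0}(-D))\cong H^1(D;\scrO_D)$, and then deduce surjectivity of $\psi$ on $\mathbb{T}^1_{Y_0}(-\log D)$ from surjectivity of its restriction. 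Your explicit use of the local-to-global exact sequence for the last step is slightly more spelled out than the paper's ``the second then follows,'' but the content is identical.
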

\begin{proof} Note that $T_{Y_0} =Hom(\Omega^1_{Y_0}, \scrO_{Y_0}) = Hom(\Omega^1_{Y_0}/\tau^1_{Y_0}, \scrO_{Y_0})$, and similarly for $T_{Y_0}(-\log D)$. Thus there is the cup product map
$$H^1(Y_0 ; T_{Y_0}(-\log D)) \otimes H^1(Y_0;\Omega^1_{Y_0}(\log D)/\tau^1_{Y_0}(-D)) \to H^2(Y_0; \scrO_{Y_0}(-D)),$$
which is identified via $\partial^{-1}$ with 
$$H^1(Y_0 ; T_{Y_0}(-\log D)) \otimes (\Lambda_0)_\C \to H^1(D; \scrO_{D}),$$
the differential of the period map. By \cite[(2.10)]{Friedman1}, the vector spaces
 $H^1(Y_0;\Omega^1_{Y_0}(\log D)/\tau^1_{Y_0}(-D))$ and $H^1(Y_0 ; T_{Y_0}(-\log D))$ are dual under cup product, hence
  the above cup product  is nondegenerate by Serre duality. Moreover by compatibility of cup product,
  $$\langle \theta , \hat{c}_1(L)\rangle = \langle \theta , \bar{c}_1(L)\rangle,$$
where $\bar{c}_1(L) \in H^1(Y_0;\Omega^1_{Y_0}(\log D)/\tau_{Y_0}^1(-D))$ is the image of $\hat{c}_1(L) \in H^1(Y_0;\Omega^1_{Y_0}(\log D)(-D))$.  This proves the first statement and the second then follows.
\end{proof}

\section{Surjectivity of the period map}

In this section, our goal is to use Theorem~\ref{diffcomp2} to prove results about the surjectivity of the period map for families of $d$-semistable  Type III anticanonical pairs, and then to apply this to construct Type III degenerations $\mathcal{Y}\to \Delta$ such that the kernel of the period map on a general fiber $Y_t$ is of a special type. 

Let $(Y_0, D_0)$ be a Type III anticanonical pair as defined in Definition~\ref{defTypeIII}. If $Y_0$ is the central fiber of a Type III degeneration $\pi\colon \mathcal{Y}\to \Delta$ of anticanonical pairs, which is the case if and only if $Y_0$ is $d$-semistable, then there are $f$ line bundles on $Y_0$, namely $\scrO_{\mathcal{Y}}(V_i)\big{|}_{Y_0}$, $i=0, \dots, f-1$.  By Lemma~\ref{notorsinH1},  
$$\Pic Y_0 = \textstyle H^2(Y_0; \Z) \cong \Ker \left(\bigoplus_iH^2(V_i;\Z) \to \bigoplus_{i< j}H^2(C_{ij}; \Z)\right).$$
Define a class $\xi_\ell\in \bigoplus _iH^2(V_i;\Z)$ by the rule:
$$\xi_\ell \big{|}_{V_i} = \begin{cases} [C_{i\ell}], &\text{if $i\neq \ell$;}\\
-\sum_{j\neq \ell}[C_{j\ell}]= -[C_\ell], &\text{if $i= \ell$.}
\end{cases}$$
By the triple point formula, $\xi_\ell \in \Ker  (\bigoplus_iH^2(V_i;\Z) \to \bigoplus_{i< j}H^2(C_{ij}; \Z) )$, and hence $\xi_0, \dots, \xi_{f-1}\in H^2(Y_0;\Z)\cong \Pic Y_0$. It is easy to check that $\sum_i\xi_i =0$. Clearly $\xi_i \cdot [D_j] =0$ for every component $D_j$ of $D$.

\begin{lemma}\label{cokeristorsionfree} The homomorphism $\psi\colon \Z^f \to H^2(Y_0;\Z)$ defined by 
$$(n_0, \dots, n_{f-1}) \mapsto \sum_in_i\xi_i$$ has kernel $\{(n, \dots, n)\in \Z^f: n\in \Z\}$ and thus defines an injective homomorphism $\Z^f/\Z \to H^2(Y_0;\Z)$. Moreover, the cokernel of $\psi$ is torsion free. 
\end{lemma}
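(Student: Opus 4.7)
The plan is to pass to a smoothing $\pi\colon \mathcal{Y}\to \Delta$, which exists by the smoothing theorem of Section~2.4 since $(Y_0, D_0)$ is $d$-semistable, and to use Corollary~\ref{Corollary2} to identify $H^2(Y_0; \Z) = \Pic Y_0 \cong \Pic \mathcal{Y}$. A direct restriction computation under this identification shows that $\xi_\ell$ is the class of $\scrO_{\mathcal{Y}}(V_\ell)$: on $V_i$ with $i\neq \ell$ one has $\scrO_{\mathcal{Y}}(V_\ell)\big{|}_{V_i}=\scrO_{V_i}(C_{i\ell})$, and on $V_\ell$ the normal bundle $N_{V_\ell/\mathcal{Y}}=\scrO_{V_\ell}(-C_\ell)$, because $\scrO_{\mathcal{Y}}(Y_0)\cong \scrO_{\mathcal{Y}}$ (the function $t$ cuts out $Y_0$). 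In particular $\sum_\ell \xi_\ell = 0$, so the diagonal $\Z\cdot(1,\dots,1)$ lies in $\ker\psi$; the asserted kernel in the statement must therefore be read as this diagonal, consistent with the claim that $\psi$ induces an injection $\Z^f/\Z \hookrightarrow H^2(Y_0;\Z)$.

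To show the kernel is exactly the diagonal, I would suppose $\sum n_\ell V_\ell=(f)$ is principal on $\mathcal{Y}$. Since the divisor of $f$ is supported on $Y_0$, the restriction $f|_{Y_t}$ to a smooth fiber $t\neq 0$ is a nowhere-vanishing holomorphic function on a projective surface, hence constant; therefore $f=\pi^*g$ for some nowhere-vanishing holomorphic $g\colon \Delta^*\to \C^*$. The hypothesis that $f$ be meromorphic on all of $\mathcal{Y}$ forces $g$ to extend meromorphically across $0\in \Delta$, since an essential singularity of $g$ would produce essential singular behaviour of $\pi^*g$ along every component of $Y_0$. Hence $g(t)=c\,t^m\cdot u(t)$ with $u$ a unit near the origin, and so $(f)=m\cdot(t) = m\sum_\ell V_\ell$, which forces $n_\ell=m$ for all $\ell$.

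For the cokernel I would invoke the exact sequence already extracted from the Leray spectral sequence for the open inclusion $\mathcal{Y}^*:=\mathcal{Y}\setminus Y_0\hookrightarrow \mathcal{Y}$ in the proof of Proposition~\ref{lambdaprop}(i):
$$\bigoplus_\ell H^0(V_\ell;\Z)\xrightarrow{\beta} H^2(\mathcal{Y};\Z)\longrightarrow H^2(\mathcal{Y}^*;\Z).$$
The Gysin map $\beta$ sends $e_\ell\mapsto [V_\ell]$, so under $H^2(\mathcal{Y};\Z)\cong \Pic\mathcal{Y}$ it coincides with $\psi$. Consequently $\Coker\psi$ injects into $H^2(\mathcal{Y}^*;\Z)$, which by the Wang sequence argument appearing in the same proof is isomorphic to $H^2(Y_t;\Z)$; this group is torsion-free since the rational surface $Y_t$ satisfies $H_1(Y_t;\Z)=0$. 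The main technical point in the whole plan is the meromorphy extension step in the kernel argument; once that is in hand, everything else amounts to bookkeeping around computations already carried out earlier in the paper.
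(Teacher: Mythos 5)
The student correctly detects the typo in the statement: the kernel is the diagonal $\Z\cdot(1,\dots,1)$, not $\{\sum_i n_i = 0\}$, which is what the paper's proof and the phrase ``$\Z^f/\Z\hookrightarrow H^2(Y_0;\Z)$'' actually require.

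There is one genuine gap. The lemma is stated for an arbitrary Type III anticanonical pair $(Y_0,D_0)$, which need not be $d$-semistable, and hence need not be the central fiber of any smoothing $\pi\colon\mathcal{Y}\to\Delta$. You pass to a smoothing at the very first step (``which exists \dots since $(Y_0,D_0)$ is $d$-semistable''), but $d$-semistability is not a hypothesis here. The fix is the one the paper uses for the cokernel: both $H^2(Y_0;\Z)$ and the classes $\xi_\ell$ are purely topological, so the entire lemma is invariant under locally trivial deformations of $(Y_0, D_0)$, and by \cite[(2.16)]{FriedmanMiranda} every Type III anticanonical pair deforms locally trivially to a $d$-semistable one. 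You need to say this \emph{before} introducing $\mathcal{Y}$, since your kernel argument, unlike the paper's, uses the threefold from the outset. With that sentence inserted the proof goes through.

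The route is otherwise interestingly different for the kernel. The paper argues directly on $H^2(Y_0;\Z)$: set $n_0=0$ by subtracting a multiple of the diagonal, restrict $\sum n_\ell\xi_\ell$ to $V_0$ where the cycle components are independent in $H^2(V_0;\Z)$, and propagate across the connected dual complex; this requires no smoothing. You instead identify $\sum n_\ell\xi_\ell=0$ in $\Pic Y_0\cong\Pic\mathcal{Y}$ with the principality of $\sum n_\ell V_\ell$ on $\mathcal{Y}$, observe that the resulting meromorphic function is constant on smooth fibers and hence of the form $\pi^*g$, and use that $g$ must be meromorphic at $0$ to conclude $(f)$ is a multiple of $\pi^*(0)=\sum_\ell V_\ell$. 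Your argument is conceptually clean and avoids any combinatorics on the dual complex, at the cost of needing the smoothing (hence the $d$-semistability reduction) even for the kernel, not just the cokernel. The meromorphy-extension step should be phrased more carefully — the clean version is to restrict $f$ to a generic holomorphic disc transverse to $Y_0$, on which $f$ is a one-variable meromorphic function equal to $g\circ\pi$ — but it is correct. Your cokernel argument is exactly the paper's: the Gysin/Leray exact sequence $\Z^f\to H^2(\mathcal{Y};\Z)\to H^2(\mathcal{Y}^*;\Z)$ from the proof of Proposition~\ref{lambdaprop}(i) identifies $\Coker\psi$ with a subgroup of $H^2(\mathcal{Y}^*;\Z)\cong H^2(Y_t;\Z)$ (via the Wang sequence and triviality of the monodromy), which is torsion free.
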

\begin{proof} Suppose that $(n_0,\dots,n_{f-1})$ is in the kernel. After adding the element $(-n_0, \dots, -n_0)$, we may assume that $n_0 = 0$ and must show that $n_i=0$ for all $i$. We have $\sum_{V_i\cap V_0 \neq \emptyset}n_i[C_{0i}] =0$ in $H^2(V_0; \Z)$. As the classes $[C_{0i}]$ are linearly independent in $H^2(V_0; \Z)$,   $n_i = 0$ for all $i$ such that $V_i\cap V_0 \neq \emptyset$. Fix one such component $V_i$. Then there exists a component  $V_j$ such that $V_0\cap V_i\cap V_j\neq \emptyset$, corresponding to a  triple point  of $Y_0$ lying on $C_{0i}$. By assumption, $\sum_{V_i\cap V_\ell \neq \emptyset}n_\ell  [C_{i\ell}] =0$ in $H^2(V_i; \Z)$, and $n_0 = n_i = n_j=0$. An easy argument (cf.\ \cite[2.8]{Friedman2}) shows that, if $(Y,D)$ is an anticanonical pair and $D_1, \dots, D_{r-2}$ is a chain of length $r-2$, i.e.\ $D_i\cdot D_{i+1}=1$, $1\leq i\leq r-3$, then the classes $[D_1], \dots, [D_{r-2}]$ are linearly independent in $H^2(Y; \Z)$. Applying this fact to the pair $(V_i, C_i)$, it follows that $n_\ell =0$ for all $\ell$ such that $V_\ell\cap V_i \neq \emptyset$. Continuing in this way and using the fact that the dual complex of $Y_0$ is connected, a straightforward argument then shows that $n_i=0$ for all $i$.

Finally, we must show that $\Coker \psi$ is torsion free. This statement is invariant under a locally trivial deformation, so we can assume that $Y_0$ is $d$-semistable by \cite[(2.16)]{FriedmanMiranda}, and that $\pi\colon \mathcal{Y}\to \Delta$ is a smoothing. Thus $\xi_i =  \scrO_{\mathcal{Y}}(V_i)\big{|}_{Y_0}$ as described above. The proof of (i) of Proposition~\ref{lambdaprop} shows that there is an inclusion of $\Coker \psi$ in $H^2(Y_t; \Z)$ for a general fiber $Y_t$. Hence $\Coker \psi$ is torsion free.
\end{proof}

 We next define the period map for $d$-semistable Type III anticanonical pairs:

\begin{definition} Let $\overline{\Lambda}_0$ be the  lattice  defined by
$$\overline{\Lambda}_0 = \{\alpha \in \Pic Y_0/\operatorname{span}\{\xi_0, \dots, \xi_{f-1}\}: \alpha \cdot [D_i] =0 \text{ for all $i$}\}.$$
Note that the condition $\alpha \cdot [D_i] =0$ is well-defined since $\xi_\ell \cdot [D_i]=0$ for all $\ell$ and $i$. By the proof of Lemma~\ref{cokeristorsionfree}, if $Y_0$ is $d$-semistable and $\pi\colon \mathcal{Y}\to \Delta$ is a smoothing, then there is an inclusion of $\overline{\Lambda}_0$ in $\Lambda= \Lambda(Y_t,D)$ for a general fiber $Y_t$. 

For  a Type III  anticanonical pair $(Y_0, D)$, with $D$ oriented, we have the period map $\varphi_{Y_0}\colon \Lambda_0 \to \C^*$ defined by $L\in \Pic Y_0 \mapsto L\big{|}_D$. If $Y_0$ is $d$-semistable, then clearly $\varphi_{Y_0}(\xi_i) = 1$ for every $i$. We define $\bar\varphi_{Y_0}\colon \overline{\Lambda}_0 \to \C^*$ to be the homomorphism induced by $\varphi_{Y_0}$. Given  a locally trivial deformation of the pair $(Y_0,D)$ with trivial monodromy, over a smooth connected base $S$, with all fibers $d$-semistable, there exists a holomorphic map $\operatorname{per}\colon S \to \Hom(\overline{\Lambda}_0, \C^*)$ in the usual way.
\end{definition}
 
 The following is then the main result of this section:
 
 \begin{theorem}\label{surj1} Let $\bar\varphi\colon \overline{\Lambda}_0 \to \C^*$ be a homomorphism.  Then there exists a locally trivial deformation of $Y_0$ with trivial monodromy, over a smooth connected base $S$ which we can take to be of the form $(\C^*)^N$, such that all fibers are $d$-semistable, and a point $s\in S$ such that, if $(Y_s,D)$ is the corresponding Type III anticanonical pair, then $\varphi_{Y_s}(\xi_i) = 1$ for all $i$ and the induced homomorphism $\overline{\Lambda}_0 \to \C^*$ is $\bar\varphi$.  
\end{theorem}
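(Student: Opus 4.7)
The plan is to reduce surjectivity of the period map to a differential computation using Corollary~\ref{diffcomp2cor} and Lemma~\ref{evenmorecalcs}, then to globalize via the exponential description of $\Hom(\overline{\Lambda}_0,\C^*)$.

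First I would check that the condition $\varphi_{Y_s}(\xi_i)=1$ in the conclusion is automatic at every $d$-semistable Type III pair, so that the period really descends to $\Hom(\overline{\Lambda}_0,\C^*)$. Indeed, each $\xi_\ell$ restricts to $V_0$ as either $-[C_0]=-[D']$ (when $\ell=0$) or as $[C_{0\ell}]\subset D'$ (when $\ell\neq 0$); in either case its support lies in $D'$, which is disjoint from $D\subset V_0$, so $\xi_\ell|_D\cong\scrO_D$ and $\varphi_{Y_0}$ factors as $\bar\varphi_{Y_0}\colon\overline{\Lambda}_0\to\C^*$. The same holds along any locally trivial $d$-semistable deformation.

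Next I would compute the differential of $\bar\varphi$ at $(Y_0,D)$. By Corollary~\ref{diffcomp2cor}, the differential on $H^1(Y_0;T_{Y_0}(-\log D))$ is an isomorphism onto $\Hom_\C((\Lambda_0)_\C,H^1(D;\scrO_D))=\Hom_\C((\Lambda_0)_\C,\C)$. By Lemma~\ref{evenmorecalcs}(iii)--(iv), the tangent space to locally trivial $d$-semistable deformations of $(Y_0,D)$ is the image of $H^1(Y_0;S_{Y_0}(-\log D))$ in $H^1(Y_0;T_{Y_0}(-\log D))$, which has codimension $f-1$. Because every $d$-semistable fiber has $\xi_i|_D\cong\scrO_D$, the differential of $\bar\varphi$ on this tangent space lands in the subspace $\Hom_\C((\overline{\Lambda}_0)_\C,\C)$, itself of codimension $f-1$ in $\Hom_\C((\Lambda_0)_\C,\C)$. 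A dimension count forces the restricted differential to be surjective onto $\Hom_\C((\overline{\Lambda}_0)_\C,\C)$, with at most a one-dimensional kernel coming from the $H^0(Y_0;T^1_{Y_0})\cong\C$ in Lemma~\ref{evenmorecalcs}(ii), which scales the smoothing without altering the period.

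Finally I would globalize. The $d$-semistable locally trivial deformation functor of $(Y_0,D)$ is unobstructed, by Lemma~\ref{evenmorecalcs}(i) together with the deformation-theoretic arguments of Section~2.4, so it is pro-representable by a smooth germ, and the period map on it is a holomorphic submersion onto $\Hom(\overline{\Lambda}_0,\C^*)\cong(\C^*)^N$ by the previous paragraph. Because the same differential computation holds at every $d$-semistable base point, this submersion can be analytically continued along any path in the target; passing to the universal cover $\C^N\to(\C^*)^N$ and using the natural action of characters of $\overline{\Lambda}_0$ on both sides (by tensoring line bundles on the fibers with characters of $D$) gives a holomorphic family over $\C^N$ which is equivariant for the deck transformations and therefore descends to a family over $(\C^*)^N$ with trivial monodromy and period map surjective onto $\Hom(\overline{\Lambda}_0,\C^*)$. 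The main obstacle is precisely this descent: one must match the deck transformations of $\C^N\to(\C^*)^N$ with the line-bundle twists on the fibers so that they commute and the quotient family remains a locally trivial $d$-semistable family with trivial monodromy; once this matching is arranged, connectedness of $(\C^*)^N$ together with openness of the submersion yields surjectivity.
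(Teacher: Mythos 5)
Your first claim --- that $\varphi_{Y_s}(\xi_i)=1$ holds automatically at every Type III anticanonical pair because the divisor class $\xi_\ell|_{V_0}$ is supported on $D'$, disjoint from $D$ --- is incorrect, and this propagates to the structure of the argument. The class $\xi_\ell$ lies in $\Pic Y_0\cong H^2(Y_0;\Z)\cong\Ker(\bigoplus_i H^2(V_i;\Z)\to\bigoplus_{i<j}H^2(C_{ij};\Z))$, so its restrictions to the rational components $V_i$, $i\neq 0$, are pinned down; but $\Pic^0 V_0\cong\C^*$ is nontrivial, and $\xi_\ell|_{V_0}$ is only specified once one knows its compatibility with $\xi_\ell|_{Y_0'}$ along $D'$, which depends on the gluings defining $Y_0$. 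In general $\xi_\ell|_{V_0}=\scrO_{V_0}(\pm\cdots)\otimes M$ for a nontrivial $M\in\Pic^0 V_0$, and $\varphi_{Y_0}(\xi_\ell)=M|_D\neq 1$. Indeed, if your claim were true the map $s\mapsto(\varphi_{Y_s}(\xi_i))_i$ would be constant on the whole locally trivial deformation space, contradicting the fact (used essentially in the paper, cf.\ Lemma~\ref{univdss} and Corollary~\ref{diffcomp2cor}) that its differential is surjective. What \emph{is} true is that $\varphi_{Y_0}(\xi_i)=1$ on the $d$-semistable locus, but the reason is that a smoothing identifies $\xi_i$ with $\scrO_{\mathcal{Y}}(V_i)|_{Y_0}$, which restricts trivially to $D$. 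Identifying the $d$-semistable locus with (a neutral component of) the zero locus of $(\varphi_{Y_s}(\xi_i))_i$ is actually one of the substantive steps in the proof, not a freebie.

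The second and more serious gap is the globalization. You correctly reduce to the surjectivity of the differential of the period map restricted to the $d$-semistable tangent space (the dimension count there is essentially right, modulo a spurious reference to a ``one-dimensional kernel'' that is irrelevant once one passes to the image of $H^1(S_{Y_0}(-\log D))$ in $H^1(T_{Y_0}(-\log D))$). But a holomorphic map with everywhere-surjective differential is only an open map locally; it need not be surjective onto $\Hom(\overline{\Lambda}_0,\C^*)$, and you are trying to establish surjectivity, not openness. Your proposed remedy --- ``analytically continue along any path in the target,'' pass to the universal cover, and descend by matching deck transformations with line-bundle twists --- is not an argument: you are attempting to analytically continue a \emph{family} over $\C^N$ whose existence beyond a germ is precisely what needs to be proven, and you acknowledge that the descent step is unresolved. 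The paper sidesteps this entirely by first constructing an explicit global algebraic family $(\mathcal{U},\mathcal{D})\to T=(\C^*)^M$ (encoding gluing parameters, locations of internal blow-up points, and the few components with moduli), then proving (Lemma~\ref{peraffine}, via a Rosenlicht-type argument) that the period map $T\to\Hom(\Lambda_0,\C^*)$ is an \emph{affine map of algebraic tori}. Once one knows the map is affine, surjectivity of the differential at a single point implies global surjectivity, and restricting to the subtorus $S=(\Ker\rho)^0$ cuts out precisely the $d$-semistable fibers. Your infinitesimal computation is in the right direction, but the passage from local to global needs the algebraic-torus structure of the family and of the period map, and that is what is missing from your proposal.
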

\begin{proof} There are three ways that we can deform $Y_0$ in a locally trivial way (cf.\ \cite[(4.3)]{FriedmanScattone} for more details):
 \begin{enumerate}
  \item We change the gluings $C_{ij}\subseteq V_i\cong C_{ji}\subseteq V_j$ by multiplying by $\lambda\in \C^*$.
 \item If $V_i$ is obtained by blowing up a point $p\in C_{ij}^{\text{int}}\cong \C^*$, we can deform the point $p$ to a point $p'\in C_{ij}^{\text{int}}$.  (Note that a blowup at a singular point of the anticanonical divisor has no moduli.) 
 \item In most cases, $V_i$ is obtained via iterated blowups from a minimal pair with no moduli (\textsl{taut}, in the terminology of \cite{Friedman2}). In a very small number of exceptional cases ($V_i = \F _0$ or $\F _2$ is already minimal and the double curve on $V_i$ is either irreducible nodal or has two components of self-intersection $2$), the pair $(V_i, C_i)$ has moduli $\cong \C^*$. 
  \end{enumerate}
There is then a ``universal"  deformation $(\mathcal{U}, \mathcal{D})$ of $Y_0$, parametrized by $T= (\C^*)^M$, corresponding to the different possibilities (1), (2), (3) above. Universal in this sense simply means that every locally trivial deformation of $(Y_0, D)$ appears as a fiber. After a translation  on $T$, we can assume  that $1\in T$ corresponds to $Y_0$. Furthermore, the monodromy of the family is trivial. Finally, we can assume that there is a unique component of $\mathcal{U}$ isomorphic to $V_0\times T$. 

Let $\Theta = (Y_0)_{\text{sing}} = \bigcup_{i<j}C_{ij}$ be the double curve on $Y_0$. Then it is easy to check that $\Theta$ is rigid under locally trivial deformations and that $\Pic^0\Theta \cong (\C^*)^{f-1}$. Furthermore, if $\Aut^0\Theta $ denotes the neutral component of $\Aut \Theta$, then $\Aut^0\Theta \cong (\C^*)^e$.  Since $H^1_{\text{\'et}}(T, \mathbb{G}_m) = 0$, $H^1_{\text{\'et}}(T,\Aut^0\Theta) =0$ as well, and thus we can choose an isomorphism of schemes  
$$(\mathcal{U})_{\text{sing}} \cong \Theta \times T.$$

Since the monodromy of the family $(\mathcal{U}, \mathcal{D}) \to T$ is trivial, there is a period map 
$\operatorname{per}\colon T \to \Hom( \Lambda_0, \C^*)$.

\begin{lemma}\label{peraffine} Let $(\mathcal{U}, \mathcal{D}) \to T$ be the deformation constructed above. Then the  period map $\operatorname{per}\colon T \to \Hom(\Lambda_0, \C^*)$ is an affine map of tori.
\end{lemma}
\begin{proof} Very generally, if $\rho\colon (\C^*)^{N_1} \to (\C^*)^{N_2}$ is a morphism of algebraic varieties, then by a theorem of Rosenlicht $\rho$ is an affine map. We will give an essentially  direct argument that $\operatorname{per}$ is an affine map. Thus, given a line bundle $L$ on $Y_0$, we must analyze how $L|_D$ varies as we vary the gluings. 

 Let $D' = \bigcup_{i=1}^{r'}D_i'$ be the dual cycle to $D$ on $V_0$. As previously noted,  $\pi_1(D', *)\cong \pi_1(V_0, *) \cong \pi_1(D, *)$, and hence 
\begin{align*}
H^1(D'; \Z) &\cong H^1(V_0; \Z) \cong H^1(D ; \Z);\\
H^1(D'; \scrO_{D'}) &\cong H^1(V_0; \scrO_{V_0}) \cong H^1(D ; \scrO_D). 
\end{align*}
Thus, an orientation on $D$ induces a compatible orientation on $D'$, and  
$$\Pic^0D'\cong \Pic ^0D \cong \Pic^0V_0 =H^1(V_0; \scrO_{V_0}^*)/H^1(V_0; \Z).$$
In particular, if $L''$ is a line bundle on $V_0$ which restricts to a line bundle of multidegree $0$ on $D$, then the map $\lambda \in \Pic^0V_0 \mapsto (L''\otimes \lambda)\big{|}_D$ is an affine isomorphism from $\Pic^0V_0 \cong \C^*$ to $\Pic^0D$. 

Choose an ordering on the components $V_i$ of $Y_0$ so that $D_i' = V_i\cap V_0$, $1\leq i \leq r'$, i.e.\ $D_i' = C_{0i}$, where the ordering of the components of $D'$ is compatible with the orientation. Let $Y_0' =\bigcup_{i=1}^{f-1}V_i$. Then Mayer-Vietoris applied to the union $Y_0 = Y_0' \cup V_0$ implies that there is an exact sequence
$$\{1\} \to \Pic Y_0 \to \Pic Y_0' \times \Pic V_0 \to  \Pic D' \to \{1\},$$
i.e.\ a line bundle $L$ on $Y_0$ is determined by line bundles $L'$ on $Y_0'$ and $L''$ on $V_0$, such that $L''\big{|}_{D'} \cong \varphi^*(L'\big{|}_{D'})$, where $\varphi$ is the isomorphism $D'\subseteq  V_0\cong D'\subseteq Y_0'$ given by $Y_0$. Thus, if $s\in T$, and the gluing $\varphi$ changes by $\varphi\circ \psi(s)$, where $\psi(s)\in \Aut^0D'$ is given by a surjective homomorphism $\psi$ from $T$ to a subtorus of $\Aut^0D'$, then the line bundle $L$ deforms to some line bundle $L_s$ corresponding to pairs $L'_s $, the line bundle corresponding to $L'$ over the corresponding deformation of $Y_0'$ and $L_s''  = L''\otimes \lambda''(s)$, with $\lambda(s)\in \Pic^0 V_0$,  such that 
$$(L''\otimes \lambda''(s))\big{|}_{D'} \cong \psi(s)^*\varphi^*(L_s'\big{|}_{D'}).$$
If $\Aut^0D' \cong (\C^*)^{r'}$, where the $i^{\text{th}}$ factor operates in the usual way on the $i^{\text{th}}$ component $D_i'$ of $D'$, then $\Aut^0D'$ acts trivially on $\Pic^0D'$, and $(z_1, \dots, z_{r'})$ acts on $\Pic^{d_1, \dots, d_{r'}}D'$, the set of line bundles on $D'$ of multidegree $(d_1, \dots, d_{r'})$, via $z_1^{d_1}\cdots z_{r'}^{d_{r'}}$ and the usual action of $\Pic^0D'\cong \C^*$ on $\Pic^{d_1, \dots, d_{r'}}D'$. Given $L_s$ and $\psi(s)$, there is a unique line bundle $\lambda''(s) \in \Pic^0 V_0$ for which the compatibility condition holds. Thus, if we can show that $s\mapsto L_s'\big{|}_{D'}$ is an affine map, it follows that $s\mapsto \lambda''(s)$ is affine and hence so is $\operatorname{per}(s)(\alpha)=\lambda''(s)\big{|}_D$, where $\alpha\in \overline{\Lambda}_0$ is the class corresponding to the line bundle $L$.

 The line bundle $L'$ determines a multidegree $\mathbf{d} = (d_{ij})$, with $d_{ij} = \deg(L'\big{|}_{C_{ij}})$, and hence $d_i = d_{0i}$, $1\leq i\leq r'$. Restriction defines a  morphism  $\Pic^{\mathbf{d}}\Theta\to \Pic^{d_1, \dots, d_{r'}}D'$, and it is equivariant with respect to the actions of $\Pic^0\Theta\cong  (\C^*)^{f-1}$ on $\Pic^{\mathbf{d}}\Theta$ and $\Pic^0D'\cong \C^*$ on $\Pic^{d_1, \dots, d_{r'}}D'$ via the natural homomorphism $\Pic^0\Theta \to \Pic^0D'$. We now consider the various ways that the construction of Lemma~\ref{univdss} changes the line bundle $L'\big{|}_\Theta$:
 \begin{enumerate}
 \item Suppose that we change the gluings $C_{ij}\subseteq V_i\cong C_{ji}\subseteq V_j$ by $\lambda \in \Aut C_{ij}^{\text{int}}$. Here we may assume that $i,j\geq 1$ since we have already dealt with the gluings from $Y_0'$ to $V_0$. The line bundle $L'\big{|}_\Theta$ is then replaced by $\lambda^*(L'\big{|}_\Theta)$ for the automorphism $\lambda\in \Aut^0\Theta$ corresponding to multiplying by $\lambda\in \C^*$ on the component $C_{ij}$.
 \item If $V_i$ is obtained by blowing up a point $p\in C_{ij}^{\text{int}}\cong \C^*$, and we replace $p$ by $p'\in  C_{ij}^{\text{int}}$, then the line bundle $L'\big{|}_\Theta$ is then replaced by $L'\big{|}_\Theta\otimes \scrO_{\Theta}(a(p'-p))$ for an appropriate integer $a$.
\item  In this exceptional case, a direct if somewhat tedious calculation shows that the corresponding map $\C^* \to \Pic^{\mathbf{d}}\Theta$ is affine (cf.\ \cite[Lemma 4.6]{FriedmanScattone}). Alternatively, it is clear that the map $\C^* \to \Pic^{\mathbf{d}}\Theta$ is a morphism (i.e.\ algebraic) and hence it is affine by Rosenlicht's theorem.
\end{enumerate}

Combining the various possibilities, we see that we get an affine map from $S$ to $\Pic^{d_1, \dots, d_{r'}}D'$ as claimed.
\end{proof}

Consider the affine map $\rho \colon T \to (\C^*)^{f-1}$ defined by 
$$\rho(s) = (\varphi_{Y_s}(\xi_0), \dots, \varphi_{Y_s}(\xi_{f-1})),$$
where we view the image of $\rho$ as contained in the subtorus defined by: the product of the components is $1$. Let $S= (\Ker \rho)^0$ be the neutral component of $\Ker \rho$.

\begin{lemma}\label{univdss} Let $(\mathcal{U}_S, \mathcal{D}) \to S$ be the restriction of the family $(\mathcal{U}, \mathcal{D}) \to T$ to the subtorus $S$ of $T$. Then:
\begin{enumerate}
\item[\rm(i)] For all $s\in S$,  $Y_s$ is $d$-semistable.
\item[\rm(ii)] The image of the Kodaira-Spencer map $T_{S,0}\to H^1(Y_0; T_{Y_0}(-\log D))$ is
$$\{\theta \in H^1(Y_0; T_Y(-\log D)): \theta \smallsmile \hat{c}_1(\xi_0) = \cdots = \theta \smallsmile \hat{c}_1(\xi_{f-1}) = 0\}.$$
\end{enumerate}
\end{lemma}
\begin{proof}  
By arguments similar to the proof of Lemma~\ref{peraffine}, one checks that $\rho_1\colon T \to \Pic^0\Theta\cong (\C^*)^{f-1}$ defined by $t\mapsto T^1_{Y_t}=Ext^1(\Omega^1_{Y_t}, \scrO_{Y_t})$ is an affine map and hence a homomorphism. By definition, the locus of $d$-semistable deformations is $\rho_1^{-1}(1)$. 
Then both $\rho_1$ and $\rho$ are homomorphisms since they take the identity to the identity, and   $\rho_1^{-1}(1) \subseteq \rho^{-1}(1)$. The differential  of $\rho_1$ is surjective at $t=0$ by construction and \cite[(5.9) and (4.5)]{Friedman1}. Hence the codimension of $\Ker (\rho_1)_*$ is $f-1$. 
The differential of $\rho$ is also surjective by Corollary~\ref{diffcomp2cor}. Hence $\dim \Ker (\rho_1)_* = \dim \Ker (\rho)_*$. Thus the two subtori $(\Ker \rho_1)^0$ and $S=(\Ker \rho)^0$ have the same tangent space at $1$ and hence are equal. In particular, all fibers $Y_s$ are $d$-semistable. The final statement (ii) about the Kodaira-Spencer map then follows from Theorem~\ref{diffcomp2}.
\end{proof}

 To complete the proof of Theorem~\ref{surj1}, consider the family $(\mathcal{U}_S, \mathcal{D}) \to S$ described in the remarks before Lemma~\ref{univdss}. By construction, the family $(\mathcal{U}_S, \mathcal{D}) \to S$  induces a period map $\operatorname{per}_S\colon S \to \Hom(\overline{\Lambda}_0, \C^*)$. By Lemma~\ref{peraffine}, the map $\operatorname{per}_S$ is  an affine map. We claim that the differential of  $\operatorname{per}_S$ is surjective. By Lemma~\ref{univdss} and the remarks before it, we are reduced to showing the following: let 
$$\{\xi_0, \dots, \xi_{f-1}\}^\perp \subseteq H^1(Y_0; T_Y(-\log D))$$
be defined as $\{\theta \in H^1(Y_0; T_Y(-\log D)): \theta \smallsmile \hat{c}_1(\xi_0) = \cdots = \theta \smallsmile \hat{c}_1(\xi_{f-1}) = 0\}$. We can replace $\hat{c}_1(\xi_i)$ by $\bar{c}_1(\xi_i)$ in the above. Moreover, 
\begin{align*}
(H^1(Y_0;\Omega^1_{Y_0}(\log D)/\tau^1_{Y_0}(-D)))/&\operatorname{span}\{\bar{c}_1(\xi_0), \dots, \bar{c}_1(\xi_{f-1})\} \\ &\cong (\Lambda_0)_\C/\{\xi_0, \dots, \xi_{f-1}\} = (\overline{\Lambda}_0)_\C. 
\end{align*}
Then cup product induces a perfect pairing
$$\{\xi_0, \dots, \xi_{f-1}\}^\perp \otimes (\overline{\Lambda}_0)_\C \to H^2(Y_0; \scrO_{Y_0}(-D))\cong H^1(D; \scrO_D).$$
This says that the differential of $\operatorname{per}_S$ is surjective.  
Hence $\operatorname{per}_S$ is an affine map of tori with surjective differential. It follows that  $\operatorname{per}_S$ is surjective. This is exactly the statement of Theorem~\ref{surj1}.
\end{proof}

\begin{remark} Related methods give a somewhat more direct proof of Theorem 4.1 in \cite{FriedmanScattone}.
\end{remark}

We can relate the period map for a $d$-semistable Type III anticanonical pair to the period map for nearby smoothings as follows. Suppose that $\pi\colon (\mathcal{Y}, \mathcal{D})\to \Delta^k$ is a morphism from the smooth complex manifold $\mathcal{Y}$ to the polydisk, whose fibers are smooth anticanonical pairs for $t\in \Delta^{k-1}\times \Delta^*$ and such that the fibers over $\Delta^{k-1}\times\{0\}$ are $d$-semistable Type III anticanonical pairs. We will apply this in the case of the germ of the smoothing component of a $d$-semistable Type III anticanonical pair below and for one parameter families (the case $k=1$) in Section 9. In this case, the monodromy on $\Lambda= \Lambda(Y_t, D_t)$ is trivial, there is a well-defined monodromy invariant $\lambda\in \Lambda$, and there is an inclusion $\overline{\Lambda}_0 \to \Lambda$ identifying the image with $\lambda^\perp$.

\begin{proposition}\label{extpermapholom} Let $\rho\colon \Hom(\Lambda, \C^*) \to \Hom (\overline{\Lambda}_0, \C^*)$ denote restriction. The function $\Phi \colon \Delta^k \to \Hom (\overline{\Lambda}_0, \C^*)$ defined by 
$$\Phi(t) =\begin{cases}    \rho\circ \varphi_{Y_t},  &\text{if $t\in \Delta^{k-1}\times \Delta^*$}; \\
   \bar\varphi_{Y_t},  &\text{if $t\in \Delta^{k-1}\times \{0\}$}
\end{cases}$$
is   holomorphic.
\end{proposition}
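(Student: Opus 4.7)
The plan is, for each fixed $\alpha \in \overline{\Lambda}_0$, to realize $t \mapsto \Phi(t)(\alpha)$ as the fiberwise evaluation of a globally defined holomorphic family of line bundles on $\mathcal{D}$; joint holomorphicity in $t$ then follows immediately. Working locally on $\Delta^k$, I would fix a point $0 \in \Delta^{k-1} \times \{0\}$ with fiber $Y_0 = \pi^{-1}(0)$ and begin by extending the Picard isomorphism of Corollary~\ref{Corollary2} to the polydisk setting: Lemma~\ref{Lemma1} gives $H^q(Y_0; \scrO_{Y_0}) = 0$ for $q = 1, 2$, and a Leray spectral sequence argument (shrinking the base if necessary so that $R^q\pi_*\scrO_{\mathcal{Y}}$ has trivial higher cohomology) combined with the deformation retraction of $\mathcal{Y}$ onto $Y_0$ yields $\Pic \mathcal{Y} \cong H^2(\mathcal{Y};\Z) \cong H^2(Y_0;\Z) \cong \Pic Y_0$.

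Granted this, I would pick any lift $\tilde\alpha \in \Lambda_0 \subseteq \Pic Y_0$ of $\alpha$, let $\mathcal{L} \in \Pic \mathcal{Y}$ be its unique extension, fix a trivialization $\mathcal{D} \cong D \times \Delta^k$ (available since $\mathcal{D} \to \Delta^k$ is locally trivial), and set $F(t) = (\mathcal{L}|_\mathcal{D})|_{D \times \{t\}}$. This $F\colon \Delta^k \to \Pic D$ is manifestly holomorphic, and because the multidegree $(\alpha \cdot [D_i])_i$ vanishes by hypothesis, $F$ takes values in $\Pic^0 D \cong \C^*$. To identify $F$ with $\Phi(\cdot)(\alpha)$, for $t$ in the smooth locus I would argue that the class of $\mathcal{L}|_{Y_t}$ in $H^2(Y_t; \Z)$ is the image of $\tilde\alpha$ under the specialization map; by the proof of Lemma~\ref{cokeristorsionfree} this kills $\operatorname{span}\{\xi_0, \ldots, \xi_{f-1}\}$ and factors through the inclusion $\overline{\Lambda}_0 \hookrightarrow \Lambda$, so $F(t) = \varphi_{Y_t}(\mathcal{L}|_{Y_t}) = (\rho \circ \varphi_{Y_t})(\alpha)$. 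For $t$ in the discriminant, local triviality of $\pi$ along $\Delta^{k-1} \times \{0\}$ identifies $\mathcal{L}|_{Y_t}$ with a valid lift of $\alpha$ to $\Lambda_0(Y_t)$, so $F(t) = \bar\varphi_{Y_t}(\alpha)$ directly.

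The main technical hurdle to verify will be independence of the choice of lift $\tilde\alpha$: if $\tilde\alpha' - \tilde\alpha = \sum n_i \xi_i$, the two extensions differ by $\bigotimes_i \mathcal{N}_i^{n_i}$, where $\mathcal{N}_i$ is the unique extension of $\xi_i$, so one must show each $t \mapsto (\mathcal{N}_i|_\mathcal{D})|_{D \times \{t\}}$ is identically trivial in $\Pic^0 D$. Over the smooth locus, restricting along a one-parameter slice and applying Lemma~\ref{cokeristorsionfree} (since $\xi_i$ specializes to zero in $\Pic Y_t$) shows $\mathcal{N}_i|_{Y_t}$ is trivial. Over $\Delta^{k-1} \times \{0\}$, local triviality identifies $\mathcal{N}_i|_{Y_t}$ with the analogous $\xi_i$-class for $Y_t$, and this class restricts trivially to $D_t \subseteq V_0$ because the divisors representing it are supported on components of $C_0 = D'$ or on double curves $C_{jk}$, all disjoint from $D_t$. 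Combining these ingredients gives $F_i \equiv 1$, so $F$ is well-defined and produces the desired holomorphic extension.
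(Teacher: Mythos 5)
Your proposal is essentially the same approach as the paper's proof, which also fixes $\alpha$, picks a corresponding line bundle $\mathcal{L}_\alpha$ on $\mathcal{Y}$ (well-defined up to twisting by the components of $\pi^{-1}(\Delta^{k-1}\times\{0\})$), restricts to $\mathcal{D}$ via a local trivialization $\mathcal{D}\cong D\times\Delta^k$, and observes the result is holomorphic into $\Pic^0 D\cong\C^*$ and independent of the lift. The one place where you are more explicit than the paper---and usefully so---is the verification that twisting by the $\xi_i$ classes (your $\mathcal{N}_i$) does not change the restriction to $\mathcal{D}$; the paper dismisses this as holding ``by definition,'' whereas you spell out that $\scrO_{\mathcal{Y}}(\mathcal{V}_i)|_{\mathcal{D}}$ is trivial because $\mathcal{D}\subseteq\mathcal{V}_0$ is disjoint from $\mathcal{V}_i$ for $i\neq 0$ and $\xi_0=-\sum_{j\neq 0}\xi_j$, equivalently that $\xi_\ell|_{V_0}$ is supported on $D'$ and hence restricts trivially to $D$. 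Your preliminary extension of Corollary~\ref{Corollary2} to the polydisk base (via vanishing of $R^q\pi_*\scrO_{\mathcal{Y}}$ for $q=1,2$ after shrinking, plus the Leray spectral sequence over a Stein base) is correct and is implicit in the paper. No gaps.
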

\begin{proof} For a fixed $\alpha \in \overline{\Lambda}_0$, there is a corresponding line bundle $\mathcal{L}_\alpha$ on $\mathcal{Y}$, well defined up to twisting by components of the inverse image of $\Delta^{k-1}\times \{0\}^*$. Restricting $\mathcal{L}_\alpha$ to $\mathcal{D}$ and choosing a trivialization $\mathcal{D} \cong D \times \Delta^k$ (which is always possible locally) gives a holomorphic map $\Delta^k \to \Pic ^0D\cong \C^*$. By definition, this map is independent of the choice of $\mathcal{L}_\alpha$ and defines $\Phi$, evaluated on the class $\alpha$. Doing this construction for a basis of $\overline{\Lambda}_0$ shows that $\Phi$ is holomorphic.  
\end{proof}

\begin{remark}  Using the alternative formulation of the period map from the preceding section and Proposition~\ref{asympbehav} in the parametrized case, we see that, if $t_k$ is a coordinate defining the last factor of $\Delta^k$ in the notation above, and $\alpha$ is a class in $\Lambda$, then, for $t\in \Delta^{k-1}\times \Delta^*$, 
$$\varphi_{Y_t}(\alpha) = t_k^{(\alpha \cdot \lambda)}h(t),$$
where $h(t)$ extends to a  holomorphic, nowhere zero  function on $\Delta^k$. Hence, $\varphi_{Y_t}(\alpha)$ extends to a holomorphic function on $\Delta^k$ with values in $\C^*$ $\iff$ $\alpha \in \overline{\Lambda}_0$.
\end{remark}

We will then need the following, which is in a sense complementary to Theorem~\ref{surj1}:
 
\begin{proposition}\label{surj2} Suppose that $(Y_0, D)$ is a $d$-semistable Type III anticanonical pair. Let $\Upsilon$ be a sublattice of $\overline{\Lambda}_0$ such that $\bar\varphi_{Y_0}(L) = 1$ for all $L \in \Upsilon$. Then there exists a Type III degeneration $\mathcal{Y} \to \Delta$, with special fiber $Y_0$ such that, via the specialization map  
$$\operatorname{sp} \colon \Pic Y_0 \cong H^2(Y_0;\Z)\to H^2(Y_t;\Z),$$
 $\varphi_{Y_t}(\operatorname{sp}(L)) =1$ for all $t\in \Delta^*$. 
\end{proposition}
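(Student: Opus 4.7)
The plan is to work inside the germ at $0$ of the smoothing component $X$ of the locally semi-universal deformation of $(Y_0,D)$ and to locate, via an implicit-function/transversality argument, a disk inside $X$ along which the period on $\Upsilon$ is identically trivial and which is transverse to the discriminant. By the smoothing theorem at the end of Section~2.4, $X$ is smooth and the discriminant locus $\Delta_{ds} \subseteq X$ is a smooth hypersurface, with fibers over $\Delta_{ds}$ being $d$-semistable Type III anticanonical pairs (i.e., locally trivial deformations of $Y_0$ preserving $d$-semistability) and fibers over $X \setminus \Delta_{ds}$ being smooth anticanonical pairs. Locally, the germ of $\Delta_{ds}$ at $0$ coincides with the germ at $1$ of the torus $S$ of Theorem~\ref{surj1}.

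First, I would extend the period map to all of $X$. For each $\alpha \in \overline{\Lambda}_0$, lift $\alpha$ to a class in $\Pic Y_0$ and extend uniquely to a line bundle on $\mathcal{Y}$ using the isomorphism $\Pic Y_0 \cong \Pic \mathcal{Y}$ of Corollary~\ref{Corollary2}; restricting to the (trivializable) divisor $\mathcal{D} \cong D\times X$ and running the argument of Proposition~\ref{extpermapholom} produces a holomorphic map
$$\Phi\colon X \to \Hom(\overline{\Lambda}_0, \C^*)$$
which equals $\bar\varphi_{Y_0}$ at $0$ and equals $\rho\circ\varphi_{Y_t}$ on smooth fibers. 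Let $T \subseteq \Hom(\overline{\Lambda}_0,\C^*)$ be the algebraic subtorus $\{\phi : \phi|_\Upsilon = 1\}$, whose codimension equals the rank $r$ of the saturation of $\Upsilon$ in $\overline{\Lambda}_0$. The hypothesis $\bar\varphi_{Y_0}|_\Upsilon = 1$ says precisely $\Phi(0)\in T$.

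The key dimension count uses Theorem~\ref{surj1}: the restriction $\Phi|_{\Delta_{ds}}$ agrees with the period map $\operatorname{per}_S$ and is an affine surjection onto $\Hom(\overline{\Lambda}_0,\C^*)$, so its differential at $0$ is surjective onto $\Hom(\overline{\Lambda}_0\otimes\C, H^1(D;\scrO_D))$. A fortiori $d\Phi_0\colon T_{X,0} \to \Hom(\overline{\Lambda}_0\otimes\C, H^1(D;\scrO_D))$ is surjective, so $\Phi$ is a submersion at $0$. Hence $X_\Upsilon := \Phi^{-1}(T)$ is a smooth submanifold of $X$ of codimension $r$, while the same argument applied to $\Phi|_{\Delta_{ds}}$ shows that $X_\Upsilon \cap \Delta_{ds}$ is smooth of codimension $r$ in $\Delta_{ds}$, that is, codimension $r+1$ in $X$. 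Consequently $X_\Upsilon \not\subseteq \Delta_{ds}$, and we may choose any smooth disk germ $(\Delta,0)\hookrightarrow X_\Upsilon$ transverse to $X_\Upsilon\cap\Delta_{ds}$ at $0$; by the smoothing theorem, the pulled-back family $(\mathcal{Y},\mathcal{D})\to \Delta$ is a Type III degeneration with central fiber $(Y_0,D)$, and by construction $\Phi(t)\in T$ for all $t\in\Delta$, which is exactly the required identity $\varphi_{Y_t}(\operatorname{sp}(L)) = 1$ for all $L\in\Upsilon$ and all $t\in\Delta^*$.

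The single nontrivial input is the surjectivity of $d\Phi_0|_{T_{\Delta_{ds},0}}$ onto $\Hom(\overline{\Lambda}_0\otimes\C, H^1(D;\scrO_D))$, which in turn rests on the Serre-duality identification in Corollary~\ref{diffcomp2cor} and on the characterization (Lemma~\ref{univdss}) of the tangent space to the $d$-semistable locally trivial deformations as the orthogonal complement of the classes $\hat c_1(\xi_i)$. Once this is in hand, the rest of the argument is a purely formal transversality/implicit-function assertion, so I do not anticipate any serious obstacle beyond citing the already-established input from Theorem~\ref{surj1}.
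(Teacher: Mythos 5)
Your proposal is correct and takes essentially the same approach as the paper: both extend the period map over the smoothing germ $X$ via Proposition~\ref{extpermapholom}, use the differential computation (Theorem~\ref{diffcomp2}/Corollary~\ref{diffcomp2cor}, which also underlies Theorem~\ref{surj1}) to see that the period map is a submersion at $0$ and remains so after restriction to the discriminant, and then choose a disk inside the preimage of the locus trivial on $\Upsilon$ that is transverse to the discriminant. The only cosmetic difference is that you realize this locus as $\Phi^{-1}(T)$ for the subtorus $T=\{\phi:\phi|_\Upsilon=1\}$, while the paper composes with the restriction map $\Hom(\overline{\Lambda}_0,\C^*)\to\Hom(\Upsilon,\C^*)$ and takes $\Phi_\Upsilon^{-1}(1)$; these are the same subset of $X$.
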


\begin{proof} Let $(\mathcal{V}, \mathcal{D})\to (X,0)$ be the germ of the subspace of the semiuniversal deformation of the pair $(Y_0, D)$ corresponding to the smoothing component and, on the  discriminant locus, to preserving the condition of $d$-semistability. Note that $\Pic \mathcal{V} \cong \Lambda_0$ and that the classes $\xi_0, \dots, \xi_{f-1}$ extend to Cartier divisors on $\mathcal{V}$ with trivial restriction to $\mathcal{D}$. By Proposition~\ref{extpermapholom}, there is a well-defined, holomorphic  period map (on the level of germs) $\Phi\colon X \to   \Hom(\overline{\Lambda}_0, \C^*)$. Via restriction, there is a surjection
$$\Hom(\overline{\Lambda}_0, \C^*) \to \Hom(\Upsilon, \C^*),$$
and in particular the differential of the surjection is also surjective.  Taking the composition with the period map, there is a (germ of a) holomorphic map $\Phi_\Upsilon\colon X \to \Hom(\Upsilon, \C^*)$. Theorem~\ref{diffcomp2}  and Corollary~\ref{diffcomp2cor} imply that $\Phi_\Upsilon$ is a submersion at $0\in X$ and that $\Phi_\Upsilon^{-1}(1)$ is transverse to the discriminant hypersurface. Taking a generic map of the disk $(\Delta, 0)$ to $\Phi_\Upsilon^{-1}(1) \subseteq (X,0)$ (and which is in particular transverse to the discriminant) gives the existence of the Type III degeneration $\mathcal{Y} \to \Delta$ above. 
\end{proof}

\section{Good sublattices and adjacent RDP singularities}

Throughout this section, we fix a deformation type of a negative definite anticanonical pair $(Y,D)$, $\Lambda= \Lambda(Y,D)$ and $R$ denotes the set of roots of $\Lambda$. We begin by describing a class of negative definite sublattices of $\Lambda$:

\begin{lemma}\label{Lemma5.1} Let $(Y,D)$ be an anticanonical pair and let $\Upsilon$ be a negative definite sublattice of $\Lambda=\Lambda(Y,D)$, not necessarily primitive. Suppose that 
\begin{enumerate}
\item[\rm(i)] $\Upsilon$ is spanned by elements of $R$. 
\item[\rm(ii)] The period homomorphism $\varphi_Y$ satisfies: $\Ker \varphi_Y\cap R = \Upsilon\cap R$.
\end{enumerate}
Then $\Upsilon \cap R$ is a root system in the real vector space $\Upsilon_\R = \Upsilon\otimes \R$ and there exist a set of simple roots $\beta_1, \dots, \beta_n\in \Upsilon$ such that
\begin{enumerate}
\item[\rm(i)] For every $i$, $\beta_i =[C_i]$, where $C_i$ is a $-2$-curve.
\item[\rm(ii)] If $\beta\in R$ and $\varphi_Y(\beta)=1$, then $\beta = \pm\sum_in_iC_i$, where the $n_i$ are nonnegative  integers. In particular, if $C$ is a $-2$-curve on $Y$, then $C=C_i$ for some $i$.
\end{enumerate}
\end{lemma}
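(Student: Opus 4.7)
My plan is to take as simple roots the classes of smooth $(-2)$-curves on $Y$ disjoint from $D$, and to deduce (i), (ii), and the ``in particular'' clause by combining standard root-system theory on the negative definite lattice $\Upsilon$ with a deformation-theoretic effectivity argument that crucially uses the period hypothesis $\varphi_Y(\beta) = 1$.

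First I would check that $\Upsilon \cap R$ is a root system in $\Upsilon_\R$: finiteness follows from negative-definiteness of $\Upsilon$, the integrality axiom $2(\alpha, \beta)/(\beta, \beta) = -(\alpha, \beta) \in \Z$ is automatic, and closure under the reflection $s_\beta$ for $\beta \in \Upsilon \cap R$ follows since $s_\beta$ preserves $\Upsilon$ (for $\gamma \in \Upsilon$, $s_\beta(\gamma) = \gamma + (\gamma, \beta)\beta \in \Upsilon$) and preserves $R$ (a standard Picard--Lefschetz fact: reflections in roots arise from monodromy transformations fixing $D$ setwise). Next, I would introduce $\mathcal{S} := \{[C] : C \text{ a smooth } (-2)\text{-curve on } Y \text{ with } [C] \in \Lambda\}$ and show $\mathcal{S} \subseteq \Upsilon \cap R$: adjunction and $K_Y = -D$ give $C \cdot D = 0$, and since $C \neq D_i$ for all $i$ (as $[C] \in \Lambda$), $C$ is disjoint from $D$, whence $\scrO_Y(C)|_D$ is trivial, $\varphi_Y([C]) = 1$, and $[C] \in R$ via the trivial deformation; hypothesis~(ii) then forces $[C] \in \Upsilon$.

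The crux is the effectivity claim: for every $\beta \in \Upsilon \cap R$, exactly one of $\pm\beta$ is represented on $Y$ by an effective divisor whose irreducible components all have classes in $\mathcal{S}$. Choose an ample $H$ on $Y$ with $H \cdot \beta \neq 0$ and, swapping $\beta$ for $-\beta$ if needed, assume $H \cdot \beta > 0$. Riemann--Roch gives $\chi(L_\beta) = 0$ (using $K_Y = -D$ and $\beta \cdot D = 0$), and Serre duality together with $H \cdot (-\beta - [D]) < 0$ kills $h^2(L_\beta) = h^0(L_{-\beta}(-D))$. By definition of $R$, there is a connected family $(\mathcal{Y}_S, \mathcal{D}_S) \to S$ containing $(Y, D)$ as a fiber over some $s_0 \in S$ and a fiber $(Y_s, D_s)$ on which $\beta$ is the class of a smooth $\Pee^1$ disjoint from $D_s$; the existence of this $\Pee^1$ forces $\varphi_{Y_s}(\beta) = 1$, so both $s$ and $s_0$ lie in $S_\beta := \{\varphi(\beta) = 1\}$. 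Connecting them by a one-parameter arc in a component of $S_\beta$ and applying upper semicontinuity of $h^0$ transports $h^0(L_\beta|_{Y_s}) \geq 1$ to $h^0(L_\beta|_Y) \geq 1$. Thus $\beta$ is represented by an effective divisor $E$ on $Y$, necessarily disjoint from $D$ since $\beta \in \Lambda$. Each irreducible component $V$ of $E$ satisfies $[V] \in \Lambda$, $\varphi_Y([V]) = 1$, and $V \cdot K_Y = 0$, giving $V^2 = 2g(V) - 2$ via adjunction; inductively peeling off $(-2)$-curve components (which lie in $\mathcal{S} \subseteq \Upsilon$) and using the negative-definiteness of $\Upsilon$ applied to the residual class forces every remaining $V$ to satisfy $V^2 < 0$ and hence $V^2 = -2$, so $[V] \in \mathcal{S}$.

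With the effectivity claim in hand, I would choose the positive system of $\Upsilon \cap R$ cut out by $H$: call $\beta \in \Upsilon \cap R$ positive if $H \cdot \beta > 0$. Every positive root is then a nonnegative integer combination of elements of $\mathcal{S}$, and each $[C] \in \mathcal{S}$ is positive and cannot be decomposed as $\beta_1 + \beta_2$ with $\beta_i$ positive roots (else the irreducible $C$ would split into two nonzero effective summands). Hence $\mathcal{S}$ is precisely the set of simple roots for this positive system, which immediately yields (i), the ``in particular'' clause, and reduces (ii) to the effectivity claim. The hard part will be making the effectivity argument rigorous: specifically, showing that $(Y,D)$ and $(Y_s,D_s)$ lie in a common connected component of $S \cap S_\beta$ so that semicontinuity applies, and inductively ruling out non-$(-2)$-curve components of $E$. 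Without the period hypothesis $\varphi_Y(\beta) = 1$, semicontinuity of $h^0$ cannot propagate between fibers where $L_\beta|_D$ becomes nontrivial, so this hypothesis is the essential input that makes the whole strategy go through.
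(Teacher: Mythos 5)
Your overall strategy is correct and your identification of why the period condition $\varphi_Y(\beta)=1$ is the crucial input is exactly right. But you are in fact attempting to reprove, inline, the key content of \cite[Theorem 6.6]{Friedman2}, which the paper simply cites: that theorem asserts $\Ker\varphi_Y\cap R = R_Y^{\text{nod}} := \mathsf{W}(\Delta_Y)\cdot\Delta_Y$, where $\Delta_Y$ is the set of $(-2)$-curve classes, and once one has that identity the lemma follows in two lines (choose the Weyl chamber in $\Upsilon_\R$ cut out by an ample divisor; the simple roots are then the $(-2)$-curve classes). Your proof that $\Upsilon\cap R$ is a root system matches the paper's (citing closure of $R$ under root reflections and invariance of $\Ker\varphi_Y$, which are \cite[(6.9)]{Friedman2} and \cite[(6.4)]{Friedman2} respectively).

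The genuine gap is in your effectivity claim. You invoke upper semicontinuity of $h^0$ to ``transport'' $h^0(L_\beta|_{Y_s})\geq 1$ from the fiber $Y_s$ (where $\beta$ is a smooth $\Pee^1$) to $Y=Y_{s_0}$. But semicontinuity only makes the locus $T=\{s'\in S_\beta : h^0(L_\beta|_{Y_{s'}})\geq 1\}$ closed; it gives no control from a single point $s\in T$ to another point $s_0$, and $Y$ (being a general period point of $\Upsilon$) is likely \emph{more generic} than $Y_s$, which is exactly the direction semicontinuity does not go. What is really needed is that $T$ is also \emph{open} in $S_\beta$, which is a nontrivial deformation-theoretic fact: a $(-2)$-curve $C$ has $H^1(N_{C/Y})\neq 0$, so it is obstructed in the full family, and one must show that the obstruction to deforming $C$ precisely cuts out the divisor $S_\beta$. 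This is the heart of \cite[Theorem 6.6]{Friedman2} and of Looijenga's original arguments, and your sketch does not supply it. Your terminal inductive argument (peeling off $(-2)$-components and invoking negative-definiteness to rule out higher-genus components) is also stated too loosely — you need to justify that each irreducible component's class lies in $\Lambda$, has trivial period, and lands in $\Upsilon$, which is not automatic for components that are not $(-2)$-curves; this can be patched, but as written it is incomplete. In short: right blueprint, right emphasis on the role of $\varphi_Y(\beta)=1$, but the central semicontinuity/openness step needs to be replaced by the actual deformation-theoretic statement, which the paper outsources to \cite{Friedman2}.
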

\begin{proof}  If $\beta_1, \beta_2\in \Upsilon \cap R$, then $r_{\beta_1}(\beta_2) \in \Upsilon \cap R$ by \cite[(6.9)]{Friedman2}.  It then follows e.g.\ by Definition 1 in \cite[IV \S1]{Bour} that $\Upsilon \cap R$ is a root system in  $\Upsilon_\R$ (and in fact that every element of square $-2$ in the $\Z$-span of $\Upsilon \cap R$ is a root). 

Let $\Delta_Y$ be the set of $-2$-curves on $Y$, let $\mathsf{W}(\Delta_Y)$ be the reflection group generated by $\Delta_Y$, and let $R_Y^{\text{nod}} = \mathsf{W}(\Delta_Y)\cdot \Delta_Y$. Then,  by \cite[Theorem 6.6]{Friedman2}, $R_Y^{\text{nod}}= \Ker \varphi_Y\cap R = \Upsilon\cap R$.  Let $\beta_1, \dots, \beta_n\in \Upsilon$ be the simple roots for  a Weyl chamber in $\Upsilon_\R$ defined by an ample divisor on $Y$.  By \cite[Lemma 2.4(ii)]{Friedman1.5}, all positive roots are effective. On the other hand, every $\beta \in R_Y^{\text{nod}}$ is of the form $\sum_in_i[C_i]$, where the $C_i$ are $-2$-curves on $Y$.  A standard argument using the fact that $\Upsilon$ is negative definite shows that, if $\beta$ is effective and of the form $\sum_in_i[C_i]$, where the $C_i$ are $-2$-curves, then $n_i \geq 0$ for all $i$.  Hence the positive roots are exactly the classes of the $\beta \in  R_Y^{\text{nod}}$ of the form $\sum_in_i[C_i]$, $n_i\geq 0$, and the simple roots $\beta_i$ are exactly the classes of the $-2$-curves on $Y$.
\end{proof}

\begin{lemma}\label{existslambda} Let $\Upsilon$ be a negative definite sublattice of $\Lambda$ and suppose that $\Upsilon$ is spanned by  $R\cap \Upsilon$. Then there exists a $\lambda\in  \mathcal{B}_{\text{\rm{gen}}}$ such that $\Upsilon \subseteq (\lambda)^\perp$.
\end{lemma}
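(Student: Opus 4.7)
The plan is to exploit the fact that since $\Upsilon$ is negative definite, it contains only finitely many vectors of square $-2$; in particular $R\cap \Upsilon$ is a finite set, and the Weyl group $\mathsf{W} := \mathsf{W}(R\cap \Upsilon)$ generated by the reflections $r_\beta$ in the hyperplanes $\beta^{\perp}$, for $\beta \in R\cap \Upsilon$, is a finite subgroup of the orthogonal group of $\Lambda$. The strategy will then be to obtain $\lambda$ by symmetrizing an arbitrary point of $\mathcal{B}_{\text{\rm{gen}}}$ over $\mathsf{W}$.

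First I would verify that $\mathsf{W}$ preserves $\mathcal{B}_{\text{\rm{gen}}}$. By the very definition of a Looijenga root (Definition~0.7), for each $\beta \in R$ there is a deformation of $(Y,D)$ over a disk whose nearby fiber contains a smooth $(-2)$-curve of class $\beta$ disjoint from $D$; the associated Picard--Lefschetz monodromy is the reflection $r_\beta$. Hence $r_\beta$ lies in the group $\Gamma(Y,D)$ of admissible isometries (cf.\ \cite[\S6]{Friedman2}), and in particular $\mathsf{W}\subseteq \mathsf{W}(R)\subseteq \Gamma$. Since $\Gamma$ preserves each $[D_i]$ (hence the subspace $\Lambda_\R$), the component $\mathcal{C}^+$, and $\overline{\mathcal{A}}_{\text{\rm{gen}}}$, it preserves $\mathcal{B}_{\text{\rm{gen}}}$.

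Next I would identify the fixed subspace of $\mathsf{W}$ in $\Lambda_\R$. Each $r_\beta$, $\beta\in R\cap \Upsilon$, fixes the hyperplane $\beta^\perp\cap \Lambda_\R$ pointwise, so the common fixed subspace is
\[
\bigcap_{\beta\in R\cap \Upsilon}(\beta^\perp\cap \Lambda_\R) \;=\; (R\cap \Upsilon)^\perp\cap \Lambda_\R \;=\; \Upsilon^\perp\cap \Lambda_\R,
\]
using the hypothesis that $R\cap \Upsilon$ spans $\Upsilon_\R$. Choosing any $\lambda_0\in \mathcal{B}_{\text{\rm{gen}}}$ (which is nonempty by Definition~\ref{defB}) and setting
\[
\lambda \;:=\; \frac{1}{|\mathsf{W}|}\sum_{w\in \mathsf{W}} w(\lambda_0),
\]
one obtains a manifestly $\mathsf{W}$-invariant element $\lambda$, hence $\lambda \in \Upsilon^\perp\cap \Lambda_\R$. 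Finally, because $\mathcal{B}_{\text{\rm{gen}}}$ is an open convex subset of $\Lambda_\R$ (being the interior of an intersection of closed half-spaces) which is stable under $\mathsf{W}$, the convex combination $\lambda$ remains in $\mathcal{B}_{\text{\rm{gen}}}$, as required.

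The only substantive input is the inclusion $\mathsf{W}(R)\subseteq \Gamma(Y,D)$ used in the first step; this is arguably the ``main obstacle,'' in that it bundles all of the deformation-theoretic content of the lemma into a citation to \cite{Friedman2}. Once that is granted, the remainder is an elementary symmetrization argument made possible by the finiteness of $\mathsf{W}$ (which in turn rests only on the negative-definiteness of $\Upsilon$).
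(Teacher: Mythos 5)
Your proof is correct, and it takes a genuinely different route from the paper's. The paper first passes to the saturation $\Upsilon'$ of $\Upsilon$ and uses the surjectivity of the period map to produce an anticanonical pair $(Y,D)$ with $\Ker\varphi_Y\cap R = \Upsilon'\cap R$; Lemma~\ref{Lemma5.1} then realizes a finite-index sublattice of $\Upsilon$ as the span of classes of actual $(-2)$-curves $C_1,\dots,C_k$ on $Y$, and $\lambda=[H]$ is taken to be the class of a nef and big divisor $H$ with $H\cdot C_j = H\cdot D_i = 0$ and $H\cdot C>0$ for all other curves. You instead symmetrize an arbitrary point of $\mathcal{B}_{\text{\rm{gen}}}$ over the finite reflection group $\mathsf{W}(R\cap\Upsilon)$, using that $\mathcal{B}_{\text{\rm{gen}}}$ is an open convex cone preserved by $\Gamma$, that $\mathsf{W}(R)\subseteq\Gamma$, and that the $\mathsf{W}$-fixed locus in $\Lambda_\R$ is exactly $\Upsilon^\perp\cap\Lambda_\R$. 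Both arguments ultimately rest on \cite{Friedman2}: yours on the inclusion of the Weyl group of roots into the admissible isometries, the paper's on the surjectivity of the period map and on the identification of $\Ker\varphi_Y\cap R$ with $R_Y^{\text{nod}}$. Your argument is more elementary and avoids the period-map apparatus entirely; finiteness of $\mathsf{W}(R\cap\Upsilon)$ comes for free from negative definiteness of $\Upsilon$. One small remark: the paper's construction automatically produces an integral $\lambda\in\Lambda\cap\mathcal{B}_{\text{\rm{gen}}}$, and integrality is quietly relied on when the lemma is combined with Theorem~\ref{lambdas} in the final Corollary. Your construction gives a real class, but this is easily patched by choosing $\lambda_0\in\Lambda\cap\mathcal{B}_{\text{\rm{gen}}}$ and replacing the average $\frac{1}{|\mathsf{W}|}\sum_w w(\lambda_0)$ by the unnormalized sum $\sum_w w(\lambda_0)$, which still lies in $\mathcal{B}_{\text{\rm{gen}}}$ since $\mathcal{B}_{\text{\rm{gen}}}$ is a cone.
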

\begin{proof} Let $\Upsilon'$ be the saturation of $\Upsilon$, and let $\Upsilon''$ be the finite index sublattice of $\Upsilon'$ spanned by  $R\cap \Upsilon'$. Then there exists a $\varphi\colon \Lambda \to \C^*$ with $\Ker \varphi = \Upsilon'$. Hence $\Ker \varphi \cap R =  R\cap \Upsilon'=  R\cap \Upsilon''$, and $\Upsilon''$ is spanned by $R\cap \Upsilon''$. Using the surjectivity of the period map, there exists a $Y$ such that $\varphi=\varphi_Y$. By Lemma~\ref{Lemma5.1}, there exist $-2$-curves $C_1, \dots, C_k$ whose classes span $\Upsilon''$.  Moreover, there is a nef and big divisor $H$ on $Y$ such that $H\cdot C_j =0$ for all $j$, $H\cdot D_i =0$  for all $i$ and $H\cdot C > 0$ for every curve $C\neq C_j$ or $D_i$. If $\lambda =[H]$, then $\lambda\in \mathcal{B}_{\text{\rm{gen}}}$ and $\lambda\cdot [C_j]=0$ for all $j$, hence $\lambda \cdot x =0$ for all $x\in \Upsilon$. 
\end{proof}

We can then rephrase the conditions of Lemma~\ref{Lemma5.1} as follows:

\begin{lemma} Let $\lambda$ be a class in $\mathcal{B}_{\text{\rm{gen}}}$. Let $\beta\in \Lambda$, $\beta^2=-2$. Suppose that $\beta\cdot \lambda =0$. Then $\beta \in R$.

Hence, the condition  {\rm(i)} of Lemma~\ref{Lemma5.1} is equivalent to: there exists a $\lambda\in \mathcal{B}_{\text{\rm{gen}}}$ such that $\Upsilon \subseteq (\lambda)^\perp$ and

\smallskip
 
{\rm{(i)}${}'$} $\Upsilon$ is spanned by elements of $\Lambda$ of square $-2$.
\end{lemma}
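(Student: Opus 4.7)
The plan is to prove the first claim by exhibiting $\beta$ as the class of a smooth rational $(-2)$-curve on a carefully chosen deformation of $(Y,D)$. Using the surjectivity of the period map for anticanonical pairs from \cite{Friedman2} (in the same spirit as the proof of Lemma~\ref{existslambda}), I would first produce a pair $(Y',D')$ in the deformation type of $(Y,D)$ such that $\varphi_{Y'}(\beta)=1$ and simultaneously $\lambda\in\overline{\mathcal{A}}(Y')$, the closure of the actual ample cone of $Y'$. The condition $\varphi_{Y'}(\beta)=1$ cuts out a codimension-one subtorus in $\Hom(\Lambda,\C^*)$; nefness of $\lambda$ on $Y'$ is an open condition on the period domain and intersects this subtorus non-trivially, since by hypothesis $\lambda\in\mathcal{B}_{\text{gen}}\subseteq\overline{\mathcal{A}}_{\text{gen}}$.

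Next I would show that $\beta$ or $-\beta$ is the class of an effective divisor on $Y'$, and then decompose it. This is the technical heart of the argument: Riemann-Roch yields only $\chi(L_\beta) = 0$, which does not by itself suffice. The idea is to combine a semicontinuity argument along a specialization within the hypersurface $\{\varphi(\beta)=1\}$ with \cite[Theorem~6.6]{Friedman2}, which identifies $\Ker\varphi_Y\cap R$ with the root lattice generated by $(-2)$-curves on $Y$; this should force at least one of $L_\beta$, $L_\beta^{-1}$ to acquire a section on a suitable limit. Assuming $\beta$ is effective, I would apply a natural extension of Proposition~\ref{effdecomp} that allows $(-2)$-curves to write
$$\beta = \sum_j a_j[D_j'] + \sum_i b_i[E_i] + \sum_k c_k[N_k],$$
with $a_j, b_i, c_k\geq 0$, the $E_i$ disjoint internal exceptional curves, and the $N_k$ distinct $(-2)$-curves on $Y'$.

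Now the hypotheses close the argument. Since $\lambda$ is nef and $\beta\cdot\lambda = 0$, each component of the decomposition must satisfy $\cdot\,\lambda = 0$. But $\lambda\in\mathcal{B}_{\text{gen}}$ gives $\lambda\cdot[E]>0$ for every internal exceptional curve, so $b_i=0$ for all $i$. The cusp condition on $D$ makes the intersection form on $\operatorname{span}\{[D_j']\}$ negative definite, hence $\operatorname{span}\{[D_j']\}\cap\Lambda = 0$; since $\beta\in\Lambda$ and each $[N_k]$ already lies in $\Lambda$ (by adjunction $N_k\cdot D'=0$), we must have $a_j=0$ for all $j$. Thus $\beta = \sum_k c_k[N_k]$ lies in the root lattice of the simply-laced root system $R_{Y'}^{\text{nod}}$; since $\beta^2=-2$ and every square-$(-2)$ vector in an ADE root lattice is a root, $\beta\in R_{Y'}^{\text{nod}}\subseteq R$.

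The second assertion -- the equivalent reformulation of condition (i) of Lemma~\ref{Lemma5.1} -- is then immediate: the forward implication follows from Lemma~\ref{existslambda} (which supplies the required $\lambda$) combined with the tautological fact that roots have square $-2$; the reverse applies the first claim to each square-$(-2)$ class in $\Upsilon$. The main obstacle is the effectivity step: cohomologically, nothing better than $\chi(L_\beta)=0$ is automatic, and producing a section of $L_\beta$ or $L_\beta^{-1}$ on the chosen $(Y',D')$ will require careful combined use of the period map surjectivity and the identification of nodal roots from \cite{Friedman2}.
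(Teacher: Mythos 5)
Your approach is genuinely different from the paper's and also has a real gap at its core. The paper's proof is short and purely lattice-theoretic: it perturbs $\lambda$ to $\lambda' = \lambda - t\sum_i r_i[D_i]$ for suitable $r_i>0$ and small $t>0$, observes that $\lambda' \cdot [D_j] > 0$ for all $j$ while $\lambda'$ is not separated from $\lambda$ by any wall $W^E$ coming from an exceptional curve (since $\lambda$ lies in the interior of $\mathcal{B}_{\text{gen}}$, it is not on any such wall), hence $\lambda' \in \mathcal{A}_{\text{gen}}$; then, since $\beta \in \Lambda$ has $\beta\cdot[D_i]=0$, one still has $\beta\cdot\lambda' = \beta\cdot\lambda = 0$, so the wall $W^\beta$ meets the interior of $\overline{\mathcal{A}}_{\text{gen}}$, and \cite[Theorem~6.6]{Friedman2} (which characterizes $R$ as exactly those $\beta$ with $\beta^2=-2$ whose wall meets the generic ample cone) gives $\beta\in R$ directly. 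No appeal to surjectivity of the period map, nor to effectivity of $\beta$, is made.

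The gap in your argument is exactly the one you flag, but it is worse than a technical nuisance: it is circular. You propose to establish effectivity of $\pm\beta$ on a suitable $(Y',D')$ with $\varphi_{Y'}(\beta)=1$ by "combined use of the period map surjectivity and the identification of nodal roots from \cite{Friedman2}," citing $\Ker\varphi_Y\cap R = R_Y^{\text{nod}}$. But that identity only says something about classes already known to lie in $R$; it gives no information whatsoever about a $\beta$ with $\varphi_{Y}(\beta)=1$ that might \emph{not} be a root, which is precisely the case you cannot yet rule out. Thus the step from $\varphi_{Y'}(\beta)=1$ to $\beta$ (or $-\beta$) effective is unjustified. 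Separately, the claimed extension of Proposition~\ref{effdecomp} to pairs with $(-2)$-curves is not proved in the paper (the proposition explicitly assumes there are none), so it would have to be supplied. If you want to make an argument along your lines work, the cleanest repair is to abandon the deformation-and-effectivity route entirely and instead observe, as the paper does, that after the small perturbation by $-t\sum r_i [D_i]$ the class $\lambda$ lands in $\mathcal{A}_{\text{gen}}$ while $W^\beta$ still passes through it; the characterization of roots via walls meeting $\overline{\mathcal{A}}_{\text{gen}}$ then finishes immediately, and your (correct) handling of the second assertion carries over unchanged.
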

\begin{proof} Since $\lambda$ is in the interior of $\mathcal{B}_{\text{\rm{gen}}}$, it is easy to check that $\lambda$ is not orthogonal  to any exceptional curve and hence that there is a neighborhood of $\lambda$ in $\mathcal{C}^+$ which is disjoint from $W^E$ for every exceptional curve $E$.   There exist positive $r_i$ such that $(\sum_ir_i[D_i])\cdot [D_j] < 0$ for all $j$, and hence 
$$(\lambda -\sum_ir_i[D_i]) \cdot [D_j] > 0$$ for all $j$. Multiplying the $r_i$ by a positive $t\ll 1$, we can further assume that $\lambda -\sum_ir_i[D_i]$  is not separated from $\lambda$ by a wall of the form $W^E$, where $E$ is exceptional, and hence $\lambda -\sum_ir_i[D_i] \in \mathcal{A}_{\text{\rm{gen}}}$. Thus $W^\beta$ meets the interior of $\overline{\mathcal{A}}_{\text{\rm{gen}}}$. It follows from   \cite[Theorem 6.6]{Friedman2} that $\beta \in R$.
\end{proof}

\begin{definition} Let $\Upsilon$ be a negative definite sublattice of $\Lambda$, not necessarily primitive. Then $\Upsilon$ is \textsl{good} if 
\begin{enumerate}
\item[\rm(i)] $\Upsilon$ is spanned by elements of $R$. 
\item[\rm(ii)] There exists a homomorphism $\varphi\colon \Lambda \to \C^*$ such that $\Ker \varphi\cap R = \Upsilon\cap R$.
\end{enumerate}
The lattice $\Upsilon$  determines an RDP configuration (possibly consisting of more than one singular point) by taking the appropriate type of a set of simple roots for $\Upsilon \cap R$. We say that the corresponding RDP configuration  is \textsl{of type $\Upsilon$}.

Given $\lambda\in \mathcal{B}_{\text{\rm{gen}}}$, the lattice $\Upsilon$ is \textsl{$\lambda$-good} if it is good and if $\Upsilon \subseteq (\lambda)^\perp$.
 By Lemma~\ref{existslambda}, every good sublattice is $\lambda$-good for some $\lambda$.
 \end{definition}
 
Let  $\Upsilon$ be a lattice  spanned by elements of $R$. If $\Upsilon$ is primitive, then it is good. If $\Upsilon'$ is the saturation of $\Upsilon$ and $\Upsilon'/\Upsilon$ is cyclic, then $\Upsilon$ is good. If $\Upsilon =\bigoplus \Upsilon_i$ is the decomposition of $\Upsilon$ into its irreducible summands, then $\Upsilon$ is good $\iff$ $\Upsilon_i$ is good for every $i$ (this follows easily from the fact that,  if $\beta =\sum_i\beta_i \in \Lambda$ with $\beta_i^2< 0$ for all $i$ and $\beta_i\cdot \beta_j =0$ if $i\neq j$, then $\beta^2 = -2$ $\iff$ $\beta=\beta_i$ for some $i$ and $\beta_i^2 = -2$). 

To explain the geometric meaning of good lattices, we start with the following definition:

\begin{definition} A \textsl{generalized anticanonical pair} is a pair  $(\overline{Y}, D)$, where $\overline{Y}$ is a rational surface, possibly with rational double points, and $D$ is a cycle  contained in the smooth locus of $\overline{Y}$, such that, if $\pi\colon Y \to \overline{Y}$ is the minimal resolution, then $(Y,D)$ is an anticanonical pair.
\end{definition}

We then have the following alternate characterization of RDP configurations of type $\Upsilon$ (cf.\ \cite[II (2.7)]{Looij}):

\begin{proposition}\label{meaningofgood} Suppose that $(\overline{Y}, D)$ is a generalized anticanonical pair with minimal resolution $\pi\colon Y \to \overline{Y}$ such that every $-2$-curve on $Y$ is contained in a fiber of $\pi$. Then the classes of the components of fibers of $\pi$ generate a good sublattice $\Upsilon$ of $\Lambda(Y,D)$, and every good sublattice of $\Lambda(Y,D)$ arises in this way. 

Hence the RDP configurations of type $\Upsilon$, for some good sublattice of $\Lambda(Y,D)$, are exactly the RDP configurations which appear on a generalized anticanonical pair $(\overline{Y}, D)$ whose minimal resolution has deformation type $[(Y,D)]$ and which satisfy the condition that every $-2$-curve on the minimal resolution is contained in a fiber of $\pi$.
\end{proposition}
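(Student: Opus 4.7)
The plan is to prove both implications of the correspondence between good sublattices $\Upsilon\subseteq\Lambda(Y,D)$ and generalized anticanonical pairs $(\overline{Y},D)$ of the prescribed type, with the ``hence'' statement following by a direct translation once the bijection is in hand.

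For the forward direction, I would start with $(\overline{Y},D)$ whose minimal resolution is $\pi\colon Y\to \overline{Y}$ and whose $(-2)$-curves on $Y$ are all fiber components. Let $E_1,\dots,E_m$ be the irreducible exceptional components of $\pi$. Each is a smooth $(-2)$-curve disjoint from $D$, since $D$ lies in the smooth locus of $\overline{Y}$ and $\pi$ is an isomorphism over $D$; hence $[E_j]\in\Lambda$ and $[E_j]\in R$, the root condition holding by taking the trivial deformation in the definition of Looijenga roots. The sublattice $\Upsilon$ they span is negative definite because the $E_j$ are contracted to points by $\pi$. To verify condition (ii) of goodness, I would take $\varphi=\varphi_Y$: the inclusion $\Upsilon\cap R\subseteq \Ker\varphi_Y\cap R$ is immediate, since $\scrO_Y(E_j)|_D$ is trivial, while the reverse inclusion comes from Theorem 6.6 of \cite{Friedman2} (used in the proof of Lemma~\ref{Lemma5.1}), which identifies $\Ker\varphi_Y\cap R$ with the root system generated by the $(-2)$-curves on $Y$, all of which lie in $\Upsilon$ by hypothesis.

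For the reverse direction, given a good sublattice $\Upsilon$ and a witness homomorphism $\varphi$ with $\Ker\varphi\cap R=\Upsilon\cap R$, I would invoke the surjectivity of the period map for anticanonical pairs (from \cite{Friedman2}) to produce a pair $(Y,D)$ in the prescribed deformation class with $\varphi_Y=\varphi$. Lemma~\ref{Lemma5.1} then provides simple roots $[C_1],\dots,[C_n]$ of $\Upsilon\cap R$ realized as $(-2)$-curves $C_i\subseteq Y$ spanning $\Upsilon$. Each $C_i$ is disjoint from $D$: since $C_i\neq D_j$ (else $[C_i]\cdot [D_j]=-2\neq 0$), one has $[C_i]\cdot [D_j]\geq 0$, and $\sum_j [C_i]\cdot [D_j]=-C_i\cdot K_Y=0$ forces each individual intersection to vanish. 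Negative-definiteness of $\Upsilon$ gives a negative-definite intersection matrix for the $C_i$, so by Artin's contraction theorem I can contract $\bigcup_i C_i$ to a union of rational double points, yielding the desired $(\overline{Y},D)$. Property (ii) of Lemma~\ref{Lemma5.1} ensures that any $(-2)$-curve $C$ on $Y$ satisfies $[C]=\sum_i n_i [C_i]$ with $n_i\geq 0$, and $[C]^2=-2$ then forces $C$ to equal some $C_i$, so every $(-2)$-curve of $Y$ lies in a fiber of $\pi$.

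The main obstacle is the reverse direction, specifically the invocation of surjectivity of the period map: I need $\varphi$ to be realized as $\varphi_Y$ for some $(Y,D)$ of the given deformation type so that the abstract simple roots of $\Upsilon\cap R$ become genuine $(-2)$-curves available for Artin contraction. The concluding ``hence'' statement is then a purely formal translation: the Dynkin type of the resulting RDP configuration on $\overline{Y}$ is exactly the Dynkin type of a set of simple roots for $\Upsilon\cap R$, which is by definition the type of $\Upsilon$.
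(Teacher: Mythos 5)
Your proposal follows essentially the same route as the paper: for the forward direction you use \cite[Theorem 6.6]{Friedman2} to show $\Ker\varphi_Y\cap R$ is contained in the lattice spanned by the fiber components, and for the reverse direction you use surjectivity of the period map together with Lemma~\ref{Lemma5.1} to realize the simple roots of $\Upsilon\cap R$ as $(-2)$-curves and then contract them. One small quibble in the very last step: the inference that ``$[C]=\sum_i n_i[C_i]$ with $n_i\geq 0$ and $[C]^2=-2$ forces $C$ to equal some $C_i$'' is not a valid numerical deduction on its own (in an $A_2$-configuration, $[C_1]+[C_2]$ also has square $-2$); the correct conclusion is the ``in particular'' clause of Lemma~\ref{Lemma5.1}(ii), which says directly that the $(-2)$-curves on $Y$ are precisely the $C_i$, and which you could have cited without the intermediate argument. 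This does not affect the soundness of the overall proof.
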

\begin{proof} If $(\overline{Y}, D)$ is as in the statement of the proposition, let $\beta \in \Ker \varphi_Y\cap R$. Then, by \cite[6.6]{Friedman2} and its method of proof, $\pm \beta = \sum_in_i[C_i]$, where $C_i$ is a $-2$-curve on $Y$.  Thus $\beta \in \Upsilon$. It follows that $\Upsilon$ is a negative definite lattice spanned by $\Ker \varphi_Y\cap R$, so that $\Upsilon$ is good.

Conversely, suppose that $\Upsilon$ is a good sublattice. By the surjectivity  of the period map, we can choose an anticanonical pair $(Y,D)$ such that $\Ker \varphi_Y\cap R = \Upsilon \cap R$ and $\Upsilon$ is spanned by  $\Upsilon \cap R$. By Lemma~\ref{Lemma5.1}, there exists a set of simple roots $\beta_1=[C_1], \dots, \beta_k =[C_k]$, where the $C_i$ are exactly the $-2$-curves on $Y$. Since $\Upsilon$ is negative definite, there exists a morphism $\pi \colon Y \to \overline{Y}$, where $\overline{Y}$ is normal and $\pi$ exactly contracts the $C_i$. Hence $(\overline{Y}, D)$ is a generalized anticanonical pair with minimal resolution $\pi\colon Y \to \overline{Y}$ such that every $-2$-curve on $Y$ is contained in a fiber of $\pi$.
\end{proof} 
 
 We now relate the above discussion to the existence of adjacent RDP singularities.

\begin{theorem}\label{charrat} Let $\pi\colon \mathcal{Y}\to \Delta$ be a Type III degeneration with monodromy invariant $\lambda$. Let $\beta_1, \dots, \beta_k$ be the finite set of elements $\beta$ of $R$ such that $\beta \cdot \lambda = 0$. Then, for all $t\in\Delta^*$, $|t|\ll 1$ and for all $\beta \in R$, $\varphi_{Y_t}(\beta) = 1$ $\iff$ $\beta = \beta_i$ for some $i$ and the function $f(t) = \varphi_{Y_t}(\beta_i)$ is identically equal to $1$ for all $t \in \Delta^*$.
\end{theorem}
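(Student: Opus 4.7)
The plan is to combine the asymptotic expansion of Proposition~\ref{asympbehav} with the formula $\varphi_{Y_t}(\beta) = \exp(2\pi\sqrt{-1}\int_\beta\omega(t))$ recalled at the start of Section~7. Substituting $\omega(t) = (\log t/2\pi\sqrt{-1})\lambda + f(t)$ yields
$$\varphi_{Y_t}(\beta) = t^{\beta\cdot\lambda}\,g_\beta(t), \qquad g_\beta(t) := \exp\!\left(2\pi\sqrt{-1}\int_\beta f(t)\right),$$
where $g_\beta$ is holomorphic and nowhere zero on $\Delta$. Since $\beta,\lambda\in\Lambda$, the exponent $\beta\cdot\lambda$ is an integer and the right-hand side is single-valued on $\Delta^*$.

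The finiteness of $\{\beta_1,\dots,\beta_k\}$ is immediate from Theorem~\ref{0.5}(iii): because $\lambda^2 = v > 0$, the sublattice $(\lambda)^\perp\cap\Lambda$ is negative definite of finite rank, hence contains only finitely many vectors of square $-2$. The backward implication of the claimed equivalence is then trivial, so only the forward direction requires argument.

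For the forward direction, suppose $\varphi_{Y_t}(\beta) = 1$ for every $t$ in some punctured neighborhood of $0$. Taking absolute values in the displayed formula, $|t|^{\beta\cdot\lambda}|g_\beta(t)| = 1$ near $0$; since $g_\beta(0)\neq 0$, this forces $\beta\cdot\lambda = 0$, and hence $\beta$ lies in the finite set $\{\beta_1,\dots,\beta_k\}$. With $\beta\cdot\lambda = 0$ one has $\varphi_{Y_t}(\beta) = g_\beta(t)$, a holomorphic function on all of $\Delta$ (in agreement with Proposition~\ref{extpermapholom}), which is identically $1$ on an open subset. The identity theorem then gives $g_\beta\equiv 1$ on $\Delta$, i.e.\ $f(t) = \varphi_{Y_t}(\beta_i) \equiv 1$ on $\Delta^*$.

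Essentially all of the work has already been done in Section~6, so no serious obstacle remains: the content is simply that the integer $\beta\cdot\lambda$ controls the leading behavior of the period at $t=0$, and the identity theorem upgrades ``equals $1$ near $0$'' to ``identically $1$.''
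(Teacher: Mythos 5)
Your forward direction misreads the statement. The hypothesis there is that $\varphi_{Y_t}(\beta)=1$ for a \emph{single} $t$ (with $|t|$ small), and the content of the theorem is precisely that this forces both $\beta\cdot\lambda=0$ and the stronger conclusion that $\varphi_{Y_\cdot}(\beta)$ vanishes identically; you instead assume $\varphi_{Y_t}(\beta)=1$ on a whole punctured neighborhood of $0$, which trivializes both halves via the identity theorem. With only a single small $t$ in hand, your ``taking absolute values'' step breaks down: $|t|^{\beta\cdot\lambda}|g_\beta(t)|=1$ at one point does not force $\beta\cdot\lambda=0$, because the bound $|g_\beta|\in[C_1,C_2]$ implicit in that step depends on $\beta$ (via $\|\beta\|$), and $R$ is typically infinite so there is no uniform estimate.

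The missing ingredient is the wall argument, which your asymptotic expansion actually sets up if you package it as in the paper. Viewing $\log|\varphi_{Y_t}|$ as an element of $\Lambda_\R$ via the pairing, one gets a continuous function $h(t)=\log|\varphi_{Y_t}|/\log|t|$ on $\Delta$ with $h(0)=\lambda$, and the condition $\varphi_{Y_t}(\beta)=1$ at a single $t$ says $h(t)$ lies on the wall $W^\beta$. Since the walls $W^\beta$ for $\beta\in R$ are locally finite in $\mathcal{C}^+$, one can choose a neighborhood $U$ of $\lambda$ meeting only the finitely many $W^{\beta_i}$; for $|t|\ll 1$ one has $h(t)\in U$, forcing $\beta\in\{\beta_1,\dots,\beta_k\}$. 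You then still need the second step: each $\varphi_{Y_t}(\beta_i)$ extends holomorphically to $\Delta$ (since $\beta_i\cdot\lambda=0$), so it is either identically $1$ or equals $1$ at only finitely many $t$; shrinking $|t|$ to avoid the latter finitely many values (over the finitely many $i$) gives the conclusion. The identity theorem alone does not substitute for this, since you do not know $\varphi_{Y_t}(\beta_i)=1$ on an open set to begin with.
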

\begin{proof} $\impliedby$: Obvious. 

\noindent $\implies$: Let $U$ be a neighborhood of $\lambda$ in $\mathcal{C}^+$ such that, if $\beta \in R$ and $W^\beta$  is a wall with $W^\beta\cap U \neq \emptyset$, then $\beta=\beta_i$ for some $i$, i.e.\ $W^\beta$ is one of the finitely many walls corresponding to elements of $R$ and passing through $\lambda$.

Given $\varphi_{Y_t}\colon \Lambda \to \C^*$, the function $\log |\varphi_{Y_t}|\colon \Lambda \to \R$ is a well-defined, $C^\infty$ function on $\Delta^*$ and thus corresponds to a function $\Delta^* \to \Lambda_\R\spcheck = \Lambda_\R$, which we continue to denote by $\log |\varphi_{Y_t}|$.  By \cite[(3.12)]{Friedman2}, 
$$\log \varphi_{Y_t} = 2\pi \sqrt{-1}\omega(t)$$ 
as multi-valued functions on $\Delta^*$. Hence, by  Proposition~\ref{asympbehav}, 
$$\log \varphi_{Y_t} = 2\pi \sqrt{-1}\omega(t)   =  (\log t) \lambda + 2\pi \sqrt{-1}f(t)$$
for a single-valued holomorphic function $f$ which extends to a holomorphic function at $0$. Taking real parts, we obtain an equality of (single-valued) functions
$$\log |\varphi_{Y_t}| = \operatorname{Re} ((\log t )\lambda + 2\pi \sqrt{-1}f(t)) = \log |t|\lambda + g(t),$$
where $g(t)$ is a $C^\infty$ function at $0$. Dividing by $\log |t|$ gives
$$\frac{\log |\varphi_{Y_t}|}{\log |t|} = \lambda + \frac{g(t)}{\log |t|}.$$
Thus $\log |\varphi_{Y_t}|/\log |t|$ defines a continuous function $h(t)\colon \Delta\to \Lambda_\R$ with $h(0) = \lambda$. Thus, for $|t|\ll 1$, $h(t) \in U$. It follows that, if $t\in \Delta^*$, $|t|\ll 1$, $\beta \in R$, and $\varphi_{Y_t}(\beta) = 1$, then $h(t)$ lies on $W^\beta$ and so   $\beta=\beta_i$ for some $i$.

Given $\beta_i$ as above, either $\varphi_{Y_t}(\beta_i)$ is identically equal to $1$ for all $t \in \Delta$ or $\varphi_{Y_t}(\beta_i) = 1$ for at most finitely many $t\in \Delta^*$. Choosing $|t|$ smaller if need be, we can then assume that, for all $i$, if $\varphi_{Y_t}(\beta_i)$ is not identically equal to $1$, then $\varphi_{Y_t}(\beta_i) \neq 1$ for all $t\in \Delta^*$.
\end{proof}

Given a Type III degeneration $\mathcal{Y}\to \Delta$, by a theorem of Shepherd-Barron \cite[Theorem 2A, p.\ 157]{Shepherd-Barron}, there exists a birational morphism $f\colon \mathcal{Y} \to \overline{\mathcal{Y}}$ over $\Delta$, where $f_0=f\big{|}_{Y_0}$ contracts $\bigcup_{i>0}V_i$  and the cycle $D'$ to a point and, for $t\neq 0$, $f_t=f\big{|}_{Y_t}\colon Y_t \to \overline{Y}_t$ is the minimal resolution of a union of rational double points. We then have the following (compare \cite[III (2.12)]{Looij}):

\begin{proposition}\label{charexcepfibers} For $0<|t|\ll 1$, the irreducible components of the exceptional fibers of $f_t$ are exactly the $-2$-curves on $Y_t$.
\end{proposition}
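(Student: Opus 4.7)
The inclusion that exceptional components of $f_t$ are $(-2)$-curves is immediate, as $f_t$ is the minimal resolution of rational double points. For the converse, let $C\subset Y_t$ be a $(-2)$-curve disjoint from $D_t$, with class $\beta = [C] \in \Lambda$. Then $\beta\in R$ and $\varphi_{Y_t}(\beta)=1$, so by Theorem~\ref{charrat} we have $\beta\cdot\lambda=0$ for $|t|$ sufficiently small. By Proposition~\ref{lambdaprop}(iii), $\beta$ lifts to a line bundle $\mathcal{L}\in\Pic\mathcal{Y}$.

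The plan is to produce an effective divisor $\mathcal{C}\subset\mathcal{Y}$ with $\mathcal{C}\big|_{Y_t}=C$ whose central fiber $\mathcal{C}_0=\mathcal{C}\cap Y_0$ is contained in the locus $\bigcup_{i>0}V_i\cup D'$ that $f_0$ contracts to a point. For the existence: by Theorem~\ref{charrat}, $\varphi_{Y_{t'}}(\beta)\equiv 1$ on $\Delta^*$, so $\beta$ is represented by a $(-2)$-curve on every nearby smooth fiber. Taking the irreducible component through $[C]$ of the relative Douady space of effective $1$-cycles of class $\beta$ on fibers of $\pi$, which is proper over $\Delta$ (after shrinking), we obtain a flat family whose total space $\mathcal{C}\subset\mathcal{Y}$ is a two-dimensional irreducible subvariety with $\mathcal{C}\big|_{Y_t}=C$ for $t\neq 0$ and $\mathcal{C}_0$ an effective Cartier divisor on $Y_0$ of class $c_1(\mathcal{L}\big|_{Y_0})$.

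To show $\mathcal{C}_0\subset\bigcup_{i>0}V_i\cup D'$, we focus on the restriction $\mathcal{C}\big|_{V_0}$ to the Inoue component. This is an effective divisor on $V_0$ whose intersection with each component $D_j\subset D$ equals $\beta\cdot[D_j]=0$, since these intersection numbers are preserved under specialization. Now $V_0$ is a non-K\"ahler surface of class $\mathrm{VII}_0$ and, by its construction in \cite{Inoue}, its only compact analytic curves are the components of $D$ and $D'$. We may therefore write $\mathcal{C}\big|_{V_0}=\sum_j a_j D_j +\sum_k b_k D'_k$ with $a_j, b_k\geq 0$. Using $D\cap D'=\emptyset$, the conditions $\mathcal{C}\big|_{V_0}\cdot D_i=(\sum_j a_j D_j)\cdot D_i=0$ for all $i$, together with the negative definiteness of the intersection matrix of $D$ (coming from $D$ being the minimal resolution of a cusp), force $a_j=0$ for all $j$. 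Hence $\mathcal{C}\big|_{V_0}$ is supported in $D'$, and consequently $\mathcal{C}_0\subset\bigcup_{i>0}V_i\cup D'$.

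Since $f_0$ contracts $\bigcup_{i>0}V_i\cup D'$ to a single point, $f(\mathcal{C}_0)$ is a point. As $\mathcal{C}$ is irreducible of dimension two and surjects onto $\Delta$, the image $f(\mathcal{C})\subset\overline{\mathcal{Y}}$ has dimension at most one. Restricting to $Y_t$ for $t$ near $0$ gives $\dim f_t(C)=0$, so $C$ is exceptional for $f_t$. The main technical obstacle will be the construction of the flat extension $\mathcal{C}$: one must show that the relevant component of the relative Douady space through $[C]$ extends across $t=0$ with a limit of the expected cycle class, which requires care because $h^0(Y_t,\mathcal{L}\big|_{Y_t})=h^1(Y_t,\mathcal{L}\big|_{Y_t})$ may jump at the central fiber and one may need to pass to a base change and subtract contributions from $V_i$-components that appear in the limit (compensated by twisting $\mathcal{L}$ by appropriate $\xi_i$).
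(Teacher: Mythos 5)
Your proof follows the same overall architecture as the paper's: identify $\beta=[C]\in R$ with $\varphi_{Y_t}(\beta)=1$, use Theorem~\ref{charrat} to conclude $\beta\in(\lambda)^\perp$ and $\varphi_{Y_{t'}}(\beta)\equiv 1$, lift $\beta$ to $\mathcal{L}\in\Pic\mathcal{Y}$ via Proposition~\ref{lambdaprop}, produce an effective divisor $\mathcal{C}\subset\mathcal{Y}$ flat over $\Delta$ restricting to $C$ on $Y_t$, and then show $\mathcal{C}_0$ is disjoint from $D$ (by negative definiteness of $(D_i\cdot D_j)$) and hence contained in the locus contracted by $f_0$. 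Your analysis of the restriction to $V_0$ (using that an Inoue surface has only the curves $D$, $D'$ as compact curves) is a slightly more explicit version of the paper's terse ``plus components disjoint from $D$, and hence is contained in $\bigcup_{i>0}V_i$,'' and is fine.

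The one place where you diverge is exactly the step you flag at the end: the construction of the flat extension $\mathcal{C}$. You propose going through properness of the relative Douady space and worry about limits of cycle classes and possible base changes. The paper takes a cleaner route that closes this gap without any of that machinery: since $h^0(Y_s;\mathcal{L}\big|_{Y_s})>0$ for all $s\in\Delta^*$ (which requires first checking, via \cite[Lemma 2.4(ii)]{Friedman1.5} and an open/closed argument on $\Delta^*$, that $\beta$ itself, not $-\beta$, is effective on every smooth fiber), upper semicontinuity of $h^0$ gives $h^0(Y_0;\mathcal{L}\big|_{Y_0})>0$. One then takes any nonzero section of $\mathcal{L}$ on $\mathcal{Y}$ (possible since $\Pic\mathcal{Y}\cong\Pic Y_0$) and removes any $V_i$-components from its divisor of zeros by twisting $\mathcal{L}$ by $\scrO_{\mathcal{Y}}(\sum a_i V_i)$; what remains is automatically flat over $\Delta$. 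This is precisely the ``subtract contributions from $V_i$-components'' idea you gesture at in your final sentence, so you have in fact identified the correct fix — but as written, your argument leaves this step unfinished and invokes machinery (Douady properness, potential base change) that is both heavier than necessary and not obviously applicable without further justification. Replacing the Douady-space step with the semicontinuity argument yields essentially the paper's proof.

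One small additional caveat: your phrase ``$h^0(Y_t,\mathcal{L}|_{Y_t})=h^1(Y_t,\mathcal{L}|_{Y_t})$ may jump at the central fiber'' presents the jump as an obstacle, but upper semicontinuity of $h^0$ actually works in your favor here — it is what guarantees a section at $t=0$, not what threatens the construction.
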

\begin{proof} Clearly, if $C_t$ is an irreducible component  of an exceptional fiber of $f_t$, then $C_t$ is a $-2$-curve on $Y_t$. Conversely, suppose that $C_t$ is a $-2$-curve on $Y_t$ for some for $0<|t|\ll 1$ and let $[C_t] =\beta$. Then $\varphi_{Y_t}(\beta) = 1$. By Theorem~\ref{charrat},   $\beta $ is orthogonal to $\lambda$ and $\varphi_{Y_t}(\beta)$ is identically $1$. It follows from  Proposition~\ref{lambdaprop} that $\beta$ corresponds to a line bundle $\mathcal{L}$  on $\mathcal{Y}$. For all $s\in \Delta^*$, $\pm \beta \in R_{Y_s}^{\text{nod}}$, i.e.\ $\pm \beta = \sum_i[C_i]$ where the $C_i$ are $-2$-curves on $Y_s$. The condition on $s\in \Delta^*$ that $\beta$ is effective on $Y_s$ is closed (by semicontinuity) and open (because we can choose a relatively ample divisor $\mathcal{H}$ on $\mathcal{Y}$ over small open subsets of $\Delta^*$). Since $\beta$ is effective on $Y_t$, it follows that $\beta$ is effective for all $s\in \Delta^*$, and,  for  $s$ generic,  $\beta =[C_s]$ is the class of an irreducible curve. Since $h^0(Y_s; \mathcal{L}\big{|}_{Y_s}) >0$ for all $s\in \Delta^*$, arguing as in the proof of Proposition~\ref{extendsections}, after modifying $\mathcal{L}$ by $\scrO_{\mathcal{Y}}(\sum_ia_iV_i)$, we can assume that $\mathcal{L}= \scrO_{\mathcal{Y}}(\mathcal{C})$ where $\mathcal{C}$ is a surface on $\mathcal{Y}$, not containing $V_i$ for any $i$, and such that $\mathcal{C}\cap Y_t = C_t$. In particular,   there exists an effective Cartier divisor $C_0=\mathcal{C}\cap Y_0$ on $Y_0$ such that $C_0\cdot D_j = 0$ for every $j$. But every such divisor $C_0$ is a sum of the form $\sum_in_iD_i$, where $n_i \geq 0$,  plus components disjoint from $D$, and hence is contained in $\bigcup_{i>0}V_i$. Since $(D_i\cdot D_j)$ is negative definite, this is  only possible if $n_i=0$ for all $i$. 
 Thus $C_0 \subseteq \bigcup_{i>0}V_i$, so that, via the relative contraction morphism $f\colon \mathcal{Y} \to \overline{\mathcal{Y}}$ over $\Delta$, $C_0$ is contracted to a point. Since $\mathcal{C}$ fibers over $\Delta$ and $f$ contracts the fiber $C_0$ to a point, $f$ must contract all fibers of $\mathcal{C}\to \Delta$ to  points as well. Thus $f_t$ contracts $C_t$ to a point.
\end{proof}

We can now state the main result concerning possible adjacencies of the cusp to RDP singularities: 

\begin{theorem}\label{lastthm} {\rm (i)} Suppose that $\lambda\in \mathcal{B}_{\text{\rm{gen}}}$ and that $\Upsilon$ is a $\lambda$-good sublattice of $\Lambda$. Let $(Y_0,D)$  be a Type III anticanonical pair  of type $\lambda$. Then the cusp $D'$ is adjacent to an RDP  configuration of type $\Upsilon$. More precisely, after a locally trivial deformation of $Y_0$ through $d$-semistable varieties, there exists a Type III degeneration $\mathcal{Y} \to \Delta$ with central fiber $Y_0$ and a blowdown $\mathcal{Y}\to \overline{\mathcal{Y}}$ over $\Delta$, such that the central fiber of $\overline{\mathcal{Y}}$ is $V_0$ with $D'$ contracted and the singular locus of the general fiber is an
RDP configuration of type $\Upsilon$.

\smallskip
\noindent {\rm (ii)} Conversely, suppose that the cusp $D'$ is adjacent to an RDP configuration  via a one parameter family $\bar{\pi}\colon \overline{\mathcal{Y}} \to \Delta$ of $\Q$-type $\lambda$. For $|t|\ll 1$, if $Y_t$ is the minimal resolution of a general fiber $\overline{Y}_t$ of $\bar\pi$, then the components of the exceptional curves generate a $\lambda$-good sublattice $\Upsilon$ of $\Lambda$, and the RDP configuration on the general fiber is of type $\Upsilon$.
\end{theorem}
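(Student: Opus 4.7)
The plan is to handle the two directions separately, leveraging Theorems~\ref{surj1}, \ref{charrat} and Propositions~\ref{surj2}, \ref{charexcepfibers} as the main tools.

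For part~(i), since $\Upsilon$ is $\lambda$-good, first choose a homomorphism $\varphi\colon \Lambda\to\C^*$ with $\Ker\varphi\cap R=\Upsilon\cap R$; this forces $\Upsilon\subseteq \Ker\varphi$ because $\Upsilon$ is the $\Z$-span of $\Upsilon\cap R$. Via Proposition~\ref{lambdaprop}(iii), identify $\overline{\Lambda}_0$ with $\lambda^\perp\subseteq \Lambda$ and let $\bar\varphi$ denote the restriction of $\varphi$; because $\Upsilon\subseteq \lambda^\perp$, we still have $\Ker\bar\varphi\cap R=\Upsilon\cap R$. Apply Theorem~\ref{surj1} to find a locally trivial deformation of $(Y_0,D)$ through $d$-semistable Type~III anticanonical pairs to one---still denoted $(Y_0,D)$---with $\bar\varphi_{Y_0}=\bar\varphi$; the monodromy invariant $\lambda$ is preserved since it depends only on the dual complex. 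Then apply Proposition~\ref{surj2} with the sublattice $\Upsilon\subseteq \Ker\bar\varphi_{Y_0}$ to produce a Type~III degeneration $\pi\colon \mathcal{Y}\to \Delta$ with this $Y_0$ as its central fiber and with $\varphi_{Y_t}(\operatorname{sp}(L))=1$ for every $L\in \Upsilon$ and every $t\in \Delta^*$.

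Next, Theorem~\ref{charrat} applied to $\pi$ says that for $0<|t|\ll 1$ the set $\{\beta\in R:\varphi_{Y_t}(\beta)=1\}$ consists precisely of those $\beta\in R\cap\lambda^\perp$ for which $\varphi_{Y_\bullet}(\beta)\equiv 1$ on $\Delta^*$. By construction this set contains $\Upsilon\cap R$, and for any such $\beta$ its specialization satisfies $\bar\varphi_{Y_0}(\beta)=\bar\varphi(\beta)=1$, which by our choice of $\varphi$ forces $\beta\in \Upsilon\cap R$. Invoking Shepherd-Barron's contraction, one obtains a birational morphism $f\colon \mathcal{Y}\to \overline{\mathcal{Y}}$ over $\Delta$ blowing down $\bigcup_{i>0}V_i$ together with $D'$ on the central fiber. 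By Proposition~\ref{charexcepfibers} the components of the exceptional locus of $f_t$ for $0<|t|\ll 1$ are exactly the $(-2)$-curves on $Y_t$, and by Lemma~\ref{Lemma5.1} applied to $\varphi_{Y_t}$ these $(-2)$-curves form a set of simple roots for $\Upsilon\cap R$. Hence $\overline{Y}_t$ acquires a rational double point configuration of type $\Upsilon$, yielding the desired adjacency.

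For part~(ii), begin with $\bar\pi\colon \overline{\mathcal{Y}}\to \Delta$ of $\Q$-type $\lambda$. By the argument cited after the definition of $\Q$-type (using \cite{FriedmanMiranda}), after base change we pass to a birational Type~III degeneration $\pi\colon \mathcal{Y}\to \Delta$ whose monodromy invariant is a positive rational multiple of $\lambda$; the notion of $\lambda$-goodness is unaffected by this rescaling. For $0<|t|\ll 1$, let $\Upsilon\subseteq \Lambda$ be the sublattice spanned by the classes of the $(-2)$-curves on $Y_t$; by Proposition~\ref{charexcepfibers} these are precisely the components of the exceptional locus of $f_t$. Then $\Upsilon$ is negative definite, spanned by elements of $R$, and by Theorem~\ref{charrat} contained in $\lambda^\perp$. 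Moreover, $\varphi_{Y_t}$ itself certifies $\Upsilon$ as good: $\Ker\varphi_{Y_t}\cap R=\Upsilon\cap R$ by Theorem~\ref{charrat} together with Lemma~\ref{Lemma5.1}. Since the Dynkin diagram of the $(-2)$-curves coincides with that of $\Upsilon\cap R$, the RDP configuration on $\overline{Y}_t$ is of type $\Upsilon$.

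The principal obstacle lies in part~(i), namely the orchestration of the two independent deformations: we must simultaneously prescribe the period data on the central fiber---so that no extraneous roots are in the kernel---and smooth the pair so that exactly $\Upsilon$ is extended into $\Ker\varphi_{Y_t}$ for all small $t$. This requires Theorem~\ref{surj1} (tangential surjectivity onto the torus $\Hom(\overline{\Lambda}_0,\C^*)$) and Proposition~\ref{surj2} (transversality of the smoothing component to the discriminant inside the relevant coordinate hyperplane) to be used in concert, with the compatibility ensured by the period computation of Theorem~\ref{charrat}.
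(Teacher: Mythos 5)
Your proposal is correct and follows essentially the same path as the paper's proof: for (i), choose $\varphi$ with $\Ker\varphi\cap R = \Upsilon\cap R$, restrict to $\overline{\Lambda}_0 \cong \lambda^\perp$, invoke Theorem~\ref{surj1} to adjust $\bar\varphi_{Y_0}$, Proposition~\ref{surj2} to build the smoothing with $\Upsilon$ in the kernel of $\varphi_{Y_t}$, Theorem~\ref{charrat} for the reverse inclusion, and then Lemma~\ref{Lemma5.1}, Proposition~\ref{charexcepfibers}, and Shepherd-Barron; for (ii), take $\Upsilon$ spanned by the $(-2)$-curves (equivalently, the $\Z$-span of $\Ker\varphi_{Y_t}\cap R$) and invoke Theorem~\ref{charrat}, Lemma~\ref{Lemma5.1}, and Proposition~\ref{charexcepfibers}. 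The minor differences — citing Proposition~\ref{lambdaprop}(iii) for the identification $\overline{\Lambda}_0 \cong \lambda^\perp$, and observing explicitly that rescaling $\lambda$ by a positive rational does not affect $\lambda$-goodness — are just useful clarifications and do not alter the argument.
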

\begin{proof} (i) Given the $\lambda$-good sublattice $\Upsilon$, choose a $\varphi\colon \Lambda\to \C^*$ such that $\Ker \varphi\cap R = \Upsilon\cap R$. Let $\bar\varphi$ be the restriction of $\varphi$ to $\overline{\Lambda}_0= (\lambda)^\perp$. By Theorem~\ref{surj1}, after a locally trivial deformation of $Y_0$ through $d$-semistable varieties, we can assume that $\bar\varphi_{Y_0} =\bar\varphi$. By Proposition~\ref{surj2}, there exists a Type III degeneration $\mathcal{Y} \to \Delta$ with central fiber $Y_0$ such that, under the natural identification of $\Upsilon$ and $\overline{\Lambda}_0$ with sublattices of $\Lambda(Y_t)$, $\varphi_{Y_t}(\beta) = 1$ for all $\beta\in \Upsilon\cap R$ and all $t\in \Delta^*$.  Conversely, by Theorem~\ref{charrat}, for   $|t|\ll 1$, if $\beta \in R$ and $\varphi_{Y_t}(\beta) =1$, then $\varphi_{Y_t}(\beta)$ is identically $1$ and $\beta \in (\lambda)^\perp$, hence $\beta \in\overline{\Lambda}_0$. Using  the continuity of $\varphi_{Y_t}(\beta)$ (Proposition~\ref{extpermapholom}), $\bar\varphi_{Y_0}(\beta) =1$ as well. Thus $\beta\in \Upsilon\cap R$. In particular, by Lemma~\ref{Lemma5.1}, the $-2$-curves on $Y_t$ correspond to a set of simple roots for $\Upsilon$. By Proposition~\ref{charexcepfibers}, these are exactly the exceptional fibers of the blowdown $Y_t \to \overline{Y}_t$. Hence the singular locus of $Y_t$ is an
RDP configuration of type $\Upsilon$.

\smallskip
\noindent (ii) After a base change, we can assume that $\bar{\pi}\colon \overline{\mathcal{Y}} \to \Delta$ is birational to a Type III degeneration of $\Q$-type $\lambda$. For $0<|t|\ll 1$, let $\Upsilon$ be the sublattice of $\Lambda$ generated by $\Ker  \varphi_{Y_t}\cap R$. By Theorem~\ref{charrat}, if $\beta \in \Ker  \varphi_{Y_t}\cap R$, then $\beta \in (\lambda)^\perp$, and hence $\Upsilon \subseteq (\lambda)^\perp$. Thus $\Upsilon$ is good, in fact it is $\lambda$-good.  By Lemma~\ref{Lemma5.1},  the classes of the $-2$-curves on $Y_t$   are a set of simple roots for $\Upsilon$.  By Proposition~\ref{charexcepfibers}, the  $-2$-curves on $Y_t$ are exactly the fibers of the morphism $Y_t \to \overline{Y}_t$. Hence the RDP configuration   on $Y_t$ is of type $\Upsilon$.
\end{proof}

\begin{corollary} Let $p'$ be a cusp singularity with minimal resolution $D'$ and let $D$ be the dual cycle. Suppose that $(Y,D)$ is an anticanonical pair. Then the possible nearby RDP configurations on some smoothing component of $p'$ of type $(Y,D)$ are exactly  the RDP configurations of type $\Upsilon$, where $\Upsilon$ is a good  sublattice of $\Lambda(Y,D)$.  
\end{corollary}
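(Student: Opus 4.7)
The plan is to deduce the corollary directly from Theorem \ref{lastthm}, combined with the existence result Theorem \ref{lambdas} and the fact (Lemma \ref{existslambda}) that every good sublattice is $\lambda$-good for some $\lambda \in \mathcal{B}_{\text{gen}}$. So the argument is essentially a matter of matching quantifiers: the corollary speaks only of good sublattices (no mention of $\lambda$), whereas Theorem \ref{lastthm} is formulated for fixed $\lambda$, so the passage between the two will be the main point.

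For the ``only if'' direction, suppose that the cusp $p'$ is adjacent to a given RDP configuration via a one-parameter family $\bar\pi\colon \overline{\mathcal Y}\to \Delta$ on a smoothing component of type $[(Y,D)]$. After a base change, and possibly a birational modification as in \cite{FriedmanMiranda}, I can upgrade $\bar\pi$ to a Type III degeneration of anticanonical pairs whose general fiber lies in the deformation type $[(Y,D)]$; by definition this family has some monodromy invariant, which is a positive rational multiple of some primitive $\lambda\in \mathcal{B}_{\text{gen}}\cap \Lambda$. Then Theorem~\ref{lastthm}(ii) applies directly and identifies the RDP configuration as being of type $\Upsilon$, where $\Upsilon$ is the $\lambda$-good (in particular good) sublattice of $\Lambda(Y,D)$ spanned by the exceptional components on the minimal resolution of the general fiber.

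For the ``if'' direction, start with a good sublattice $\Upsilon \subseteq \Lambda(Y,D)$. By Lemma~\ref{existslambda} there exists a class $\lambda \in \mathcal{B}_{\text{gen}}$ with $\Upsilon \subseteq (\lambda)^\perp$, so that $\Upsilon$ is $\lambda$-good. By Theorem~\ref{lambdas} (equivalently Theorem~\ref{0.6}) there is a Type III degeneration of anticanonical pairs with general fiber in the deformation type $[(Y,D)]$ and monodromy invariant $\lambda$, which corresponds to a smoothing component of $p'$ of type $[(Y,D)]$. Theorem~\ref{lastthm}(i) now produces, possibly after a locally trivial deformation of the central fiber through $d$-semistable pairs and a transverse one-parameter family on the smoothing component, an adjacency of $p'$ to an RDP configuration of type $\Upsilon$ on this smoothing component.

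The only step that is not completely formal is ensuring that the two senses of ``smoothing component of type $[(Y,D)]$'' match on both sides of the argument: on the algebraic side, a smoothing component is defined by the deformation type of its general anticanonical pair, while the constructions in Theorem~\ref{lastthm} produce Type III degenerations whose general fiber we know only up to diffeomorphism preserving the components of $D$. This is handled exactly as in the proof of Theorem~\ref{deftypes}: by \cite[Theorem 5.14]{Friedman2}, such a diffeomorphism implies deformation-equivalence of the two anticanonical pairs, so the smoothing component constructed really is of type $[(Y,D)]$. With this compatibility in hand, the two directions above together yield the corollary.
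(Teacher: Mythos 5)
Your proof is correct and takes essentially the same route as the paper's: in each direction you reduce to Theorem~\ref{lastthm} by producing (for the ``only if'' direction) a Type~III model with some monodromy invariant $\lambda$, and (for the ``if'' direction) a $\lambda$ orthogonal to $\Upsilon$ via Lemma~\ref{existslambda} together with the existence result Theorem~\ref{lambdas}. Your closing paragraph on matching deformation types via \cite[Theorem 5.14]{Friedman2} is a sound observation, though it is not spelled out in the paper's proof of the corollary since that compatibility was already secured in the proof of Theorem~\ref{lambdas}.
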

\begin{proof} Suppose that there is a nearby RDP configuration  on some smoothing component of $p'$ of type $(Y,D)$. Then we can choose a one parameter deformation $\pi \colon (\overline{\mathcal{Y}}, \mathcal{D}) \to \Delta$, where the central fiber $(\overline{Y}_0,D)$ is the Inoue surface with $D'$ contracted. After base change, there is a birational model for $\pi \colon (\overline{\mathcal{Y}}, \mathcal{D}) \to \Delta$ which is a Type III degeneration with monodromy invariant $\lambda$ for some $\lambda\in \mathcal{B}_{\text{\rm{gen}}}$. By (ii) of Theorem~\ref{lastthm}, the RDP configuration on a general fiber of $\bar\pi$ is of type $\Upsilon$, where $\Upsilon$ is a $\lambda$-good and hence good  sublattice of $\Lambda(Y,D)$.

Conversely, suppose that $\Upsilon$ is a good  sublattice of $\Lambda(Y,D)$. By Lemma~\ref{existslambda}, $\Upsilon$ is $\lambda$-good for some $\lambda\in \mathcal{B}_{\text{\rm{gen}}}$. By Theorem~\ref{lambdas}, there exists a Type III anticanonical pair $(Y_0,D)$ with  monodromy invariant $\lambda$. Hence, by (i) of Theorem~\ref{lastthm}, $p'$ is adjacent to an RDP of type $\Upsilon$.
\end{proof}

\bigskip
\noindent
Philip Engel \\
Department of Mathematics \\
Harvard University \\
Cambridge, MA 02138 \\
USA \\
{\tt engel@math.harvard.edu}

\bigskip
\noindent
Robert Friedman \\
Department of Mathematics \\
Columbia University \\
New York, NY 10027 \\
USA \\
{\tt rf@math.columbia.edu}

\end{document}